\documentclass[a4paper,12pt,reqno]{amsart}
\usepackage[utf8]{inputenc}
\usepackage[T1]{fontenc}
\usepackage{lmodern}
\usepackage[english]{babel}
\usepackage{microtype}
\usepackage{comment}
\usepackage{todonotes}
  
\usepackage{amsmath,amssymb,amsfonts,amsthm,esint}
\usepackage{mathtools,accents}
\usepackage{mathrsfs}
\usepackage{aliascnt}
\usepackage{braket}
\usepackage{bm}
\usepackage{stmaryrd}
\usepackage{tikz-cd}
\newdimen\vintkern\vintkern11pt
\def\vint{-\kern-\vintkern\int}

\usepackage[a4paper,margin=3cm]{geometry}
\usepackage[citecolor=blue,colorlinks]{hyperref}

\usepackage{enumerate}
\usepackage{xcolor}
\usepackage{graphicx}
\usepackage{mathbbol}
\usepackage{soul}

\mathtoolsset{showonlyrefs}

\makeatletter
\g@addto@macro\@floatboxreset\centering
\makeatother

\makeatletter
\def\newaliasedtheorem#1[#2]#3{
  \newaliascnt{#1@alt}{#2}
  \newtheorem{#1}[#1@alt]{#3}
  \expandafter\newcommand\csname #1@altname\endcsname{#3}
}
\makeatother

\numberwithin{equation}{section}

\newtheoremstyle{slanted}{\topsep}{\topsep}{\slshape}{}{\bfseries}{.}{.5em}{}

\theoremstyle{plain}
\newtheorem{theorem}{Theorem}[section]
\newaliasedtheorem{proposition}[theorem]{Proposition}
\newaliasedtheorem{question}[theorem]{Question}
\newaliasedtheorem{lemma}[theorem]{Lemma}
\newaliasedtheorem{corollary}[theorem]{Corollary}
\newaliasedtheorem{counterexample}[theorem]{Counterexample}

\theoremstyle{definition}
\newaliasedtheorem{definition}[theorem]{Definition}
\newaliasedtheorem{example}[theorem]{Example}
\newaliasedtheorem{conjecture}[theorem]{Conjecture}

\theoremstyle{remark}
\newaliasedtheorem{remark}[theorem]{Remark}
\newcommand{\setN}{\mathbb{N}}

\newcommand{\setR}{\mathbb{R}}

\newcommand{\br}[1]{\llbracket #1 \rrbracket}

\let\phi\varphi

\newcommand{\norm}[1]{\left\lVert#1\right\rVert}

\DeclareMathOperator{\Hess}{Hess}

\newcommand{\loc}{{\rm loc}}

\DeclareMathOperator{\Lip}{Lip}

\DeclareMathOperator{\lip}{lip} % %for the slope

\newcommand{\haus}{\mathscr{H}}
\newcommand{\Leb}{\mathscr{L}}

\newcommand{\dist}{\mathsf{d}}

\newcommand{\meas}{\mathfrak{m}}

\DeclareMathOperator{\RCD}{RCD}

\newfont{\tmpf}{cmsy10 scaled 2500}

\newcommand{\GCBA}{{\rm GCBA}}
\newcommand{\CBA}{{\rm CBA}}
\newcommand{\CAT}{{\rm CAT}}

\newcommand{\rstr}{\:\mbox{\rule{0.1ex}{1.2ex}\rule{1.1ex}{0.1ex}}\:}
\def\set{{\rm set}}

\newcommand{\spt}{{\rm spt}}
\newcommand{\sgn}{{\rm sgn}}

\newcommand{\bM}{\operatorname{\mathbf M}}
\newcommand{\bN}{\operatorname{\mathbf N}}
\newcommand{\bI}{\operatorname{\mathbf I}}

\def\Xint#1{\mathchoice
{\XXint\displaystyle\textstyle{#1}}%
{\XXint\textstyle\scriptstyle{#1}}%
{\XXint\scriptstyle\scriptscriptstyle{#1}}%
{\XXint\scriptscriptstyle\scriptscriptstyle{#1}}%
\!\int}
\def\XXint#1#2#3{{\setbox0=\hbox{$#1{#2#3}{\int}$ }
\vcenter{\hbox{$#2#3$ }}\kern-.6\wd0}}

\def\dashint{\Xint-}

\begin{document}

\author[N. Cavallucci]{Nicola Cavallucci}
\address[Nicola Cavallucci]{Département de mathématiques,
Université de Fribourg,
Ch. du musée 23
CH-1700 Fribourg,
Switzerland}\email{n.cavallucci23@gmail.com}

\author[D. Marti]{Denis Marti}
\thanks{D. Marti was funded by the Swiss National Science Foundation grant 212867}
\address[Denis Marti]{Département de mathématiques,
Université de Fribourg,
Ch. du musée 23
CH-1700 Fribourg,
Switzerland}\email{denis.marti@unifr.ch}

\author[A. Mondino]{Andrea Mondino}
\thanks{A. Mondino was funded by the European Research  Council
(ERC), under the European Union Horizon 2020 research and innovation programme, via the ERC Starting Grant “CURVATURE”, grant agreement No. 802689.}
\address[A. Mondino]{Mathematical Institute, University of Oxford, United Kingdom}
\email{andrea.mondino@maths.ox.ac.uk}

\author[R. Perales]{Raquel Perales}
\thanks{R. Perales was partially funded by 
the Austrian Science Fund (FWF) [Grant DOI: 10.55776/EFP6]}
\address[R. Perales]{Centro de Investigaci\'on en Matem\'aticas, 
De Jalisco s/n, Valenciana, Guanajuato, Gto. Mexico. 36023}
\email{raquel.perales@cimat.mx}

\title[On the orientability of GCBA spaces]{Orientability of metric manifolds and metric spaces with curvature bounds}

\begin{abstract}
We develop a theory of orientability for metric manifolds and singular
metric spaces using locally integral currents. For topological
$n$-manifolds with locally finite Hausdorff measure, we introduce local
degree and measure-theoretic conditions under which topological
orientations correspond canonically to boundaryless, locally integer
rectifiable $n$-currents of multiplicity one on the rectifiable part.
We extend these currents to a class of singular spaces whose manifold
part is dense and of full measure, and establish constancy and
top-dimensional homology results under local uniqueness of orientation.
For orientable non-collapsed $\RCD(\kappa,n)$ spaces, we prove a
current-theoretic Stokes theorem: the boundary of the orientation
current is the induced orientation current of the geometric boundary,
with multiplicity one. In particular, its mass measure is precisely
the $(n-1)$-dimensional Hausdorff measure on that boundary. For purely
$n$-dimensional, locally geodesically complete spaces with local upper
curvature bounds, we show that local uniqueness of orientation excludes
codimension-one branching and implies a local $1$-Poincar\'e inequality.
The latter result applies beyond homology manifolds and extends the
connection between quantitative topology and Poincar\'e inequalities
to this class of singular spaces.
\end{abstract}

\maketitle
{
\hypersetup{linkcolor=blue}
\setcounter{tocdepth}{2}
\tableofcontents
}

\section{Introduction}

An orientation on a smooth manifold determines an integral current, and
Stokes' theorem identifies the boundary of this current with integration over
the oriented boundary. These familiar statements link topology, measure and
first-order analysis on smooth manifolds. For a topological manifold equipped with a general
compatible metric, however, this link is no longer automatic: topological
coordinate maps need not be Lipschitz, and local finiteness of the Hausdorff
measure does not imply rectifiability. On a singular space there is a further
question. Even when its manifold part admits an integral  current, the
extension of this current across the singular set may acquire a boundary,
and that boundary need not have locally finite mass.

In this paper we develop a theory of orientation by metric currents that
addresses these questions in a common framework. We first characterize
topological orientability for a class of metric manifolds defined by local
degree and measure-theoretic conditions. We then extend the construction to
spaces with a dense manifold part of full measure and study the resulting
top-dimensional homology. The principal geometric applications concern
non-collapsed spaces with Ricci curvature bounded below and geodesically
complete spaces with curvature bounded above. In the former setting, we
identify the current boundary of an orientation with the orientation of the
geometric boundary, with multiplicity one. In the latter, local uniqueness
of orientation excludes codimension-one branching and yields a local
$1$-Poincar\'e inequality, including examples that are not homology manifolds.

\subsection{Orientations of metric manifolds}

The theories of metric currents developed by Ambrosio--Kirchheim
\cite{AmbrosioKirchheim00} and Lang \cite{Lang} provide a notion of integration
and boundary that does not require a differentiable structure. A central
problem is to determine when this analytic theory detects the topological
fundamental class. Basso--Marti--Wenger \cite{BassoMartiWenger25} constructed
metric fundamental classes for closed, oriented metric manifolds with finite
Hausdorff measure under linear local contractibility, and used them to prove
relative isoperimetric and Poincar\'e inequalities; see also \cite{Heinonen-Keith,heinonen-rickman02,heinonen-sullivan02} for earlier work in this direction. Their framework initially
uses a smooth reference manifold and also admits a Lipschitz reference
manifold. Marti--Soultanis \cite{marti2024characterization} subsequently
related the existence of a metric fundamental class to local index bounds
and approximation by bi-Lipschitz manifolds. Marti \cite{Marti2026} extended
the fundamental class theory to manifolds with boundary and to non-orientable
manifolds, with coefficients modulo two in the latter case.

Our starting point is a local construction that does not assume a global
smooth or Lipschitz manifold structure. Throughout, a metric $n$-manifold is
a second-countable topological $n$-manifold without boundary, equipped with
a compatible metric for which $\haus^n$ is locally finite. We call it
\emph{degree-regular} if porous subsets are $\haus^n$-negligible and the
following local-degree condition holds: whenever $K\subseteq X$ is compact
and $\rho:K\to\setR^n$ is bi-Lipschitz, a Lipschitz extension
$\widetilde\rho:X\to\setR^n$ has local degree $\pm1$ at
$\haus^n$-almost every point of $K$; see
Definition~\ref{def: good-metric-mfld}. Under the porosity assumption this
condition is independent of the extension. It holds for linearly locally
contractible metric manifolds and, more generally, for the Lipschitz
homeomorphic charts with an almost-everywhere bi-Lipschitz decomposition
that we call ETR, cf.\ \cite{MP26}, for ``essentially topologically regular''.

Let $E$ be the rectifiable part of $X$, defined up to a
$\haus^n$-negligible set, and let $\lambda_E$ denote its tangential area
factor in the mass representation of integer rectifiable currents. Thus
$\lambda_E=1$ wherever the approximate tangent norm is Euclidean. We use
$\mathcal I_{n,\loc}(X)$ for locally integer rectifiable currents and
$\mathbf I_{n,\loc}(X)$ for locally integral currents.

\begin{theorem}[Orientability of metric manifolds]
\label{thm:intro_metric_manifolds}
Let $(X,\dist)$ be a degree-regular metric $n$-manifold. Then $X$ is
 topologically orientable if and only if there exists
$T\in\mathcal I_{n,\loc}(X)$ such that
\[
 \partial T=0,
 \qquad
 \|T\|=\lambda_E\haus^n\rstr E.
\]
Each topological orientation determines a unique such current. If $X$ is
connected and oriented, every locally integral $n$-cycle is an integer
multiple of this current.
\end{theorem}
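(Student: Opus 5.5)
Given a topological orientation, I will build $T$ by gluing local pieces along the rectifiable part. Decompose $E$, up to an $\haus^n$-null set, into countably many disjoint compact sets $K_i$ with bi-Lipschitz maps $\rho_i:K_i\to\setR^n$. Choose Lipschitz extensions $\widetilde\rho_i:X\to\setR^n$; by degree-regularity, the local degree $\deg(\widetilde\rho_i,x)\in\{\pm1\}$ at $\haus^n$-a.e.\ $x\in K_i$. This degree is computed with respect to the given orientation of $X$ and the standard one of $\setR^n$, and by porosity-negligibility it does not depend on the extension. Set $\theta_i:=\deg(\widetilde\rho_i,\cdot)$ on $K_i$ and define
\[
T:=\sum_i (\rho_i^{-1})_\#\big(\br{\rho_i(K_i)}\rstr(\theta_i\circ\rho_i^{-1})\big).
\]
The mass formula for integer rectifiable currents gives $\|T\|=\lambda_E\haus^n\rstr E$, since multiplicities are $\pm1$ and the pieces are disjoint. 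Local finiteness of $\haus^n$ makes $T\in\mathcal I_{n,\loc}(X)$. Independence of the decomposition follows because on overlaps the degrees multiply by the chain rule for local degree, and this matches the sign of the Jacobian of the transition map.

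The key step, and the main obstacle, is $\partial T=0$. My plan is to localize. Around each point take a chart $\phi:U\to B\subseteq\setR^n$ that is orientation preserving. For Lipschitz $f,\pi_1,\dots,\pi_n$ with $\spt f\Subset U$, I want the identity
\[
T(f\,d\pi_1\wedge\dots\wedge d\pi_n)=\int_{\setR^n}\sum_{x\in\pi^{-1}(y)\cap E} f(x)\deg(\pi,x)\,dy,
\]
which is an area formula with degree weights. For $y$ a regular value, the right-hand inner sum equals the degree of $\pi$ on $U$ weighted by $f$. When $f$ is replaced by a test form $f\,d\pi_1\wedge\cdots$, the degree-theoretic sum $\sum_{x\in\pi^{-1}(y)}\deg(\pi,x)$ is locally constant in $y$ off $\pi(\partial\spt f)$. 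Closedness then follows as in the proof that $\br{\setR^n}$ is a cycle: one shows $T(d\sigma\wedge d\pi_2\cdots)=0$ by integrating the total derivative of a degree-weighted quantity. I would first attempt to reduce to Lipschitz $\pi:X\to\setR^n$ and use the fact that local degree integrates Jacobians, i.e.\ $\int \sigma\circ\pi^{-1}\ldots$ via the classical formula $\int_U g(\pi)J\pi\,\deg=\int_{\setR^n} g\,\deg(\pi,U,y)\,dy$. This needs the non-rectifiable part to be invisible. Its $\pi$-image is $\Leb^n$-null, which I expect to derive from purely unrectifiable sets having null Lipschitz images together with degree-regularity.

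For the converse, suppose $T$ is given. The multiplicity $\theta=\pm1$ together with the chart orientations $\rho_i$ defines an a.e.\ sign. I then define a local orientation at each point via $\deg(\widetilde\rho,\cdot)\theta$ on small balls. The constancy theorem for cycles in a chart, applied to the pushforward under a local homeomorphism approximated by Lipschitz maps (via $\partial T=0$), gives that this sign is locally consistent, hence a continuous orientation. Uniqueness of $T$ per orientation is already built in. Finally, for a locally integral $n$-cycle $S$ on connected oriented $X$, the statement $\|S\|\ll\haus^n\rstr E$ holds by rectifiability. Writing $S=T\rstr g$ with integer $g$, the condition $\partial S=0$ and the localized degree argument show that $g$ is locally a.e.\ constant (a constancy theorem on charts). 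By connectedness $g$ is constant.
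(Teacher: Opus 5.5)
Your construction of $T$, the mass identity and the independence of the charts essentially agree with the paper. Your displayed identity, with the sum restricted to $E$, is in fact valid for every Lipschitz $\pi$; it is the chart-by-chart computation in the proof of Proposition \ref{prop: characterization-T_U}.

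\textbf{Handling the purely unrectifiable part.} The proof of $\partial T=0$ breaks at the next step. To identify the inner sum with a topological degree, and so use its local constancy in $y$, you need $\pi^{-1}(y)\cap P=\emptyset$ for a.e.\ $y$, where $P$ is the purely unrectifiable part of $X$. In other words, you need $\haus^n(\pi(P))=0$. The principle you plan to use for this is false: purely unrectifiable sets can have Lipschitz images of positive measure. For example, the four-corner Cantor set $K\subseteq\setR^2$ (keep the four corner squares of side $1/4$ and iterate) is purely $1$-unrectifiable, yet the linear map $(x,y)\mapsto x+2y$ sends it onto an interval. Degree-regularity does not supply this either, since its two conditions concern only bi-Lipschitz pieces and porous sets. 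The theorem is stated precisely so as to allow $\haus^n(P)>0$.

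The missing ingredient is Bate's perturbation theorem (Theorem \ref{thme: bate-perturb}). On a compact piece, maps with $\haus^n(\pi(P))=0$ are uniformly dense among maps with bounded Lipschitz constant. So you prove the degree identity only for those maps and pass to the limit with the continuity axiom. The degree argument must then survive the approximation, and your cutoff $\sigma$ does not guarantee this. For an approximant $\Phi_k$ of $(f,\pi_1,\dots,\pi_{n-1})$, preimages of points with small $|y_1|$ need not lie where $\sigma=1$. The paper avoids this by doubling a regular disk $D$ to $\hat D\cong\mathbb{S}^n$ and testing with $(1,\pi)$. There $\sum_{x\in\pi^{-1}(y)}\deg(\pi,x)=\deg(\pi)=0$ for every map $\hat D\to\setR^n$ (Proposition \ref{prop: existence-metric-fund-class-double}). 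Your open-manifold version, with $\deg(\Phi_k,W,y)$ constant on $\{y_1>\varepsilon_k\}$ and zero for large $y_1$, can also be completed. It needs an extra step: you must show separately that the contribution of $\{|\Phi_{k,1}|\le\varepsilon_k\}$ vanishes in the limit, for instance via the chain rule and the continuity axiom.

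\textbf{The constancy theorem.} The second gap is the constancy theorem on which your converse, the uniqueness of $T$ for a given orientation, and the statement about cycles all rest. You cannot push $T$ forward under a topological chart, because charts of a degree-regular manifold need not be Lipschitz. Pushing forward under Lipschitz approximations of a chart destroys injectivity. Then neither the a.e.\ sign $\theta\,\deg(\widetilde\rho,\cdot)$ nor the multiplicity $g$ of a cycle $S=T\rstr g$ can be read off from the pushforwards.

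The paper proves the needed statement as Corollary \ref{cor: uniqueness-T_U}: every $S\in\mathcal I_n(D)$ with $\spt(\partial S)\subseteq\partial D$ equals $k\,T^{\mathfrak o}_D$. This is obtained by doubling $D$ and invoking uniqueness of fundamental classes on closed metric manifolds \cite[Proposition 4.4]{marti2024characterization}. From it the rest follows:
\begin{itemize}
\item $T|_{D_i}=\pm T^{\mathfrak o_i}_{D_i}$, by comparing masses;
\item the induced local orientations agree on overlaps, because the degree formula of Proposition \ref{prop: characterization-T_U} is purely topological;
\item for a cycle on a connected manifold, $|k_i|$ is constant.
\end{itemize}
In your proposal, the locally consistent sign and the local constancy of $g$ are asserted rather than proved.
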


This combines Theorem~\ref{thme: characterization-orientability-metric-manifold},
its more precise form in
Theorem~\ref{theo: characterization-orientability-metric-manifold-with-details},
and Corollary~\ref{cor: any-bdry-less-current-implies-orient}.
No completeness, compactness or a priori rectifiability of $X$ is assumed.
The mass is prescribed on the rectifiable part, while the support of the
current is all of $X$. In particular, the assertion distinguishes full
support from full-measure rectifiability.

The construction is explicit on rectifiable charts. If
$\rho_i:K_i\to\setR^n$ are bi-Lipschitz charts with pairwise disjoint domains covering the rectifiable part
of an oriented regular disk $D$, then
\[
 T_D^{\mathfrak o}
 =\sum_i(\rho_i^{-1})_\#
   \br{\deg(\widetilde\rho_i,\rho_i^{-1}(\cdot))},
\]
where the multiplicities are extended by zero outside $\rho_i(K_i)$.
The local degree records the topological orientation, and the porosity
condition ensures compatibility on overlapping rectifiable charts.
Bate's perturbation theorem \cite{bate-perturb,bate24} allows us to control
the purely unrectifiable part when testing these currents. Doubling a
regular disk and using additivity of degree then shows that the current
has no boundary in its interior. These local currents agree on overlaps
and therefore define the global orientation.

\subsection{Extension across singular sets}

We call a separable, locally compact metric space $X$ \emph{$n$-regular} if
$\haus^n$ is locally finite, its topological manifold part $\mathcal M(X)$
is a degree-regular metric $n$-manifold, and
\[
 \overline{\mathcal M(X)}=X,
 \qquad
 \haus^n(\mathcal S(X))=0,
 \qquad
 \mathcal S(X):=X\setminus\mathcal M(X).
\]
Here $n$-regularity is a property of the whole space; it should not be
confused with the notion of an $n$-regular point in Cheeger-Colding's theory of Ricci-limit spaces \cite{CC:97, CC:II, CC:III}.

Every orientation of $\mathcal M(X)$ extends uniquely to a locally integer
rectifiable current on $X$, with the same mass measure and with boundary
supported in $\mathcal S(X)$; see Corollary~\ref{cor:extension_of_T_X}.
We call this extension an \emph{orientation} of $X$. The extension is unique for
a prescribed orientation of the manifold part, but different components
of that part can carry independent choices of sign. This distinction is
essential at branching singularities.

We say that $X$ is \emph{uniquely orientable} if it has exactly two
orientations, and \emph{locally uniquely orientable} (LUO) if all sufficiently
small balls about each point are uniquely orientable. Unique orientability
is equivalent to connectedness and orientability of $\mathcal M(X)$.
For connected LUO spaces the manifold part is connected, and, if
$\haus^{n-1}(\mathcal S(X))=0$, we recover the familiar dichotomy
\[
 H_n^{\mathrm{IC}}(X)\cong
 \begin{cases}
 \mathbb Z,& X\text{ is orientable},\\
 \{0\},& X\text{ is not orientable}.
 \end{cases}
\]
Here and throughout, $H_\bullet^{\mathrm{IC}}$ is formed from
\emph{locally integral currents}, with no compact-support condition.
Its topological analogue is locally finite, rather than ordinary,
singular homology. We also establish constancy results on connected
open subsets and construct a branched orientation double cover when
$(\mathcal M(X),\dist)$ is a non-orientable length space.

Two boundary criteria underlie these applications. A locally finite-mass
$n$-current has zero boundary if the support of its boundary is
$\haus^{n-1}$-negligible. When the boundary support is not negligible,
suitable upper $n$-density bounds for the mass, together with local
finiteness and positive lower $(n-1)$-density of its Hausdorff measure,
imply local finiteness of the boundary mass; see
Theorems~\ref{theo:boundary_zero_mass} and
\ref{theo:boundary_finite_mass_hard}. These criteria let us pass from
the topology of the manifold part to integral current statements on
the entire space.

\subsection{Non-collapsed spaces with Ricci curvature bounded below}

The structure theory of non-collapsed Ricci limit spaces, initiated by
Cheeger--Colding \cite{CC:97,CC:II,CC:III}, and its development for
non-collapsed $\RCD$ spaces \cite{MondinoNaber19, DPG, KapMonGT} provide a
natural setting for this construction.

Orientability in Ricci-limit geometry was studied by
Matveev--Portegies \cite{MatveevPortegies17} through flat
limits of the integral currents associated with closed
orientable manifolds, and by Honda \cite{Honda17} through
differential forms with Sobolev regularity.
For non-collapsed $\RCD$ spaces without boundary,
Brena--Bru\`e--Pigati \cite{BrenaBruePigati2024} established
the equivalence between topological orientability of the
manifold part, Honda's formulation via differential forms,
and a characterization in terms of metric currents.
As a geometric application, they proved that four-manifolds
with nonnegative Ricci curvature and Euclidean volume
growth are orientable.
The current-theoretic approach of Mondino--Perales
\cite{MP26} further connects orientability with local flat
convergence for possibly noncompact non-collapsed
$\RCD(\kappa,n)$ spaces and Kato limit spaces, both without boundary.
For closed, orientable, Alexandrov
spaces with a lower curvature bound, integral current structures were
constructed by Jaramillo--Perales--Rajan--Searle--Siffert \cite{JPRSS21}.

Our results include non-collapsed $\RCD(\kappa,n)$ spaces \emph{with nonempty
boundary}. We use the notion of boundary proposed in \cite{DPG}, namely
\[
 \partial X
 :=\overline{\mathcal S^{n-1}(X)\setminus\mathcal S^{n-2}(X)},
\]
where $\mathcal S^k(X)$ denotes the usual metric singular stratification.
Building on the boundary regularity theory of Bru\`e--Naber--Semola
\cite{BrueNaberSemola20}, we show that $X$ is $n$-regular and that
$(\partial X,\dist)$, with the restricted ambient distance, is
$(n-1)$-regular. The approximate tangent norms are Euclidean almost
everywhere in both cases. A point that requires particular care is the
boundary chart: its almost-everywhere bi-Lipschitz decomposition must
hold for the same Lipschitz homeomorphism used to describe the topology.
We establish this in Lemma~\ref{lem:ETR-boundary-RCD}.

\begin{theorem}[Orientation and boundary of non-collapsed $\RCD$ spaces]
\label{thm:intro_RCD_Stokes}
Let $(X,\dist,\haus^n)$ be a non-collapsed $\RCD(\kappa,n)$ space,
with $n\geq2$. If $\mathcal M(X)$ is orientable, then its orientation
extends to a current $T\in\mathbf I_{n,\loc}(X)$, unique up to sign,
which satisfies
\[
 \|T\|=\haus^n,
 \qquad
 \spt(\partial T)=\partial X,
 \qquad
 \|\partial T\|=\haus^{n-1}\rstr\partial X.
\]
Moreover, $\partial X$ is orientable and
$(\partial T)|_{\partial X}$ is its induced orientation current.
\end{theorem}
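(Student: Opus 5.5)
We will combine the general extension theory for $n$-regular spaces with the boundary regularity of non-collapsed $\RCD$ spaces, in the following order.

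\emph{Step 1: regularity of $X$ and $\partial X$.} From the Bru\`e--Naber--Semola boundary regularity, we will show that $X$ is $n$-regular and that $(\partial X,\dist)$ is $(n-1)$-regular, with Euclidean approximate tangent norms almost everywhere; in particular $\lambda_E=1$ a.e. For $X$, the manifold part contains the $\epsilon$-regular set. Its complement consists of $\mathcal S^{n-1}(X)$ together with points of $\partial X$, and has $\haus^n$-measure zero. Degree-regularity follows from the ETR property of Reifenberg charts. By Corollary~\ref{cor:extension_of_T_X}, the orientation of $\mathcal M(X)$ then extends to $T\in\mathcal I_{n,\loc}(X)$ with $\|T\|=\haus^n$ and $\spt\partial T\subseteq\mathcal S(X)$. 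Uniqueness up to sign uses the connectedness of $\mathcal M(X)$; we expect this to follow from the fact that $\mathcal S^{n-2}$ has codimension two, or from unique orientability.

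\emph{Step 2: interior vanishing.} Away from $\partial X$, the singular set lies in $\mathcal S^{n-2}(X)$, whose Hausdorff dimension is at most $n-2$. Hence $\haus^{n-1}(\spt\partial T\setminus\partial X)=0$ locally, and Theorem~\ref{theo:boundary_zero_mass}, applied on open sets $U\Subset X\setminus\partial X$, gives $\partial T=0$ there. Thus $\spt\partial T\subseteq\partial X$.

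\emph{Step 3: local finiteness of the boundary mass.} Near $\partial X$ we will apply Theorem~\ref{theo:boundary_finite_mass_hard}. The required inputs are the following:
\begin{enumerate}[(i)]
\item the upper $n$-density bound for $\haus^n$, from Bishop--Gromov;
\item local finiteness of $\haus^{n-1}\rstr\partial X$ and a positive lower $(n-1)$-density, both from boundary regularity.
\end{enumerate}
This gives $T\in\mathbf I_{n,\loc}(X)$. By the boundary rectifiability theorem, $\partial T$ is then an integer rectifiable $(n-1)$-current carried by $\partial X$.

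\emph{Step 4: identifying $\partial T$ (the main obstacle).} We must show that $\partial T$ has multiplicity exactly one and coincides with the induced orientation of $\partial X$. We will work in the boundary charts of Lemma~\ref{lem:ETR-boundary-RCD}. Near $\haus^{n-1}$-a.e.\ boundary point $p$, there is a Lipschitz homeomorphism $\Phi$ from a neighbourhood $U$ onto a relatively open subset of the half-space $\setR^n_+$. It restricts to a homeomorphism $\partial X\cap U\to\setR^{n-1}\times\{0\}$, and it admits an almost-everywhere bi-Lipschitz decomposition on both $U$ and $\partial X\cap U$.

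The pushforward $\Phi_\#(T\rstr U)$ is an $n$-current on the half-space with multiplicity $\pm1$, given by the degree of $\Phi$. Since $\Phi$ is a homeomorphism, this current has constant sign $\pm\br{\setR^n_+}$ restricted to the image. Its boundary is therefore $\pm\br{\setR^{n-1}}$, with the standard outward-normal convention.

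We then pull back, using the bi-Lipschitz pieces of $\Phi|_{\partial X}$ and the fact that $\Phi_\#$ commutes with $\partial$. This shows that $\partial T\rstr U$ equals the orientation current of $\partial X\cap U$ determined by the local degree of $\Phi|_{\partial X}$. Consequently $\|\partial T\|=\haus^{n-1}\rstr(\partial X\cap U)$, using Euclidean tangent norms to get area factor one.

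The delicate point is that $\Phi$ is only Lipschitz, not bi-Lipschitz, so pushforward alone does not recover $\partial T$. We plan to argue as in the proof of Theorem~\ref{thm:intro_metric_manifolds}: compare $\partial T$ with the chart-defined current via the porosity and degree conditions, and use Bate's perturbation to discard the purely unrectifiable pieces.

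\emph{Step 5: globalising.} The local orientations of $\partial X$ obtained in Step 4 patch together, because they are all restrictions of the single current $\partial T$. Applying Theorem~\ref{thm:intro_metric_manifolds} to the $(n-1)$-regular space $\partial X$ then yields orientability of $\partial X$. Finally, $\spt\partial T=\partial X$ follows from $\|\partial T\|=\haus^{n-1}\rstr\partial X$ together with the positive lower density of $\haus^{n-1}$ on $\partial X$.
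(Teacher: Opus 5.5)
Your Steps 1--3 agree with the paper: the orientation extends from $\mathcal M(X)$; $\partial T$ vanishes off $\partial X$ because $\mathcal S(X)\setminus\partial X\subseteq\mathcal S^{n-2}(X)$ is $\haus^{n-1}$-null; and $\|\partial T\|$ is locally finite by Theorem~\ref{theo:boundary_finite_mass_hard}. The first half of Step~4 also agrees. The half-space chart $F$ (your $\Phi$) from \cite[Theorem 9.1]{BrueNaberSemola20} pushes $T$ on a small half-disk $D$ to $\pm\br{\overline B_{\setR^n_+}(0,r)}$, and hence pushes $(\partial T)\rstr D$ to $\pm\br{\overline B_{\setR^{n-1}}(0,r)}$. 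The paper obtains this with a regular disk, using $T|_D=T^{\mathfrak o}_D$ from Corollary~\ref{cor: uniqueness-T_U} and Proposition~\ref{prop: existence-metric-fund-class-double} for the doubled map. Only the degree of $F$ enters, so the interior bi-Lipschitz decomposition you claim is neither available nor needed.

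\textbf{The gap is in extracting the multiplicity.} You pull back along bi-Lipschitz pieces of $\Phi|_{\partial X}$, and you cite Lemma~\ref{lem:ETR-boundary-RCD} for a half-space chart that is a.e.\ bi-Lipschitz on $U$ and on $\partial X\cap U$. That lemma gives no such chart. It produces an ETR decomposition only for the $(n-1)$-dimensional splitting map $u$ on $\partial X$. The topological half-space chart is a different map, and nothing shows that its restriction to $\partial X$ splits into bi-Lipschitz pieces. This matters because an injective Lipschitz map can send a set of positive $\haus^{n-1}$-measure onto a null set. On such a set the identity $F_\#((\partial T)\rstr D)=\pm\br{\overline B_{\setR^{n-1}}(0,r)}$ carries no information, so $\partial T$ could a priori have multiplicity $3$ there. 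Your fallback does not close this. Bate's perturbation is irrelevant, since $\partial X$ is $(n-1)$-rectifiable. Porosity and degree control make a chart-defined orientation of $\partial X$ have multiplicity one, but identifying $\partial T$ with that current requires a uniqueness statement, and that is precisely what is missing.

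\textbf{The missing idea is constancy on the boundary.} Let $A=F^{-1}(B_{\setR^{n-1}}(0,r)\times\{0\})$; it is connected and is a degree-regular $(n-1)$-manifold by Lemma~\ref{lem:ETR-boundary-RCD} and Proposition~\ref{prop:RCD-boundary-nminus1-regular}. Since $\partial\partial T=0$, Proposition~\ref{prop:unique_orientation_extension} gives $(\partial T)|_A=k\,W$, where $W$ is an orientation of $A$ with $\|W\|=\haus^{n-1}\rstr A$. Pushing forward gives $k\,(F|_A)_\#W=\br{B_{\setR^{n-1}}(0,r)}$, and integrality of $(F|_A)_\#W$ forces $k=\pm1$. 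Because $k$ is a single integer on all of $A$, $F$ only has to detect it somewhere. Multiplicity one then comes from $W$, that is, from the ETR property of $u$, not of $F$. Alternatively, you could prove that the first $n-1$ components of the half-space chart can be taken to be the splitting map of Lemma~\ref{lem:ETR-boundary-RCD}; with that, your pullback argument goes through.

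\textbf{A smaller point concerns uniqueness up to sign.} This needs $\mathcal M(X)$ to be connected. That does not follow from $\dim_H\mathcal S^{n-2}(X)\le n-2$ alone, because $\mathcal S(X)$ also contains the codimension-one set $\partial X$. Appealing to unique orientability is circular. The paper proves connectedness by showing that $(\mathcal M(X),\dist)$ is a length space, using the a.e.\ convexity of $\mathcal R_n(X)$ from \cite[Theorem~6.5]{Deng}.
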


See Proposition~\ref{prop:RCD_orientable_implies_uniquely_orientable} and
Theorem~\ref{theo:RCD-boundary-mass}. The boundary identity gives a
current-theoretic Stokes theorem: it identifies both the support and
the multiplicity of the boundary, rather than only its concentration
on the geometric boundary. The proof first establishes local finiteness
of the boundary mass and then uses degree-one half-space charts to
identify the boundary multiplicity.

The resulting top-dimensional absolute and relative homology groups are
those expected for an oriented manifold with boundary. In particular,
for orientable $X$,
\[
 H_n^{\mathrm{IC}}(X,\partial X)\cong\mathbb Z,
 \qquad
 H_n^{\mathrm{IC}}(X)\cong
 \begin{cases}
 \mathbb Z,&\partial X=\varnothing,\\
 \{0\},&\partial X\neq\varnothing.
 \end{cases}
\]
Both groups vanish when $X$ is non-orientable. As a further consequence,
there is no proper Lipschitz retraction of an orientable $X$ onto its
nonempty boundary. For compact $X$, properness is automatic. In the
noncompact case it is essential: a Euclidean half-space retracts onto
its boundary by a Lipschitz projection, which is not proper.

\subsection{Upper curvature bounds and Poincar\'e inequalities}

Our second application concerns purely $n$-dimensional $\GCBA$ spaces,
that is, separable, locally compact, locally geodesically complete spaces
with local upper curvature bounds. The structure theory of
Lytchak--Nagano \cite{LytchakNagano2019_GCBA,LN:JEMS22} shows that these
spaces have a large manifold part and controlled singular strata.
We prove that they are $n$-regular and that every orientation has
locally finite boundary mass. In contrast with the $\RCD$ case,
the boundary of an orientation can exhibit branching between several
manifold sheets. It is concentrated on the set of \emph{wall points},
where the tangent splits an $\setR^{n-1}$ factor but is not $\setR^n$.
Using Nagano's analysis of wall singularities \cite{Nagano2026}, we
show that local unique orientability excludes wall points.

This yields an analytic consequence beyond the manifold setting.

\begin{theorem}[Local Poincar\'e inequality]
\label{thm:intro_GCBA_PI}
Let $(X,\dist)$ be a purely $n$-dimensional, locally uniquely orientable
$\GCBA$ space. Then $(X,\dist,\haus^n)$ is locally doubling and
supports a local $1$-Poincar\'e inequality.
\end{theorem}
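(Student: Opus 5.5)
Local doubling is standard for GCBA spaces. By Lytchak--Nagano, on a small ball $B$ (radius below the curvature and geodesic-completeness scale) the measure $\haus^n$ on a purely $n$-dimensional GCBA space satisfies two-sided bounds $c r^n\le \haus^n(B(x,r))\le C r^n$ for $x$ in $B$ and $r$ small. This gives local Ahlfors $n$-regularity, and hence local doubling. So the content of the theorem is the $1$-Poincar\'e inequality. I would obtain it from a quantitative connectivity or isoperimetric statement proved with currents, in the spirit of Basso--Marti--Wenger. There, a fundamental class whose boundary is controlled, combined with local Ahlfors regularity, yields relative isoperimetric inequalities. These in turn yield a $1$-Poincar\'e inequality, via the equivalence between relative isoperimetric inequalities and $1$-PI on Ahlfors regular spaces (Semmes, Heinonen--Koskela, Ambrosio--Miranda coarea).

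The plan has four steps.

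\emph{First}, fix $p$ and a small ball $U=B(p,r_0)$ that is uniquely orientable. Using $n$-regularity of $X$ and the extension corollary (Corollary~\ref{cor:extension_of_T_X}), choose an orientation $T\in\mathcal I_{n,\loc}(U)$ with $\|T\|=\lambda_E\haus^n$. In the GCBA setting the tangent norms are Euclidean a.e., so $\|T\|=\haus^n$.

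\emph{Second}, show that $\partial T=0$ in $U$. The earlier GCBA results say that $\partial T$ has locally finite mass and is concentrated on the wall points. The Nagano-based result says that LUO excludes wall points. Together these give $\|\partial T\|=0$. Alternatively, if the non-wall, non-manifold singular set is $\haus^{n-1}$-negligible, the first boundary criterion (Theorem~\ref{theo:boundary_zero_mass}) applies directly.

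\emph{Third}, prove a relative isoperimetric inequality at small scales. Let $A\subset B(x,r)$ be a set of finite perimeter. Apply a slicing/coarea argument to $T\rstr A$, using $\partial(T\rstr A)=-\partial T\rstr\ldots+\langle\ldots\rangle$ and the fact that $\partial T=0$. This gives
\[
\min\{\haus^n(A\cap B(x,r)),\haus^n(B(x,r)\setminus A)\}^{(n-1)/n}\le C\,\mathrm{Per}(A,B(x,\tau r)).
\]
To get this inequality, I would use the isoperimetric/filling inequality for integral cycles in CAT($\kappa$) balls. Small balls are convex and contractible by a geodesic homotopy, so cones exist with the Lipschitz cone inequality. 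I would combine this with the constancy result: every locally integral $n$-current without boundary in a connected LUO ball is an integer multiple of $T$. Concretely, fill $\partial(T\rstr A)$ inside $B(x,\tau r)$ by a cone $S$ with $\bM(S)\lesssim r\,\bM(\partial(T\rstr A))$. Then $T\rstr A-S$ is a cycle in the ball, hence equal to $mT$ there. Comparing masses with Ahlfors regularity forces $m\in\{0,1\}$, and either case yields the inequality.

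\emph{Finally}, use local Ahlfors regularity plus the relative isoperimetric inequality to deduce the local $1$-PI by the standard coarea argument applied to Lipschitz functions.

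The main obstacle is the third step, specifically the constancy step applied in small balls rather than globally. The cycle $T\rstr A-S$ lives in a ball that must itself be LUO and connected in its manifold part, and cone filling may leave the ball slightly, so I would work with concentric balls $B(x,r)\subset B(x,\tau r)$ inside $U$. The other delicate point is ruling out a multiplicity $m\ne 0,1$ using mass bounds. For that I would use the multiplicity-one condition $\|T\|=\haus^n$ together with the lower density bound.
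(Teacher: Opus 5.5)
Your overall architecture matches the paper's. You take an orientation $T$ of a small ball with $\partial T=0$ (LUO excludes wall points, so $\haus^{n-1}(\mathcal S(X))=0$). You then combine a filling argument with the constancy theorem to get a relative isoperimetric inequality, and pass to the $1$-Poincar\'e inequality by the layer-cake argument.

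\textbf{The gap is the localization in your third step.} What implies a Poincar\'e inequality, and what Lemma~\ref{lemma: rel-isoper-implies-poincare} requires, is the \emph{relative} inequality. There $A$ is an arbitrary set (a superlevel set of $f$), not contained in the ball, and only the part of its boundary inside $\tau B$ is counted. For such $A$, the cycle $\partial(T\rstr A)$ is neither supported in $B(x,\tau r)$ nor controlled by $\mathrm{Per}(A,B(x,\tau r))$, so there is nothing you can cone off. If instead you literally take $A\subset B(x,r)$, then $T\rstr A-S$ is compactly supported in the ball. Constancy then forces $m=0$, so the case $m=1$ never occurs, and you only get the Dirichlet-type bound $\haus^n(A)\lesssim r\,\bM(\partial(T\rstr A))$. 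That bound is strictly weaker than what you need. Two planes glued at a point are Ahlfors $2$-regular and satisfy it, yet fail the Poincar\'e inequality: one plane has zero perimeter in every ball centred at the gluing point.

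\textbf{The missing idea is how to truncate the boundary and close it up away from $B=B(y,R)$.} The paper slices at a radius $R'\in(2R,3R)$ to get $U=\partial(T\rstr E_t)\rstr B(y,R')$, with $\bM(U)\lesssim P$ and $\spt(\partial U)\subseteq S(y,R')$. It then needs a filling $V$ of $\partial U$ with $\bM(V)\leq\bM(U)$ whose support avoids $B$.
\begin{itemize}
\item A cone over $\partial U$, from any vertex, sweeps through the ball and leaves boundary of $T\rstr E_t-S$ inside $B$. That destroys the constancy step.
\item The paper instead takes $V$ to be a minimal filling and uses the lower density bound $\|V\|(B(z,\rho))\gtrsim\rho^{n-1}$ for $\rho\leq\dist(z,\spt(\partial V))$ from Proposition~\ref{thm:filling-cat(k)-spaces}. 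This puts $\spt(V)$ within distance $D\,\bM(V)^{1/(n-1)}$ of $S(y,R')$, which is smaller than $R$ provided $\bM(V)\lesssim\varepsilon R^{n-1}$ with $\varepsilon$ small.
\item This forces a case split. If $P>\varepsilon R^{n-1}$, the inequality is trivial by Ahlfors regularity.
\item Only after this is $W=U-V$ a cycle of mass at most $2\bM(U)$ that can be filled; a cone would do here. Then $Q=T\rstr E_t-S$ has $\spt(\partial Q)\cap B=\emptyset$, so $Q|_B=k\,T|_B$.
\item Any integer $k$ already bounds the minimum, so no mass argument pinning $k$ to $\{0,1\}$ is needed.
\end{itemize}

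Your worries about concentric balls and connectedness are not the real obstacle. Open connected subsets of a connected LUO space have connected manifold part by Theorem~\ref{theo:n-regular_loc_uniq_orient_dichotomy}.

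You also leave open how $\mathrm{Per}(A,\cdot)$ relates to $\bM(\partial(T\rstr A))$ for a general finite-perimeter set. The paper sidesteps this: it works with $P=\liminf_{s\to0}\|T\|((E_s\setminus E)\cap 3B)/s$ and slices $\dist(\cdot,E)$, so that $\partial(T\rstr E_t)$ is integral with mass controlled by $P$.
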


This is Theorem~\ref{thm:local-PI-new}. It applies in particular
to separable $\CBA$ homology manifolds, but also to spaces that are not
homology manifolds, such as the spherical suspension of a flat
two-dimensional torus described in
Example~\ref{ex:GCBA_loc_uniq_orient_not_mfd}.

Semmes' theorem \cite{Semmes1996} establishes Poincar\'e inequalities
from quantitative topology and Ahlfors regularity in the metric-manifold
setting. Our result extends this connection to the above singular class
under an upper curvature bound. The role of manifold topology is played
by local uniqueness of the orientation current and the resulting
constancy theorem; filling estimates in small $\CAT(\kappa)$ neighbourhoods
provide the geometric input. This places the result within the
isoperimetric approach to Poincar\'e inequalities, as in
\cite{BassoMartiWenger25,Wenger2005,KorteLahti2014}. Such inequalities
are central to first-order analysis and quasiconformal geometry on
metric spaces; see \cite{HeinonenKoskela1998,Cheeger1999,HKST15}.

The study of local Poincaré inequalities on general $\GCBA$ spaces remains an open problem. Currently, the only general result available in this setting is the local $\infty$-Poincaré inequality, recently established by Caputo, Cavallucci, and Ikonen in \cite{CaputoCavallucciIkonen2026}. Theorem \ref{thm:intro_GCBA_PI} extends the validity of
local $1$-Poincar\'e inequalities beyond the manifold
setting to the full class of purely dimensional LUO
$\GCBA$ spaces. In particular, it applies to spaces
that fail to be homology manifolds, showing that local
uniqueness of orientation can replace manifold topology
in establishing this fundamental analytic property.

\subsection{Organization of the paper}

After the preliminaries, we develop the local-current tools needed for
restriction, extension and control of boundary mass, and recall the
rectifiability and degree-theoretic ingredients. Section~\ref{sec:orientability_of_manifolds}
proves the orientation theorem for degree-regular metric manifolds.
Section~\ref{sec:n-regular} treats singular $n$-regular spaces, their
top-dimensional homology and branched orientation double covers.
Sections~\ref{sec:RCD} and~\ref{sec:GCBA} contain the applications to
non-collapsed $\RCD$ spaces and to $\GCBA$ spaces, respectively.
\subsection{Acknowledgements}
The authors acknowledge the use of AI solely for basic language proofreading and as an aid in drafting the proof of Theorem \ref{theo:boundary_finite_mass_hard}. AI was not used in any other aspect of this work. All mathematical content has been independently verified by the authors, who assume full responsibility for the paper.

\section{Preliminaries} 

Let $(X,\dist)$ be a metric space, $A\subseteq X$ and $r>0$. The open (resp. closed) $r$-neighbourhood of $A$ is denoted by $B(A,r)$ (resp. $\overline{B}(A,r)$). Given $s \ge 0$, we set $\omega_s:=\pi^{s/2}/\Gamma(s/2+1)$, where $\Gamma(\cdot)$ is the $\Gamma$-function. The $s$-dimensional Hausdorff measure of $A \subseteq X$ is 
\begin{equation}
    \label{eq:defin_Hausdorff_measure}
 \haus^s(A):=\frac{\omega_s}{2^s}
 \cdot \lim_{\delta\downarrow0}
 \inf\left\{
   \sum_i(\operatorname{diam}U_i)^s:
   A\subseteq\bigcup_iU_i,\ 
   \operatorname{diam}U_i\leq\delta
 \right\}.
\end{equation}
With this normalization, $\haus^k = \Leb^k$ on $\setR^k$.
The Hausdorff dimension of $A$ is $\textup{dim}_H(A) := \inf\{s\ge 0\,:\, \haus^s(A)=0\} = \sup\{s\ge 0\,:\, \haus^s(A) = \infty\}$.

\subsection{Lipschitz functions}

Let $f\colon (X,\dist_X) \to (Y,\dist_Y)$ be a function. The Lipschitz constant of $f$ is $\Lip(f) := \sup_{x,x'\in X, x\neq x'} \frac{\dist_Y(f(x),f(x'))}{\dist_X(x,x')}$. A function $f$ is Lipschitz if $\Lip(f)<\infty$. A function $f$ is locally Lipschitz if every point has an open neighbourhood $U$ such that the restriction $f|_U$ is Lipschitz. 
The asymptotic Lipschitz constant of a locally Lipschitz map
$f\colon (X,\dist_X)\to(Y,\dist_Y)$ is defined by
\begin{equation}\label{eq:defin_loc_Lip_const}
    \lip_a f(x)
    := \lim_{r\downarrow 0}
    \operatorname{Lip}\bigl(f|_{B(x,r)}\bigr).
\end{equation}
Here the Lipschitz constant on a singleton is understood to be zero.
The limit exists by monotonicity and is finite by local
Lipschitz continuity.
We denote by $\Lip(X,Y)$, $\Lip_c(X,Y)$ and $\Lip_\loc(X,Y)$ the set of Lipschitz functions, Lipschitz functions with compact support and locally Lipschitz functions from $(X,\dist_X)$ to $(Y,\dist_Y)$, respectively. When $Y=\setR$, we simply write $\Lip(X)$, $\Lip_c(X)$ and $\Lip_\loc(X)$.

We recall a version of the coarea inequality for the Hausdorff measure, expressed in terms of the upper integral. 

\begin{lemma}[{Eilenberg's Coarea Inequality, \cite[Theorem 1.1]{Esmayli_Hajlasz_2021}}]
\label{lemma:coarea_inequality}
    Let $(X,\dist_X)$, $(Y,\dist_Y)$ be metric spaces, let $f\colon X \to Y$ be Lipschitz and let  $0\leq t \leq s$. For every $A\subseteq X$ it holds
    $$ \int_Y^* \haus^{s-t}(f^{-1}(y)\cap A)\,d\haus^t(y)\le (\Lip f)^t \, \frac{\omega_{s-t} \omega_t}{\omega_s} \,\haus^s(A).$$
\end{lemma}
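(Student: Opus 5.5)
The plan is to follow the classical Eilenberg--Federer route: discretize the upper integral by approximating $\haus^s(A)$ with fine coverings, and push each covering forward through $f$. By the definition \eqref{eq:defin_Hausdorff_measure}, I may assume $\haus^s(A)<\infty$ and, for each $j$, fix a countable cover $\{U_i^j\}_i$ of $A$ with $\operatorname{diam}U_i^j\le 1/j$ and
\[
\frac{\omega_s}{2^s}\sum_i(\operatorname{diam}U_i^j)^s\le \haus^s(A)+1/j .
\]
Replacing $U_i^j$ by their closures changes nothing, so the sets $f(U_i^j)$ are bounded and have diameter at most $\Lip(f)\operatorname{diam}U_i^j$.

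For fixed $y\in Y$, the sets $U_i^j\cap f^{-1}(y)$ cover $f^{-1}(y)\cap A$ with diameters at most $1/j$. Hence
\[
\haus^{s-t}_{1/j}(f^{-1}(y)\cap A)\le \frac{\omega_{s-t}}{2^{s-t}}\sum_i(\operatorname{diam}U_i^j)^{s-t}\,\chi_{f(U_i^j)}(y).
\]
Each $f(U_i^j)$ lies in a set of diameter $\le \Lip(f)\operatorname{diam}U_i^j$. So I need an estimate of the form
\[
\int^*\chi_{f(U)}\,d\haus^t\lesssim \frac{\omega_t}{2^t}\bigl(\Lip(f)\operatorname{diam}U\bigr)^t .
\]
For $t$-dimensional Hausdorff measure in a general metric space $Y$, this is exactly the point where sharp constants are delicate: $\haus^t(E)\le \frac{\omega_t}{2^t}(\operatorname{diam}E)^t$ holds only for the spherical or the $\delta$-approximating premeasure, not for $\haus^t$ itself. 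I would therefore replace $\haus^t$ by $\haus^t_{\delta}$ with $\delta$ comparable to $\Lip(f)/j$, for which the bound is immediate. Summing over $i$ then gives
\[
\int^*\haus^{s-t}_{1/j}(f^{-1}(y)\cap A)\,d\haus^t_{\delta_j}(y)\le \Lip(f)^t\frac{\omega_{s-t}\omega_t}{\omega_s}\Bigl(\haus^s(A)+\tfrac1j\Bigr),
\]
using the identity $\frac{\omega_{s-t}}{2^{s-t}}\frac{\omega_t}{2^t}=\frac{\omega_{s-t}\omega_t}{\omega_s}\cdot\frac{\omega_s}{2^s}$.

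The final step is to pass $j\to\infty$. The integrands increase to $\haus^{s-t}(f^{-1}(y)\cap A)$ and the measures $\haus^t_{\delta_j}$ increase to $\haus^t$, so I need a monotone convergence statement for upper integrals with respect to both changing integrands and changing outer measures. This is the main obstacle. Upper integrals lack full monotone convergence for non-measurable functions, and replacing the measure by its $\delta$-approximant is not a one-line limit. I would first try the Federer-style trick: dominate the right-hand side by the Borel functions $g_j=\sum_i(\ldots)\chi_{\overline{f(U_i^j)}}$, and apply Fatou's lemma to $\liminf g_j$, which still dominates the limit integrand. Even so, the interplay with $\haus^t_\delta$ versus $\haus^t$ and the sharp constant is genuinely subtle. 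This is why the statement is quoted from \cite[Theorem 1.1]{Esmayli_Hajlasz_2021}, where these measure-theoretic issues are resolved; in the paper I would simply invoke that reference rather than reprove it.
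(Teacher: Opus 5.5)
Your conclusion matches the paper, which does not prove this lemma but quotes it directly from \cite[Theorem 1.1]{Esmayli_Hajlasz_2021}; invoking that reference is exactly what the paper does. Your Federer-style covering sketch is not a complete proof on its own. The missing steps are that $\haus^t(\overline{f(U)})$ cannot in general be bounded by $\frac{\omega_t}{2^t}(\operatorname{diam}f(U))^t$ in a general metric space, and that passing from $\haus^t_{\delta_j}$ back to $\haus^t$ inside the upper integral needs justification. That is where the real measure-theoretic work lies, and it is handled in the cited reference. You identify this correctly and do not rely on the sketch.
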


We will also use the following basic version of the area formula.

\begin{lemma}[{\cite[Theorem 3.2.5]{Federer}}]\label{lemma: area-formula}
    Let $\psi\colon \setR^k \to \setR^k$ be Lipschitz and $f \colon \setR^k\to \setR$ be Lebesgue integrable. Then
    $$\int_{\setR^k} f(x) |\det(D_x \psi)| \, d\mathcal{L}^k(x) =\int_{\setR^k} \sum_{x \in \psi^{-1}(y)} f(x)  \, d\mathcal{L}^k(y).$$
\end{lemma}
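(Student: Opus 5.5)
The plan is to follow the classical route: first prove the formula for characteristic functions of measurable sets, then extend by linearity and monotone convergence. We may assume $f\ge 0$ by splitting $f=f^+-f^-$. Integrability of the right-hand side will follow from the identity applied to $|f|$, since $|\det D\psi|\le (\Lip\psi)^k$ is bounded.

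By Rademacher's theorem $\psi$ is differentiable $\Leb^k$-a.e., so $J\psi:=|\det D\psi|$ is defined a.e. and is a bounded Borel function. Set
\[
 Z:=\{x: D_x\psi \text{ exists},\ J\psi(x)=0\},
 \qquad
 G:=\{x: D_x\psi \text{ exists},\ J\psi(x)>0\}.
\]
We treat the two pieces separately.

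\textbf{Step 1 (the set $Z$).} We show that $\Leb^k(\psi(Z))=0$, so that $Z$ contributes zero to both sides. For $\epsilon>0$, consider the map $\psi_\epsilon(x):=(\psi(x),\epsilon x)$ into $\setR^{2k}$. Its differential has full rank $k$, and a direct estimate of $k$-dimensional volumes gives $\haus^k(\psi_\epsilon(A))\le C\epsilon\,\Leb^k(A)$ for bounded $A\subseteq Z$. Projecting onto the first factor does not increase $\haus^k$. Alternatively, one can cover $Z\cap B(0,R)$ by small cubes on which $\psi$ is within $\eta r$ of an affine map with degenerate linear part. The image of such a cube lies in a slab of volume $\lesssim \eta r^k$, and summing over the cubes and letting $\eta\to0$ gives the claim.

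\textbf{Step 2 (the set $G$).} This is the main step. Fix $t>1$. We decompose $G$ into countably many Borel sets $E_j$ such that, for some invertible linear maps $L_j$,
\[
 t^{-1}|L_j v|\le |D_x\psi\, v|\le t|L_j v| \quad (x\in E_j),
 \qquad
 t^{-1}|L_j(x-y)|\le|\psi(x)-\psi(y)|\le t|L_j(x-y)| \quad (x,y\in E_j).
\]
To obtain this, choose $L_j$ from a countable dense set of invertible matrices, and use differentiability to select radii on which the first-order approximation holds with error $\le \epsilon|x-y|$. In particular $\psi|_{E_j}$ is injective and bi-Lipschitz with constants close to those of $L_j$. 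It follows that
\[
 t^{-2k}\int_{A\cap E_j} J\psi \,d\Leb^k \le \Leb^k(\psi(A\cap E_j))\le t^{2k}\int_{A\cap E_j}J\psi \,d\Leb^k,
\]
since $\Leb^k(L_j(B))=|\det L_j|\Leb^k(B)$ and $|\det L_j|$ is comparable to $J\psi$ on $E_j$.

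Summing over $j$, with the $E_j$ made disjoint, gives
\[
 \int_{\setR^k}\#\bigl(A\cap G\cap\psi^{-1}(y)\bigr)\,dy=\sum_j\Leb^k(\psi(A\cap E_j))
\]
up to the factors $t^{\pm 2k}$. Measurability of $y\mapsto \#(A\cap\psi^{-1}(y))$ follows from this countable decomposition. Letting $t\downarrow 1$ and combining with Step 1, which also gives $\Leb^k(\setR^k\setminus(Z\cup G))=0$ and hence a null image since $\psi$ is Lipschitz, yields the formula for $f=\mathbf 1_A$.

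\textbf{Step 3 (general $f$).} Extend the formula to nonnegative simple functions by linearity, then to nonnegative measurable $f$ by monotone convergence applied on both sides; the right-hand sum is monotone in $f$.

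The main obstacle is the decomposition in Step 2, and in particular the uniformity needed to pass from pointwise differentiability to bi-Lipschitz control on whole pieces. I would handle it by the standard Federer argument: intersect the sets where $|\psi(y)-\psi(x)-D_x\psi(y-x)|\le\epsilon|y-x|$ for $|y-x|<1/m$ with a countable partition into sets of small diameter. The degenerate set in Step 1 is routine by comparison.
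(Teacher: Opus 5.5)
Your proposal is correct and is essentially the classical proof behind the cited result (Federer 3.2.2--3.2.5, also reproduced in Evans--Gariepy): Rademacher's theorem, a decomposition of $\{J\psi>0\}$ into countably many pieces on which $\psi$ is bi-Lipschitz and uniformly close to a fixed linear map, negligibility of $\psi(\{J\psi=0\})$, and then simple functions plus monotone convergence; the paper gives no proof of its own beyond the citation. Two small points: the $\psi_\epsilon$ route in Step~1 needs the analogue of your Step~2 for injective Lipschitz maps $\setR^k\to\setR^{2k}$ (with $\haus^k$ and the $k$-Jacobian replacing $|\det L_j|$), so your cube/slab argument is the self-contained option; and $\Leb^k(\setR^k\setminus(Z\cup G))=0$ comes from Rademacher's theorem, not from Step~1.
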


\subsection{Metric measure spaces}

A metric measure space is a triplet $(X, \dist, \meas)$ where $(X, \dist)$ is a 
(not necessarily complete) metric space and $\meas$ is an outer measure which
is Radon. In particular, $\meas(K)<\infty$ for every compact $K\subseteq X$. It is said to be $(D)$-doubling if $\meas(2B)\le D\cdot \meas(B)$ for every ball $B=B(x,r)\subseteq X$. Here and in the sequel, given $\lambda \ge 0$ and a ball $B=B(x,r)$, we denote by $\lambda B$ the ball $B(x,\lambda r)$. 

Given $s\ge 0$, $(X,\dist,\meas)$ is said to be upper (resp. lower) Ahlfors $s$-regular if there exists $C >0$ such that $\meas(B(x,r)) \le Cr^s$ (resp. $\meas(B(x,r)) \ge Cr^s$) for every $x\in X$ and $r\ge 0$. It is Ahlfors $s$-regular if it is both upper and lower Ahlfors $s$-regular.

The metric measure space $(X,\dist,\meas)$ is said to be locally doubling if for every $x\in X$ there exist $r_x, D_x>0$ such that $\meas(B(y,2r)) \le D_x\meas(B(y,r))$ for every $y\in B(x,r_x)$ and every $0<r<r_x$. Similarly, one defines the notions of being locally (upper/lower) Ahlfors regular.

The metric measure space $(X,\dist,\meas)$ is said to satisfy a local $1$-Poincaré inequality, according to \cite[Definition 1.1]{BjornBjorn2018}, if for every $x\in X$, there exist $r_x>0$ and
$C_x,\lambda_x\geq 1$ such that
\[
 \dashint_{B(y,r)}|f-f_{B(y,r)}|\,d\meas
 \leq C_xr
 \dashint_{B(y,\lambda_xr)}\lip_a f\,d\meas
\]
for every Lipschitz $f\colon X\to\setR$, every
$y\in B(x,r_x)$ and every $0<r<r_x$.  The radii are chosen so that
$0<\meas(B(y,r))\leq\meas(B(y,\lambda_xr))<\infty$
for all balls under consideration.

Let $(X,\dist,\meas)$ be a metric measure space and let  $s\ge 0$. The upper (resp.lower) $s$-density of $\meas$ at a point $x\in X$ is the quantity
\begin{equation}
    \label{eq:defin_densities}
    \Theta^s(x,\meas):= \limsup_{r\to 0} \frac{\meas(B(x,r))}{\omega_sr^s}\quad \left(\textup{resp. } \Theta_s(x,\meas):= \liminf_{r\to 0} \frac{\meas(B(x,r))}{\omega_sr^s}\right).
\end{equation}

\section{Metric currents}
        We follow the theory of currents in locally compact metric spaces developed by Lang in \cite{Lang}; for closely related but different theories see also \cite{AmbrosioKirchheim00, LangWenger} and Section \ref{subsec:AK_currents}. Let $(X,\dist)$ be a locally compact metric space. We equip $\Lip_c(X)$ with a locally convex vector space topology such that $f_j \to f$ in $\Lip_c(X)$ if and only if there exists a compact $K\subseteq X$ such that $\spt (f_j) \subseteq K$, $\Lip(f_j) \le L$ for every $j \in \setN$ and $f_j \to f$ pointwise. Furthermore, we endow $\Lip_{\loc}(X)$ with a locally convex vector space topology such that $f_j \to f$ in $\Lip_{\loc}(X)$ if and only if $f_j \to f$ pointwise and for each compact $K\subseteq X$ there exists $L_K\geq 0$ such that $\Lip(f_j|_K)\leq L_K$. For $k\geq 0$, we set $\mathcal{D}^k(X) := \Lip_c(X) \times \Lip_{\loc}(X)^k$. We say that a sequence $(f^j,\pi_1^j,\dots,\pi_k^j)$ converges to $(f,\pi_1,\dots,\pi_k)$ in $\mathcal{D}^k(X)$ if $f^j\to f$ in $\Lip_c(X)$ and $\pi_i^j\to \pi_i$ in $\Lip_\loc(X)$ for all $i = 1,\dots,k$. An element $(\pi_1,\ldots,\pi_k)\in \Lip_\loc(X)^k$ will often be denoted simply by $\pi$. Similarly, we will denote by $\pi\colon X \to \setR^k$ the locally Lipschitz map with coordinates $(\pi_1,\ldots,\pi_k)$.

        \begin{definition}[Currents]
        \label{def:current}
            Let $k \in \setN$. A $k$-dimensional metric current is a $(k+1)$-linear function $T\colon \mathcal{D}^k(X)\to \setR$ with the following properties.
            \begin{enumerate}
                \item (Continuity) $T(f^j,\pi^j_1,\dots,\pi_k^j) \to T(f,\pi_1,\dots,\pi_k)$ if $(f^j,\pi_1^j,\dots,\pi_k^j) \to \\(f,\pi_1,\dots,\pi_k)$ in $\mathcal{D}^k(X)$.
                \item (Locality) If $k\geq 1$, then $T(f,\pi_1,\dots,\pi_k) = 0 $ whenever some $\pi_i$ is constant on an open neighbourhood of $\spt (f)$.
            \end{enumerate}
        The vector space of $k$-dimensional metric currents in $X$ is denoted by $\mathcal{D}_k(X)$. As a consequence of the definition we have
        \begin{equation}
            \label{eq:Leibniz_currents}
            T(f, gh, \pi) = T(fg, h, \pi) + T(fh, g, \pi)
        \end{equation}
        for all $(f,\pi) \in \mathcal{D}^{k-1}(X)$ and $g,h\in \Lip_\loc(X)$, see \cite[Proposition 2.4]{Lang}.
        
        \end{definition}

        \begin{definition}[Support]
            Let $k\in \setN$ and $T \in \mathcal{D}_k(X)$. The support $\spt (T)$ of $T$ is the intersection of all closed sets $C\subseteq X$ such that $T(f, \pi) = 0$ for all $(f, \pi) \in \mathcal{D}_k(X)$ with $\spt(f) \cap  C = \emptyset$.
        \end{definition}
        
         By \cite[Lemma 3.2]{Lang}, we have the following equivalent description:
         \begin{equation}
             \label{eq:defin_alternative_support}
             \spt (T) = \{ x \in X \colon \forall \varepsilon>0 \,\, \exists(f,\pi)\in \mathcal{D}^k(X) \textup{ with } \spt (f) \subseteq B(x,\varepsilon) \textup{ and } T(f,\pi)  \neq 0\}.
         \end{equation}

        The following definition of push-forward corresponds to \cite[Definition 3.6]{Lang}.
        
        \begin{definition}[Push-forward]
        \label{defin:pushforward}
            Let $T\in \mathcal{D}_k(X)$ and let $(Y,\dist_Y)$ be a locally compact metric space. Let $A\subseteq X$ be a locally compact set containing $\spt (T)$ and let $\varphi\colon A\to Y$ be a locally Lipschitz map which is proper, i.e. $\varphi^{-1}(K)$ is compact whenever $K\subseteq Y$ is compact. The pushforward $\varphi_\#T$ of $T$ under $\varphi$ is the $k$-current in $Y$ given by
            $$\varphi_\#T(f,\pi_1,\dots,\pi_k) = T(f\circ\varphi,\pi_1\circ \varphi,\dots,\pi_k\circ\varphi)$$
            for all $(f,\pi_1,\ldots,\pi_k)\in \mathcal{D}^k(Y)$. It satisfies
            \begin{equation}
                \label{eq:support_vs_pushforward}
                \spt(\varphi_\#T)\subseteq \varphi(\spt(T)).
            \end{equation}
        \end{definition}

        Next, we recall the definition of the boundary of a current.

        \begin{definition}[Boundary]
            Let $k \geq 1$. The boundary $\partial T$ of $T\in \mathcal{D}_k(X)$ is the metric $(k-1)$-current defined by 
            \begin{equation}
                \label{eq:defin_boundary}
                \partial T(f,\pi_1,\dots,\pi_{k-1}) = T(\sigma,f,\pi_1,\dots,\pi_{k-1})
            \end{equation}
            for all $(f,\pi_1,\dots,\pi_{k-1})\in \mathcal{D}^{k-1}(X)$, where $\sigma\in \Lip_c(X)$ is any function such that $\sigma = 1$ on $\{f \neq 0\} \cap \spt(T)$.
        \end{definition}
        
        The definition does not depend on the choice of $\sigma$, by \cite[Lemma 3.2.(3)]{Lang}. Moreover, by \cite[Eq. 3.9]{Lang},
        \begin{equation}
            \label{eq:boundary_pushforward_commute}
            \partial(\varphi_\#T) = \varphi_\#(\partial T).
        \end{equation} 

        \begin{definition}[Restriction]
        \label{defin:restriction_currents}
            Let $T\in \mathcal{D}_k(X)$ and $(g,\tau) \in \Lip_\loc(X)\times \Lip_\loc(X)^m$, $m \leq k$. The restriction $T \rstr (g,\tau)$ of $T$ to $(g,\tau)$ is the $(k-m)$-current given by
            $$T\rstr(g,\tau)(f,\pi_1,\dots,\pi_{k-m}) = T(fg,\tau_1,\dots,\tau_m,\pi_1,\dots,\pi_{k-m})$$
            for all $(f,\pi_1,\dots,\pi_{k-m})\in \mathcal{D}^{k-m}(X)$.
        \end{definition}
        
        The following identities will be useful  
        \begin{equation}\label{eq:restr-boundary}
            (\partial T) \rstr f = T \rstr(1,f) + \partial(T\rstr f)
        \end{equation}
        and
        \begin{equation}\label{eq:boundary-restr-switch}
            \partial (T \rstr (1,f)) = - (\partial T)\rstr (1,f)
        \end{equation}
        for all $f \in \Lip_\loc(X)$. Both follow from \cite[Lemma 3.5]{Lang}: the first one directly, the second one by applying the locality property to infer that $T\rstr (1,1,f) = 0$.

        The notion of mass will be important in our considerations.

        \begin{definition}[Mass]
        \label{def:mass}
            Let $T \in \mathcal{D}_k(X)$. For every open set $V \subseteq X$, we define the mass $\bM_V (T)$ of $T$ in $V$ as the least number $M \in [0, \infty]$ such that
            $$\sum_{j \in J} T(f^j,\pi_1^j,\dots,\pi_k^j) \leq M$$
            whenever $J$ is a finite set, $(f^j,\pi_1^j,\dots,\pi_k^j)\in \mathcal{D}^k(X)$, $\spt (f^j) \subseteq V$, $\sum_{j \in J} |f^j| \leq 1 $ and $\Lip(\pi^j_i) \leq 1$ for all $i =1,\dots,k$.
        \end{definition}
        Given $T\in \mathcal{D}_k(X)$, we define the mass measure of $T$ by 
        $$\norm{T}(A)= \inf \{ \bM_V(T) \colon V\subseteq X \textup{ is open and } A\subseteq V\}$$
        for all subsets $A \subseteq X$. This defines a Borel measure on $X$; see \cite[Theorem 4.3]{Lang}. The total mass of $T$ is equal to $\bM(T) = \norm{T}(X)$ and in case $\bM(T) < \infty$ we say $T$ has finite mass. The space of $k$-currents in $X$ with finite mass is denoted by $\bM_k(X)$. The total mass $\bM$ defines a norm on $\bM_k$. We say a $k$-current $T\in \mathcal{D}_k(X)$ has locally finite mass if $\norm{T}$ is a locally finite measure. For $k \in \setN$, we write $\bM_{k,\loc}(X)$ for the set of all $k$-currents with locally finite mass. For $T \in \bM_{k,\loc}(X)$ we have 
        \begin{equation}
            \label{eq:support_current_=_support_mass}
            \spt (T) = \spt \norm{T},
        \end{equation}
        the measure $\norm{T}$ is Radon 
        and
        \begin{equation}\label{eq:mass-measure-ineq}
            |T(f,\pi_1,\dots,\pi_k) | \leq \prod_{i=1}^k \Lip\left(\pi_i|_{\spt (f)}\right) \int_X |f| \;d\norm{T}
        \end{equation}
        for all $(f,\pi_1,\dots,\pi_k)\in \mathcal{D}^k(X)$. These statements correspond to \cite[Eq. 4.3 and Theorem 4.3]{Lang}.
        The bound \eqref{eq:mass-measure-ineq} can be promoted to the following localized version.
        \begin{proposition}
        \label{prop:mass-measure-ineq-loc-Lip-constant}
            Let $(X,\dist)$ be a separable, locally compact metric space and let $T\in \bM_{k,\loc}(X)$. Then 
            \begin{equation}
                \label{eq:mass_measure_ineq_loc_Lipschitz_constant}
                |T(f,\pi_1,\dots,\pi_k) | \leq \int_X |f|\prod_{i=1}^k \lip_a \pi_i  \;d\norm{T}
            \end{equation}      for all $(f,\pi_1,\dots,\pi_k)\in \mathcal{D}^k(X)$.
        \end{proposition}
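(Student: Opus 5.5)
The plan is to localize \eqref{eq:mass-measure-ineq} via a partition of unity at small scales and then let the scale go to zero. By multilinearity and locality, we may decompose $f$ into pieces with small support and apply \eqref{eq:mass-measure-ineq} to each piece separately. Fix $(f,\pi)\in\mathcal D^k(X)$ and set $K:=\spt(f)$, which is compact. For $\delta>0$, cover $K$ by finitely many balls $B(x_j,\delta)$ with $x_j\in K$. Choose a Lipschitz partition of unity $\{\phi_j\}$ on a neighbourhood of $K$ subordinate to the balls $B(x_j,2\delta)$, with $\sum_j\phi_j=1$ on $K$. Then $f=\sum_j f\phi_j$, each $f\phi_j\in\Lip_c(X)$, and linearity of $T$ in the first slot gives
\[
|T(f,\pi)|\le\sum_j|T(f\phi_j,\pi)|\le\sum_j\prod_{i=1}^k\Lip\bigl(\pi_i|_{B(x_j,2\delta)}\bigr)\int_X|f|\phi_j\,d\norm{T}.
\]
Here we used \eqref{eq:mass-measure-ineq} together with $\spt(f\phi_j)\subseteq \overline B(x_j,2\delta)$; if needed, we enlarge the balls slightly to pass to open ones.

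The next step is to compare the Lipschitz constant on each small ball with $\lip_a$ at the points of that ball. For $y\in B(x_j,2\delta)$ we have $B(x_j,2\delta)\subseteq B(y,4\delta)$, so
\[
\Lip\bigl(\pi_i|_{B(x_j,2\delta)}\bigr)\le g_i^\delta(y):=\Lip\bigl(\pi_i|_{B(y,4\delta)}\bigr).
\]
Since $\phi_j$ is supported in $B(x_j,2\delta)$, this yields
\[
|T(f,\pi)|\le\int_X|f|\prod_i g_i^\delta\,d\norm{T}.
\]

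Finally, let $\delta\downarrow0$. For each $y$, $g_i^\delta(y)$ decreases monotonically to $\lip_a\pi_i(y)$ by \eqref{eq:defin_loc_Lip_const}. On a neighbourhood of $K$, the functions $g_i^\delta$ are bounded uniformly for $\delta\le\delta_0$, because $\pi_i$ is Lipschitz on a compact neighbourhood of $K$ by local compactness. Moreover $\norm{T}$ is Radon, so $|f|\,\norm{T}$ is a finite measure. Dominated (or monotone) convergence then gives \eqref{eq:mass_measure_ineq_loc_Lipschitz_constant}.

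The main obstacle is the measurability of $y\mapsto g_i^\delta(y)$ and of $\lip_a\pi_i$, which is needed for the integrals to make sense. I would show that $y\mapsto\Lip(\pi_i|_{B(y,r)})$ is lower semicontinuous. Indeed, if $y_n\to y$, then any pair of points in $B(y,r)$ lies in $B(y_n,r)$ for $n$ large, so $\liminf_n\Lip(\pi_i|_{B(y_n,r)})\ge\Lip(\pi_i|_{B(y,r)})$. Hence $g_i^\delta$ is Borel, and $\lip_a\pi_i$, as a monotone limit along $\delta=1/m$, is Borel as well. Separability is used only for the finiteness and regularity of $\norm{T}$, and to allow covers by countably many balls if one prefers to avoid compactness arguments.
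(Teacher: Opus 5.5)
Your argument is correct, and it takes a genuinely different route from the paper's. The paper gives no self-contained proof. It adapts \cite[Lemma 3.1]{FasslerOrponen2019}, which proves the sharper estimate with the upper pointwise Lipschitz constant (a $\limsup$ of $\sup_{\dist(x,z)\le r}|\pi_i(z)-\pi_i(x)|/r$) in place of $\lip_a\pi_i$. It then observes that this quantity is dominated by $\lip_a\pi_i$. The pointwise constant at $x$ only controls oscillations of $\pi_i$ around $x$ itself, not the Lipschitz constant of $\pi_i$ on any neighbourhood of $x$. An argument at that level of generality therefore has to work with Borel pieces: split $\spt(f)$ into small Borel sets on which each $\pi_i$ is almost optimally Lipschitz, replace $\pi_i$ there by a McShane extension, and invoke locality of $T$ for Borel first arguments via the extension of $T$ to $\mathcal{B}^\infty_c(X)$. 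Your proof instead uses exactly the extra strength of $\lip_a$: it does bound $\Lip(\pi_i|_{B(y,4\delta)})$ on whole small balls. So a continuous Lipschitz partition of unity together with the basic estimate \eqref{eq:mass-measure-ineq} is enough. The passage $\delta\downarrow0$ is then a routine dominated-convergence step, using the lower semicontinuity of $y\mapsto\Lip(\pi_i|_{B(y,r)})$, which you verify correctly. You gain a short, elementary argument that needs no Borel-restriction or extension machinery. You give up the pointwise version, which the paper never uses: its only application, the estimate of $\int_X\lip_a\Phi_\rho\,d\norm{T}$ in the proof of Theorem \ref{theo:boundary_finite_mass_hard}, is phrased with $\lip_a$.
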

       
        \begin{proof}
            The proof is a straightforward adaptation of the one of \cite[Lemma 3.1]{FasslerOrponen2019}. The upper pointwise Lipschitz constant used in \cite[Lemma 3.1]{FasslerOrponen2019} is defined using a $\limsup$ of radial oscillations divided by the radius. It is bounded above by the asymptotic Lipschitz constant in \eqref{eq:defin_loc_Lip_const}. Consequently, the estimate obtained by adapting their argument also implies the inequality stated here.
        \end{proof}

        We denote by $\mathcal{B}_c^\infty(X)$ the algebra of all bounded Borel functions $f \colon X \to \setR$ such that $\spt (f)$ is compact. Every $T\in \bM_{k,\loc}(X)$ extends to a $(k+1)$-linear function $T \colon \mathcal{B}_c^\infty(X) \times \Lip_\loc(X)^k$ that satisfies the continuity and locality axioms of Definition \ref{def:current} and such that \eqref{eq:mass-measure-ineq} still holds, see \cite[Theorem 4.4]{Lang}. Using this extension, we can define the restriction $T\rstr u$ of $T$ to locally bounded Borel functions $u\in \mathcal{B}_\loc^\infty(X)$. More precisely, 
        $$T\rstr u (f,\pi_1,\dots,\pi_k)= T(u f,\pi_1,\dots,\pi_k)$$
        for all $(f,\pi_1,\dots,\pi_k) \in \mathcal{D}^k(X)$. If $u = \chi_B$ for some Borel set $B \subseteq X$, then we simply write $T\rstr B = T \rstr u$. In this case, we have $$\norm{T}\rstr B \leq  \norm{T\rstr B} \rstr B. $$
        If $\spt(T)$ is separable, or if $B$ is open or $\sigma$-finite with respect to $\norm{T}$, then the previous inequality becomes an equality and we have 
        \begin{equation}
            \label{eq:mass_of_restriction}
            \norm{T}(B) = \bM(T\rstr B);
        \end{equation} see \cite[Lemma 4.7]{Lang}. Moreover, a direct application of the definitions proves that
        \begin{equation}
            \label{eq:borel-restriction_vs_pushforward}
            (\varphi_\#T)\rstr B = \varphi_\#(T\rstr \varphi^{-1}(B))
        \end{equation}
        for every $T\in \bM_{k,\loc}(X)$ and every locally Lipschitz, proper map $\varphi \colon X \to Y$ with $Y$ locally compact.
        
        \begin{definition}[Normal currents]
        Let $T\in \mathcal{D}_k(X)$.
        A $k$-current $T \in \mathcal{D}_k(X)$ is called normal if $\bM(T) + \bM(\partial T) < \infty$. The space of normal $k$-currents in $X$ is denoted by $\bN_k(X)$. If $\norm{T} + \norm{\partial T}$ is locally finite, then we say that $T$ is a locally normal current. We write $\bN_{k,\loc}(X)$ for the space of locally normal $k$-currents in $X$.
        \end{definition}

        \begin{example}[Main example]
            The standard example of a metric $k$-current with local finite mass is given by integration with respect to a locally integrable function in $\setR^k$. Let $U \subseteq \setR^k$. Every $\theta\in L^1_\loc(U)$ induces a $k$-current $\br{\theta}\in \mathcal{D}_k(U)$ as follows
        \begin{equation}\label{eq:std-current}
            \br{\theta}(f,\pi) = \int_U \theta f \det(D\pi) \, d\haus^k
        \end{equation}
        for all $(f,\pi) \in \mathcal{D}^k(U)$; see \cite[Proposition 2.6]{Lang}. Moreover, $\norm{\br{\theta}} = |\theta| \haus^k$.  
        \end{example}

        \subsection{Restriction and extension of currents}

        Currents with locally finite mass can be restricted to define currents in locally compact subsets.
        
        \begin{proposition}
        \label{prop:restriction_of_current_to_open_subsets}
            Let $(X,\dist)$ be a separable, locally compact metric space, let $T\in \bM_{k,\loc}(X)$ and let $B\subseteq X$ be a locally compact subset. For $(f,\pi_1,\ldots,\pi_k) \in \Lip_c(B) \times (\Lip(B))^k$, set
            $$T|_B(f,\pi_1,\ldots,\pi_k) := T\rstr B(f',\pi'_1,\ldots,\pi_k'),$$
            where $f' \in \Lip_c(X)$ is a Lipschitz extension of $f$ and $\pi_i' \in \Lip(X)$ is a Lipschitz extension of $\pi_i$, for $i=1,\ldots, k$. This formula is independent of the chosen extensions and extends uniquely to a current $T|_B\in\bM_{k,\loc}(B)$. Moreover,
            \begin{itemize}
                \item[(i)] $T|_B \in \bM_{k,\loc}(B)$ and $\norm{T} \rstr B = \norm{T|_B}$,
                \item[(ii)] $\norm{\partial(T|_B)} \le \norm{\partial(T\rstr B)} \rstr B$, 
                \item[(iii)] if $B$ is closed, then $\varphi_\#(T\rstr B) = (\varphi|_B)_\#(T|_B)$ for every locally Lipschitz, proper map $\varphi\colon X \to Y$, where $Y$ is locally compact.
                \item[(iv)] if $B'\subseteq B$ is locally compact, then $T|_{B'} = (T|_B)|_{B'}$.
            \end{itemize} 
        \end{proposition}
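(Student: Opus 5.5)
The plan is to define $T|_B$ first on Lipschitz data via extensions, show that this is well defined, and then extend by density to all of $\mathcal D^k(B)$.

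\emph{Independence of extensions.} Let $f''$, $\pi''$ be other Lipschitz extensions. Then $f'-f''$ vanishes on $B$, so by the Borel extension of Lang \cite[Theorem 4.4]{Lang} and \eqref{eq:mass-measure-ineq} we get $T\rstr B(f'-f'',\pi')=T(\chi_B(f'-f''),\pi')=0$. For the $\pi_i$, I would write $T\rstr B(f',\pi')-T\rstr B(f',\pi'')$ as a telescoping sum in which only one coordinate changes at a time. It then suffices to show
\[
T(\chi_B f',\ldots,\pi_i'-\pi_i'',\ldots)=0 .
\]
This is where Proposition~\ref{prop:mass-measure-ineq-loc-Lip-constant} enters, applied in its Borel-function form. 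It is not enough, since $\lip_a(\pi_i'-\pi_i'')$ need not vanish on $B$. The cleaner route is the chain-rule/locality argument of Lang: the mass measure of $T$ is concentrated on a $\sigma$-compact set. For a function $g$ vanishing on a Borel set $B$, one has $T(\chi_B f,g,\ldots)=0$. This is the strong locality of currents with locally finite mass, \cite[Theorem 4.4/Lemma 4.7]{Lang}, and is proved by approximating $g$ by $\phi_\epsilon\circ g$ with $\phi_\epsilon$ constant on $[-\epsilon,\epsilon]$ and $1$-Lipschitz. By ordinary locality, $T(\chi_{B} f, \phi_\epsilon\circ g,\ldots)$ is small, controlled by $\norm{T}(\{0<|g|<\epsilon\}\cap\ldots)$, which tends to $0$; continuity in the second slot then finishes. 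I expect this strong-locality step to be the main technical point.

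\emph{Extension and mass bound.} For $f\in\Lip_c(B)$ and $\pi_i\in\Lip_\loc(B)$, I would localize. Since $\spt f$ is compact in $B$, the $\pi_i$ are Lipschitz on a neighbourhood of it in $B$. Multiplying by cutoffs and applying McShane extension, \eqref{eq:mass-measure-ineq} gives
\[
|T|_B(f,\pi)|\le\prod_i\Lip(\pi_i|_{\spt f})\int|f|\,d\norm{T}\rstr B .
\]
Continuity follows from this bound, from dominated convergence, and from the fact that equi-Lipschitz, pointwise convergent sequences on a compact subset of $B$ admit equi-Lipschitz extensions converging pointwise; the latter uses McShane extensions and Arzelà--Ascoli on the compact set. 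Locality on $B$ reduces to strong locality on $X$. This yields $T|_B\in\bM_{k,\loc}(B)$ with $\norm{T|_B}\le\norm{T}\rstr B$. The reverse inequality comes from testing the mass definition: \eqref{eq:mass_of_restriction} and the separability of $X$ show that $\norm{T}(B\cap V)$ is approximated by sums $\sum_j T(\chi_B f^j,\pi^j)$ with $1$-Lipschitz $\pi^j$. Restricting the data to $B$ gives admissible competitors, after replacing $\chi_B f^j$ and approximating $f^j|_B$ within $\Lip_c(B)$ by cutoff near the relative boundary. This proves (i).

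\emph{Items (ii)--(iv).} For (ii), we have $\partial(T|_B)(f,\pi)=T|_B(\sigma,f,\pi)=T\rstr B(\sigma',f',\pi')=\partial(T\rstr B)(f',\pi')$ whenever $\sigma=1$ near $\spt f\cap B$, so the mass bound follows by taking suprema. For (iii), $B$ closed makes $\varphi|_B$ proper, and the identity is immediate from the definitions because $f\circ\varphi|_B$ extends to $f\circ\varphi$. For (iv), $(T|_B)|_{B'}$ uses extensions from $B'$ to $B$ and then to $X$, and by independence of extensions this is the same as extending directly, with $\chi_{B'}\chi_B=\chi_{B'}$.
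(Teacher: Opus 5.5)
Your proposal is correct and follows essentially the paper's route. Independence of the extensions comes from the mass inequality in the first slot and from Lang's strong locality (his Theorem 4.4(iii)) in the remaining slots; continuity comes from equi-Lipschitz extensions; and (ii)--(iv) follow by unpacking the definitions. The extra details you supply only make explicit what the paper leaves to the references: uniform convergence of McShane extensions from a compact set in the continuity step, and a direct proof of $\norm{T}\rstr B\le\norm{T|_B}$ where the paper cites Lang's Lemma 4.7.
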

        \begin{proof}
            If $f',\pi'_1,\ldots,\pi_k'$ and $f'',\pi''_1,\ldots,\pi_k''$ are two Lipschitz extensions as in the statement, then we can use the multilinearity of $T\rstr B$ (\cite[Theorem 4.4.(i)]{Lang}) to bound
            \begin{equation}
                    |T\rstr B(f',\pi'_1,\ldots,\pi_k') - T\rstr B(f'',\pi''_1,\ldots, \pi_k'')| 
            \end{equation}
            by
            \begin{equation}
                \begin{aligned}
                    |T\rstr B(f' - f'',\pi'_1,\ldots,\pi_k')|  + \sum_{i=1}^k|T\rstr B(f'',\pi_1'',\ldots,\pi'_i - \pi''_i,\ldots \pi_k')|.
                \end{aligned}
            \end{equation}
            For the first term, we use \eqref{eq:mass-measure-ineq} to get
            $$|T\rstr B(f' - f'',\pi'_1,\ldots,\pi_k')| \le \int_B|f'-f''|\prod_{i=1}^k \Lip(\pi_i')\,d\norm{T} = 0$$
            since $f'=f''$ on $B$. For each of the terms in the sum we use \cite[Theorem 4.4.(iii)]{Lang} to get
            $$|T\rstr B(f'',\pi_1'',\ldots,\pi'_i - \pi''_i,\ldots \pi_k')| = T(\mathbb{1}_Bf'',\pi_1'',\ldots,\pi'_i - \pi''_i,\ldots \pi_k') = 0$$
            because $\pi_i' = \pi_i''$ on $\spt(\mathbb{1}_Bf'') = \spt(f)$ which is compact in $B$.
            Multilinearity and locality of $T|_B$ follow from the corresponding properties of $T$ and its extension to Borel functions in the first argument. To verify continuity on $\Lip_c(B)^{k+1}$, choose Lipschitz extensions with uniformly bounded Lipschitz constants and multiply the extensions in the first argument by a fixed compactly supported Lipschitz cutoff equal to one near the common compact support of the original first arguments. Continuity then follows from that of $T\rstr B$. By \cite[Lemma 2.2]{Lang}, the resulting functional extends uniquely to a current on $\mathcal D^k(B)$.
            
            The facts that $T|_B$ has locally finite mass and that $\norm{T}\rstr B = \norm{T|_B}$ follow from \cite[Lemma 4.7]{Lang}. The inequality $\norm{\partial(T|_B)} \le \norm{\partial(T\rstr B)} \rstr B$ is a direct consequence of the definition of mass. The equality in (iii) follows directly by unpacking the definitions. Notice that if $B$ is closed, then $\varphi|_B$ is a proper map. 
            Finally, (iv) follows directly from the definitions.
        \end{proof}

        The restriction of a current to its support has full support.
        \begin{lemma}
            \label{lemma:restriction_to_support}
            Let $(X,\dist)$ be a separable, locally compact metric space and let $T\in \bM_{k,\loc}(X)$. Then, $\spt(T|_{\spt(T)}) = \spt(T)$.
        \end{lemma}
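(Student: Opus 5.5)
Write $S:=\spt(T)$, which is closed in $X$ and hence locally compact, so Proposition~\ref{prop:restriction_of_current_to_open_subsets} applies with $B=S$. The plan is to compare mass measures and use \eqref{eq:support_current_=_support_mass} twice. By item (i) of that proposition, $T|_S\in\bM_{k,\loc}(S)$ and $\norm{T|_S}=\norm{T}\rstr S$, viewed as a measure on $S$. Moreover, $\norm{T}(X\setminus S)=0$, because $\spt\norm{T}=S$ and $\norm{T}$ is Radon on a separable space, so the open set $X\setminus S$ is a countable union of $\norm{T}$-null balls. Consequently $\norm{T|_S}$ coincides with $\norm{T}$ restricted to Borel subsets of $S$.

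It remains to compute supports. By \eqref{eq:support_current_=_support_mass}, applied in the locally compact space $S$, we have $\spt(T|_S)=\spt\norm{T|_S}$, where the support is taken in $S$ with the relative topology. For the inclusion $\spt(T|_S)\subseteq S$ there is nothing to prove. For the converse, fix $x\in S$ and $\varepsilon>0$. The relatively open ball $B(x,\varepsilon)\cap S$ satisfies
\[
\norm{T|_S}(B(x,\varepsilon)\cap S)=\norm{T}(B(x,\varepsilon)\cap S)=\norm{T}(B(x,\varepsilon))>0.
\]
The second equality holds because $\norm{T}(X\setminus S)=0$, and the positivity holds because $x\in\spt\norm{T}$. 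Hence $x\in\spt\norm{T|_S}$.

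The only point requiring care is the identification $\norm{T|_S}=\norm{T}\rstr S$, together with the fact that the support in \eqref{eq:support_current_=_support_mass} is intrinsic to the ambient space in which the current lives. Both are provided by item (i) of Proposition~\ref{prop:restriction_of_current_to_open_subsets} and by Lang's theorem applied in $S$. Alternatively, one could argue directly from \eqref{eq:defin_alternative_support}: from $(f,\pi)$ with $\spt f\subseteq B(x,\varepsilon)$ and $T(f,\pi)\neq0$, restricting $f,\pi$ to $S$ and using $T\rstr S=T$ (again because $\norm{T}(X\setminus S)=0$) gives $T|_S(f|_S,\pi|_S)\neq0$. Here $f|_S$ is supported in $B(x,\varepsilon)\cap S$, and $\pi|_S$ is only locally Lipschitz, which is handled by first multiplying by a cutoff and using locality.
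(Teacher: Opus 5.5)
Your proposal is correct and follows essentially the same route as the paper: identify $\norm{T|_S}=\norm{T}\rstr S$ via Proposition~\ref{prop:restriction_of_current_to_open_subsets}(i), note that $\norm{T}(V\cap S)=\norm{T}(V)>0$ for every open $V\ni x$, and conclude with \eqref{eq:support_current_=_support_mass} applied in $S$. You also make explicit (via Lindel\"of) the fact $\norm{T}(X\setminus S)=0$, which the paper uses without comment.
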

        \begin{proof}
            The general inequality $\spt(T|_B) \subseteq \spt(T) \cap B$ follows from the definition. For the other direction, let $x\in \spt(T)$ and let $U$ be an open neighbourhood of $x$ in $\spt(T)$. By definition, there exists an open subset $V \subseteq X$ such that $U = V\cap \spt(T)$. By \eqref{eq:support_current_=_support_mass}, $\norm{T}(U) = \norm{T}(V) > 0$. Therefore, by Proposition \ref{prop:restriction_of_current_to_open_subsets}.(i),
            $$\norm{T|_{\spt(T)}}(U) = \norm{T}\rstr \spt(T) (U) = \norm{T}(U) > 0.$$
            This shows, again by \eqref{eq:support_current_=_support_mass}, that $x\in \spt(T|_{\spt(T)})$ and concludes the proof.
        \end{proof}

        Cycles restricted to open sets remain cycles. For an open subset $B\subseteq X$, the restriction
$R|_B$ of an arbitrary current $R$ is defined locally
by extending the compactly supported first test
function by zero and extending the remaining test
functions near its support. Locality makes this
definition independent of the extensions.
For currents of locally finite mass, it agrees with
the restriction defined above.

        \begin{lemma}
            \label{cor:cycles_open_sets}
            Let $(X,\dist)$ be a separable, locally compact metric space, $T\in \bM_{k,\loc}(X)$ and $B \subseteq X$ be locally compact. Then
            \begin{equation}
                \label{eq:support_boundary_restriction}
                \spt(\partial(T|_B)) \subseteq (\spt(\partial T)\cup \partial B) \cap B.
            \end{equation}
            If $B$ is open then $(\partial T)|_B = \partial(T|_B)$.
        \end{lemma}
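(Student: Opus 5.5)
The support inclusion will come from the alternative description \eqref{eq:defin_alternative_support} of the support together with locality; the identity for open $B$ will come from unpacking the definitions of boundary and restriction.

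\textbf{Support inclusion.} We already know that $\spt(\partial(T|_B))\subseteq \spt(T|_B)\subseteq B$, so it suffices to show the following. If $x\in B$ has a neighbourhood that avoids $\spt(\partial T)$ and is contained in the interior of $B$ relative to $X$, then $x\notin\spt(\partial(T|_B))$.

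Note that $x\notin\partial B$ means exactly that $x$ is a topological interior point of $B$. Pick $\varepsilon>0$ so small that $U:=B(x,\varepsilon)\subseteq B$, $U$ is disjoint from $\spt(\partial T)$, and $\overline{B}(x,\varepsilon)$ is compact. Take $(f,\pi)\in\mathcal D^{k-1}(B)$ with $\spt f\subseteq B(x,\varepsilon/2)$. Choose a cutoff $\sigma\in\Lip_c(B)$ with $\sigma=1$ on $B(x,\varepsilon/2)$ and $\spt\sigma\subseteq U$. Then, by definition,
\[
\partial(T|_B)(f,\pi)=T|_B(\sigma,f,\pi)=T\rstr B(\sigma',f',\pi').
\]
Here we may take $\sigma'$ to be the extension of $\sigma$ by zero, which is Lipschitz because $\spt\sigma$ is a compact subset of the open set $U$. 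Since $\sigma'$ is supported in $U\subseteq B$, we have $\chi_B\sigma'=\sigma'$. The right-hand side therefore equals
\[
T(\sigma',f',\pi')=\partial T(f',\pi'),
\]
using that $\sigma'=1$ on $\{f'\neq0\}\cap\spt T$ after first replacing $f'$ by $f'\sigma'$. This replacement does not change the value, because $f=f\sigma$ on $B$; this is justified by the independence-of-extension statement of Proposition \ref{prop:restriction_of_current_to_open_subsets}. Finally, $\partial T(f'\sigma',\pi')=0$ because $\spt(f'\sigma')\subseteq U$ is disjoint from $\spt\partial T$. By \eqref{eq:defin_alternative_support}, $x\notin\spt(\partial(T|_B))$.

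\textbf{The open case.} If $B$ is open, then $\partial B\cap B=\emptyset$, and the computation above works for every test tuple. Given $(f,\pi)\in\mathcal D^{k-1}(B)$, the set $\spt f$ is compact in $B$, so we can choose $\sigma\in\Lip_c(B)$ with $\sigma=1$ near $\spt f$ and compact support in $B$. Extending $\sigma$ and $f$ by zero gives Lipschitz functions on $X$. The same chain of equalities then yields
\[
\partial(T|_B)(f,\pi)=\partial T(f',\pi'),
\]
and the right-hand side is exactly $(\partial T)|_B(f,\pi)$ under the locality-based restriction described before the lemma. This restriction makes sense for $\partial T$ even though $\partial T$ need not have locally finite mass.

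\textbf{Main obstacle.} The delicate point is that $T\rstr B$ is evaluated on a Borel first argument. We need $T\rstr B(\sigma',\dots)=T(\sigma',\dots)$, and this requires $\chi_B\sigma'=\sigma'$ pointwise. That holds only because $\sigma$ is supported inside the interior of $B$, which is precisely why $\partial B$ must appear in the inclusion.

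One might also worry about the other arguments: $f'$ and $\pi'$ are locally Lipschitz extensions, and we need them to be admissible. This is handled by the locality axiom, which lets us modify them away from $\spt\sigma'$ without changing the value.
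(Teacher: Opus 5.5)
Your proposal is correct and follows the paper's proof. Both use a cutoff $\sigma'$, extended by zero and supported in $\textup{int}(B)$, together with the chain $\partial(T|_B)(f,\pi)=T\rstr B(\sigma',f',\pi')=T(\sigma',f',\pi')=\partial T(f',\pi')$, for the support inclusion and for the open case. The only difference is cosmetic: you argue by contrapositive, showing that $\partial(T|_B)$ vanishes near points of $\textup{int}(B)\setminus\spt(\partial T)$, whereas the paper starts from $x\in\spt(\partial(T|_B))\cap\textup{int}(B)$ and carries a non-vanishing test pair over to $\partial T$.
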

        \begin{proof}
            Let $x\in \spt(\partial(T|_B))$. We need to prove that if $x\notin \partial B \cap B$, then $x\in \spt(\partial T)$. By definition, $x \in \textup{int}(B)$. Let $\varepsilon > 0$ be such that $B(x,\varepsilon) \subseteq B$. By \eqref{eq:defin_alternative_support}, we can find $(f,\pi)\in \mathcal{D}^{k-1}(B)$ with $\spt(f)\subseteq B(x,\varepsilon/2)$ and $\partial(T|_B)(f,\pi) \ne 0$. By multiplying with a cutoff function relative to $\spt(f)$, we can assume that $\pi \in (\Lip(X))^{k-1}$. Let $\sigma \in \Lip_c(B)$ be such that $\sigma \equiv 1$ on $B(x,\varepsilon/2)$. Let $f', \sigma'$ be the zero extension of $f,\sigma$ to $X$: they are Lipschitz because $\spt(f)$, $\spt(\sigma) \subseteq \textup{int}(B)$. Let $\pi'$ be an arbitrary Lipschitz extension of $\pi$. Then, applying twice \eqref{eq:defin_boundary} we get
            $$0\neq \partial(T|_B)(f,\pi) = T|_B(\sigma,f,\pi) = T(\mathbb{1}_B\sigma',f',\pi') = T(\sigma',f',\pi') = \partial T(f',\pi').$$
            This shows that $x\in \spt(\partial T)$, concluding the proof of the first part of the statement.
        
            Suppose now that $B$ is open. Let $(f,\pi) \in \Lip_c(B) \times (\Lip(B))^{k-1}$ and let $\pi'$ be an arbitrary Lipschitz extension of $\pi$. Let $\sigma \in \Lip_c(B)$ be such that $\sigma \equiv 1$ on $\spt(f)$. Since $B$ is open, extending $f$ and $\sigma$ to $0$ outside of $B$ gives well-defined $f'$, $\sigma' \in \Lip_c(X)$. Then
            \begin{equation}
                \begin{aligned}
                    \partial(T|_B)(f,\pi) \stackrel{\eqref{eq:defin_boundary}}{=} &T|_B(\sigma,f,\pi) = T\rstr B(\sigma',f',\pi') = T(\sigma',f',\pi') \\
                    \stackrel{\eqref{eq:defin_boundary}}{=} &\partial T(f',\pi')= (\partial T)|_B(f,\pi).
                \end{aligned}
            \end{equation}
            
        \end{proof}

        The restriction commutes with the push-forward in the following sense.

        \begin{lemma}
        \label{lemma:restriction_vs_pushforward}
            Let $(X,\dist)$, $(Y,\dist_Y)$ be separable, locally compact metric spaces, let $\varphi \colon X \to Y$ be a locally Lipschitz, proper map, let $B\subseteq Y$ be a locally compact subset and let $T \in \bM_{k,\loc}(X)$. Then, \begin{equation}\label{eq:restriction_vs_pushforward}
                (\varphi_\#T)|_B = (\varphi|_{\varphi^{-1}(B)})_\#(T|_{\varphi^{-1}(B)}).
            \end{equation}
        \end{lemma}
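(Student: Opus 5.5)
Since $\varphi$ is proper and locally Lipschitz, $\varphi_\#T$ has locally finite mass, so both sides are defined. The set $A:=\varphi^{-1}(B)$ is locally compact, because it is the preimage of a locally compact set under a proper continuous map. The map $\varphi|_A\colon A\to B$ is proper and locally Lipschitz. Hence the right-hand side is a well-defined current in $\bM_{k,\loc}(B)$.

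By Proposition~\ref{prop:restriction_of_current_to_open_subsets} and the uniqueness of the extension from $\Lip_c(B)\times\Lip(B)^k$ (\cite[Lemma 2.2]{Lang}), it suffices to check equality on tuples $(f,\pi_1,\dots,\pi_k)\in\Lip_c(B)\times\Lip(B)^k$.

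Fix such a tuple. Choose Lipschitz extensions $f'\in\Lip_c(Y)$ of $f$ and $\pi_i'\in\Lip(Y)$ of $\pi_i$. By definition of the restriction and by \eqref{eq:borel-restriction_vs_pushforward},
\[
(\varphi_\#T)|_B(f,\pi)=(\varphi_\#T)\rstr B(f',\pi')=\varphi_\#(T\rstr A)(f',\pi')=T\rstr A(f'\circ\varphi,\pi'\circ\varphi).
\]
For the right-hand side we compute
\[
(\varphi|_A)_\#(T|_A)(f,\pi)=T|_A(f\circ\varphi|_A,\pi\circ\varphi|_A).
\]
The function $f\circ\varphi|_A$ has compact support in $A$, because $\varphi$ is proper. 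It is Lipschitz on compact sets and locally Lipschitz, since $\varphi$ is only locally Lipschitz. The functions $f'\circ\varphi$ and $\pi'\circ\varphi$ extend $f\circ\varphi|_A$ and $\pi\circ\varphi|_A$ respectively. If the restriction formula in Proposition~\ref{prop:restriction_of_current_to_open_subsets} is used with these extensions, the two expressions coincide.

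The main obstacle is that the restriction formula was stated only for globally Lipschitz test functions on $B$, and for $f'\in\Lip_c(X)$. The composed functions are merely locally Lipschitz, and $f'\circ\varphi$ has compact support because $\varphi$ is proper.

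To handle this, I would check that the independence-of-extension argument in the proof of Proposition~\ref{prop:restriction_of_current_to_open_subsets} still applies. That argument uses only two facts. First, differences in the first slot vanish $\|T\|$-a.e. on $A$, which follows from \eqref{eq:mass-measure-ineq}. Second, differences in the $\pi$ slots vanish on the support of $\mathbb 1_A f'$, which follows from \cite[Theorem 4.4.(iii)]{Lang}. Both facts hold for locally Lipschitz data.

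Alternatively, I would multiply all functions by a compactly supported Lipschitz cutoff $\sigma$ with $\sigma\equiv1$ near $\varphi^{-1}(\spt f')$. By locality this changes nothing, and afterwards every function is globally Lipschitz. The formula then holds on the defining class of test tuples, and the uniqueness of the continuous extension gives the identity on all of $\mathcal D^k(B)$.
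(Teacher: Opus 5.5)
Your proof is correct and follows the paper's argument. Both unpack the definitions, use $\mathbb{1}_B\circ\varphi=\mathbb{1}_{\varphi^{-1}(B)}$ (your appeal to \eqref{eq:borel-restriction_vs_pushforward}), and treat $f'\circ\varphi$, $\pi'\circ\varphi$ as admissible extensions of $f\circ\varphi|_{\varphi^{-1}(B)}$, $\pi\circ\varphi|_{\varphi^{-1}(B)}$ in the restriction formula. Your extra check that the merely locally Lipschitz $\pi$-slots cause no trouble (via locality or a cutoff) fills in a point the paper leaves implicit. The first slot needs no such care, since a locally Lipschitz function with compact support on a locally compact space is already globally Lipschitz.
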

        \begin{proof}
            Notice that $\varphi^{-1}(B)$ is locally compact since $B$ is and $\varphi$ is proper, so the two currents on $B$ are well-defined. Moreover, $\varphi|_{\varphi^{-1}(B)}$ is a proper map. For every $(f,\pi) \in \Lip_c(B) \times (\Lip(B))^k$ we have
            \begin{equation}
                \begin{aligned}
                    (\varphi_\#T)|_B(f,\pi) = T((\mathbb{1}_Bf') \circ \varphi, \pi'\circ \varphi) &= T(\mathbb{1}_{\varphi^{-1}(B)} f'\circ \varphi, \pi'\circ \varphi) \\
                    &= T(\mathbb{1}_{\varphi^{-1}(B)} f\circ \varphi|_{\varphi^{-1}(B)}, \pi\circ \varphi|_{\varphi^{-1}(B)})\\
                    &= (\varphi|_{\varphi^{-1}(B)})_\#(T|_{\varphi^{-1}(B)})(f,\pi),
                \end{aligned}
            \end{equation}
            where $f' \in \Lip_c(Y)$, $\pi' \in \Lip(Y)$ are Lipschitz extension of $f,\pi$ to $Y$.
        \end{proof}

        Currents defined on locally compact subsets $B$ can be extended to the whole space $X$. For that, we recall that every locally compact, separable metric space $B$ admits an exhausting sequence of compact sets $K_j \subseteq B$ such that $B=\bigcup_{j} K_j$ and $K_j \subseteq \textup{int}_B(K_{j+1}) =: W_j$. For every $j$, we consider $\varepsilon_j := \dist(K_j, B \setminus W_j) > 0$, with $\dist(K_j,\emptyset) = +\infty$. Consider the sequence of Lipschitz cutoff functions 
        \begin{equation}
            \label{eq:defin_psi_j}
            \psi_j\colon B \to \setR,\quad \psi_j(x) := \max\left(0, 1 - \frac{2}{\varepsilon_j} \dist(x, K_j)\right).
        \end{equation}
        Hence, $\psi_j\in \Lip_c(B)$, $\psi_j\equiv 1$ on $K_j$ and $\spt(\psi_j) \subseteq W_j$.

        \begin{lemma}\label{lemma:extension-of-currents}
            Let $(X,\dist)$ be a separable, locally compact metric space and $B \subseteq X$ be locally compact. Let $T \in \bM_{k,\loc}(B)$ be such that $\norm{T}(B\cap K) < \infty$ for every compact $K\subseteq X$. Then, the formula
            \begin{equation}
                \label{eq:defin_T^X}
                T^X(f,\pi) := \lim_{j\to +\infty} T(\psi_jf|_B,\pi|_B), \quad (f,\pi) \in \mathcal{D}^k(X),
            \end{equation}
            where $\psi_j$ are as in \eqref{eq:defin_psi_j}, defines a current $T^X \in \bM_{k,\loc}(X)$. It does not depend on the choice of the exhausting sequence of compact sets. Moreover,  $\spt(\partial (T^X)) \subseteq \spt(\partial T) \cup (X\setminus B)$.
        \end{lemma}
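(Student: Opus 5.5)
The proof has three parts: show that the limit defining $T^X$ exists and is independent of choices, show that $T^X$ is a current of locally finite mass, and locate the support of its boundary.

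\textbf{Existence and independence.} Fix $(f,\pi)\in\mathcal D^k(X)$. Since $\spt(f)$ is compact in $X$ and $\norm{T}(B\cap\spt f)<\infty$, we can use the Borel extension of $T$ (\cite[Theorem 4.4]{Lang}). By \eqref{eq:mass-measure-ineq} on $B$,
\[
|T(\psi_jf|_B,\pi|_B)-T(\psi_lf|_B,\pi|_B)|\le \prod_i\Lip(\pi_i|_{\spt f})\int_{B\cap \spt f}|\psi_j-\psi_l|\,|f|\,d\norm{T}.
\]
Since $\psi_j\to 1$ pointwise on $B$, $0\le\psi_j\le1$, and $\norm{T}(B\cap\spt f)<\infty$, dominated convergence makes the sequence Cauchy. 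Moreover
\[
\lim_j T(\psi_jf|_B,\pi|_B)=T(\mathbb 1_B f|_B,\pi|_B),
\]
where the right-hand side is understood via the Borel extension and continuity in the first slot under bounded pointwise convergence with uniformly bounded support. This point needs care, because $\spt(f|_B)$ need not be compact in $B$. The identification with $T(\mathbb 1_B f|_B,\pi|_B)$ gives independence of the exhaustion directly. Alternatively, two exhaustions can be compared by the same integral estimate with $|\psi_j-\psi'_l|$.

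\textbf{$T^X$ is a current.} Multilinearity is inherited from $T$. The key bound is
\[
|T^X(f,\pi)|\le \prod_i\Lip(\pi_i)\int_{B}|f|\,d\norm{T},
\]
which gives finite local mass with $\norm{T^X}\le\norm{T}$ extended by zero. Locality holds because if $\pi_i$ is constant near $\spt f$ in $X$, then $\pi_i|_B$ is constant near $\spt(\psi_jf|_B)$, so each term vanishes. Continuity is the main obstacle, since the $\psi_j$ are not uniformly Lipschitz in $j$. Suppose $(f^m,\pi^m)\to(f,\pi)$ in $\mathcal D^k(X)$ with supports in a compact $K$. For each fixed $j$ the map $(f,\pi)\mapsto T(\psi_jf|_B,\pi|_B)$ is continuous, because $\psi_jf^m|_B$ is supported in the compact set $\spt\psi_j$ with uniformly bounded Lipschitz constants. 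The tail is controlled uniformly in $m$ by
\[
\sup_m\Lip(\pi^m|_K)^k\sup_m\|f^m\|_\infty\int_{B\cap K}(1-\psi_j)\,d\norm{T}\to0.
\]
A standard $\varepsilon/3$ argument then gives continuity. Alternatively, one could invoke \cite[Lemma 2.2]{Lang}-type criteria, or the characterization of currents of locally finite mass.

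\textbf{Support of the boundary.} Let $x\in B\setminus\spt(\partial T)$. Note that $x\notin X\setminus B$, but $x$ might not be an interior point of $B$ in $X$; the boundary must still vanish near $x$ unless $x\in X\setminus B$. Choose $r>0$ with $\overline B(x,r)\cap B$ compact and disjoint from $\spt\partial T$. Test with $(f,\pi)\in\mathcal D^{k-1}(X)$ with $\spt f\subseteq B(x,r)$, and choose $\sigma\in\Lip_c(X)$ equal to $1$ near $\spt f$. For $j$ large we have $\psi_j=1$ on $K_j\supseteq B\cap\spt\sigma$, so $\psi_j\sigma|_B=\psi_j$ there. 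By \eqref{eq:Leibniz_currents},
\[
T(\psi_j\sigma|_B,f|_B,\pi|_B)=T(\psi_j,f|_B,\pi|_B)=\partial T(\psi_jf|_B,\pi|_B)-T(f|_B\psi_j\text{-terms}).
\]
Using the Leibniz rule $T(\psi_j,f,\pi)=\partial T(\psi_jf,\pi)-T(f,\psi_j,\pi)$ and locality, the term $T(f,\psi_j,\pi)$ vanishes because $\psi_j$ is constant near $\spt f\cap B$. The term $\partial T(\psi_jf|_B,\pi|_B)$ vanishes since its support avoids $\spt\partial T$. Hence $\partial T^X(f,\pi)=0$, so $x\notin\spt\partial T^X$.

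The same computation handles the case $\spt(f)\cap B$ compact in $B$; for a general $f$ supported near such an $x$, $\spt f\cap B$ is compact because $\overline B(x,r)\cap B$ is, and this is where the choice of $r$ enters. This shows $\spt(\partial T^X)\subseteq\spt(\partial T)\cup(X\setminus B)$.
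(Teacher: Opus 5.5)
Your proposal is correct and follows the same route as the paper: a Cauchy estimate from the mass inequality, comparison of two exhaustions, the local mass bound by $\norm{T}(V\cap B)$, and a local check of the boundary support. It also spells out two points the paper leaves implicit, namely the uniform tail estimate that gives continuity of $T^X$, and the choice of $r$ that makes $\spt(f)\cap B$ compact in $B$.

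One small fix is needed in the boundary step. The identity $\psi_j\sigma|_B=\psi_j$ is false pointwise. You can either invoke locality (the two first arguments agree near $\spt(f|_B)$), or choose $\spt(\sigma)\subseteq B(x,r)$. In the latter case $\psi_j\sigma|_B=\sigma|_B$ for large $j$, and directly
$\partial T^X(f,\pi)=T(\sigma|_B,f|_B,\pi|_B)=\partial T(f|_B,\pi|_B)=0$.
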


        \begin{proof}
            Notice that $\psi_j \equiv 1$ on $K_j$, its support $\spt(\psi_j) \subseteq \overline{W_j} \subseteq B$ is compact, i.e. $\psi_j \in \Lip_{c}(B)$, and $\psi_j \to 1$ pointwise on $B$.
            Let $(f,\pi) = (f, \pi_1, \dots, \pi_k) \in \mathcal{D}^k(X)$. Using \eqref{eq:mass-measure-ineq} for $i \le j$ we have that
            \begin{align*}
                |T(\psi_if|_B,\pi|_B)-T(\psi_jf|_B,\pi|_B) | &\leq \prod_{l=1}^k\Lip(\pi_l|_{\spt(f)})  \int_B  |f (\psi_i -\psi_j)| \, d\norm{T} 
                \\
                &\leq  \norm{f}_\infty  \prod_{l=1}^k\Lip(\pi_l|_{\spt(f)}) \norm{T}(\spt(f) \cap(B \setminus K_i)),
            \end{align*}
            which converges to $0$ as $i \to \infty$, since $\norm{T}(\spt(f) \cap B) <\infty$ by assumption. This shows that the limit in \eqref{eq:defin_T^X} exists. Therefore, $T^X$ defines a current on $X$. A similar proof shows that $T^X$ does not depend on the particular choice of the cut-off functions and of the exhausting sequence.    
            Next, we show that $T^X$ has locally finite mass. Let $V \subseteq X$ be open with compact closure, $J$ be a finite set, $(f^j,\pi_1^j,\dots,\pi_k^j)\in \mathcal{D}^k(X)$, $\spt (f^j) \subseteq V$, $\sum_{j \in J} |f^j| \leq 1 $ and $\Lip(\pi^j_i) \leq 1$ for all $i =1,\dots,k$. Using \eqref{eq:mass-measure-ineq} again we get
            $$\sum_{j \in J} T^X(f^j,\pi_1^j,\dots,\pi_k^j) \leq  \lim_{i \to \infty}  \int_B \sum_{j\in J} |f^j\psi_i|\,d\norm{T} \leq  \norm{T}( \overline{V} \cap B) < \infty.$$
            Therefore, $T^X\in \bM_{k,\loc}(X)$. 
            The inclusion $\spt(\partial (T^X)) \subseteq \spt(\partial T ) \cup (X\setminus B)$ is a direct consequence of the definition of $T^X$ and the support of a current. 
            This completes the proof. 
        \end{proof}

        The extension and the restriction interact in the expected way.

        \begin{corollary}
        \label{cor:extension_vs_restriction}
            Let $(X,\dist)$ be a separable, locally compact metric space and let $B\subseteq X$ be locally compact. Then:
            \begin{itemize}
                \item[(i)] $T^X|_B = T$ for every $T\in \bM_{k,\loc}(B)$ such that $\norm{T}(B\cap K)<\infty$ for every compact $K\subseteq X$, so $\|T^X\|\rstr B = \norm{T}$;
                \item[(ii)] $(T|_B)^X = T\rstr B$ for every $T \in \bM_{k,\loc}(X)$.
            \end{itemize}
        \end{corollary}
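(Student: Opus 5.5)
Both items reduce to testing the two currents on $\mathcal{D}^k$ and applying the mass inequality \eqref{eq:mass-measure-ineq} to the Borel extension of $T$ from \cite[Theorem 4.4]{Lang}.

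For (i), take $T\in\bM_{k,\loc}(B)$ with $\norm{T}(B\cap K)<\infty$ for every compact $K\subseteq X$. Fix $(f,\pi)\in\Lip_c(B)\times(\Lip(B))^k$ and choose Lipschitz extensions $f'\in\Lip_c(X)$ and $\pi'\in\Lip(X)^k$. By the definition in Proposition~\ref{prop:restriction_of_current_to_open_subsets},
\[
T^X|_B(f,\pi)=T^X\rstr B(f',\pi').
\]
I first identify $T^X\rstr B(g,\pi')$ for a Borel $g$. For $g\in\Lip_c(X)$, the definition \eqref{eq:defin_T^X} gives
\[
T^X(g,\pi')=\lim_j T(\psi_j g|_B,\pi'|_B)=T(g|_B,\pi'|_B),
\]
where the second equality uses \eqref{eq:mass-measure-ineq} together with dominated convergence. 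Indeed $\psi_j g|_B\to g|_B$ pointwise, boundedly, with supports contained in $\spt(g)\cap B$, and this set has finite $\norm{T}$-mass by hypothesis. Both sides of this identity, viewed as functions of $g$, are currents of locally finite mass. By the uniqueness in \cite[Theorem 4.4]{Lang}, which is the extension to bounded Borel first arguments, the identity therefore persists for $g=\mathbb 1_B f'$, where the right-hand side is interpreted via the Borel extension of $T$ applied to $(\mathbb 1_Bf')|_B=f$.

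I expect this to be the main obstacle. The subtle point is that $f$ is compactly supported in $B$ but need not be Lipschitz-extendable by zero. I would handle it by dominated convergence: approximate $\mathbb 1_B f'$ by $\psi_j f'$, which lies in $\Lip_c(X)$ and agrees with $\mathbb 1_Bf'$ outside $B\setminus K_j$, and note that $\norm{T^X}(B\setminus K_j\cap\spt f')\to0$. This yields $T^X|_B(f,\pi)=T(f,\pi)$, hence $T^X|_B=T$. The mass statement then follows from Proposition~\ref{prop:restriction_of_current_to_open_subsets}(i): $\norm{T^X}\rstr B=\norm{T^X|_B}=\norm{T}$.

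For (ii), take $T\in\bM_{k,\loc}(X)$. The hypothesis of Lemma~\ref{lemma:extension-of-currents} holds because $\norm{T|_B}(B\cap K)=\norm{T}(B\cap K)<\infty$. For $(f,\pi)\in\mathcal{D}^k(X)$, note that $f',\pi'$ may be taken to be $f,\pi$ themselves as extensions of $\psi_jf|_B$ and $\pi|_B$, since $\psi_j$ is only defined on $B$. I instead pick an extension $\psi_j'$ of $\psi_j$ and compute
\[
(T|_B)^X(f,\pi)=\lim_j T\rstr B(\psi_j' f,\pi)=\lim_j T(\mathbb 1_B\psi_j'f,\pi).
\]
Now $\mathbb 1_B\psi_j'\to\mathbb 1_B$ pointwise and boundedly on $\spt f$. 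Applying \eqref{eq:mass-measure-ineq} to the Borel extension with dominated convergence gives the limit $T(\mathbb 1_Bf,\pi)=T\rstr B(f,\pi)$, which proves (ii).
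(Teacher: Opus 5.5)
Your overall strategy matches the paper's: test both currents on pairs in $\mathcal{D}^k$ and pass to the limit using the Borel extension of \cite[Theorem 4.4]{Lang}. Part (ii) is correct and is the paper's computation; choosing a Lipschitz extension $\psi_j'$ of $\psi_j$ is a sensible clarification.

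The gap is in (i), at the step you flag yourself. Your approximants do not exist as described.
If $\psi_jf'$ denotes the extension by zero outside $B$, it agrees with $\mathbb{1}_Bf'$ off $B\setminus K_j$, but in general it is not Lipschitz. For example, if $B$ is a closed disk in $\setR^2$ one may take $K_j=B$; then $\psi_j\equiv 1$ and $\psi_jf'=\mathbb{1}_Bf'$ itself.
If instead it denotes $\psi_j'f'$ with $\psi_j'$ a Lipschitz extension, it lies in $\Lip_c(X)$. However, it then differs from $\mathbb{1}_Bf'$ on $X\setminus B$, where you have no control on $\norm{T^X}$.
The preceding ``uniqueness'' paragraph does not close this gap. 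For $g\in\Lip_c(X)$ the set $\spt(g)\cap B$ need not be compact in $B$. So $T(g|_B,\pi'|_B)$ has no meaning other than the limit that defines $T^X(g,\pi')$. Transferring the identity to $g=\mathbb{1}_Bf'$ would require a comparison between $\norm{T^X}$ and $\norm{T}$, which you have not established.

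The missing ingredient is that $\norm{T^X}$ is carried by $B$.
\begin{itemize}
\item The computation in the proof of Lemma~\ref{lemma:extension-of-currents} gives $\bM_V(T^X)\le\norm{T}(V\cap B)$ for every open $V\subseteq X$.
\item By the hypothesis on $T$, the map $A\mapsto\norm{T}(A\cap B)$ is a locally finite Borel measure on the separable, locally compact space $X$, hence outer regular.
\item Therefore $\norm{T^X}(A)\le\norm{T}(A\cap B)$ for every Borel $A$, and in particular $\norm{T^X}(X\setminus B)=0$.
\item By \eqref{eq:mass-measure-ineq} for the Borel extension, $T^X\rstr B=T^X$.
\end{itemize}
With this, no Borel approximation is needed:
\[
T^X|_B(f,\pi)=T^X(f',\pi')=\lim_{j\to\infty}T(\psi_jf,\pi)=T(f,\pi),
\]
since $\psi_j\equiv 1$ on the compact set $\spt(f)\subseteq B$ for $j$ large. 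Equivalently, $\psi_j'f'\to\mathbb{1}_Bf'$ in $L^1(\norm{T^X})$, so your approximation scheme then works as intended.

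The paper writes $T^X(\mathbb{1}_Bf',\pi')=\lim_jT(\psi_jf'|_B,\pi'|_B)$ without further comment. The concentration of $\norm{T^X}$ on $B$ is exactly what justifies that step there as well.
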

        \begin{proof}
            Let $(f,\pi) \in \Lip_c(B) \times (\Lip(B))^{k}$, let $f'\in \Lip_c(X)$ and $\pi' \in (\Lip(X))^k$ be Lipschitz extensions and let $T \in \bM_{k,\loc}(B)$. Then
            $$T^X|_B(f,\pi) = T^X\rstr B(f',\pi') = T^X(\mathbb{1}_Bf',\pi') = \lim_{j\to +\infty} T(\psi_jf'|_B,\pi'|_B) = T(f,\pi),$$
            where the last step follows by \cite[Theorem 4.4]{Lang}. Therefore, by Proposition \ref{prop:restriction_of_current_to_open_subsets}, $\norm{T^X}\rstr B = \norm{T^X|_B} = \norm{T}$. 
            
            Now, let $(f,\pi) \in \mathcal{D}^k(X)$ and $T\in \bM_{k,\loc}(X)$. Then
            $$(T|_B)^X(f,\pi) = \lim_{j\to +\infty} T|_B(\psi_jf|_B,\pi|_B) = \lim_{j\to +\infty} T\rstr B(\psi_jf,\pi) = T\rstr B(f,\pi),$$
            again by \cite[Theorem 4.4]{Lang}.
        \end{proof}

        We discuss the uniqueness of the extension $T^X$.
        \begin{lemma}
        \label{lemma:uniqueness_extension}
            Let $(X,\dist)$ be a separable, locally compact metric space and $B \subseteq X$ be a locally compact subset such that $\haus^k(X\setminus B)= 0$. Let $T \in \bM_{k,\loc}(B)$ be such that $\norm{T}(B\cap K) < \infty$ for every compact $K\subseteq X$. Then, every $\tilde{T}\in \bM_{k,\loc}(X)$ with $\tilde{T}|_B = T$ satisfies $\tilde{T} = T^X$.
        \end{lemma}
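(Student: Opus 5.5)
The plan is to reduce to showing that $\tilde T$ puts no mass on $X\setminus B$, and then conclude with Corollary~\ref{cor:extension_vs_restriction}.(ii). That corollary gives $(\tilde T|_B)^X=\tilde T\rstr B$. Since $\tilde T|_B=T$ by hypothesis, we get $T^X=\tilde T\rstr B$. Hence it suffices to prove $\tilde T\rstr(X\setminus B)=0$. By \eqref{eq:mass_of_restriction}, applicable since $\spt(\tilde T)\subseteq X$ is separable, this reduces to showing $\norm{\tilde T}(X\setminus B)=0$.

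The key step is therefore the absolute continuity of $\norm{\tilde T}$ with respect to $\haus^k$. For this I would invoke the standard fact from Lang's theory \cite{Lang} that every current of locally finite mass satisfies $\norm{\tilde T}\ll\haus^k$, in the sense that $\norm{\tilde T}(A)=0$ whenever $\haus^k(A)=0$.

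If I were to argue this directly, I would proceed as follows. Cover a compact piece of $A$ by sets $U_i$ with $\sum_i(\operatorname{diam}U_i)^k$ small. Then estimate $\tilde T(f,\pi)$ for $f$ supported near such a set. The estimate would exploit the locality of $\tilde T$ and the fact that each $\pi_j$ oscillates by at most $\operatorname{diam}U_i$ on $U_i$. Concretely, I would replace $\pi$ on $U_i$ by its restriction recentered at a point, which contributes a bound of order $(\operatorname{diam}U_i)^k$ times a density factor. This step is the main obstacle in a self-contained argument, since it requires the density estimate $\Theta^{*k}(\norm{\tilde T},x)<\infty$ or a comparable ingredient. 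Since the paper freely cites \cite{Lang}, I would simply quote the absolute continuity result of Lang (Theorem 4.? there; the corresponding statement for Ambrosio--Kirchheim currents is in \cite{AmbrosioKirchheim00}).

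With $\norm{\tilde T}(X\setminus B)=0$ established, the proof concludes as follows. Since $X\setminus B$ is Borel, being the complement of a locally compact, hence locally closed, set, we have $\tilde T\rstr(X\setminus B)=0$ by \eqref{eq:mass-measure-ineq} applied to the extension to bounded Borel functions. Therefore $\tilde T=\tilde T\rstr B=T^X$.
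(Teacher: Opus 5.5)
Your argument is the paper's: Corollary~\ref{cor:extension_vs_restriction}.(ii) with $\tilde T|_B=T$ gives $T^X=\tilde T\rstr B$, and $\tilde T\rstr(X\setminus B)=0$ follows from $\norm{\tilde T}\ll\haus^k$ and $\haus^k(X\setminus B)=0$. One correction on that key ingredient: for currents of merely locally finite mass, $\norm{\tilde T}\ll\haus^k$ is not a standard result of Lang's theory, nor of Ambrosio--Kirchheim's, which obtains it only for normal currents. For finite-mass currents it remained open until the De Philippis--Rindler theorem, which is why your covering sketch stalls at the density bound. It is the paper's Lemma~\ref{lemma:mass-measure-abso-cont}, obtained from De Philippis--Rindler combined with Lang's Proposition~7.4, and that lemma is what you should cite.
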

        \begin{proof}
            By Corollary \ref{cor:extension_vs_restriction} we have that
            $$T^X\rstr B = (T^X|_B)^X = T^X = (\tilde{T}|_B)^X = \tilde{T}\rstr B.$$
            Moreover, $T^X\rstr (X \setminus B) = 0$ by \eqref{eq:mass-measure-ineq} and Lemma \ref{lemma:mass-measure-abso-cont}, and similarly, $\tilde{T}\rstr (X\setminus B)=0$. Hence, $\tilde{T} = T^X$ by \cite[Theorem 4.4.(i)]{Lang}.
        \end{proof}

        The extension and the pushforward follow the next rule.
        \begin{lemma}
            \label{lemma:extension_vs_pushforward}
            Let $(X,\dist)$, $(Y,\dist_Y)$ be separable, locally compact metric spaces, let $\varphi \colon X \to Y$ be a locally Lipschitz, proper map, let $B\subseteq Y$ be a locally compact subset containing $\varphi(X)$ and let $\varphi^B\colon X \to B$, $x\mapsto \varphi(x)$. Then,
            $$(\varphi^B_\#T)^Y = \varphi_\# T$$
            for every $T \in \bM_{k,\loc}(X)$. 
        \end{lemma}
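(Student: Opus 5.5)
The plan is to unfold both sides on a test tuple $(f,\pi)\in\mathcal D^k(Y)$ and reduce the identity to the convergence of the cutoff sequence $\psi_j$ from \eqref{eq:defin_psi_j}. First a few preliminaries. Since $\varphi$ is proper into $Y$ and takes values in $B$, the map $\varphi^B$ is proper into $B$: if $K\subseteq B$ is compact, it is also compact in $Y$, so $(\varphi^B)^{-1}(K)=\varphi^{-1}(K)$ is compact. It is also locally Lipschitz, so $\varphi^B_\#T\in\bM_{k,\loc}(B)$ is well defined. To apply Lemma~\ref{lemma:extension-of-currents} we must check that $\norm{\varphi^B_\#T}(B\cap K)<\infty$ for every compact $K\subseteq Y$. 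Take an open $V\supseteq K$ with compact closure. A direct estimate from Definition~\ref{def:mass} and \eqref{eq:mass-measure-ineq}, using the Lipschitz constants of $\varphi$ on the compact set $\varphi^{-1}(\overline V)$, bounds $\bM_{V\cap B}(\varphi^B_\#T)$ by a constant times $\norm{T}(\varphi^{-1}(\overline V))$. This is finite by properness and local finiteness of $\norm{T}$.

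Next, fix $(f,\pi)\in\mathcal D^k(Y)$. By definition,
\[
(\varphi^B_\#T)^Y(f,\pi)=\lim_{j\to\infty}T\bigl((\psi_j f|_B)\circ\varphi,\ \pi|_B\circ\varphi\bigr)=\lim_{j\to\infty}T\bigl((\psi_j\circ\varphi)(f\circ\varphi),\ \pi\circ\varphi\bigr).
\]
Here we use that $\varphi(X)\subseteq B$, so compositions with $\varphi^B$ coincide with compositions with $\varphi$. The functions $\psi_j\circ\varphi$ are Lipschitz on compacts and converge pointwise to $1$ on $X$, because $\psi_j\to1$ pointwise on $B$. Moreover $0\le\psi_j\le1$.

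It remains to show that $T((\psi_j\circ\varphi)(f\circ\varphi),\pi\circ\varphi)\to T(f\circ\varphi,\pi\circ\varphi)=\varphi_\#T(f,\pi)$. The plan is to use \eqref{eq:mass-measure-ineq} on the difference:
\[
|T((1-\psi_j\circ\varphi)(f\circ\varphi),\pi\circ\varphi)|\le C\int_X|f\circ\varphi|\,|1-\psi_j\circ\varphi|\,d\norm{T}.
\]
The constant $C$ is the product of the Lipschitz constants of $\pi_i\circ\varphi$ on the compact set $\varphi^{-1}(\spt f)$. The integrand is bounded by $\norm{f}_\infty\chi_{\varphi^{-1}(\spt f)}$, which is $\norm{T}$-integrable, and it tends to $0$ pointwise. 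Dominated convergence therefore gives the conclusion.

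The main obstacle I foresee is the technicality that $(\psi_j\circ\varphi)(f\circ\varphi)$ need not be globally Lipschitz with compact support in a way compatible with \eqref{eq:mass-measure-ineq}. Indeed, $\psi_j$ is only defined on $B$, and $\spt(\psi_j)$ is compact in $B$. Its composition with $\varphi$ is nevertheless Lipschitz on $X$, with compact support by properness. So each term is a legitimate evaluation, and the mass inequality applies with Lipschitz constants restricted to $\spt(f\circ\varphi)$. If needed, one can instead invoke the Borel extension of Lang's Theorem 4.4 and argue directly with $\chi$-type functions.
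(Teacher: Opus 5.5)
Your proposal is correct and follows the same route as the paper. Both arguments unfold the definition of $(\cdot)^Y$, use $\varphi(X)\subseteq B$ to identify $(\psi_j f|_B)\circ\varphi^B$ with $(\psi_j\circ\varphi)(f\circ\varphi)$, and pass to the limit. Two differences: the paper justifies the limit by citing Lang's Theorem 4.4, whereas you prove it directly from \eqref{eq:mass-measure-ineq} and dominated convergence; and you also check the finiteness hypothesis of Lemma~\ref{lemma:extension-of-currents}, which the paper leaves implicit.
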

        \begin{proof}
            For every $(f,\pi) \in \mathcal{D}^k(Y)$ we have that
            \begin{equation}
                \begin{aligned}
                    (\varphi^B_\#T)^Y(f,\pi) = \lim_{j\to \infty} \varphi^B_\#T(\psi_j f|_B, \pi|_B) &= \lim_{j\to \infty} T(\psi_j f|_B \circ \varphi^B, \pi|_B \circ \varphi^B)\\ &= \lim_{j\to \infty} T(\psi_j f \circ \varphi, \pi \circ \varphi) = \varphi_\# T(f,\pi),
                \end{aligned}
            \end{equation}
            where the last step follows by \cite[Theorem 4.4]{Lang}.
        \end{proof}

        The extension and the boundary commute when $B$ is closed.
        \begin{lemma}
            \label{lemma:extension_vs_boundary}
            Let $(X,\dist)$ be a separable, locally compact metric space and $B \subseteq X$ be a closed subset. Then $(\partial T)^X = \partial (T^X)$
            for every $T\in \bM_{k,\loc}(B)$.
        \end{lemma}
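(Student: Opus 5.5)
The plan is to show that, because $B$ is closed, the extension $T^X$ is simply the push-forward of $T$ under the inclusion $\iota\colon B\hookrightarrow X$. The claim then follows from \eqref{eq:boundary_pushforward_commute}. Note that $\iota$ is $1$-Lipschitz and proper, since $\iota^{-1}(K)=K\cap B$ is compact for every compact $K\subseteq X$. So $\iota_\#T$ is well defined by Definition~\ref{defin:pushforward} for any current on $B$, with no mass assumption. This matters because $\partial T$ need not have locally finite mass. For such currents I will interpret $(\partial T)^X$ via the formula \eqref{eq:defin_T^X}, and I will check below that the limit there is eventually constant, so it makes sense for arbitrary currents.

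\emph{Step 1: the limit in \eqref{eq:defin_T^X} stabilizes.} Let $f\in\Lip_c(X)$. Then $C:=\spt(f)\cap B$ is compact, because $B$ is closed, and $f|_B\in\Lip_c(B)$. The open sets $W_j=\mathrm{int}_B(K_{j+1})$ are increasing and cover $B$, since $K_j\subseteq W_j$ and $\bigcup_jK_j=B$. By compactness, $C\subseteq W_{j_0}\subseteq K_{j_0+1}$ for some $j_0$. Hence $\psi_j\equiv1$ on $C$ for all $j\geq j_0+1$, so $\psi_jf|_B=f|_B$ exactly for those $j$. Therefore, for any current $S\in\mathcal D_m(B)$,
\[
S^X(f,\pi)=\lim_{j\to\infty}S(\psi_jf|_B,\pi|_B)=S(f|_B,\pi|_B)=\iota_\#S(f,\pi),
\]
and the sequence is eventually constant. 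This holds in particular for $S=T$ and for $S=\partial T$.

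\emph{Step 2: conclusion.} By Step~1 and \eqref{eq:boundary_pushforward_commute},
\[
(\partial T)^X=\iota_\#(\partial T)=\partial(\iota_\#T)=\partial(T^X).
\]
One could also verify this directly from \eqref{eq:defin_boundary}. Given $(f,\pi)\in\mathcal D^{k-1}(X)$, choose $\sigma\in\Lip_c(X)$ with $\sigma=1$ on $\{f\neq0\}$. Then $\sigma|_B=1$ on $\{f|_B\neq0\}\cap\spt(T)$, so $\sigma|_B$ is an admissible cutoff for $\partial T$, and the two sides agree.

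The only genuine point is Step~1, namely that closedness of $B$ makes $\spt(f)\cap B$ compact. This is exactly where the hypothesis enters. For non-closed $B$ the cutoffs $\psi_j$ create extra boundary on $\overline B\setminus B$, and the identity fails. I expect no further obstacle.
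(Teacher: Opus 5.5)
Your proof is correct and follows essentially the same route as the paper. You identify $T^X$ and $(\partial T)^X$ with the push-forward $\iota_\#$ under the proper inclusion, using that the cutoffs $\psi_j$ are eventually equal to $1$ on the compact set $\spt(f)\cap B$, and then commute the boundary with the push-forward; the paper does this last step directly from the definition of the boundary with a cutoff $\sigma$, exactly as in your alternative verification.
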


       \begin{proof}
        Since $B$ is closed, the inclusion $\iota\colon B\hookrightarrow X$
        is proper. For every current $R$ on $B$, its pushforward
        $\iota_\#R$ is therefore well-defined and satisfies
        \[
        (\iota_\#R)(f,\pi)=R(f|_B,\pi|_B).
        \]
        For currents of locally finite mass this agrees with the
        extension $R^X$ defined above, since the cutoff functions
        are eventually identically one on $\spt_B(f|_B)$.
        For closed $B$, we use the notation $R^X=\iota_\#R$
        also for arbitrary currents, in particular for $R=\partial T$.
        
        Let $(f,\pi)\in\mathcal D^{k-1}(X)$ and choose
        $\sigma\in\Lip_c(X)$ equal to one on a neighbourhood of
        $\spt_X(f)$. Since $\sigma|_B\in\Lip_c(B)$, we obtain
        \[
        \begin{aligned}
            \partial(T^X)(f,\pi)
            &=T^X(\sigma,f,\pi)=T(\sigma|_B,f|_B,\pi|_B)=\partial T(f|_B,\pi|_B)\\
            &=(\partial T)^X(f,\pi).
        \end{aligned}
        \]
        This proves the claim.
        \end{proof}

        \subsection{Metric currents in the sense of Ambrosio--Kirchheim and Flat chains}
        \label{subsec:AK_currents}
            We quickly discuss the theory of metric currents developed by Ambrosio--Kirchheim \cite{AmbrosioKirchheim00}. Let $(X,\dist)$ be a complete metric space and let $\Lip_b(X)$ denote the space of bounded Lipschitz functions $f\colon X \to \setR$. An AK metric $k$-current $T\in \bM_k^\mathrm{AK}(X)$ is a $(k+1)$-linear function on $\Lip_b(X)\times \Lip(X)^k$ satisfying the locality property and the continuity property $\lim_{i \to \infty} T(f,\pi_1^j,\dots,\pi_k^j) = T(f,\pi_1,\dots,\pi_k)$ for all sequences in $\Lip_b(X)\times \Lip(X)^k$ such that $\pi_i^j\to \pi$ pointwise and $\Lip(\pi_i^j) \leq L$ for some constant $L>0$; cf. Definition \ref{def:current}. In addition, there exists a finite Borel measure $\mu$ on $X$ that is supported on a $\sigma$-compact set and $T$ satisfies \eqref{eq:mass-measure-ineq} with $\mu$ on the right hand side. The smallest such measure $\mu$ is denoted by $\mu_T$ and is called the mass measure of $T$. It follows that if $X$ is also locally compact, then every $T\in  \bM_k(X)$ induces a current in $\bM_k^\mathrm{AK}(X)$ and the measure $\mu_T$ coincides with $\norm{T}$. In particular, for $T \in \bM_k(X)$, the mass norm $\bM$ as defined above coincides with the mass norm $\bM^\mathrm{AK}$ in the sense of Ambrosio--Kirchheim; cf. \cite[Pages 710-711]{Lang}. On the other hand, every $T \in \bM^\mathrm{AK}_k(X)$ induces an element in $\bM_k(X)$ by putting
            $$T(f,\pi) = T(f,\pi')$$
            for all $(f,\pi) \in \mathcal{D}^k(X)$ and any Lipschitz map $\pi' \colon X \to \setR^k$ satisfying $\pi'|_{\spt (f)} = \pi|_{\spt (f)}$. Notice that by \cite[Theorem 3.5]{AmbrosioKirchheim00} this gives a well-defined $k$-current in the sense of Lang. It follows that $ \bM_k(X)= \bM_k^\mathrm{AK}(X)$ and $\bN_k(X) = \bN_k^\mathrm{AK}(X)$, where $\bN_k^\mathrm{AK}(X)$ denotes the space of normal currents in the sense of AK.
            
            Finally, we recall the notion of flat chains in Euclidean space. For a detailed exposition, we refer to \cite{Federer}. The space of $k$-dimensional flat chains $\mathbf{F}_k(\setR^m)$ in $\setR^m$ is the closure, with respect to the mass norm $\bM$, of normal currents $\bN_k(\setR^m)$. We denote by $\mathbf{F}_{k,c}(\setR^m)$ the subspace of flat chains with compact support. It is well-known that every flat chain $T \in \mathbf{F}_{k,c}(\setR^m)$ induces a metric $k$-current in $\bM_k(\setR^m)$; see \cite[Appendix A]{AmbrosioKirchheim00}. A deep result due to De Philippis--Rindler proves that in top-dimension, $\mathbf{F}_{k,c}(\setR^k)$ actually coincide with currents in $\bM_k(\setR^k)$ with compact support.

            \begin{theorem}[{\cite[Theorem 1.15]{De-Philippis-Rindler-flat-conj}}]\label{thme:De-Philippis-Rindler}
                Let $T \in \bM_k(\setR^k)$ be with compact support. Then $T\in \mathbf{F}_{k,c}(\setR^k)$ and $T$ is of the form $\br{\theta}$ for some $\theta\in L^1(\setR^k)$.
            \end{theorem}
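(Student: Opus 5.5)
The statement is \cite[Theorem 1.15]{De-Philippis-Rindler-flat-conj}, which the paper quotes without proof. Here is how I would reconstruct it. The core is a structure theorem for the mass measure: show $\norm{T}\ll\Leb^k$. Once that holds, the representation $T=\br{\theta}$ and flatness are soft.

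\textbf{Step 1: reduce the singular part to a decomposability-bundle statement.} Write $\mu:=\norm{T}$, a finite Radon measure with compact support in $\setR^k$. Decompose $\mu=\mu^a+\mu^s$ with respect to $\Leb^k$. The goal is $\mu^s=0$.

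Assume instead $\mu^s\neq 0$. Restrict $T$ to a compact set $S$ with $\Leb^k(S)=0$ and $\norm{T}(S)>0$; by the Borel extension, $T\rstr S$ is again a finite-mass current, with mass measure $\mu\rstr S$. Let $V(\mu,x)$ be the Alberti--Marchese decomposability bundle of $\mu$.

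Two facts would then give a contradiction:
\begin{itemize}
\item[(a)] For a metric $k$-current of finite mass, $V(\norm{T},x)=\setR^k$ for $\norm{T}$-a.e.\ $x$.
\item[(b)] If $V(\mu,x)=\setR^k$ for $\mu$-a.e.\ $x$, then $\mu\ll\Leb^k$.
\end{itemize}
Fact (b) is exactly the De Philippis--Rindler converse of Rademacher's theorem. It comes from their rigidity theorem for $\mathcal A$-free measures: at $|\mu^s|$-a.e.\ point the polar of the measure lies in the wave cone. Applied to a suitable $\mathcal A$-free object, such as the normal $1$-currents generating the full bundle, this forces $\mu^s=0$.

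\textbf{Step 2: prove (a), the part I expect to be hardest.} Suppose $\dim V(\mu,x)<k$ on a set $E$ with $\mu(E)>0$. By Alberti--Marchese, there is a Lipschitz $g\colon\setR^k\to\setR$ that is differentiable at $\mu$-a.e.\ $x\in E$ only along $V(\mu,x)$. In fact one can choose $g$ so that, for $\mu$-a.e.\ $x\in E$, the partial derivative of $g$ in some direction $e\notin V(\mu,x)$ takes arbitrary prescribed values.

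Next, use that a finite-mass metric current admits a representation
\[
T(f,\pi)=\int f\,\langle d^V\pi_1\wedge\dots\wedge d^V\pi_k,\vec T\rangle\,d\mu
\]
through the tangential (bundle) derivative. This follows from the Ambrosio--Kirchheim chain rule together with continuity along Lipschitz approximations, where the continuity axiom is tested on mollifications $\pi^\varepsilon\to\pi$.

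If $V$ has dimension less than $k$, any $k$-fold wedge of tangential differentials vanishes. Hence $T\rstr E(f,x_1,\dots,x_k)=0$ for all $f$. Combining this with the continuity and locality of $T$ gives $T\rstr E=0$, which contradicts $\norm{T}(E)>0$. The delicate point is making this representation rigorous through the bundle without already knowing absolute continuity. I would first try to handle it by decomposing $\mu\rstr E$ into $1$-rectifiable families of curves (Alberti--Marchese), along which the continuity axiom reduces $T$ to lower-dimensional data.

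\textbf{Step 3: conclude.} With $\norm{T}=|\theta|\Leb^k$ for some $\theta\in L^1(\setR^k)$, set $\theta:=T(\cdot,x_1,\dots,x_k)$ density-wise, using \eqref{eq:mass-measure-ineq} for the absolute continuity of $f\mapsto T(f,x)$. For smooth $\pi$, the identity
\[
T(f,\pi)=\int f\,\theta\det D\pi\,d\Leb^k
\]
follows from the Leibniz rule \eqref{eq:Leibniz_currents} and the chain rule, which reduce $T(f,\pi)$ to $T(f\det D\pi,x)$. For general Lipschitz $\pi$, mollify and use the continuity axiom together with dominated convergence, since $D\pi^\varepsilon\to D\pi$ $\Leb^k$-a.e. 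Thus $T=\br{\theta}$.

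Finally, $\br{\theta}$ is the $\bM$-limit of $\br{\theta_j}$ with $\theta_j$ smooth and compactly supported. Each $\br{\theta_j}$ is normal, so $T\in\mathbf F_{k,c}(\setR^k)$.
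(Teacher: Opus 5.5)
The paper gives no proof of this statement. It imports \cite[Theorem 1.15]{De-Philippis-Rindler-flat-conj}, whose deep input is the rigidity theorem for $\mathcal A$-free measures, i.e.\ the result behind your fact (b). Your reduction to (a) plus (b) is sound, (b) is correctly attributed, and Step 3 is routine and correct. The gap is Step 2. This is the only place where the continuity axiom has to enter, i.e.\ the fact that $T$ is a \emph{metric} current rather than an arbitrary functional $(f,\pi)\mapsto\int f\det D\pi\,d\nu$ on smooth maps. Your argument for it does not work.

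The representation $T(f,\pi)=\int f\langle d^V\pi_1\wedge\dots\wedge d^V\pi_k,\vec T\rangle\,d\mu$ already asserts $\vec T(x)\in\Lambda_kV(\mu,x)$. For $\dim V<k$ this already means $T\rstr E=0$, so it restates (a) rather than proving it. Nor does it follow from the chain rule and mollification. With $\nu:=T(\cdot,x_1,\dots,x_k)$ one has $T(f,\pi^\varepsilon)=\int f\det D\pi^\varepsilon\,d\nu$, and continuity only says this tends to the unknown number $T(f,\pi)$. On the singular part of $\nu$ there is no dominated convergence. At a point $x$ of an $\Leb^k$-null set, $D\pi^\varepsilon(x)$ averages $D\pi$ over nearby Lebesgue points and need not converge, let alone to a tangential derivative at $x$. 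Weak convergence of single directional derivatives along curves tangent to $V$ also does not pass through the nonlinear determinant. Note too that the Alberti--Marchese function $g$ is never used afterwards, and the curve-decomposition fallback is not an argument.

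The missing idea is to use continuity as an \emph{obstruction}, testing with smooth maps for which the chain rule holds. Suppose $\dim V(\mu,x)<k$ on $E$ with $\norm{T}(E)>0$.
\begin{itemize}
\item Then $|\nu|(E)>0$. Otherwise $T\rstr E$ vanishes on smooth tuples, hence on all tuples by continuity.
\item Localize: pick a compact $K\subseteq E$ with $|\nu|(K)>0$ and $\nu\rstr K$ of constant sign, and a fixed unit vector $w$ with $\dist(w,V(\mu,x))\geq 1/2$ for $x\in K$.
\item The construction behind the Alberti--Marchese non-differentiability theorem (which is how your $g$ is built) gives smooth $f_j$ with $\|f_j\|_\infty\to0$, $\Lip(f_j)\leq C$, and $\partial_wf_j\geq1$ on $K$ outside sets of $\mu$-measure tending to $0$.
\item Set $\pi^j:=\mathrm{id}+f_jw$. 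It converges to the identity as the continuity axiom requires, while $\det D\pi^j=1+\partial_wf_j$.
\item With $h:=\max\{0,1-\dist(\cdot,K)/\delta\}$ you get $|T(h,\pi^j)-T(h,\mathrm{id})|=|\int h\,\partial_wf_j\,d\nu|\geq|\nu|(K)-C|\nu|(B(K,\delta)\setminus K)-o(1)$.
\end{itemize}
For small $\delta$ this stays positive, contradicting continuity. The same conclusion can be phrased via Alberti representations: the mass of a metric $k$-current carries $k$ independent ones, which by Alberti--Marchese forces $V=\setR^k$. With (a) proved this way, (b) and your Step 3 complete the argument.
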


            Combining this result with \cite[Proposition 7.4]{Lang}, we get the following.

            \begin{lemma}\label{lemma:mass-measure-abso-cont}
                Let $(X,\dist)$ be a locally compact metric space and $T \in \bM_{k,\loc}(X)$. Then $\norm{T}$ is absolutely continuous with respect to $\haus^k$.
            \end{lemma}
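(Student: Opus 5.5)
The strategy is to reduce the claim to the Euclidean case through Lipschitz pushforwards, where Theorem~\ref{thme:De-Philippis-Rindler} applies. Since absolute continuity is a local and countably additive property, we may work on a single open set $V$ with compact closure, so that $\norm{T}(V)<\infty$. Fix a Borel set $N\subseteq V$ with $\haus^k(N)=0$; the goal is $\norm{T}(N)=0$. By inner regularity of the Radon measure $\norm{T}$, it suffices to treat compact $N$.

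The next step is to express the mass through test tuples. By \eqref{eq:mass_of_restriction}, $\norm{T}(N)=\bM(T\rstr N)$. By the definition of mass, this is a supremum of sums $\sum_j T(\chi_N f^j,\pi^j)$, where $\sum_j|f^j|\le1$ and each $\pi^j\colon X\to\setR^k$ is $1$-Lipschitz in every coordinate; here we use the extension of $T$ to bounded Borel first arguments. It is therefore enough to show that each single term $T(\chi_N f,\pi)$ vanishes.

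For a single term, choose a compact set $K\supseteq\spt(f)$ and replace $T$ by $S:=(T|_K)$, which is a finite-mass current on the compact space $K$ by Proposition~\ref{prop:restriction_of_current_to_open_subsets}. The map $\pi|_K$ is proper because $K$ is compact, so the pushforward $R:=(\pi|_K)_\#(S\rstr (f\chi_N))\in\bM_k(\setR^k)$ is defined and has compact support in $\pi(K)$. Here $S\rstr(f\chi_N)$ is again a finite-mass current by Lang's Theorem~4.4. By Theorem~\ref{thme:De-Philippis-Rindler}, $R=\br{\theta}$ for some $\theta\in L^1(\setR^k)$, and hence $\norm{R}\ll\Leb^k$. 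Now $\pi(N)$ is compact and satisfies $\haus^k(\pi(N))\le \Lip(\pi)^k\haus^k(N)=0$. Using \eqref{eq:borel-restriction_vs_pushforward} together with $\chi_N=\chi_N\cdot\chi_{\pi^{-1}(\pi(N))}$,
\[
T(\chi_N f,\pi)=R(\chi_{\pi(N)},x_1,\dots,x_k)=\int_{\pi(N)}\theta\,d\Leb^k=0,
\]
where the first equality comes from unwinding the pushforward with Borel first argument and identity coordinates, justified by Lang's Theorem~4.4 and a uniform approximation. This yields $\norm{T}(N)=0$.

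The main technical obstacle is this final identification. Pushforward is defined on Lipschitz test functions, so passing to the Borel function $\chi_{\pi(N)}\circ\pi$ must be justified. The plan is to approximate $\chi_{\pi(N)}$ from above by Lipschitz cutoffs $g_m\downarrow\chi_{\pi(N)}$, use \eqref{eq:mass-measure-ineq} and dominated convergence on both sides, and invoke the continuity of the Borel extension. If the mass-definition route proves awkward, an alternative is \cite[Proposition 7.4]{Lang}, which presumably states that $\norm{T}$ is controlled by the pushforwards under $1$-Lipschitz maps to $\setR^k$, of the form $\norm{T}\le C\sup_\pi\pi^\ast$-type bounds. Combined with $\norm{\pi_\#(\cdot)}\ll\Leb^k$, this again gives the claim directly.
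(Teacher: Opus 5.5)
Your proposal is correct and follows essentially the paper's route. The paper proves the lemma by combining Theorem~\ref{thme:De-Philippis-Rindler} with \cite[Proposition 7.4]{Lang}, which is the reduction you write out by hand: test the mass with $1$-Lipschitz $\pi$, push forward to a compactly supported top-dimensional current $\br{\theta}$ in $\setR^k$, and use $\haus^k(\pi(N))=0$. The identification step you flag is harmless, since for Lipschitz $g_m\downarrow\chi_{\pi(N)}$ one has $g_m\circ\pi\equiv 1$ on $N$, so $T(\chi_N f,\pi)=R(g_m,x_1,\dots,x_k)=\int g_m\theta\,d\Leb^k\to 0$ by dominated convergence.
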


    \subsection{Bound on the mass of the boundary}

        The goal of this section is to provide sufficient geometric conditions on the support of the boundary of a current in order to guarantee that the boundary has locally finite mass, and therefore that the current is locally normal.
        We begin with a technical result.
        
    \begin{lemma}
    \label{lemma:T_X_flat_chain}
        Let $T \in \bM_{k,\loc}(X)$. For every $f \in \Lip_c(X)$ and every $\pi \in \mathcal{D}^{k-1}(X)$, the current $S_f := \pi_\#(\partial T \rstr f) \in \mathcal{D}_{k-1,c}(X)$, is a flat chain. That is, $S_f \in \mathbf{F}_{k-1,c}(\setR^{k-1})$.
    \end{lemma}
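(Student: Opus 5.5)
The idea is to exhibit $S_f$ as the boundary of a compactly supported top-dimensional current of finite mass in $\setR^{k-1}$, plus a finite-mass remainder, and then use that boundaries of flat chains are flat chains. Here $\pi=(\pi_1,\dots,\pi_{k-1})\colon X\to\setR^{k-1}$ is locally Lipschitz, and $\pi_\#$ is well defined because $\partial T\rstr f$ has compact support $\subseteq\spt(f)$. As in the definition of push-forward, we may replace $\pi$ by a Lipschitz map that agrees with it near $\spt(f)$ and is proper after multiplying by cutoffs. This does not change the current, by locality.

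\textbf{Step 1: rewrite via \eqref{eq:restr-boundary}.} We have
\[
 \partial T\rstr f = T\rstr(1,f) + \partial(T\rstr f).
\]
Pushing forward and using \eqref{eq:boundary_pushforward_commute} gives
\[
 S_f=\pi_\#\bigl(T\rstr(1,f)\bigr)+\partial\bigl(\pi_\#(T\rstr f)\bigr).
\]
Both currents $T\rstr f$ and $T\rstr(1,f)$ have compact support inside $\spt(f)$. They have finite mass by \eqref{eq:mass-measure-ineq}, which gives $\|T\rstr f\|\le |f|\,\|T\|$ and $\|T\rstr(1,f)\|\le \Lip(f)\,\|T\|\rstr\spt(f)$. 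The latter is finite because $\|T\|$ is Radon. Push-forwards of finite-mass compactly supported currents under Lipschitz maps again have finite mass and compact support.

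\textbf{Step 2: apply Theorem~\ref{thme:De-Philippis-Rindler} in two dimensions.} The current $\pi_\#(T\rstr f)$ is a $k$-current in $\setR^{k-1}$ with locally finite mass. By Lemma~\ref{lemma:mass-measure-abso-cont} its mass is absolutely continuous with respect to $\haus^k$ on $\setR^{k-1}$, which is the zero measure. Hence $\pi_\#(T\rstr f)=0$ and its boundary vanishes. This is the key observation that removes the non-normal part. The remaining term $\pi_\#(T\rstr(1,f))$ is a $(k-1)$-current in $\setR^{k-1}$ with finite mass and compact support. By Theorem~\ref{thme:De-Philippis-Rindler}, it lies in $\mathbf F_{k-1,c}(\setR^{k-1})$ and equals $\br{\theta}$ for some $\theta\in L^1$.

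\textbf{Step 3: conclude and check.} From Steps 1 and 2, $S_f=\pi_\#(T\rstr(1,f))$ is a compactly supported flat chain. The main point to check carefully is that the identity \eqref{eq:restr-boundary}, stated for $f\in\Lip_\loc$ and general currents, holds here and commutes with $\pi_\#$ when $\pi$ is merely locally Lipschitz and not proper on $X$. I would handle this by evaluating both sides directly on test tuples $(g,\phi)\in\mathcal D^{k-1}(\setR^{k-1})$, using the cutoff $\sigma$ from the definition of the boundary equal to $1$ near $\spt(f)$, together with the Leibniz rule \eqref{eq:Leibniz_currents}.

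For $k=1$, $\setR^{0}$ is a point and $S_f$ is a $0$-current there, i.e.\ a real multiple of the Dirac mass. This is trivially a flat chain.
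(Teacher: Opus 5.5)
Your proposal is correct and follows the paper's proof essentially step for step. You split $\partial T\rstr f$ via \eqref{eq:restr-boundary}, kill $\pi_\#(T\rstr f)$ (and hence its boundary) by absolute continuity of its mass with respect to $\haus^k$ on $\setR^{k-1}$ (Lemma \ref{lemma:mass-measure-abso-cont}), and apply Theorem \ref{thme:De-Philippis-Rindler} to the finite-mass, compactly supported remainder $\pi_\#(T\rstr(1,f))$. Your opening plan to use that boundaries of flat chains are flat chains is never needed, since that term vanishes.
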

    \begin{proof}

        By \eqref{eq:restr-boundary}, we have
        $$S_f = \pi_\#(\partial T\rstr f) = \pi_\#(T\rstr(1,f)) + \pi_\#(\partial (T\rstr f)).$$
        Since $T \rstr f \in \mathcal{D}_k(X)$ then $\pi_{\#}(T \rstr f) = 0$ by Lemma \ref{lemma:mass-measure-abso-cont}. By \eqref{eq:boundary_pushforward_commute}, $\pi_{\#}(\partial(T \rstr f)) = \partial(\pi_{\#}(T \rstr f)) = 0$. Thus, $S_f = \pi_{\#}(T \rstr (1, f))$.
        We notice that $T \rstr (1, f) \in \bM_{k-1}(X)$ since $\bM(T\rstr(1,f)) \le \Lip(f)\norm{T}(\spt(f)) < \infty$ due to the compactness of $\spt(f)$. Here we used that $\spt(T\rstr (1,f)) \subseteq \spt(f)$, as it can be readily checked by \eqref{eq:defin_alternative_support} and Definition \ref{defin:restriction_currents}. By \cite[Lemma 4.6]{Lang}, $\pi_\#(T \rstr (1,f)) \in \bM_{k-1}(\setR^{k-1})$. Theorem \ref{thme:De-Philippis-Rindler} implies that $S_f\in \mathbf{F}_{k-1,c}(\setR^{k-1})$.
    \end{proof}

    This leads to a first condition that ensures that $\partial T$ is actually zero.

    \begin{theorem}
    \label{theo:boundary_zero_mass}
        Let $(X,\dist)$ be a locally compact metric space and let $T\in \bM_{k,\loc}(X)$. If $\haus^{k-1}(\spt(\partial T)) = 0$ then $\partial T = 0$.
    \end{theorem}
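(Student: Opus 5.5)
To show $\partial T=0$, I will show $\partial T(f,\pi)=0$ for every $(f,\pi)\in\mathcal D^{k-1}(X)$. I will do this through the pushforwards $S_f=\pi_\#(\partial T\rstr f)$ of Lemma~\ref{lemma:T_X_flat_chain}, where $\pi\colon X\to\setR^{k-1}$. Since $S_f$ is compactly supported in $\setR^{k-1}$, we can test it against the identity coordinates:
\[
S_f(\sigma,x_1,\dots,x_{k-1})=\partial T\rstr f(\sigma\circ\pi,\pi_1,\dots,\pi_{k-1})=\partial T(f,\pi),
\]
where $\sigma\in\Lip_c(\setR^{k-1})$ is equal to one on $\pi(\spt f)$. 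Here I use that $\partial T\rstr f$ is well defined for Lipschitz $f$, by Definition~\ref{defin:restriction_currents}, and the Leibniz identity \eqref{eq:Leibniz_currents} to absorb the factor $\sigma\circ\pi$, which equals $1$ near $\spt f$. It therefore suffices to prove $S_f=0$.

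The key step is a support estimate. By \eqref{eq:support_vs_pushforward} and the obvious inclusion $\spt(\partial T\rstr f)\subseteq\spt(\partial T)\cap\spt f$, we get
\[
\spt(S_f)\subseteq\pi\bigl(\spt(\partial T)\cap\spt f\bigr).
\]
Since $\pi$ is Lipschitz on the compact set $\spt f$, it does not increase $\haus^{k-1}$-nullity. Hence $\spt(S_f)$ is a compact subset of $\setR^{k-1}$ with $\haus^{k-1}(\spt S_f)=\Leb^{k-1}(\spt S_f)=0$.

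Now Lemma~\ref{lemma:T_X_flat_chain} says that $S_f$ is a top-dimensional flat chain. Its proof moreover showed $S_f=\pi_\#(T\rstr(1,f))\in\bM_{k-1}(\setR^{k-1})$. By Theorem~\ref{thme:De-Philippis-Rindler}, $S_f=\br{\theta}$ for some $\theta\in L^1(\setR^{k-1})$. Then $\norm{S_f}=|\theta|\Leb^{k-1}$ is concentrated on $\spt S_f$, a Lebesgue-null set, so $\theta=0$ almost everywhere and $S_f=0$. Equivalently, Lemma~\ref{lemma:mass-measure-abso-cont} gives $\norm{S_f}\ll\haus^{k-1}$, and by \eqref{eq:support_current_=_support_mass} the measure $\norm{S_f}$ lives on a null set. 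This gives $\partial T(f,\pi)=0$.

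The main obstacle is the step where finite mass of $S_f$ is used. A priori $\partial T$ need not have locally finite mass, so one cannot apply absolute continuity to $\partial T$ directly. The trick is that the identity $S_f=\pi_\#(T\rstr(1,f))$ expresses $S_f$ through $T$, which does have locally finite mass, so $S_f$ has finite mass even though $\partial T$ may not. Some care is also needed with the case $k=1$: there $\partial T$ is a $0$-current and there are no $\pi$'s. In that case I would argue directly: $\partial T(f)=T(\sigma,f)=T\rstr(1,f)(\sigma)$, which is a finite-mass $0$-current supported on the set $\spt(\partial T)\cap\spt f$. This set is $\haus^0$-null, i.e. empty, so the value is zero.
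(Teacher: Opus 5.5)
Your proof is correct and uses the same ingredients as the paper's proof. These are the currents $S_f=\pi_\#(\partial T\rstr f)$ of Lemma~\ref{lemma:T_X_flat_chain}, their finite mass and absolute continuity via Theorem~\ref{thme:De-Philippis-Rindler}, and the fact that $\pi$ sends the compact $\haus^{k-1}$-null set $\spt(\partial T)\cap\spt f$ to a Lebesgue-null set. The difference lies in the last step.

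The paper fixes an admissible family $(f^j,\pi^j)$ and uses only $\norm{S^j}(N^j)=0$. It then splits each $f^j$ with cutoffs $\chi^j\circ\pi^j$ around $N^j$. The piece near $N^j$ is controlled by $\int_{V^j}|\theta^j|<\varepsilon/|J|$. The piece away from $N^j$ vanishes because its support misses $\spt(\partial T)$. This gives $\bM(\partial T)\le\varepsilon$.

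You instead use the support inclusion \eqref{eq:support_vs_pushforward} to get $\spt(S_f)\subseteq\pi(\spt(\partial T)\cap\spt f)$. By \eqref{eq:support_current_=_support_mass}, the absolutely continuous measure $\norm{S_f}$ is then concentrated on a null set, so $S_f=0$. Hence $\partial T(f,\pi)=S_f(\sigma,x_1,\dots,x_{k-1})=0$ for each test tuple separately. This is a real simplification: the paper's $\varepsilon$-cutoff construction is a hands-on proof that $S^j$ vanishes, and your support argument obtains this in one line. Incidentally, no Leibniz rule is needed to absorb $\sigma\circ\pi$, since $f\cdot(\sigma\circ\pi)=f$ pointwise.

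Your separate treatment of $k=1$ contains a false intermediate claim. $T\rstr(1,f)$ is in general \emph{not} supported in $\spt(\partial T)\cap\spt f$. For $T=\br{1}$ on $\setR$ one has $\partial T=0$, yet $T\rstr(1,f)(h)=\int hf'\,dx\neq0$. Only its value at $\sigma$, equivalently its pushforward to a point, agrees with that of $\partial T\rstr f$. The case is trivial anyway: $\haus^0(\spt(\partial T))=0$ means $\spt(\partial T)=\emptyset$, so $\partial T=0$ directly from the definition of support. Your general argument, read with $\setR^0$ a point, also covers it.
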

    
    \begin{proof}
        Let $J$ be a finite set, let $\{(f^j, \pi_1^j,\ldots, \pi_{k-1}^j)\}_{j\in J} \subseteq \mathcal{D}^{k-1}(X)$ with $\sum_{j\in J} |f^j| \le 1$ and $\Lip(\pi_i^j) \le 1$. The set $K := \bigcup_{j\in J} \spt(f^j)$ is compact. By Lemma \ref{lemma:T_X_flat_chain}, the current $S^j := \pi^j_\#(\partial T\rstr f^j) \in \mathbf{F}_{k-1,c}(\setR^{k-1})$ for every $j\in J$. By Theorem \ref{thme:De-Philippis-Rindler} there exist functions $\theta^j \in L^1(\setR^{k-1})$ such that $\Vert S^j \Vert = \vert\theta^j\vert\haus^{k-1}$, for every $j\in J$.
        Let $N_K := \spt(\partial T) \cap K$, which satisfies  $\haus^{k-1}(N_K) = 0$ and is compact. The set $N^j := \pi^j(N_K) \subseteq \mathbb{R}^{k-1}$ is compact too and satisfies $\haus^{k-1}(N^j) = 0$, so also $\Vert S^j \Vert(N^j) = 0$. By the $L^1$ integrability of $\theta^j$, for every $\varepsilon > 0$, there exists an open neighbourhood $V^j \subseteq \mathbb{R}^{k-1}$ of $N^j$ such that
        \begin{equation}
        \label{eq:estimate_on_mass_theta_j}
            \int_{V^j} \vert{}\theta^j\vert{} \, d\haus^{k-1} < \frac{\varepsilon}{|J|}.
        \end{equation}
        Since $N^j$ is closed, we can find another open neighbourhood $W^j$ of $N^j$ such that $\overline{W^j} \subseteq V^j$.
        Let $\chi^j \colon \setR^{k-1} \to [0,1]$ be a Lipschitz function such that $\chi^j \equiv 1$ on $W^j$ and $\spt(\chi^j) \subseteq V^j$. We define the Lipschitz functions $\phi^j \colon X \to [0,1]$, $\phi^j(x) = 1 - \chi^j(\pi^j(x))$. By construction, $\phi^j \equiv 0$ on $(\pi^j)^{-1}(W^j)$, hence $\spt(\phi^j ) \subseteq X \setminus (\pi^j)^{-1}(W^j) \subseteq X \setminus N_K$.        
        By linearity (see Definition \ref{def:current}) we have
        $$\sum_{j\in J} \partial T(f^j, \pi^j) = \sum_{j\in J} \partial T(f^j \cdot (\chi^j \circ \pi^j), \pi^j) + \sum_{j\in J} \partial T(f^j \phi^j, \pi^j).$$
        For the first term, by definition of pushforward current we get
        $$\partial T(f^j \cdot (\chi^j \circ \pi^j), \pi^j) = S^j(\chi^j, \text{id}).$$
        Thus, using \eqref{eq:estimate_on_mass_theta_j} we obtain
        \begin{equation}
            \label{eq:sum_bad_part_boundary}
            \left\vert\sum_{j\in J} \partial T(f^j \cdot (\chi^j \circ \pi^j), \pi^j)\right\vert \le \sum_{j\in J} \int_{\mathbb{R}^{k-1}} \chi^j \vert{}\theta^j\vert{} \, d\haus^{k-1} \le \sum_{j\in J} \int_{V^j} \vert{}\theta^j\vert{} \, d\haus^{k-1} < \varepsilon.
        \end{equation}
        We now look at the second term. The functions $f^j \phi^j \in \Lip_c(X)$ have support $$\spt(f^j\phi^j) \subseteq \spt(f^j)\cap \spt(\phi^j) \subseteq K \cap (X \setminus N_K) \subseteq X\setminus \spt(\partial T).$$
        Therefore, the second term vanishes.
        This and \eqref{eq:sum_bad_part_boundary} give
        $$\sum_{j\in J} \partial T(f^j, \pi^j) \le \varepsilon.$$
        By the arbitrariness of $f^j, \pi^j$ and $\varepsilon$ we obtain that $\bM(\partial T) = 0$, whence $\partial T = 0$.
    \end{proof}

    The next sufficient condition for the finiteness of the mass of the boundary is of different nature and it allows non-zero boundaries. We recall that the density functions appearing in the statement have been defined in \eqref{eq:defin_densities}.

    \begin{theorem}
    \label{theo:boundary_finite_mass_hard}
    Let $(X,\dist)$ be a locally compact metric space, let $k\ge 1$, and let
    $T\in \mathbf{M}_{k,\loc}(X)$. Assume that:
    \begin{enumerate}
        \item[(i)] $\Theta^k(\cdot,\|T\|)$ is locally bounded above on
        $\spt(\partial T)$;
        \item[(ii)] $\mathcal{H}^{k-1}\llcorner \spt(\partial T)$ is locally finite 
        and, for every compact set $K\subseteq X$, there exists $c_K>0$ such that \[\Theta_{k-1}
        (\cdot,\mathcal{H}^{k-1}\llcorner\spt(\partial T))\ge c_K \] for $\mathcal{H}^{k-1}$-a.e.\ $x\in K\cap \spt(\partial T)$.
        \end{enumerate}
    Then $\partial T\in \mathbf{M}_{k-1,\loc}(X)$ and hence $T\in \mathbf{N}_{k,\loc}(X)$.
    \end{theorem}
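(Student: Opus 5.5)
Since $\spt(\partial T)$ is closed, $\partial T(f,\pi)$ only depends on $f$ near $\Sigma:=\spt(\partial T)$. The plan is to bound the mass of $\partial T$ on a relatively compact open set $V$ by $C\,\haus^{k-1}(\Sigma\cap\overline V)$, using a covering argument near $\Sigma$. Fix a compact $K\supseteq \overline V$ and let $M$ bound $\Theta^k(\cdot,\norm{T})$ on $\Sigma\cap K$; more precisely, for each $x\in\Sigma\cap K$ we have $\norm{T}(B(x,r))\le 2M\omega_k r^k$ for small $r$. Take test data $(f^j,\pi^j)_{j\in J}$ as in Definition~\ref{def:mass} with $\spt f^j\subseteq V$. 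For a cutoff $\sigma\in\Lip_c(X)$ equal to $1$ near $\Sigma\cap\overline V$ and supported in a small neighbourhood, we have
\[
\sum_j\partial T(f^j,\pi^j)=\sum_j\partial T(\sigma f^j,\pi^j)=\sum_j T(\tau,\sigma f^j,\pi^j),
\]
where $\tau\equiv1$ on a neighbourhood of $\spt(\sigma)\cup\bigcup_j \spt f^j$. By the Leibniz rule \eqref{eq:Leibniz_currents}, $T(\tau,\sigma f^j,\pi^j)=T(\tau f^j,\sigma,\pi^j)+T(\tau\sigma,f^j,\pi^j)$. Summing the first terms and using \eqref{eq:mass-measure-ineq} gives a bound of at most $\Lip(\sigma)\,\norm{T}(\spt\sigma)$. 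The second terms equal $\sum_j\partial T(\sigma f^j,\pi^j)$ restricted to the region where $\sigma=1$. This is not yet good, so instead one absorbs it: replacing $f^j$ by $(1-\sigma)f^j$ kills the boundary entirely, because $\spt((1-\sigma)f^j)\cap\Sigma=\emptyset$. Hence directly
\[
\sum_j\partial T(f^j,\pi^j)=\sum_j\partial T(\sigma f^j,\pi^j)=\sum_j T\rstr(1,\sigma)(f^j,\pi^j)+\sum_j \partial(T\rstr\sigma)(f^j,\pi^j)
\]
by \eqref{eq:restr-boundary}. Choose $\sigma$ to be a \emph{partition} sum: $\sigma=\max_i \sigma_i$ or $\sum_i\sigma_i$ with $\sigma_i(y)=(1-\dist(y,x_i)/r)^+$-type bumps centred at points $x_i$ of $\Sigma$.

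The key step is a Vitali/$5r$ covering of $\Sigma\cap\overline V$ by balls $B(x_i,r)$ with $x_i\in\Sigma$ and $B(x_i,r/5)$ disjoint. Using the lower density (ii), after discarding an $\haus^{k-1}$-null set (which is handled by the argument of Theorem~\ref{theo:boundary_zero_mass}, via the flat chain representation of Lemma~\ref{lemma:T_X_flat_chain} with a tiny neighbourhood), we get $\haus^{k-1}(\Sigma\cap B(x_i,r/5))\gtrsim c_K r^{k-1}$ for small $r$, for points in a large subset. This yields $N(r)\,r^{k-1}\lesssim c_K^{-1}\haus^{k-1}(\Sigma\cap B(\overline V,r))$ for the number $N(r)$ of balls. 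With $\sigma=\min(1,\sum_i\sigma_i)$ and $\Lip(\sigma)\lesssim 1/r$ (using bounded overlap, which follows from the disjointness and upper density), we obtain
\[
\Bigl|\sum_jT\rstr(1,\sigma)(f^j,\pi^j)\Bigr|\le \int \lip_a\sigma\,d\norm{T}\lesssim \frac1r\sum_i \norm{T}(B(x_i,2r))\lesssim \frac{M}{r}N(r)r^k\lesssim \frac{M}{c_K}\haus^{k-1}(\Sigma\cap B(\overline V,r)),
\]
using Proposition~\ref{prop:mass-measure-ineq-loc-Lip-constant}.

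The remaining term $\sum_j\partial(T\rstr\sigma)(f^j,\pi^j)$ is the main obstacle, since it is again a boundary. I would handle it by letting $r\to0$. Its mass lives on $\spt\sigma$, which shrinks to $\Sigma$, and, tested against fixed $(f^j,\pi^j)$, $T\rstr\sigma\to T\rstr\Sigma=0$ in mass, because $\norm{T}(\Sigma)=0$ by Lemma~\ref{lemma:mass-measure-abso-cont} and $\haus^k(\Sigma\cap K)=0$ (as $\haus^{k-1}\rstr\Sigma$ is locally finite). Thus $\partial(T\rstr\sigma)(f^j,\pi^j)=T(\tau,f^j\sigma,\pi^j)$ tends to $0$ for each of the finitely many $j$; the convergence follows from the continuity and mass bounds, after writing it through Leibniz as $T\rstr\sigma(\tau,f^j,\pi^j)$ plus a term already controlled. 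Letting $r\to0$ gives $\bM_V(\partial T)\le C(M,c_K,k)\,\haus^{k-1}(\Sigma\cap\overline V)<\infty$. Hence $\partial T$ has locally finite mass and $T\in\bN_{k,\loc}(X)$. The delicate points are the exceptional set where the density bounds fail at scale $r$, which is handled by exhausting by sets on which uniform scales hold and putting the rest into a small $\haus^{k-1}$-measure set treated as in Theorem~\ref{theo:boundary_zero_mass}, and the uniformity of the density scales.
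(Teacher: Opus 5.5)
Your architecture matches the paper's. You localize with a cutoff equal to one near $\Sigma\cap\bigcup_j\spt f^j$ and split $T(\tau,\sigma f^j,\pi^j)=T(\tau f^j,\sigma,\pi^j)+T(\tau\sigma,f^j,\pi^j)$. You bound the first sum by $\int_X\lip_a\sigma\,d\norm{T}$ via Proposition~\ref{prop:mass-measure-ineq-loc-Lip-constant}, and estimate it on a compact good set $G$ with uniform scales: the lower density of $\haus^{k-1}\rstr\Sigma$ counts separated centres, and the upper density of $\norm{T}$ weighs the balls. You then send the second sum to zero; using $\norm{T}(\Sigma)=0$ there is a fine substitute for the paper's bound $\norm{T}(\spt\Phi_\rho)\lesssim\rho$.

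The gap is the exceptional set $Z:=(\Sigma\cap\overline V)\setminus G$, which you propose to treat as in Theorem~\ref{theo:boundary_zero_mass}. That argument needs the exceptional set to be compact and $\haus^{k-1}$-null. Then $\norm{S^j}(\pi^j(N_K))=0$, and a cutoff $\chi^j$ equal to one near the compact set $\pi^j(N_K)$ can be supported where $|\theta^j|$ has small integral. Here none of this holds:
\begin{itemize}
\item $Z$ has positive measure, is only relatively open in $\Sigma\cap\overline V$, and can be dense in it (e.g.\ if $G$ has empty relative interior). Any $\chi^j\circ\pi^j$ equal to one on a neighbourhood of $Z$ equals one on $\overline Z$, possibly all of $\Sigma\cap\overline V$, so nothing is small.
\item $\int_{\pi^j(Z)}|\theta^j|$ has no bound in terms of $\haus^{k-1}(Z)$. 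The density $\theta^j$ is merely integrable with $\bM(S^j)\le\Lip(f^j)\norm{T}(\spt f^j)$, and a quantitative bound on it is essentially what is to be proved. Only a non-uniform absolute-continuity smallness for the fixed family is available.
\item If you cover only the compact part $Z\setminus B(G,r/2)$ with $r$-dependent cutoffs $\chi^j_r$, their Lipschitz constants enter $T(\tau\sigma_r,f^j(1-\chi^j_r\circ\pi^j),\pi^j)$. They may blow up faster than $\norm{T}(\spt\sigma_r)\to0$.
\end{itemize}
The same objection applies to the $\haus^{k-1}$-null set where the lower density fails, which need not be closed.

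The missing idea is that the bad set needs neither the lower density nor flat chains, because hypothesis (i) holds at \emph{every} point of $\Sigma$.
\begin{itemize}
\item Write $Z=\bigcup_mA_m$, where $A_m$ is the set of $y\in Z$ with $\norm{T}(B(y,t))\le C\omega_kt^k$ for all $0<t<1/m$, and disjointify to $A'_m$.
\item Use the definition of $\haus^{k-1}$ to cover $A'_m$ by balls $B(y_{m,i},r_{m,i})$ with $y_{m,i}\in A'_m$, $r_{m,i}<\min\{1/(3m),r\}$ and $\sum_ir_{m,i}^{k-1}\lesssim\haus^{k-1}(A'_m)+\varepsilon2^{-m}$. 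Here $\haus^{k-1}(Z)<\varepsilon$ is arranged when choosing $G$.
\item The bumps $(1-\dist(\cdot,B(y_{m,i},r_{m,i}))/r_{m,i})^+$ have total energy $\sum_{m,i}\norm{T}(B(y_{m,i},3r_{m,i}))/r_{m,i}\lesssim C\varepsilon$, and supports of total $\norm{T}$-mass $\lesssim C\varepsilon r$.
\item $B(G,r/2)$ together with these balls is an open cover of the compact set $\Sigma\cap\bigcup_j\spt f^j$, so finitely many balls suffice. The maximum $\Phi$ of the good cutoff and the corresponding bumps then equals one near that set.
\end{itemize}
With $\Phi$ in place of your $\sigma$, the rest of your argument goes through.

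Two smaller points. First, bounded overlap does not follow from disjointness and the upper density of $\norm{T}$, since there is no scale-$r$ upper bound for $\haus^{k-1}\rstr\Sigma$. You do not need it, because $\lip_a\min\{1,\sum_i\sigma_i\}\le\sum_i\lip_a\sigma_i$. Second, $\partial(T\rstr\sigma)(f,\pi)=T(\tau\sigma,f,\pi)$, not $T(\tau,f\sigma,\pi)$.
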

    
    \begin{proof}
    It is enough to prove that $\partial T$ has finite mass in a neighbourhood of
    every point $x\in \spt(\partial T)$.    
    Fix $x\in \spt(\partial T)$. By assumptions (i) and (ii) we can find
    $r_x,c_x,C_x,C'_x>0$ such that
    \begin{enumerate}
        \item[(a)] $\overline{B}(x,2r_x)$ is compact;
        \item[(b)] $\Theta^k(y,\norm{T})\le C_x/2$ for every $y\in \spt(\partial T)\cap B(x,2r_x)$;
        \item[(c)] $ \haus^{k-1}(B(x,2r_x) \cap \spt(\partial T))\le C'_x$;
        \item[(d)] $\Theta_{k-1}(y,\haus^{k-1}\rstr \spt(\partial T))
        \ge 2c_x$ for $\haus^{k-1}$-a.e. $y\in \spt(\partial T)\cap B(x,2r_x)$.
    \end{enumerate}
    Thus, for $\haus^{k-1}$-almost every $y\in \spt(\partial T)\cap B(x,r_x)$ there exists
    $r_y>0$ such that
    \begin{equation}\label{eq:density-estimates-416}
        \norm{T}(B(y,t))\le C_x\omega_k t^k,
        \qquad
        \haus^{k-1}(B(y,t) \cap \spt(\partial T))\ge c_x\omega_{k-1} t^{k-1}
    \end{equation}
    for every $0<t<r_y$.
    
    Fix $\varepsilon>0$. By 
    \eqref{eq:density-estimates-416} and the inner regularity of $\haus^{k-1}$,
    we can find a compact set
    \[
    G_\varepsilon\subseteq \spt(\partial T)\cap B(x,r_x)
    \]
    and a number $0<\rho_\varepsilon<r_x$ such that
    \begin{equation}\label{eq:good-complement}
        \haus^{k-1}\big( B(x,r_x) \cap \spt(\partial T)\setminus G_\varepsilon\big)<\varepsilon
    \end{equation}
    and
    \begin{align}
        \norm{T}(B(y,t))&\le C_x \omega_k t^k,
        \label{eq:good-upper-density}\\
        \haus^{k-1}(B(y,t)\cap \spt(\partial T))&\ge c_x \omega_{k-1} t^{k-1}
        \label{eq:good-lower-density}
    \end{align}
    for every $y\in G_\varepsilon$ and every
    $0<t\le \rho_\varepsilon$.
    
    We prove that
    \[
    \bM_{B(x,r_x)}(\partial T)<\infty.
    \]
    Let $J$ be finite and, for every $j\in J$, let
    \[
    (f^j,\pi^j_1,\ldots,\pi^j_{k-1})\in\mathcal{D}^{k-1}(X)
    \]
    satisfy
    \[
    \spt(f^j)\subseteq B(x,r_x),
    \qquad
    \sum_{j\in J}|f^j|\le 1,
    \qquad
    \Lip(\pi^j_i)\le 1.
    \]
    Set
    \[
    K:=\bigcup_{j\in J}\spt(f^j),
    \qquad
    E_K:=\spt(\partial T)\cap K,
    \qquad
    G^K_\varepsilon:=G_\varepsilon\cap K,
    \qquad
    B^K_\varepsilon:=E_K\setminus G^K_\varepsilon.
    \]
    Notice that $K$ and $E_K$ are compact and, by
    \eqref{eq:good-complement},
    \begin{equation}\label{eq:bad-small}
        \haus^{k-1}(B^K_\varepsilon)<\varepsilon.
    \end{equation}
    
    We first construct a cutoff function around the good set $G^K_\varepsilon$.
    Fix $0<\rho<\frac{\rho_\varepsilon}{4}$.
    If $G^K_\varepsilon=\emptyset$, set $\varphi_\rho\equiv 0$.
    Otherwise, let $\{y_1,\ldots,y_N\}$ be a maximal $2\rho$-separated
    subset of $G^K_\varepsilon$. Then the balls $B(y_i,\rho)$ are pairwise
    disjoint and
    \[
    Nc_x\omega_{k-1}\rho^{k-1}
    \stackrel{\eqref{eq:good-lower-density}}{\le}
    \sum_{i=1}^N \haus^{k-1}(B(y_i,\rho)\cap \spt(\partial T))
    \le
    \haus^{k-1}(B(x,2r_x) \cap \spt(\partial T))
    \stackrel{\text{(c)}}{\le} C'_x.
    \]
    Therefore,
    \begin{equation}\label{eq:number-good-balls}
        N\le \frac{C'_x}{c_x\omega_{k-1}\rho^{k-1}}.
    \end{equation}
    By maximality, $G^K_\varepsilon\subseteq\bigcup_{i=1}^N B(y_i,2\rho)$, and hence $B(G^K_\varepsilon,2\rho)
    \subseteq
    \bigcup_{i=1}^N B(y_i,4\rho)$.
    Using \eqref{eq:good-upper-density} and
    \eqref{eq:number-good-balls}, we obtain
    \begin{equation}\label{eq:good-neighbourhood-mass}
        \norm{T}(B(G^K_\varepsilon,2\rho))
        \le
        \sum_{i=1}^N \norm{T}(B(y_i,4\rho))
        \le
        NC_x\omega_k(4\rho)^k
        \le
        \frac{4^k C_x \omega_k C'_x}{c_x \omega_{k-1}}\rho.
    \end{equation}
    
    Define
    \[
    \varphi_\rho(z)
    :=
    \max\left\{
    0,\,
    1-\frac{2}{\rho}
    \max\left\{0,\dist(z,G^K_\varepsilon)-\frac{\rho}{2}\right\}
    \right\}.
    \]
    Then $\varphi_\rho\equiv 1$  on $B(G^K_\varepsilon,\rho/2)$, $\spt(\varphi_\rho)
    \subseteq
    \overline{B}(G^K_\varepsilon,\rho)
    \subseteq B(G^K_\varepsilon,2\rho)$, and $\lip_a\varphi_\rho\le \frac{2}{\rho}$.
    Consequently,
    \begin{equation}\label{eq:good-energy}
        \int_X \lip_a\varphi_\rho\,d\norm{T}
        \le
        \frac{2}{\rho}
        \norm{T}(B(G^K_\varepsilon,2\rho)) \le
        \frac{2\cdot 4^k C_x\omega_kC'_x}{c_x\omega_{k-1}}.
    \end{equation}
    
    We now construct a cutoff function around the bad set $B^K_\varepsilon$.
    For $m\in\mathbb{N}$, set
    \[
    A_m
    :=
    \left\{
    y\in B^K_\varepsilon:
    \norm{T}(B(y,t))\le C_x\omega_kt^k
    \ \text{for every }0<t<\frac1m
    \right\}.
    \]
    By (b) and the definition of upper density, $B^K_\varepsilon=\bigcup_{m\in\mathbb{N}}A_m$.
    Set
    \[
    A'_1:=A_1,
    \qquad
    A'_m:=A_m\setminus\bigcup_{\ell<m}A_\ell,
    \quad m\ge2.
    \]
    Then the sets $A'_m$ are pairwise disjoint and $B^K_\varepsilon=\bigsqcup_{m\in\mathbb N} A'_m$.
    For every $m$, using the definition of Hausdorff measure, choose a countable
    family of balls
    \[
    B_{m,i}:=B(y_{m,i},r_{m,i}),
    \qquad
    y_{m,i}\in A'_m,
    \]
    such that
    \[
    A'_m\subseteq\bigcup_i B_{m,i},
    \qquad
    r_{m,i}<\min\left\{\frac{1}{3m},\rho\right\},
    \]
    and
    \begin{equation}\label{eq:bad-cover}
        \sum_i r_{m,i}^{k-1}
        \le
        a_{k-1}\left(
        \haus^{k-1}(A'_m)+\frac{\varepsilon}{2^m}
        \right),
    \end{equation}
    where $a_{k-1}>0$ is a constant depending only on $k$.
    
    The family
    \[
    B(G^K_\varepsilon,\rho/2)
    \cup
    \bigcup_{m,i}B_{m,i}
    \]
    is an open cover of the compact set $E_K$. Hence there exists a finite set
    $F\subset\mathbb N\times\mathbb N$ such that
    \begin{equation}\label{eq:finite-cover}
        E_K
        \subseteq
        B(G^K_\varepsilon,\rho/2)
        \cup
        \bigcup_{(m,i)\in F} B_{m,i}.
    \end{equation}
    For $(m,i)\in F$, define
    \[
    \psi_{m,i}(z)
    :=
    \max\left\{
    0,\,
    1-\frac{\dist(z,B_{m,i})}{r_{m,i}}
    \right\},
    \]
    and set $ \psi_\rho:=\max_{(m,i)\in F}\psi_{m,i}$.
    Then $\psi_\rho\equiv1$ on
    $\bigcup_{(m,i)\in F}B_{m,i}$. Moreover,
    $\spt(\psi_{m,i})
    \subseteq
    \overline B(y_{m,i},2r_{m,i})
    \subseteq
    B(y_{m,i},3r_{m,i})$, and $\Lip(\psi_{m,i})\le \frac1{r_{m,i}}$.
    Therefore,
    \[
    \lip_a\psi_\rho
    \le
    \sum_{(m,i)\in F}\lip_a\psi_{m,i}
    \le
    \sum_{(m,i)\in F}
    \frac{\chi_{B(y_{m,i},3r_{m,i})}}{r_{m,i}}.
    \]
    Since $y_{m,i}\in A_m$ and $3r_{m,i}<1/m$, we infer
   
   \begin{align}
\int_X\lip_a\psi_\rho\,d\norm{T}
    \le
    \sum_{(m,i)\in F}
    \frac{\norm{T}(B(y_{m,i},3r_{m,i}))}{r_{m,i}}
     &\le
    3^k C_x \omega_k
    \sum_{(m,i)\in F}r_{m,i}^{k-1}
    \notag\\
    &\le
    3^k\omega_k a_{k-1}C_x
    \sum_m\left(
    \haus^{k-1}(A'_m)+\frac{\varepsilon}{2^m}
    \right)
    \notag\\
    &\le
    2\cdot 3^k\omega_k a_{k-1}C_x\varepsilon,  \label{eq:bad-energy}
    \end{align}    
    where in the last inequality we used
    \eqref{eq:bad-small}, the disjointness of the $A'_m$ and \eqref{eq:bad-cover}.
    
    We also record that
    \begin{align}
    \norm{T}(\spt(\psi_\rho))
    \le
    \sum_{(m,i)\in F}
    \norm{T}(B(y_{m,i},3r_{m,i}))
     &\le
    3^kC_x \omega_k
    \sum_{(m,i)\in F} r_{m,i}^k
    \nonumber\\
    &\le
    3^kC_x\omega_k\rho
    \sum_{(m,i)\in F}r_{m,i}^{k-1} \nonumber
    \\
    &\le
    2\cdot 3^k\omega_k a_{k-1}C_x\varepsilon\rho.
    \label{eq:bad-support-mass}
    \end{align}
    
    Let
    \[
    \Phi_\rho:=\max\{\varphi_\rho,\psi_\rho\}.
    \]
    By \eqref{eq:finite-cover}, $\Phi_\rho\equiv1$ on an open neighbourhood of
    $E_K$. Therefore, for every $j\in J$,
    $\spt((1-\Phi_\rho)f^j)\cap E_K=\emptyset$,
    and hence, by the definition of $\spt(\partial T)$, $
    \partial T((1-\Phi_\rho)f^j,\pi^j)=0$.
    Consequently,
    \begin{equation}\label{eq:boundary-localization}
        \partial T(f^j,\pi^j)
        =
        \partial T(\Phi_\rho f^j,\pi^j).
    \end{equation}
    For every $j$, choose
    $\sigma_j\in\Lip_c(X)$ such that $0\le\sigma_j\le1$ and
    $\sigma_j\equiv1$ on a neighbourhood of
    $\spt(f^j)\cap\spt(T)$. Then
    \begin{equation}
    \label{eq:split-boundary}
    \begin{aligned}
    \partial T(f^j,\pi^j)
    \stackrel{\eqref{eq:boundary-localization} + \eqref{eq:defin_boundary}}{=}
    T(\sigma_j,\Phi_\rho f^j,\pi^j)
    \stackrel{\eqref{eq:Leibniz_currents}}{=}
    T(\Phi_\rho\sigma_j,f^j,\pi^j)
    +
    T(f^j\sigma_j,\Phi_\rho,\pi^j).
    \end{aligned}
    \end{equation}
    We estimate first the second term. Since $\lip_a\Phi_\rho
    \le \lip_a\varphi_\rho+\lip_a\psi_\rho$, 
    Proposition \ref{prop:mass-measure-ineq-loc-Lip-constant}, \eqref{eq:good-energy},
    \eqref{eq:bad-energy}, and
    $\sum_j|f^j|\le1$ yield

    \begin{equation}
    \label{eq:second-term-416}
    \begin{aligned}
    \left|
    \sum_{j\in J}
    T(f^j\sigma_j,\Phi_\rho,\pi^j)
    \right|
    \le
    \int_X
    \sum_{j\in J}|f^j|\,
    \lip_a\Phi_\rho\,d\norm{T}
     &\le
    \int_X\lip_a\Phi_\rho\,d\norm{T}
    \\
    &\le
    \frac{2\cdot 4^k C_x\omega_kC'_x}{c_x\omega_{k-1}}
    +
    2\cdot 3^k\omega_k a_{k-1}C_x\varepsilon.
    \end{aligned}
    \end{equation}
    
    It remains to show that the first term in
    \eqref{eq:split-boundary} vanishes as $\rho\to 0$.
    By the mass estimate \eqref{eq:mass-measure-ineq},
    \begin{align}
    \left|
    \sum_{j\in J}
    T(\Phi_\rho\sigma_j,f^j,\pi^j)
    \right|
    &\le
    \left(\sum_{j\in J}\Lip(f^j)\right)
    \norm{T}(\spt (\Phi_\rho)).
    \label{eq:first-term-bound}
    \end{align}
    Now, $\spt(\Phi_\rho)
    \subseteq
    \spt(\varphi_\rho)\cup\spt(\psi_\rho)$. By \eqref{eq:good-neighbourhood-mass} and \eqref{eq:bad-support-mass} we have that
    \[
    \norm{T}(\spt(\Phi_\rho)) \le \frac{4^k C_x \omega_k C'_x}{c_x \omega_{k-1}}\rho + 2\cdot 3^k\omega_k a_{k-1}C_x\varepsilon\rho\longrightarrow0
    \qquad\text{as }\rho\to 0.
    \]
    Therefore, by \eqref{eq:first-term-bound},
    \begin{equation}\label{eq:first-term-vanishes}
    \lim_{\rho\to 0}
    \left|
    \sum_{j\in J}
    T(\Phi_\rho\sigma_j,f^j,\pi^j)
    \right|
    =0.
    \end{equation}
    Combining \eqref{eq:split-boundary},
    \eqref{eq:second-term-416}, and
    \eqref{eq:first-term-vanishes}, and letting $\rho\to 0$, we obtain
    \[
    \left|
    \sum_{j\in J}\partial T(f^j,\pi^j)
    \right|
    \le
    \frac{2\cdot 4^k C_x\omega_kC'_x}{c_x\omega_{k-1}}
    +
    2\cdot 3^k\omega_k a_{k-1}C_x\varepsilon.
    \]
    Finally, letting $\varepsilon\to 0$ and taking the supremum over all
    admissible families $(f^j,\pi^j)$ in the definition of the mass gives
    \[
    \bM_{B(x,r_x)}(\partial T)
    \le
    \frac{2\cdot 4^k C_x\omega_kC'_x}{c_x\omega_{k-1}}
    <\infty.
    \]
    Thus $\partial T\in \bM_{k-1,\loc}(X)$.
    \end{proof}

\section{Rectifiability and integer rectifiable currents}
    In this section we recall the notions of rectifiability, integer rectifiable currents and homology via integral currents. Every argument in this section is well known or is a slight generalization of a well-known fact.
    
    \subsection{Rectifiability}
        Let $(X,\dist)$ be a metric space. A $\haus^k$-measurable set $E \subseteq X$ is called \textit{$k$-rectifiable} if there exist a countable collection of compact sets $K_i \subseteq \setR^k$ and bi-Lipschitz maps $\varphi_i \colon K_i \to X$ such that the $\varphi_i(K_i)$ are pairwise disjoint and
        \begin{equation}\label{eq: def-rect}
            \haus^k\left( E \setminus \bigcup_i \varphi_i(K_i)\right) = 0.
        \end{equation}

        It follows from the fundamental result about rectifiable sets in metric spaces due to Kirchheim (\cite[Lemma 4]{kirchheim94}), that the definition of rectifiability given above is equivalent to the standard definition\footnote{In the standard terminology, a $\haus^k$-measurable set
$E\subseteq X$ is called \emph{$k$-rectifiable} if there exist
countably many Lipschitz maps $\varphi_i\colon B_i\to X$,
with $B_i\subseteq\setR^k$ Borel, such that
$    \haus^k\Bigl(E\setminus\bigcup_i\varphi_i(B_i)\Bigr)=0.$}; see also \cite[Lemma 4.1]{AmbrosioKirchheim00}. The fact that the sets $K_i$ in the next statement can be chosen to be compact follows by inner regularity of $\haus^k$ on $\setR^k$.
        
        \begin{lemma}[{\cite[Lemma 4]{kirchheim94} as stated in \cite[Lemma 7.2]{bate-perturb}}]
        \label{lemma: lip-bilip-decomp}
            Let $B \subseteq \setR^k$ be a Borel set and $\varphi \colon B \to X$ be a Lipschitz map. Then, there exist countably many compact sets $K_i \subseteq B$ such that
            \begin{enumerate}
                \item $\haus^k(\varphi(B) \setminus \bigcup_i \varphi(K_i)) = 0$;
                \item $\varphi$ is bi-Lipschitz on each $K_i$.
            \end{enumerate}
        \end{lemma}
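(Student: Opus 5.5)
The plan is to reduce to Kirchheim's metric differentiability theory. First, $\varphi(B)$ is separable, so I would embed it isometrically into $\ell^\infty$ via a Fréchet--Kuratowski embedding. This changes neither Hausdorff measures nor bi-Lipschitz properties, so we may regard $\varphi$ as a map into a Banach space. Kirchheim's theorem gives that $\varphi$ is metrically differentiable at $\Leb^k$-a.e.\ $x\in B$. That is, there is a seminorm $\operatorname{md}\varphi_x$ on $\setR^k$ with
\[
\dist(\varphi(y),\varphi(z))-\operatorname{md}\varphi_x(y-z)=o(|y-x|+|z-x|)\quad\text{for } y,z\in B.
\]
Discarding a Lebesgue-null set is harmless, because a Lipschitz map sends $\Leb^k$-null sets to $\haus^k$-null sets.

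I would split the set of differentiability points into $D$, where $\operatorname{md}\varphi_x$ is degenerate (not a norm), and $N$, where it is a norm. The first step is to show $\haus^k(\varphi(D))=0$. The quickest route is Kirchheim's area formula $\int_{D} J(\operatorname{md}\varphi_x)\,dx=\int \#(\varphi^{-1}(y)\cap D)\,d\haus^k(y)$, since the Jacobian vanishes on degenerate seminorms.

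If I want to avoid the area formula, I would argue directly by covering. Near each $x\in D$ and at small scales $r$, the set $\varphi(B\cap B(x,r))$ lies in an $\eta r$-neighbourhood of the image of a ball under a degenerate seminorm, that is, of a set that is essentially $(k-1)$-dimensional. Such a set can be covered by $\lesssim \eta^{1-k}$ balls of radius $\eta r$, which gives $\haus^k_\infty$-content $\lesssim \eta r^k$. A Vitali covering of $D$ by such balls then gives $\haus^k(\varphi(D))\lesssim \eta\,\Leb^k(D)$ for every $\eta$. I expect this degenerate-part estimate to be the main obstacle; I would try the area formula first.

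For $N$, I would follow Federer's 3.2.2 scheme. Fix a countable dense set of norms $\{\|\cdot\|_l\}$ on $\setR^k$. For $m,l,p\in\setN$, let $N_{m,l,p}$ be the set of $x\in N$ such that:
\begin{itemize}
\item $\operatorname{md}\varphi_x$ is $(1+1/m)$-comparable to $\|\cdot\|_l$, and $\|\cdot\|_l\ge \tfrac1m|\cdot|$;
\item $\bigl|\dist(\varphi(y),\varphi(z))-\operatorname{md}\varphi_x(y-z)\bigr|\le \tfrac{1}{m^2}\,(|y-x|+|z-x|)$ for all $y,z\in B(x,1/p)\cap B$.
\end{itemize}
These sets are Borel (using separability to reduce to countably many $y,z$) and cover $N$. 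Next I would chop each $N_{m,l,p}$ into countably many pieces of diameter $<1/p$. On such a piece, for $y,z$ in it, one takes $x=y$ and gets $\dist(\varphi(y),\varphi(z))\ge c_m|y-z|$ with $c_m>0$. So $\varphi$ is bi-Lipschitz on each piece.

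Finally, by inner regularity of $\Leb^k$, each piece equals a countable union of compact sets $K_i$ up to a Lebesgue-null set, and $\varphi$ remains bi-Lipschitz on each $K_i$. The complement of $\bigcup_i K_i$ in $B$ consists of $D$, the non-differentiability set and Lebesgue-null remainders, and all of these have $\haus^k$-null image. This gives (1) and (2).
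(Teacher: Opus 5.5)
Your argument is correct and is essentially Kirchheim's own proof of the cited Lemma~4: metric differentiability, null image of the degenerate set, and a Federer-type decomposition of the nondegenerate set into bi-Lipschitz pieces, finished by inner regularity of $\Leb^k$ exactly as the paper remarks (the paper gives no argument beyond this citation). Two cautions: Kirchheim obtains his area formula from this very decomposition, so for a self-contained proof you should use your direct covering estimate for the degenerate set rather than the area formula; and to apply his differentiability theorem on the Borel set $B$, you should first extend $\varphi$ to a Lipschitz map $\setR^k\to\ell^\infty$ coordinatewise.
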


        A consequence of rectifiability is the following density property.

        \begin{lemma}
            \label{lemma:density_points_rectifiable_space}
            Let $E\subseteq (X,\dist)$ be $k$-rectifiable and assume that $\haus^k\rstr E$ is locally finite. Then, $\Theta^k(x,\haus^k\rstr E) = \Theta_k(x,\haus^k \rstr E) = 1$ for $\haus^k$-a.e. $x\in E$.
        \end{lemma}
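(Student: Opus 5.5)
The strategy is to reduce to a single bi-Lipschitz piece and then transfer the Lebesgue density theorem in $\setR^k$ to $X$ through Kirchheim's metric differentiability. By the definition of rectifiability, write $E$ up to an $\haus^k$-null set as $\bigsqcup_i \varphi_i(K_i)$, with $K_i\subseteq\setR^k$ compact and $\varphi_i$ bi-Lipschitz. Since $\haus^k\rstr E$ is locally finite, the standard density theorem for locally finite measures in metric spaces gives $\Theta^k(x,\haus^k\rstr(E\setminus \varphi_i(K_i)))=0$ for $\haus^k$-a.e.\ $x\in\varphi_i(K_i)$. Concretely, apply the upper density estimate $\haus^k(\{x\in A:\Theta^k(x,\mu)>t\})\le t^{-1}\mu(A)$, for $\mu$ vanishing on $A$, with $\mu=\haus^k\rstr(E\setminus\varphi_i(K_i))$ and $A=\varphi_i(K_i)$. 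This reduces the claim to $S:=\varphi(K)$, with $K$ compact and $\varphi$ bi-Lipschitz, and we must show that $\haus^k(S\cap B(x,r))/(\omega_k r^k)\to1$ for $\haus^k$-a.e.\ $x\in S$.

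Next, invoke Kirchheim's metric differentiability theorem. For a.e.\ $a\in K$ there is a seminorm, in fact a norm since $\varphi$ is bi-Lipschitz, $\|\cdot\|_a$ on $\setR^k$ such that $\dist(\varphi(b),\varphi(c))=\|b-c\|_a+o(|b-a|+|c-a|)$. Combining this with Lusin/Egorov-type arguments, we decompose $K$ further, up to a null set, into compact pieces $K_j$ on which $\varphi$ is $(1+\varepsilon)$-bi-Lipschitz with respect to a fixed norm $\|\cdot\|_j$. This is Kirchheim's Lemma 4 refinement. Applying the reduction of the first paragraph again, it suffices to treat one such piece.

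On such a piece, $\haus^k$ of $\varphi(\cdot)$ is comparable, up to a factor $(1+\varepsilon)^{\pm k}$, to $\haus^k_{\|\cdot\|_j}$, which equals the constant $\omega_k/\Leb^k(B_{\|\cdot\|_j})$ times $\Leb^k$; this is the Busemann/Kirchheim area factor. Moreover, metric balls $B(\varphi(a),r)\cap\varphi(K_j)$ are sandwiched between the images of $\|\cdot\|_j$-balls of radii $r(1\pm\varepsilon)$ intersected with $K_j$. The Lebesgue density theorem in $\setR^k$, applied at density points of $K_j$, then gives
\[
\frac{\haus^k(\varphi(K_j)\cap B(\varphi(a),r))}{\omega_k r^k}\to \frac{\omega_k\,\Leb^k(B_{\|\cdot\|_j}(0,r))}{\Leb^k(B_{\|\cdot\|_j})\,\omega_k r^k}=1
\]
up to factors $(1+\varepsilon)^{\pm 2k}$. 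Letting $\varepsilon\to0$ along a countable sequence and intersecting the resulting full-measure sets completes the argument.

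The main obstacle is the normalization: the identity $\haus^k_{\|\cdot\|}(B_{\|\cdot\|})=\omega_k$ for the Hausdorff measure of the unit ball of an arbitrary norm, with the normalization \eqref{eq:defin_Hausdorff_measure}. This follows from the isodiametric inequality in normed spaces, namely that the unit ball has diameter $2$ and is extremal. I would cite Kirchheim for this fact rather than reprove it.
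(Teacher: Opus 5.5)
Your argument is correct and follows essentially the paper's route. The paper simply cites Kirchheim's density theorem \cite[Theorem 9]{kirchheim94}, together with a measure-theoretic lemma from Federer, and your steps (metric differentiability, the $(1+\varepsilon)$-norm decomposition of Kirchheim's Lemma 4, the identity $\haus^k_{\|\cdot\|}(B_{\|\cdot\|})=\omega_k$, and the vanishing upper density of the complementary pieces) are exactly the proof behind that citation. The only imprecision is that the definition gives $\haus^k(E\setminus\bigcup_i\varphi_i(K_i))=0$ but not $\varphi_i(K_i)\subseteq E$. Fix this by shrinking the $K_i$ using the $\haus^k$-measurability of $E$, or by noting that $\haus^k\rstr(\varphi_i(K_i)\setminus E)$ also has vanishing upper $k$-density at $\haus^k$-a.e.\ point of $E\cap\varphi_i(K_i)$.
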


        \begin{proof}
            This is a direct consequence of \cite[Theorem 9]{kirchheim94} together with \cite[2.10.18(3)]{Federer}; see also \cite[Theorem 2.4]{bate-tangents}.
        \end{proof}
        
        A $\haus^k$-measurable set $P \subseteq X$ is said to be \textit{purely $k$-unrectifiable} if $\haus^k(P \cap E)=0$ for every $k$-rectifiable set $E \subseteq X$. The following deep result due to Bate is known as the perturbation theorem.

        \begin{theorem}\label{thme: bate-perturb}{\normalfont (\cite[Theorem~1.2]{bate-perturb})}
            Let $(X,\dist)$ be a complete metric space and let $P \subseteq X$ be purely $k$-unrectifiable with $\haus^k(P)< \infty$. Then, for every $k \in \setN$ the set 
            $$\{f \in \Lip^1_b(X,\setR^k) \colon \haus^k(f(P)) = 0\}$$
            is residual. 
        \end{theorem}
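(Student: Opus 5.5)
Since the statement is \cite[Theorem~1.2]{bate-perturb}, the paper simply quotes it. If I had to reprove it, I would use a Baire category argument in the space $\Lip^1_b(X,\setR^k)$ of bounded $1$-Lipschitz maps with the supremum distance. This space is complete because $X$ is complete and uniform limits of $1$-Lipschitz maps are $1$-Lipschitz. The residual set would be written as a countable intersection of open dense sets.

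\textbf{Reduction to compact pieces.} Since $\haus^k(P)<\infty$, I would first write $P=N\cup\bigcup_m K_m$ with each $K_m$ compact and $\haus^k(N)=0$. This uses inner regularity of $\haus^k\rstr P$, possibly after replacing $P$ by a Borel hull, which stays purely unrectifiable up to a null set. Every $1$-Lipschitz $f$ satisfies $\haus^k(f(N))=0$, so it suffices to control the compact sets $K_m$. For a compact $K$ and $\varepsilon>0$, I would set $\mathcal U_{K,\varepsilon}:=\{f:\Leb^k(f(K))<\varepsilon\}$. Each $\mathcal U_{K,\varepsilon}$ is open. Indeed, $f(K)$ is compact, so some open neighbourhood $V\supseteq f(K)$ still has $\Leb^k(V)<\varepsilon$. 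If $g$ is uniformly close to $f$, then $g(K)\subseteq V$. The desired residual set contains $\bigcap_{m,q}\mathcal U_{K_m,1/q}$. Hence the whole proof reduces to showing that each $\mathcal U_{K,\varepsilon}$ is dense.

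\textbf{Density, the main obstacle.} Fix $f$ and $\delta>0$. I need a $1$-Lipschitz $g$ with $\|g-f\|_\infty<\delta$ and $\Leb^k(g(K))<\varepsilon$. This is the heart of Bate's work, and pure unrectifiability must be used quantitatively. My plan is to argue by contradiction and localize. Suppose every $\delta$-perturbation of $f$ has image of $K$ with measure at least $\varepsilon$. First I would decompose $K$ into small pieces on which $f$ is nearly affine in a weak sense. Next I would perturb $f$ piece by piece, by composing with maps that fold or compress small cubes in $\setR^k$. The aim is to shrink the image of each piece, up to a small error, while keeping the Lipschitz constant below $1$. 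I would try first to make the contraction factor slightly less than $1$, so that there is room for the folding. A stable lower bound on the image measure, uniform over all perturbations, should force a form of "non-degeneracy". By a Kirchheim-type or Alberti-representation argument, this should produce a subset of $K$ of positive measure on which some perturbation is bi-Lipschitz onto its image. Such a subset is a $k$-rectifiable subset of $P$ of positive $\haus^k$-measure, contradicting pure unrectifiability.

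I expect this extraction of bi-Lipschitz pieces from a robust lower bound on the image measure to be the genuinely hard step. Lemma~\ref{lemma: lip-bilip-decomp} shows the reverse direction, namely that Lipschitz images decompose into bi-Lipschitz pieces. What the argument needs is a converse statement that is stable under perturbation, and this is where the real work lies.
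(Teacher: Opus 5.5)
Your Baire-category reduction is correct, and it is the routine half of the argument. The openness of $\mathcal U_{K,\varepsilon}$, the inclusion of $\bigcap_{m,q}\mathcal U_{K_m,1/q}$ in the target set, and the compact exhaustion of $P$ are all fine. One small correction: $\Lip^1_b(X,\setR^k)$ is complete for every $X$, so that is not where completeness of $X$ enters. You need completeness of $X$ for the exhaustion. The closure $\overline P$ is then Polish (it is separable because $\haus^k(P)<\infty$), so the finite Borel measure $\haus^k\rstr P$ is inner regular by compact sets. However, every use of the hypothesis on $P$ lies in the density of $\mathcal U_{K,\varepsilon}$. There the proposal is a list of intentions rather than an argument, and you yourself flag the decisive step as unproved. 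As it stands, the gap is the whole theorem.

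The ingredients you list also cannot be assembled as described.

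\emph{Nearly affine pieces.} ``Pieces on which $f$ is nearly affine'' has no meaning on a subset of an arbitrary metric space.

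\emph{Folding the target.} This does not help by itself. Let $\phi\colon\setR^k\to\setR^k$ be $L$-Lipschitz with $\|\phi-\mathrm{id}\|_\infty<\delta$, and let $Q$ be a cube of side $s\gg\delta$ in which $f(K)$ has density close to $1$. Degree theory shows that $\phi(Q)$ contains the concentric cube of side $s-2\delta$, while $\Leb^k(\phi(Q\setminus f(K)))\le L^k\,\Leb^k(Q\setminus f(K))$. So $\Leb^k(\phi(f(K)))$ stays bounded below. And $f(K)$ can have positive measure for purely unrectifiable $K$: the orthogonal projection of the four-corner Cantor set onto the line spanned by $(1,2)$ is a segment.

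\emph{Piecewise folds.} Using different folds on different small pieces $K_i$ amounts to prescribing new values of $g$ on each piece. The whole difficulty is then the $1$-Lipschitz constraint between distinct pieces. You must find, at small scales, pieces of $K$ far enough apart, relative to how much $f$ changes between them, that $g$ can compress each piece to small $\Leb^k$-measure while staying $\delta$-close to $f$. You must then glue these local modifications by a covering argument without exceeding Lipschitz constant one. Your initial contraction to a $(1-\eta)$-Lipschitz map only creates room for this gluing; it does not supply the pieces. Extracting such structure from pure unrectifiability is the actual content of \cite{bate-perturb}. Bate's original proof of this density step needed, in addition, positive lower density of $P$, removed only in \cite[Theorem~1.5]{bate24}. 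Your sketch does not say where any quantitative information about $P$ enters.

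\emph{The contradiction route.} This needs a converse of Kirchheim's Lemma~\ref{lemma: lip-bilip-decomp}. Namely, a lower bound on $\Leb^k(g(K))$, stable under all small perturbations $g$ of $f$, should yield a positive-measure piece of $K$ on which some map is bi-Lipschitz. No Kirchheim- or Alberti-type argument producing this is given. For a single map the statement is false, as the four-corner example shows.
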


        Here, $\Lip^1_b(X,\setR^k) = \{f\in\Lip(X,\setR^k) \,:\, \Lip(f)\le 1 \text{ and } f \text{ is bounded}\}$. If $(X,\dist)$ is a complete metric space, then $\Lip^1_b(X,\setR^k)$, equipped with the uniform distance $\dist_\infty$, is also a complete metric space.
        Thus, the Baire category theorem implies that every residual subset of $\Lip^1_b(X,\setR^k)$ is dense. The theorem was first proven in \cite{bate-perturb} with the additional assumption that the lower density of $P$ is positive at almost every point. However, \cite[Theorem 1.5]{bate24} shows that this assumption is not necessary. We record the following consequence of the perturbation theorem; see \cite[Theorem 1.1]{Enrico-david} and \cite[Theorem 1.7]{bate24}

        \begin{theorem}\label{thm: topological-dim-existence-rect}
            Let $(X,\dist)$ be a compact metric space with topological dimension equal to $k$ and $\haus^k(X) < \infty$. Then $X$ contains a $k$-rectifiable subset with positive $\haus^k$-measure. In particular, if every open subset of $X$ has topological dimension $k$, then $X$ contains a dense $k$-rectifiable subset with locally positive $\haus^k$-measure. 
        \end{theorem}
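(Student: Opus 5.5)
The first assertion is a direct combination of Bate's perturbation theorem (Theorem~\ref{thme: bate-perturb}, in the improved form of \cite{bate24}) with the classical fact that Lipschitz maps cannot lower topological dimension too much when their image is $\haus^k$-null. I will argue by contradiction. Suppose $\haus^k(E)=0$ for every $k$-rectifiable $E\subseteq X$. Then $P:=X$ is purely $k$-unrectifiable. It is also $\haus^k$-measurable with $\haus^k(P)<\infty$, and it is complete because it is compact. Theorem~\ref{thme: bate-perturb} then gives a residual, hence dense, set of maps $f\in\Lip_b^1(X,\setR^k)$ with $\haus^k(f(X))=0$.

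To reach a contradiction I use that $\dim X = k$. By the characterization of covering dimension through essential maps (Hurewicz--Wallman), there is a continuous map $g\colon X\to[0,1]^k$ that is essential. Concretely, some closed $C\subseteq X$ satisfies $g(C)\subseteq\partial[0,1]^k$, and $g|_C$ does not extend to a map $X\to\partial[0,1]^k$. Rescaling and uniformly approximating $g$ by Lipschitz maps (via McShane extension of a Lipschitz approximation on a finite net), I may take $g\in\Lip_b^1(X,\setR^k)$ with image in a small cube $Q$. The key property is robustness: essentiality is stable, so every $f$ with $\dist_\infty(f,g)$ smaller than a fixed $\delta$ covers a fixed smaller concentric cube $Q'$. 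The reason is that if $f$ missed a point of $Q'$, radial projection from that point would give a map into $\partial Q$ extending a map homotopic to $g|_C$. This contradicts essentiality via the homotopy extension property for maps into $\partial Q\simeq S^{k-1}$. Consequently $\haus^k(f(X))\ge\haus^k(Q')>0$ on a whole $\dist_\infty$-ball, which contradicts density. This robustness/covering step is the main obstacle. In particular, the ANR homotopy-extension argument must be set up so that it gives genuine surjectivity onto $Q'$ rather than just nontriviality of a map.

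For the ``in particular'' statement, I apply the first part locally. Fix a nonempty open $U\subseteq X$ and a point $x\in U$. Choose $r>0$ with $\overline B(x,r)\subseteq U$; the set $\overline{B}(x,r)$ is compact. It contains the open ball $B(x,r)$, which has topological dimension $k$ by hypothesis. Monotonicity of dimension in separable metric spaces then bounds $\dim\overline B(x,r)$ below by $k$ and above by $\dim X= k$. Hence $\overline B(x,r)$ is a compact space of dimension $k$ with finite $\haus^k$-measure, and the first part yields a rectifiable $E_U\subseteq\overline B(x,r)$ with $\haus^k(E_U)>0$.

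Finally, since $X$ is compact and therefore second countable, I take a countable base $\{U_m\}$ and set $E:=\bigcup_m E_{U_m}$. A countable union of rectifiable sets is rectifiable: the chart images can be made disjoint by subtracting previously covered parts and then re-extracting compact pieces by inner regularity. The set $E$ is dense, and every nonempty open set $V$ contains some $U_m$ and hence satisfies $\haus^k(E\cap V)\ge\haus^k(E_{U_m})>0$, which gives the locally positive measure.
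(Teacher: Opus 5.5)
Your argument is correct and follows the same route as the references the paper cites for this statement (David--Le Donne, Theorem~1.1, and Bate, Theorem~1.7). In both, topological dimension $k$ gives an essential (equivalently, stable-value) map to a cube, so every map in a uniform neighbourhood of a rescaled $1$-Lipschitz approximation covers a fixed cube, which contradicts the density of maps with $\haus^k(f(X))=0$ given by the perturbation theorem. The paper states the result without proof, and your countable-base argument correctly supplies the ``in particular'' part.

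The only point to state carefully is that the covering robustness is proved for the continuous essential map (call it $g_0$, with $g_0(C)\subseteq\partial Q$). It then passes to its Lipschitz approximation by the triangle inequality, since that approximation need not map $C$ into $\partial Q$ itself.
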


        Every separable and locally compact metric space with locally finite $\haus^k$ measure admits a unique a.e. decomposition into a rectifiable and a purely unrectifiable part.      
        \begin{lemma}\label{lemma: rect-part}
            Let $(X,\dist)$ be separable and locally compact. If $X$ has locally finite $\haus^k$-measure, then there exist $\haus^k$-measurable subsets $E,P \subseteq X$ such that $E$ is $k$-rectifiable, $P$ is purely $k$-unrectifiable and
            $$X = E \cup P.$$
        \end{lemma}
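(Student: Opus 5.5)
We will obtain $E$ by a maximization over rectifiable sets, working first inside a compact set and then exhausting $X$. Since $X$ is separable and locally compact, it is $\sigma$-compact. We therefore fix compact sets $C_1\subseteq C_2\subseteq\cdots$ with $X=\bigcup_\ell C_\ell$ and $\haus^k(C_\ell)<\infty$; the last property follows from local finiteness of $\haus^k$ and compactness. It suffices to construct, for each $\ell$, a $k$-rectifiable set $E_\ell\subseteq C_\ell$ such that $C_\ell\setminus E_\ell$ is purely $k$-unrectifiable. Indeed, once this is done we set $E:=\bigcup_\ell E_\ell$, a countable union of $k$-rectifiable sets and hence $k$-rectifiable in the sense of \eqref{eq: def-rect}. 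The bi-Lipschitz pieces can be made pairwise disjoint by successively removing the earlier images; since these images are compact, each difference remains a countable union of compact sets. We then set $P:=X\setminus E$, which is $\haus^k$-measurable. To see that $P$ is purely unrectifiable, let $F$ be $k$-rectifiable. Then $P\cap F\cap C_\ell\subseteq (C_\ell\setminus E_\ell)\cap F$, which is $\haus^k$-null, and summing over $\ell$ gives $\haus^k(P\cap F)=0$.

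For a fixed compact $C=C_\ell$, we define
\[
 s:=\sup\{\haus^k(F): F\subseteq C \text{ is } k\text{-rectifiable}\}\le \haus^k(C)<\infty.
\]
We choose $k$-rectifiable sets $F_m\subseteq C$ with $\haus^k(F_m)\to s$ and put $E_C:=\bigcup_m F_m$. This set is rectifiable and satisfies $\haus^k(E_C)=s$, so the supremum is attained. Now suppose $F$ is $k$-rectifiable. Then $F\cap C$ is also rectifiable, because we may intersect the images $\varphi_i(K_i)$ with the compact set $C$ and restrict $\varphi_i$ to the compact sets $\varphi_i^{-1}(C)$. Consequently $E_C\cup (F\cap C\setminus E_C)$ is a rectifiable subset of $C$ of measure $s+\haus^k((F\cap C)\setminus E_C)$. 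Maximality of $s$ forces $\haus^k((F\cap C)\setminus E_C)=0$, which is exactly the statement that $C\setminus E_C$ is purely $k$-unrectifiable.

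The main point requiring care is measurability together with the closure of the class of rectifiable sets under the operations used above. We handle this by always choosing, up to a null set, a representative $E_C$ given as a countable union of compact bi-Lipschitz images $\varphi_i(K_i)$, which is Borel. The definition \eqref{eq: def-rect} permits such modifications by $\haus^k$-null sets. Subsets of rectifiable sets that are $\haus^k$-measurable are again rectifiable, since the covering condition in \eqref{eq: def-rect} passes to them. Finiteness of $\haus^k(C)$ is what makes the supremum argument work; this is why we localize to compact sets before exhausting $X$.
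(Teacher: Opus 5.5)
Your proof is correct and follows the same route as the paper: exhaust $X$ by countably many compact sets of finite $\haus^k$-measure, decompose each one, and take unions. The paper cites Lemma 2.2 of \cite{marti2025lipschitz-rigidity} for the finite-measure decomposition, whereas you prove it directly by the standard supremum argument. You also set $P=X\setminus E$, which makes the decomposition disjoint, while the paper takes $P=\bigcup_i P_i$.
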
 

        Each decomposition as in the lemma is unique up to a set of $\haus^k$-measure zero. Any set $E$ as in the statement is called \emph{the rectifiable part} of $X$.

        \begin{proof}
            Since $X$ is separable and locally compact, there exists a sequence of compact sets $K_i \subseteq X$ such that $K_i \subseteq \textup{int}(K_{i+1})$ and $X = \bigcup_{i\in \setN} K_i$. Each $K_i$ has finite Hausdorff $k$-measure, by assumption. Thus, \cite[Lemma 2.2]{marti2025lipschitz-rigidity} implies that for each $i \in \setN$ there exist $\haus^k$-measurable sets $E_i,P_i \subseteq K_i$ such that $E_i$ is $k$-rectifiable, $P_i$ is purely $k$-unrectifiable and $K_i = E_i \cup P_i.$ It follows that $E = \bigcup_{i\in \setN} E_i$ and $P = \bigcup_{i\in \setN} P_i$ satisfy the required properties.
        \end{proof}

    \subsection{Integer rectifiable currents}

        Let us assume again that $(X,\dist)$ is a locally compact metric space.
        Integer rectifiable currents are defined using the standard example \eqref{eq:std-current} as a   building block. We use the following operative definition; see \cite[Theorem 8.3]{Lang}.
    
    \begin{definition}\label{def: loc-integer-rect}
        A current $T \in \bM_{k,\loc}(X)$ is a locally integer rectifiable current if and only if there exist countably many compact sets $K_i \subseteq \setR^k$, functions $\theta_i \in L^1(\setR^k,\mathbb{Z})$ with $\spt (\theta_i) \subseteq K_i$, and bi-Lipschitz maps $\varphi_i \colon K_i \to X$ such that
        \begin{equation}\label{eq:def-integer-rect-current}
            T = \sum_{i\in \setN} \varphi_{i\#}\br{\theta_i} \quad \textup{and} \quad \norm{T}(B) = \sum_{i\in \setN} \norm{\varphi_{i\#}\br{\theta_i}}(B)
        \end{equation}
        for all Borel sets $B \subseteq X$. Any such collection $\{\varphi_i,K_i,\theta_i\}$ is called a \textit{parametrization} of $T$. 
        The space of locally integer rectifiable $k$-currents in $X$ is denoted by $\mathcal{I}_{k,\loc}(X)$. 
    \end{definition}

    \begin{definition}[Locally integral currents]
    \label{defin:locally_integral_currents}
        We say $T \in \mathcal{I}_{k,\loc}(X)$ is a locally integral current if $\partial T\in \mathcal{I}_{k-1,\loc}(X)$. We write $\bI_{k,\loc}(X)$ for the space of locally integral currents in $X$.
    \end{definition}
        
    By the boundary rectifiability theorem \cite[Theorem 8.7]{Lang}, $\bI_{k,\loc}(X)= \mathcal{I}_{k,\loc}(X) \cap \bN_{k,\loc}(X)$. A locally integral $k$-current $T \in \bI_{k,\loc}(X)$ is said to be a \emph{$k$-cycle} if $\partial T = 0$. The most important examples of integral currents are given by Riemannian manifolds. 
    
    \begin{definition}\label{def: fund-class-riemann}
        Let $M$ be a compact, connected, oriented Riemannian $k$-manifold. Then 
        \[
        \br{M}(f,\pi) = \int_Mf\det(D\pi)\,d\hspace{-0.14em}\haus^k
        \]
        for all \((f, \pi)\in \mathcal{D}^k(M)\), defines an integral $k$-current on $M$. When $M$ has no boundary, Stokes' theorem implies that $\partial \br{M}=0$. Moreover, the mass measure of $\br{M}$ is equal to the Hausdorff $k$-measure on $M$. We call $\br{M}$ the fundamental class of $M$.
    \end{definition}

    Let $\mathcal{I}_k(X) = \mathcal{I}_{k,\loc}(X) \cap \bM_k(X)$ and $\bI_k(X) = \mathcal{I}_{k,\loc}(X) \cap \bN_k(X) $. We call $\mathcal{I}_k(X)$ and $\bI_k(X)$ the space of integer rectifiable currents and integral currents, respectively. There is also a notion of integer rectifiable currents and integral currents for AK currents that coincides with our notion of integer rectifiable currents whenever the underlying space is complete. Indeed, let $T \in \mathcal{I}_k(X)$ and assume that $X$ is complete. Then, $T\in \bM_k^\mathrm{AK}(X)$. Thus, combining Definition \ref{def: loc-integer-rect} with \cite[Theorem 4.5]{AmbrosioKirchheim00} yields that $T$ induces an integer rectifiable current in the sense of Ambrosio--Kirchheim. An analogous arguments holds in case $T \in \bI_k(X)$.

    Now, let $X$ be locally compact and separable. Given $T\in \mathcal{I}_{k,\loc}(X)$, we define the \textit{characteristic set of $T$} by
    $$\set(T) = \left\{ x\in X \,:\, \Theta_k(x,\norm{T})>0\right\}.
    $$
    Suppose that $\bM(T)<\infty $ and $\spt(T)$ is complete. By the above, $T$ is a integer rectifiable current in the sense of AK. Therefore, \cite[Theorem 4.6]{AmbrosioKirchheim00} implies that $E = \set(T)$ is $k$-rectifiable. Moreover, by \cite[Theorem 9.5]{AmbrosioKirchheim00} (see also \cite[Theorem 9.5]{AmbrosioKirchheim00} and the preceding paragraph), there exist $\lambda_E \in L^\infty(\norm{T})$ and $\theta\in L^1(\norm{T})$ such that 
    \begin{equation}\label{eq: represnt-mass-measure-integer-AK}
        \norm{T}= |\theta| \lambda_E \haus^k \rstr E.
    \end{equation}
    The function $\theta$ is defined as follows. Let $\{\varphi_i,K_i,\theta_i\}$ be a parametrization of $T$. Then
    \begin{equation}\label{eq: def-theta}
        \theta := \sum_{i \in \setN} \theta_i \circ \varphi_i^{-1}.    
    \end{equation}
    On the other hand, the function $\lambda_E$ depends only on $E$. More precisely, for $\haus^k$-almost all $x \in E$, the rectifiable set $E$ has an approximate tangent plane $\textup{Tan}(E,x)$, which is a $k$-dimensional normed plane; see \cite[Section 9]{AmbrosioKirchheim00} and \cite{ambrosio-kirchheim-rect}. Note that $\textup{Tan}(E,x)$ is defined using an isometric embedding into $\ell^\infty$. However, for $\haus^k$-almost all $x \in E$, the approximate tangent planes are intrinsic and do not depend on the embedding. The function $\lambda_E$ at $x$ is defined to be the area factor of  $\textup{Tan}(E,x)$ and we will call it the \emph{tangential function} of $E$. We summarize some of its properties.
    \begin{lemma}
        \label{lemma:tangential_function}
        Let $E\subseteq X$ be a $k$-rectifiable set. If $F \subseteq E$ is $k$-rectifiable then $\textup{Tan}(F,x) = \textup{Tan}(E,x)$, and so $\lambda_F(x) = \lambda_E(x)$, for $\haus^k$-almost all $x \in F$. Moreover, $k^{-k/2} \leq \lambda_E \leq 2^k/\omega_k$. If $\textup{Tan}(E,x)$ exists and is isometric to Euclidean $\setR^k$, then $\lambda_E(x) = 1$. Finally, for $\haus^k$-a.e. $x\in E$, the measured Gromov-Hausdorff tangent to $E$ at $x$ is unique and isometric to $\textup{Tan}(E,x)$.
    \end{lemma}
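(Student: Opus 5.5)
The claims are essentially properties of approximate tangent planes of rectifiable sets in the sense of Ambrosio--Kirchheim, and I would prove them by working in a fixed isometric embedding $\iota\colon X\to\ell^\infty$ and checking each assertion in turn.

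\emph{Locality.} I would start from the characterization of $\textup{Tan}(E,x)$ via a parametrization. By Lemma~\ref{lemma: lip-bilip-decomp}, write $E$ up to a null set as a disjoint union of bi-Lipschitz images $\varphi_i(K_i)$. For $x=\varphi_i(y)$ with $y$ a density point of $K_i$ at which $\iota\circ\varphi_i$ is metrically differentiable, set $\textup{Tan}(E,x)=(\setR^k,\mathrm{md}_y\varphi_i)$, as in \cite[Section 9]{AmbrosioKirchheim00}. For $F\subseteq E$ rectifiable, I would parametrize $F$ by charts $\phi_j$ and intersect with the charts of $E$. On $\phi_j^{-1}(\varphi_i(K_i))$ the transition map $\varphi_i^{-1}\circ\phi_j$ is bi-Lipschitz between subsets of $\setR^k$, hence differentiable almost everywhere by Rademacher after Lipschitz extension. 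The chain rule for metric differentials at common density points then gives isometric seminorms. The null sets discarded are $\haus^k$-null because bi-Lipschitz maps preserve null sets. This yields $\textup{Tan}(F,x)=\textup{Tan}(E,x)$ almost everywhere, and hence $\lambda_F=\lambda_E$.

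\emph{Bounds on $\lambda_E$.} The area factor of a $k$-dimensional normed space $V$ is $\omega_k/\haus^k_V(B_V)$ in AK's convention, i.e.\ the Jacobian relative to Lebesgue measure of a Euclidean reference. The bounds then reduce to a purely finite-dimensional estimate. I would apply John's theorem: $B_E\subseteq B_V\subseteq \sqrt{k}\,B_E$ for a suitable ellipsoid $B_E$. Comparing $\haus^k_V$ with the Euclidean measure through the isodiametric inequality and Kirchheim's area factor estimates gives $k^{-k/2}\le\lambda\le 2^k/\omega_k$. I would simply cite \cite[Lemma 9.2]{AmbrosioKirchheim00} or \cite{ambrosio-kirchheim-rect}, where this normalization appears. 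If $V$ is Euclidean, then $\haus^k_V(B_V)=\omega_k$ with the normalization \eqref{eq:defin_Hausdorff_measure}, so $\lambda=1$.

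\emph{Tangents.} For the last claim I would combine Kirchheim's blow-up theorem \cite[Theorem 9]{kirchheim94} with Lemma~\ref{lemma:density_points_rectifiable_space}. At almost every $x=\varphi_i(y)$ with $y$ a Lebesgue density point of $K_i$, the rescalings $(E,r^{-1}\dist,x)$ converge in pointed Gromov--Hausdorff sense to $(\setR^k,\mathrm{md}_y\varphi_i)$. This holds because $\dist(\varphi_i(y+rv),\varphi_i(y+rw))=r\,\mathrm{md}_y\varphi_i(v-w)+o(r)$ uniformly on compact sets, once restricted to the density set, and density one fills the gaps. The same estimate, together with the local finiteness of $\haus^k\rstr E$ and the fact that the other charts have density zero at $x$, gives convergence of the rescaled measures to $\lambda_E(x)\haus^k$ on $V$ (up to the normalization used). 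This identifies the unique measured tangent with $\textup{Tan}(E,x)$.

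\emph{Main obstacle.} The step I expect to be delicate is the measured convergence. It requires density-one control showing that points of $E$ outside $\varphi_i(K_i)$ contribute $o(r^k)$ mass in $B(x,r)$. I would handle this with Lemma~\ref{lemma:density_points_rectifiable_space} applied to $E\setminus\varphi_i(K_i)$, which has density zero at almost every point of $\varphi_i(K_i)$.
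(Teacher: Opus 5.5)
Your locality argument (transition maps, Rademacher, chain rule for metric differentials at common density points) is fine; it is essentially the content of \cite[Lemma 5.6]{ambrosio-kirchheim-rect}, which the paper cites.

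The description of the area factor, however, is wrong. With the normalization \eqref{eq:defin_Hausdorff_measure}, the isodiametric inequality holds in every $k$-dimensional normed space $V$ (Brunn--Minkowski applied to $\tfrac12(A-A)\subseteq\tfrac{\operatorname{diam}A}{2}B_V$). Hence $\haus^k_V=\frac{\omega_k}{\Leb^k(B_V)}\Leb^k$ and $\haus^k_V(B_V)=\omega_k$ for \emph{every} $V$. Your formula $\omega_k/\haus^k_V(B_V)$ is therefore identically $1$. The Ambrosio--Kirchheim factor is instead
\[
 \lambda_V=\frac{2^k}{\omega_k}\sup\Bigl\{\frac{\Leb^k(B_V)}{\Leb^k(R)}:\ R\supseteq B_V\ \text{a parallelepiped}\Bigr\},
\]
equivalently $\omega_k^{-1}\Leb^k(B_V)\sup\{|\det(\ell_1,\ldots,\ell_k)|:\ell_i\in B_{V^*}\}$. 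For $V=\ell^\infty_k$ it equals the upper bound $2^k/\omega_k\neq1$. Consequently your proof that $\lambda_E(x)=1$ in the Euclidean case is vacuous, since the same reasoning would apply to every $V$. Your John-ellipsoid sketch also cannot produce the two bounds from that formula. With the correct definition, the bounds follow directly:
\begin{itemize}
\item the upper bound is just $\Leb^k(B_V)\le\Leb^k(R)$;
\item the lower bound takes $R$ to be the cube of side $2\sqrt k$ circumscribing $\sqrt k\,\mathcal E$, where $\mathcal E\subseteq B_V\subseteq\sqrt k\,\mathcal E$ is John's ellipsoid, made round;
\item the Euclidean case is Hadamard's inequality $|\det(\ell_1,\ldots,\ell_k)|\le1$ for unit covectors.
\end{itemize}
Alternatively, simply cite \cite[Lemma 9.2]{AmbrosioKirchheim00} for all of this, as the paper does.

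In the tangent step, the claim that $(E,r^{-1}\dist,x)$ converges in the pointed Gromov--Hausdorff sense is false in general. $E$ may contain pieces of density zero that remain metrically visible at every scale. For example, take the unit disk in $\setR^2\times\{0\}\subseteq\setR^3$ together with unit vertical segments over a countable dense subset of it. This set is $2$-rectifiable, yet at every point of the disk the Gromov--Hausdorff tangent of the set is a half-space. Density one of $K_i$ fills gaps inside the chart, but gives no control on points of $E$ at distance comparable to $r$ from $\varphi_i(K_i)$.

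The lemma only concerns the measured tangent, whose underlying space is the support of the limit measure. So you must argue purely at the level of measures: blow up $\varphi_i(K_i)$ using metric differentiability, and show that $E\setminus\varphi_i(K_i)$ has vanishing normalized mass. This is what \cite[Theorem 6.6]{bate-tangents}, cited by the paper, provides.

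Finally, the limit of $r^{-k}\haus^k\rstr E$ is the Hausdorff measure of the tangent normed plane, not $\lambda_E(x)\haus^k$. Otherwise $\Theta^k(x,\haus^k\rstr E)$ would equal $\lambda_E(x)$, contradicting Lemma~\ref{lemma:density_points_rectifiable_space} whenever $\lambda_E(x)\neq1$. The factor $\lambda_E$ only converts $\haus^k$ into the mass measure of a current.
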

    \begin{proof}
        The first part of the statement follows from \cite[Lemma 5.6]{ambrosio-kirchheim-rect}, while the estimates on the tangential function, see \cite[Lemma 9.2]{AmbrosioKirchheim00}. The next part of the statement is \cite[Lemma 9.2]{AmbrosioKirchheim00}. The last one is \cite[Theorem 6.6]{bate-tangents}.
    \end{proof}
        
    We prove an analogous representation formula for locally integer rectifiable currents.

    \begin{lemma}\label{lemma: charact-set}
        Let $(X,\dist)$ be a locally compact and separable metric space and let $T\in \mathcal{I}_{k,\loc}(X)$. Then $E=\set(T)$ is $k$-rectifiable and
        \begin{equation}\label{eq:mass-meaure-rect-current}
            \norm{T} = |\theta| \lambda_E \haus^k\rstr \set(T),
        \end{equation}
        where $\theta  \in L_{\loc}^1(X,\haus^k)$ is defined as in \eqref{eq: def-theta}.
    \end{lemma}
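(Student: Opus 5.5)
The plan is to reduce to the finite-mass, complete-support case treated just above the statement, by localizing on an exhaustion. Since $X$ is separable and locally compact, fix compact sets $K_j\subseteq X$ with $K_j\subseteq \textup{int}(K_{j+1})$ and $X=\bigcup_j K_j$. Set $U_j:=\textup{int}(K_{j+1})$ and $T_j:=T\rstr \overline{U_j}$. This is a current of finite mass, since $\norm{T}(\overline{U_j})\le\norm{T}(K_{j+1})<\infty$.

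The first step is to check that $T_j$ is integer rectifiable. Take a parametrization $\{\varphi_i,K_i,\theta_i\}$ of $T$. Then $T_j=\sum_i \varphi_{i\#}\br{\theta_i\chi_{\varphi_i^{-1}(\overline{U_j})}}$, using \eqref{eq:borel-restriction_vs_pushforward} termwise. The mass identity in \eqref{eq:def-integer-rect-current} is inherited, which also gives the $L^1$-summability needed for the series to converge in mass.

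Next, to apply the Ambrosio--Kirchheim representation \eqref{eq: represnt-mass-measure-integer-AK}, I need $\spt(T_j)$ to be complete. It is, because $\spt(T_j)\subseteq\overline{U_j}\subseteq K_{j+1}$ is compact. More precisely, I would view $T_j$ as a current in the compact space $K_{j+1}$ via Proposition~\ref{prop:restriction_of_current_to_open_subsets}, or simply in the completion of $X$. This yields
\[
\set(T_j)\ \text{is } k\text{-rectifiable}, \qquad \norm{T_j}=|\theta^{(j)}|\lambda_{\set(T_j)}\haus^k\rstr\set(T_j),
\]
where $\theta^{(j)}$ is given by \eqref{eq: def-theta} from the restricted parametrization. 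Hence $\theta^{(j)}=\theta\chi_{\overline{U_j}}$ $\haus^k$-a.e. on the images.

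The third step is to compare the characteristic sets and glue. For $x\in U_j$ (open), small balls around $x$ lie in $U_j$, so $\norm{T_j}(B(x,r))=\norm{T}(B(x,r))$. Therefore
\[
\set(T)\cap U_j=\set(T_j)\cap U_j,
\]
and each $\set(T)\cap U_j$ is $k$-rectifiable. Since $X=\bigcup_j U_j$, the set $E=\set(T)$ is a countable union of $k$-rectifiable sets, hence rectifiable. One should also note that $\set(T)$ is Borel/$\haus^k$-measurable, since lower densities of a Radon measure are Borel.

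On each $U_j$ we have $\norm{T}\rstr U_j=\norm{T_j}\rstr U_j$. By Lemma~\ref{lemma:tangential_function}, $\lambda_{\set(T_j)}=\lambda_E$ a.e. on $\set(T)\cap U_j$, since both sets are rectifiable and agree there. So \eqref{eq:mass-meaure-rect-current} holds on every $U_j$, and hence on $X$. Local integrability of $\theta$ then follows from $\lambda_E\ge k^{-k/2}$ and the local finiteness of $\norm{T}$.

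The main obstacle is the legitimacy of invoking the Ambrosio--Kirchheim results for $T_j$: its ambient space $X$ need not be complete. I would handle this by pushing $T_j$ forward under the isometric inclusion into the compact set $K_{j+1}$ (or into $\ell^\infty$), using \eqref{eq:borel-restriction_vs_pushforward} and Proposition~\ref{prop:restriction_of_current_to_open_subsets}(iii). I would then check that densities, the tangent planes, and the parametrization all transfer unchanged, since the inclusion is isometric.
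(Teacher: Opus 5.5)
Your proposal is correct and follows essentially the paper's route. Like the paper, you exhaust $X$ by compact sets, restrict the parametrization of $T$ to each piece, apply the Ambrosio--Kirchheim representation to the resulting finite-mass, compactly supported current, and glue using Lemma~\ref{lemma:tangential_function}. Two minor differences work in your favour. Restricting to $\overline{U_j}$ and comparing lower densities only on the open set $U_j$ gives the exact identity $\set(T)\cap U_j=\set(T_j)\cap U_j$, which is slightly cleaner than the paper's a.e.\ comparison via \cite[2.10.19(4)]{Federer}. Your explicit passage to the compact set $K_{j+1}$ also handles the completeness requirement that the paper leaves implicit.
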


    \begin{proof}
        Since $X$ is separable and locally compact, there exists a sequence of compact sets $K_i \subseteq X$ such that $K_i \subseteq \textup{int}(K_{i+1})$ and $X = \bigcup_{n\in \setN} K_i$. We can easily find a parametrization $\{\varphi_i^j,K_i^j,\theta_i^j\}_{i,j}$ of $T$ that is adapted to $K_i$, that is, $\varphi_i^j(K_i^j)\subseteq K_i$ for all $i,j\in \setN$. In other words, $\{\varphi_i^j,K_i^j,\theta_i^j\}_{j\in \setN}$ is a parametrization for $T_i := T\rstr K_i$. For every $i \in \setN$, the current $T_i$ is an integer rectifiable current in the sense of AK. Hence, as each $K_i$ is compact, 
        \begin{equation}\label{eq: quality-mass-measure-integer-rect-currents}
            \norm{T}\rstr K_i \stackrel{\eqref{eq:mass_of_restriction}}{=} \norm{T\rstr K_i} = \norm{T_i} = |\theta_i| \lambda_{E_i} \haus^k\rstr E_i 
        \end{equation}
        where $E_i = \set(T_i)$ and $\theta_i$ is defined as in \eqref{eq: def-theta} with respect to the parametrization $\{\varphi_i^j,K_i^j,\theta_i^j\}_{j\in \setN}$. We first show that, up to a set of $\norm{T}$-measure zero, 
        \begin{equation}\label{eq: equality-char-sets}
            E_i = \set(T_i) = \set(T\rstr K_i) = \set(T) \cap K_i.
        \end{equation}
        Since $X$ is separable and locally compact and $\norm{T}$ is locally finite, there exists a countable open cover $\{U_l\}_l$ of $X$ such that $\norm{T}(U_l)<\infty$ for all $l\in \setN$. It follows from \cite[2.10.19(4)]{Federer} that for all $l\in \setN$ and $\norm{T}$-almost all $x \in K_i \cap U_l$
        \begin{align*}
           \liminf_{r \to 0}\frac{\norm{T}(B(x,r)\setminus K_i)}{\omega_k r^k}&= \lim_{r\to 0}\frac{\norm{T}(B(x,r)\setminus K_i)}{\omega_k r^k}
           \\
           &=\lim_{r \to 0}\frac{\norm{T}((B(x,r) \cap  U_l)\setminus K_i)}{\omega_k r^k} = 0. 
        \end{align*}
        The second equality follows from the fact that $U_l$ is open. Therefore, for $\norm{T}$-almost all $x\in \set(T) \cap K_i$ we have
        \begin{align*}
            \liminf_{r \to 0} \frac{\norm{T}(B(x,r))}{\omega_k r^k} &= \liminf_{r \to 0} \frac{\norm{T}(B(x,r)\cap K_i)}{\omega_k r^k} +\lim_{r \to 0} \frac{\norm{T}(B(x,r)\setminus K_i)}{\omega_k r^k}
            \\
            &=\liminf_{r \to 0} \frac{\norm{T}(B(x,r)\cap K_i)}{\omega_k r^k}.
        \end{align*}
        In particular,  
        $$\liminf_{r \to 0} \frac{\norm{T}(B(x,r))}{\omega_k r^k} = \liminf_{r \to 0} \frac{\norm{T\rstr K_i}(B(x,r))}{\omega_k r^k} = \liminf_{r \to 0} \frac{\norm{T_i}(B(x,r))}{\omega_k r^k} $$
        for $\norm{T}$-almost all $x\in K_i$. This, together with \eqref{eq: quality-mass-measure-integer-rect-currents}, proves \eqref{eq: equality-char-sets}. We conclude that $E = \set(T) = \bigcup_i \set(T) \cap E_i$ is $k$-rectifiable. By Lemma \ref{lemma:tangential_function} we have $\lambda_E(x) = \lambda_{E_i}(x)$ for $\haus^k$-almost all $x \in E_i$. Furthermore, by our choice of the parametrization $\{\varphi_i^j,K_i^j,\theta_i^j\}_{i,j}$, it holds that $\theta = \theta_i$ on $E_i$. We conclude that 
        $$\norm{T\rstr E_i} = |\theta| \lambda_E \haus^k\rstr K_i$$
        for all $i \in \setN$. This implies that 
        $$\norm{T} = |\theta| \lambda_E \haus^k\rstr \set(T)$$
        and completes the proof. 
    \end{proof}

    \subsection{Restriction and extension of integer rectifiable currents}

    Restrictions and extensions of locally integer rectifiable currents are locally integer rectifiable. 

    \begin{proposition}
    \label{prop:extension_restriction_integral_currents}
        Let $(X,\dist)$ be a separable, locally compact metric space and let $B \subseteq X$ be locally compact.
        \begin{itemize}
            \item[(i)] If $T\in \mathcal{I}_{k,\loc}(X)$, then $T|_B \in \mathcal{I}_{k,\loc}(B)$.
            \item[(ii)] If $T\in \mathcal{I}_{k,\loc}(B)$ and $\norm{T}(B\cap K)<\infty$ for every compact $K\subseteq X$, then $T^X \in \mathcal{I}_{k,\loc}(X)$.
        \end{itemize}
    \end{proposition}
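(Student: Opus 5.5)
The plan is to work directly with parametrizations as in Definition~\ref{def: loc-integer-rect}, restricting or extending each building block $\varphi_{i\#}\br{\theta_i}$ and checking that the mass identity in \eqref{eq:def-integer-rect-current} survives.

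For (i), let $\{\varphi_i,K_i,\theta_i\}$ be a parametrization of $T$. Since $B$ is locally compact, it is the intersection of an open and a closed set, hence Borel. Therefore $C_i:=\varphi_i^{-1}(B)\subseteq K_i$ is Borel, and by inner regularity of $\Leb^k$ it can be written, up to a null set, as a countable disjoint union of compact sets $C_{i,l}$. Set $\theta_{i,l}:=\theta_i\chi_{C_{i,l}}$ and $\varphi_{i,l}:=\varphi_i|_{C_{i,l}}$; the latter is a bi-Lipschitz map into $B$. The first step is to check that
\[
T|_B=\sum_{i,l}(\varphi_{i,l})_\#\br{\theta_{i,l}}.
\]
Test against $(f,\pi)\in\Lip_c(B)\times\Lip(B)^k$ with Lipschitz extensions $f',\pi'$. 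By the definition of $T|_B$ and \eqref{eq:borel-restriction_vs_pushforward}, $T\rstr B(f',\pi')=\sum_i\varphi_{i\#}\br{\theta_i\chi_{C_i}}(f',\pi')$. Here the sum converges absolutely, by the mass identity and dominated convergence for the Borel extension of \cite[Theorem 4.4]{Lang}. Each term depends only on the values of $f',\pi'$ on $\varphi_i(C_{i,l})\subseteq B$, so it equals the corresponding term of the right-hand side. The mass identity then follows from Proposition~\ref{prop:restriction_of_current_to_open_subsets}(i), namely $\norm{T|_B}=\norm{T}\rstr B=\sum_i\norm{\varphi_{i\#}\br{\theta_i}}\rstr B$, together with the fact that $\norm{(\varphi_{i,l})_\#\br{\theta_{i,l}}}=\norm{\varphi_{i\#}\br{\theta_i}}\rstr\varphi_i(C_{i,l})$. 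Finally, $T|_B\in\bM_{k,\loc}(B)$ by the same proposition.

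For (ii), take a parametrization $\{\varphi_i,K_i,\theta_i\}$ of $T$ in $B$ and let $\iota\colon B\to X$ be the inclusion. The maps $\iota\circ\varphi_i\colon K_i\to X$ are bi-Lipschitz, and $K_i$ is compact. The claim is that
\[
T^X=\sum_i(\iota\circ\varphi_i)_\#\br{\theta_i}.
\]
Evaluate \eqref{eq:defin_T^X} termwise: for $(f,\pi)\in\mathcal D^k(X)$ we have $T(\psi_jf|_B,\pi|_B)=\sum_i\br{\theta_i}(\psi_j\circ\varphi_i\,f\circ\varphi_i,\pi\circ\varphi_i)$. As $j\to\infty$, $\psi_j\to1$ pointwise on $B$. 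Dominated convergence, justified by $\sum_i\norm{\varphi_{i\#}\br{\theta_i}}(\spt f\cap B)=\norm{T}(\spt f\cap B)<\infty$, lets us interchange the limit and the sum. The mass identity comes from Corollary~\ref{cor:extension_vs_restriction}(i), which gives $\norm{T^X}\rstr B=\norm{T}$, combined with $\norm{T^X}(X\setminus B)=0$. The last fact holds because $\norm{T^X}$ is dominated by the pushforward of $\norm{T}$; this can be seen from the mass estimate in the proof of Lemma~\ref{lemma:extension-of-currents}, applied with $V$ disjoint from a compact subset of $B$ and using outer regularity.

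The main obstacle is justifying the interchange of infinite sums with restriction and limits, together with the exact additivity of masses, since currents are only locally of finite mass. I would handle this by localizing to the compact $\spt f$, where all the masses involved are finite and the series of measures converges in total variation. It may also be necessary to recall that the definition in \cite[Theorem 8.3]{Lang} allows Borel rather than compact domains, after which one reduces to compact domains by inner regularity.
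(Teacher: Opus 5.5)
Your proposal is correct and follows the same route as the paper. You restrict (resp.\ extend) each building block $\varphi_{i\#}\br{\theta_i}$ of a parametrization and transfer the mass identity via Proposition~\ref{prop:restriction_of_current_to_open_subsets}(i) (resp.\ Corollary~\ref{cor:extension_vs_restriction}(i)). Your version is more careful than the paper's sketch in two places: you decompose the possibly non-compact domains $\varphi_i^{-1}(B)$ into compact pieces up to null sets, and you justify exchanging the countable sum with the Borel restriction and with the limit in \eqref{eq:defin_T^X} by dominated convergence.
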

    \begin{proof}
        Let $T\in \mathcal{I}_{k,\loc}(X)$, so that $T = \sum_i \varphi_{i\#}\br{\theta_i}$ and $\norm{T} = \sum_i \norm{\varphi_{i\#}\br{\theta_i}}$. By linearity of the restriction operator, $T|_B = \sum_i (\varphi_{i\#}\br{\theta_i})|_B$. For  each $i$, by the definition of $\br{\theta_i}$, we have that $$(\varphi_{i\#}\br{\theta_i})|_B \stackrel{\eqref{eq:restriction_vs_pushforward}}{=} (\varphi_{i}|_{\varphi_i^{-1}(B)})_\#(\br{\theta_i}|_{\varphi_i^{-1}(B)}) = (\varphi_{i}|_{\varphi_i^{-1}(B)})_\#(\br{\theta_i|_{\varphi_i^{-1}(B)}}).$$
        Moreover, $\norm{(\varphi_{i\#}\br{\theta_i})|_B} = \norm{(\varphi_{i\#}\br{\theta_i})} \rstr B$ and $\norm{T|_B} = \norm{T}\rstr B$, by Proposition \ref{prop:restriction_of_current_to_open_subsets}.(i). Therefore, $T|_B$ is locally integer rectifiable.

        A similar proof applies to (ii), using the linearity of the extension operator, Lemma \ref{lemma:extension_vs_pushforward} and Corollary \ref{cor:extension_vs_restriction}.(i).
    \end{proof}
    
    \subsection{Slicing}\label{sec: slicing}
        We recall the slicing theory for locally normal $k$-currents; see \cite[Section 6]{Lang}. Let $T \in \bN_{k,\loc}(X)$ and $u \in \Lip_{\loc}(X)$. The slice of $T$ at $t\in \setR$ with respect to $u$ is denoted by $\langle T,u,t\rangle$. Suppose that $\spt(T)$ is separable. By \cite[Theorem 6.2]{Lang}, we have that for almost all $t\in \setR$, the slice $\langle T,u,t\rangle$ is well-defined, it is an element of $\bN_{k-1,\loc}(X)$ and $\spt\big(\langle T, u,t\rangle\big)\subseteq \pi^{-1}(t) \cap \spt(T)$. Moreover, for such $t$ we have
        \begin{equation}\label{eq: char-slice}
            \langle T,u,t\rangle = (\partial T) \rstr \{u \geq t\} - \partial (T \rstr \{u \geq t\}).
        \end{equation}
        Furthermore, if in addition $T\in \bI_{k,\loc}(X)$, then $\langle T, u,t\rangle \in \bI_{k-1,\loc}(X)$ for almost all $t\in \setR$. Indeed, It follows from \cite[Theorem 8.5]{Lang} that for almost all $t \in \setR$, we have $\langle T, u,t\rangle \in \mathcal{I}_{k-1,\loc}(X)$. Since for almost all $t$, we also have $\langle T,u,t\rangle \in \bN_{k-1,\loc}(X)$, it follows from \cite[Theorem 8.7]{Lang} that $\langle T, u,t\rangle \in \bI_{k-1,\loc}(X)$ for almost all $t$.

    \subsection{Homology via integral currents}
        Recall that $\partial T\in \bI_{k-1,\loc}(X)$ whenever $T \in \bI_{k,\loc}(X)$. Therefore, the boundary operator $\partial$ for integral currents maps $\bI_{k,\loc}(X)$ into $\bI_{k-1,\loc}(X)$ and in particular, 
        $$
        \dotsm \overset{\partial_{k+1,}}{\longrightarrow} \bI_{k,\loc }(X) \overset{\partial_{k}}{\longrightarrow} \bI_{k-1,\loc}(X) \overset{\partial_{k-1}}{\longrightarrow} \dotsm \overset{\partial_1}{\longrightarrow} \bI_{0,\loc}(X) 
        $$
        is a chain complex. We write $H_k^\textup{IC}(X)$ for the $k$-th homology group of this chain complex. Given a locally compact subset $A \subseteq X$, we let $$\bI_{k,\loc}(A) = \{ T\in \bI_{k,\loc}(X) \colon \spt(T) \subseteq A\}.$$ 
        Since $\spt(\partial T) \subseteq \spt(T)$, the boundary operator $\partial$ maps $\bI_{k,\loc }(A)$ into $\bI_{k-1,\loc }(A)$. Therefore, $\bI_{\bullet,\loc}(A)$ is a chain subcomplex of $I_{\bullet,\loc}(X)$. For $k \geq 0$, we define the relative chain groups 
        $$\bI_{k,\loc}(X,A) := \bI_{k,\loc}(X)/\bI_{k,\loc}(A).$$ The boundary operator descends to the quotient because of the previous observation. Therefore,
        $$
        \dotsm \overset{\partial_{k+1,}}{\longrightarrow} \bI_{k,\loc }(X,A) \overset{\partial_{k}}{\longrightarrow} \bI_{k-1,\loc}(X,A) \overset{\partial_{k-1}}{\longrightarrow} \dotsm \overset{\partial_1}{\longrightarrow} \bI_{0,\loc}(X,A) 
        $$
        is a chain complex and we denote by $H_k^\textup{IC}(X,A)$ its $k$-th homology group. 
        \begin{lemma}
        \label{lemma:relative_homology_dimension_n}
            Let $(X,\dist)$ be a locally compact metric space such that $\haus^{k+1}(X) = 0$ and let $A\subseteq X$ be locally compact. Then
        $$H_k^\textup{IC}(X) = \{ T \in \bI_{k,\loc}(X) \,:\, \partial T= 0\}$$
        and
        \begin{equation}
            \label{eq:top_dim_relative_homology}
            H_k^\textup{IC}(X,A) = \{ T \in \bI_{k,\loc}(X) \,:\, \spt( \partial T)\subseteq  A\}/ \bI_{k,\loc}(A).
        \end{equation}
        \end{lemma}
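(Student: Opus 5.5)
The plan is to show that every locally integral $(k+1)$-current in $X$ vanishes, i.e. $\bI_{k+1,\loc}(X)=\{0\}$ (and likewise $\bI_{k+1,\loc}(A)=\{0\}$). Both identities then follow by computing homology in degree $k$, where the chain group one degree up is trivial.

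First, I take $S\in\bI_{k+1,\loc}(X)$. Then $S\in\bM_{k+1,\loc}(X)$, so by Lemma~\ref{lemma:mass-measure-abso-cont} the mass measure $\norm{S}$ is absolutely continuous with respect to $\haus^{k+1}$. Since $\haus^{k+1}(X)=0$, we get $\norm{S}=0$. Using \eqref{eq:mass-measure-ineq}, this gives $S(f,\pi)=0$ for every $(f,\pi)\in\mathcal D^{k+1}(X)$, so $S=0$. Alternatively one can use the parametrization in Definition~\ref{def: loc-integer-rect}: each bi-Lipschitz image $\varphi_i(K_i)$ with $\Leb^{k+1}(K_i)>0$ would have positive $\haus^{k+1}$-measure, so all $\theta_i$ vanish almost everywhere. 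Since $\bI_{k+1,\loc}(A)\subseteq\bI_{k+1,\loc}(X)$, this subgroup is trivial as well.

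For the absolute statement, $H_k^{\mathrm{IC}}(X)=\ker\partial_k/\operatorname{im}\partial_{k+1}$. The image is $\partial(\bI_{k+1,\loc}(X))=\{0\}$, so $H_k^{\mathrm{IC}}(X)$ equals the group of cycles $\{T\in\bI_{k,\loc}(X):\partial T=0\}$.

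For the relative statement, a class in $\bI_{k,\loc}(X,A)$ is a coset $T+\bI_{k,\loc}(A)$. It is a relative cycle exactly when $\partial T\in\bI_{k-1,\loc}(A)$, that is, when $\spt(\partial T)\subseteq A$; here $\partial T$ is automatically locally integral. This condition is well defined on cosets, because $\spt(\partial R)\subseteq\spt(R)\subseteq A$ for every $R\in\bI_{k,\loc}(A)$. The relative boundaries are images of $\bI_{k+1,\loc}(X,A)=\bI_{k+1,\loc}(X)/\bI_{k+1,\loc}(A)=0$, so they are trivial. Hence $H_k^{\mathrm{IC}}(X,A)$ is the quotient of $\{T\in\bI_{k,\loc}(X):\spt(\partial T)\subseteq A\}$ by $\bI_{k,\loc}(A)$, which is \eqref{eq:top_dim_relative_homology}.

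For $k=0$, the convention $\partial=0$ on $0$-currents makes both statements trivially consistent. I expect no real obstacle. The only substantive input is the vanishing of $(k+1)$-currents of locally finite mass on an $\haus^{k+1}$-null space, which Lemma~\ref{lemma:mass-measure-abso-cont} supplies; the rest is bookkeeping with supports.
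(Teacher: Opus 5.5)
Your proposal is correct and follows the paper's own proof. The paper likewise uses Lemma~\ref{lemma:mass-measure-abso-cont} to get $\bI_{k+1,\loc}(X)=0$ and then says both identities ``readily follow''; you simply spell out that bookkeeping, namely the relative cycle condition $\spt(\partial T)\subseteq A$, its well-definedness on cosets, and the vanishing of relative boundaries.
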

        \begin{proof}
            It follows from Lemma \ref{lemma:mass-measure-abso-cont} that $\bI_{k+1,\loc}(X)=0$. The conclusions readily follow.
        \end{proof}

        This homology theory is the analytic counterpart of the singular homology with \emph{locally finite} chains $H^{\textup{lf-sing}}_\bullet$. For example $H^\textup{IC}_n(\setR^n) \cong H_n^\textup{lf-sing}(\setR^n) \cong \mathbb{Z}$, while $H_n^\textup{sing}(\setR^n) = \{0\}$. The analytic counterpart of the classical singular homology theory $H^\textup{sing}_\bullet$ with finite chains is the homology of integral currents with compact support. 
The analogy may be made more precise under additional geometric assumptions. For instance, if $(X,\dist)$ is a locally Lipschitz contractible metric space, then this latter homology and the classical singular homology are isomorphic, see \cite[Theorem 1.1]{Mitsuishi2019}.
        
\section{Metric manifolds}

    A topological $n$-manifold is a Hausdorff, second-countable topological space which is locally homeomorphic to $\setR^n$.
    A \emph{metric $n$-manifold} is a topological $n$-manifold $X$ equipped with a (possibly incomplete) metric $\dist$ inducing its topology and such that the corresponding $n$-dimensional Hausdorff measure $\haus^n$ is locally finite. It follows from  \cite[Théorème 2]{Szpilrajn1937} that $\haus^n$ is always locally positive. The notion of a metric manifold considered here differs from the notion used in \cite{BassoMartiWenger25, Marti2026,marti2024characterization}. Therein, it is additionally assumed that $X$ is homeomorphic to a compact, smooth manifold. In order to use the results available for such metric manifolds, we introduce the following definition.

    \begin{definition}\label{def: good-ball}
        A subset $D \subseteq X$ of a metric space is said to be a \emph{regular disk} if $D$ is homeomorphic to the closed ball $\overline{B}(0,1) \subseteq \setR^n$, $\haus^n(D) < \infty$ and $\haus^n(\partial D) = 0$.
    \end{definition}

    In metric manifolds, regular disks always exist.

    \begin{lemma}\label{lemma: good-subsets-existence}
        Let $(X,\dist)$ be a metric $n$-manifold, let $x\in X$ and $r>0$. Then there exists a regular disk $D \subseteq B(x,r)$ that contains $x$ in its interior.
    \end{lemma}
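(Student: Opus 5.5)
Fix a chart homeomorphism $h\colon U\to\setR^n$ with $x\in U\subseteq B(x,r)$ and $h(x)=0$; shrinking $U$ we may assume $U\subseteq B(x,r)$. The candidate disks are $D_s:=h^{-1}(\overline B(0,s))$ for $0<s<s_0$, where $s_0$ is chosen so that $h^{-1}(\overline B(0,s_0))$ is compact and contained in $U$. Each $D_s$ is homeomorphic to the closed unit ball and contains $x$ in its interior $h^{-1}(B(0,s))$. Since $D_s$ is compact and $\haus^n$ is locally finite, a finite cover by open sets of finite measure gives $\haus^n(D_s)<\infty$. It therefore remains to choose $s$ so that the topological boundary $\partial D_s=h^{-1}(S(0,s))$ is $\haus^n$-null.

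The sets $\Sigma_s:=h^{-1}(S(0,s))$, $0<s<s_0$, are pairwise disjoint Borel subsets of the compact set $D_{s_0}$, which has finite $\haus^n$-measure. A finite measure admits at most countably many pairwise disjoint sets of positive measure; otherwise, for some $m$, infinitely many of them would have measure at least $1/m$. So $\haus^n(\Sigma_s)=0$ for all but countably many $s\in(0,s_0)$. Choosing such an $s$ gives the regular disk $D:=D_s\subseteq U\subseteq B(x,r)$.

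Two routine points need checking. First, the boundary of $D_s$ taken in $X$ equals $h^{-1}(S(0,s))$. This holds because $D_s$ is compact, hence closed in $X$, and $h$ is a homeomorphism from the open set $U$ onto $\setR^n$, so interiors and boundaries relative to $U$ coincide with those in $X$ for sets compactly contained in $U$. Second, measurability: the $\Sigma_s$ are compact, hence $\haus^n$-measurable. No step is a serious obstacle; the only real input is local finiteness of $\haus^n$, combined with the disjointness argument. Positivity of $\haus^n$, from \cite{Szpilrajn1937}, is not needed here.
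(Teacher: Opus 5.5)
Your proof is correct and follows essentially the same route as the paper: take the preimages $D_s$ of closed Euclidean balls under a chart inside $B(x,r)$, observe that their boundaries $h^{-1}(S(0,s))$ are pairwise disjoint subsets of a set of finite $\haus^n$-measure, and pick one of the all-but-countably-many radii for which the boundary is $\haus^n$-null. The only difference is cosmetic. The paper takes the chart domain itself to have finite measure, whereas you bound the measure through the compact set $D_{s_0}$; your explicit checks that the boundary in $X$ is $h^{-1}(S(0,s))$ and that the sets are measurable are details the paper leaves implicit.
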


    \begin{proof}
        Since $X$ is a topological manifold and has locally finite Hausdorff $n$-measure, there exist an open neighbourhood $V \subseteq B(x,r)$ of $x$ that has finite Hausdorff $n$-measure and a homeomorphism $\varphi\colon V \to \setR^n$ with $\varphi(x)=0$. For $\rho\geq 0$, we write $S_\rho = \{ y \in \setR^n \,:\, \norm{y}=\rho\}$ and $D_\rho = \varphi^{-1}(\overline{B}(0,\rho))$. Then $D_\rho$ is homeomorphic to $\overline{B}(0,1)$ and satisfies $\haus^n(D_\rho) < \infty$, for every $\rho > 0$. It remains to show that there exists $\rho > 0$ such that $\haus^n(\partial D_\rho) =0$. For every $\rho>0$,
        we have $\partial D_\rho = \varphi^{-1}(S_\rho)$ and, $\partial D_\rho \cap \partial D_{\rho'} = \emptyset$ for $\rho \neq  \rho'$ nonnegative. Let $B := \bigcup_{\rho \ge 0} \partial D_\rho \subseteq V$. Then $\haus^n(B) <\infty$ and thus, as the $\partial D_\rho$ are pairwise disjoint, the set $I = \{\rho \in (0,\infty) \,:\, \haus^n(\partial D_\rho)>0\}$ is at most countable. In particular, $D_\rho$  is a regular disk for every $r \in (0,\infty) \setminus I$. This completes the proof. 
    \end{proof}

    Every regular disk $D\subseteq X$ is a metric $n$-manifold with boundary, as defined in \cite{Marti2026}. Usually, it is easier to work with closed manifolds. For this, we will often work with the space obtained by gluing two copies of $D$ along their boundaries; cf. \cite[Definition 4.3]{Marti2026}

    \begin{definition}\label{def: double-good-subset}
        Given a regular disk $D \subseteq X$, we let $\hat{D}$ be the quotient $\hat{D}= D_1 \sqcup D_2 / \sim$, where $D_1$ and $D_2$ are two copies of $D$ and $x\sim y$ if and only if $x\in \partial D_1$, $y\in \partial D_2$ and $x=y$. We call $\hat{D}$ the double of $D$ and equip it with the quotient distance. 
    \end{definition}

    If $D$ is a regular disk then $\hat{D}$ is homeomorphic to $\mathbb{S}^n$. Moreover, there exist isometric embeddings $\iota_1,\iota_2\colon D\hookrightarrow \hat{D}$ that identify $D$ with $D_1$ and $D_2$, respectively. If $D$ is oriented, see Section \ref{subsec:orientation_topological}, then we equip $\hat{D}$ with the orientation such that $\iota_1$ and $\iota_2$ are sense-preserving and sense-reversing, respectively. This means $\deg(\iota_1,x)= 1$ and $\deg(\iota_2,x) = -1$ for every $x\in \textup{int}(D)$. See Section \ref{subsec:orientation_topological} for the definition of the local degree.

    \subsection{Orientation and degree theory}
    \label{subsec:orientation_topological}
        We review the topological notion of orientation of a topological manifold and of the degree of continuous maps between oriented topological manifolds. 
        For a detailed exposition of the topic, we refer to \cite{dold-1980}. 
        
        Let $X$ be a topological $n$-manifold. For every $x\in X$, the relative singular homology group $H_n^\textup{sing}(X, X\setminus \{x\})$ with integer coefficients is isomorphic to $\mathbb{Z}$. An orientation $\{o_x\}_{x\in X}$ of an orientable manifold $X$ is a choice of local orientations $o_x$ for each $x\in X$ such that the following continuity property holds. For every $x\in X$, there exists an open neighbourhood $U\subseteq X$ of $x$ and $\eta\in H_n^\textup{sing}(X, X\setminus U)$ such that the homomorphism from $H_n^\textup{sing}(X, X\setminus U)$ to $H_n^\textup{sing}(X, X\setminus \{x\})$ induced by inclusion sends $\eta$ to $o_x$. If such an orientation exists, we say $X$ is orientable. In case $X$ is not connected, we say that $X$ is orientable if each connected component of $X$ is orientable and an orientation of $X$ is an orientation for each component. 
        
        Suppose that $X$ has an orientation $\{o_x\}_{x\in X}$. Then for every compact set $K\subseteq X$, there exists a unique homology class $o_K\in H_n^\textup{sing}(X, X\setminus \hspace{-0.1em}K)$ such that for every $x\in K$, the homomorphism $H_n^\textup{sing}(X, X\setminus \hspace{-0.1em}K)\to H_n^\textup{sing}(X, X\setminus \{x\})$ induced by the inclusion maps $o_K$ to $o_x$. 
                
        Let $f\colon X\to Y$ be a continuous map between oriented topological $n$-manifolds. If $K\subseteq Y$ is a non-empty compact connected set such that $f^{-1}(K)$ is compact, then  \(f_\ast \colon H_n^\textup{sing}(X, X \setminus \hspace{-0.1em}f^{-1}(K)) \to H_n^\textup{sing}(Y, Y\setminus \hspace{-0.1em}K)\) sends \(o_{f^{-1}(K)}\) to an integer multiple of \(o_K\). This integer is called the \textit{degree of $f$ over $K$} and is denoted by $\deg_{K}(f)$. By \cite[Chapter 8, Proposition 4.4]{dold-1980}, we have $\deg_{K}(f) = \deg_y(f)$ for all $y \in K$. In particular, if $X$ is compact and connected, then $\deg_K(f)$ is the same integer for all compact, connected sets $K \subseteq Y$. We denote this number by $\deg(f)$ and call it the \textit{degree of $f$}. Moreover, $H_n^\textup{sing}(X)$ is infinite cyclic and its generator $[X]$ is called the fundamental class of $X$. In case $Y$ is also compact and connected, we have $f_*[X] =  \deg(f)\cdot [Y]$. Otherwise, if $Y$ is non-compact, then it always holds that $\deg(f)=0$ because $f$ is not surjective. 

        Let $X$ and $Y$ be topological $n$-manifolds and let $f\colon X \to Y$ be continuous.
        Let $x\in X$ and $U \subseteq X$ be an open neighbourhood of $x$. Suppose that there exists an open neighbourhood $V\subseteq Y$ of $f(x)=y$ such that $f(U) \subseteq V$ and $y \notin f(U\setminus \{x\})$. Then $f_* \colon H_n^\textup{sing}(U,U\setminus \{x\}) \to H_n^\textup{sing}(V,V\setminus \{y\})$ is well-defined and sends the local orientation $o_x$ to an integer multiple of the local orientation $o_{y}$. We call this number the \textit{local degree of $f$ at} $x$ and denote it by $\deg(f,x)$. Notice that we can always find such neighbourhoods whenever there exists an open neighbourhood $W$ of $x$ such that $W \cap f^{-1}(y)$ is finite.
        
        The local degree satisfies the following additivity property; see \cite[Chapter 8, Proposition 4.7]{dold-1980}. Let $y \in Y$ be a point such that $f^{-1}(y)$ is finite. Then the local degree of $f$ at each $x \in f^{-1}(y)$ is well-defined and we have
        \begin{equation}\label{eq: local-degree-additivity}
            \deg_y(f) = \sum_{x \in f^{-1}(y)} \deg(f,x).
        \end{equation}
        Notice that in case $X$ is compact and connected we have $\deg(f) = \deg_y(f)$ for all $y \in Y$. 
        
        Let $g \colon Y \to Z$ be a continuous map into another oriented topological $n$-manifold $Z$. If the local degrees $\deg(f,x)$ and $\deg(g,f(x))$ are well-defined, then
        \begin{equation}\label{eq: local-degree-multiplicity}
            \deg(g\circ f, x) = \deg(g,f(x)) \cdot \deg(f,x).
        \end{equation}
        We call this the multiplicity property of the local degree; see \cite[Chapter 8, Corollary 4.6]{dold-1980}. Furthermore, the local degree is a homotopy invariant. More precisely, let $h\colon X \to Y$ be another continuous map with $f(x) = h(x)$ and $U \subseteq X$ be an open neighbourhood of $x$. If there exists a homotopy $H$ between $f$ and $h$ satisfying $H(z,t) \neq f(x)$ for all $z \in U \setminus \{x\}$ and every $t$, then $\deg(f,x) = \deg(h,x)$. Similarly, in case $X$ is compact and connected, we have $\deg(f) = \deg(h)$ whenever $h\colon X \to Y$ is a continuous map homotopic to $h$. Finally, let $\psi \colon \setR^n \to \setR^n$ be Lipschitz. Using the homotopy invariance, it is not difficult to show that 
        \begin{equation}\label{eq: local-deg-Rn-diffb}
            \deg(\psi,x) = \deg(D_x\psi) = \sgn(\det(D_x\psi))
        \end{equation}
        whenever $\psi$ is differentiable at $x\in \setR^n$ with non-degenerate derivative.

    \subsection{Local degree of Lipschitz maps between metric $n$-manifolds and porosity}

        We now specialize the local degree theory to Lipschitz maps between metric $n$-manifolds.
        We begin with the following simple observation; cf. \cite[Remark 2.7]{marti2024characterization}.

        \begin{lemma}\label{lemma: bi-lip-local-degree-ae}
            Let $(X,\dist)$ be a metric $n$-manifold and let $\rho \colon X \to \setR^n$ be Lipschitz. Suppose that there exists a compact set $K \subseteq X$ such that $\rho$ restricted to $K$ is bi-Lipschitz. Then, the local degree $\deg(\rho,x)$ is well-defined for almost all $x \in K$.
        \end{lemma}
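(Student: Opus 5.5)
To make $\deg(\rho,x)$ well-defined at $x\in K$, I will use the criterion recalled in Section~\ref{subsec:orientation_topological}: it suffices to find an open neighbourhood $W$ of $x$ with $W\cap\rho^{-1}(\rho(x))$ finite. Local degree is a local notion, so I fix around $x$ a chart neighbourhood homeomorphic to $\setR^n$ with its orientation, orient $\setR^n$ in the standard way, and work there. I will therefore show that for $\haus^n$-a.e.\ $x\in K$ the fibre $\rho^{-1}(\rho(x))$ is finite, or at least finite near $x$. Since $\rho|_K$ is injective, $K$ itself contributes only the single point $x$ to this fibre, so everything depends on the part of the fibre in $X\setminus K$.

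The main tool will be Eilenberg's coarea inequality, Lemma~\ref{lemma:coarea_inequality}, with $s=t=n$. Because $X$ is second countable, I exhaust it by countably many open sets $U_m$ of finite $\haus^n$-measure. For each $m$, the lemma applied to $A=U_m$ gives
\[
\int^*_{\setR^n}\haus^0\bigl(\rho^{-1}(y)\cap U_m\bigr)\,d\haus^n(y)\le (\Lip\rho)^n\,\frac{\omega_0\omega_n}{\omega_n}\,\haus^n(U_m)<\infty .
\]
Consequently the set $N_m=\{y:\#(\rho^{-1}(y)\cap U_m)=\infty\}$ is $\haus^n$-null in $\setR^n$, and so is $N=\bigcup_m N_m$.

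Next I pull this null set back to $K$. Since $\rho|_K$ is bi-Lipschitz, the inverse map $\rho(K)\to K$ is Lipschitz, and Lipschitz maps preserve $\haus^n$-null sets. Hence $\haus^n\bigl(K\cap\rho^{-1}(N)\bigr)=\haus^n\bigl((\rho|_K)^{-1}(N\cap\rho(K))\bigr)=0$. Now take $x\in K\setminus\rho^{-1}(N)$ and choose $m$ with $x\in U_m$. With $W=U_m$, the set $W\cap\rho^{-1}(\rho(x))$ is finite, so $x$ is isolated in its fibre. Shrinking to a small connected chart neighbourhood $U$ of $x$ whose closure avoids the other, finitely many, fibre points, the map $\rho_*\colon H_n(U,U\setminus\{x\})\to H_n(V,V\setminus\{\rho(x)\})$ is defined for any open $V\supseteq\rho(U)$, and therefore $\deg(\rho,x)$ is defined.

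I expect the only delicate point to be the handling of the upper integral and measurability in the coarea inequality. The finiteness of an upper integral already forces the integrand to be finite outside an $\haus^n$-null set, since that set has outer measure zero, so no measurability is needed. A second, minor point is making sure the degree does not depend on the choice of $U$ and $V$; this follows from excision, as noted in the paper's discussion of local degree.
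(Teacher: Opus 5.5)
Your proposal is correct and follows essentially the paper's argument: apply Eilenberg's coarea inequality with $s=t=n$ to get finite fibres over $\haus^n$-a.e.\ $y$, then pull the exceptional null set back to $K$ via the Lipschitz inverse of $\rho|_K$. The only cosmetic difference is that the paper uses a single open neighbourhood $U$ of the compact set $K$ with $\haus^n(U)<\infty$, whereas you take a countable cover of $X$ by finite-measure open sets $U_m$ and a countable union of null sets.
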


        \begin{proof}
             Since $X$ is locally compact, we can find an open neighbourhood $U$ of $K$ such that $\haus^n(U) < \infty$. By Lemma \ref{lemma:coarea_inequality} we have that $U\cap \rho^{-1}(y)$ is finite for $\haus^n$-almost all $y\in \setR^n$. Since $\rho|_K$ is bi-Lipschitz, we have that $U\cap \rho^{-1}(\rho(x))$ is finite for $\haus^n$-a.e. $x\in K$. For such $x$, the local degree $\deg(\rho,x)$ is well-defined.
        \end{proof}

        To make full use of local degree theory, we recall the notion of porous sets.

        \begin{definition}[Porous sets]
            Let $(X,\dist)$ be a metric space. A subset $A \subseteq X$ is said to be \textit{porous} at $x \in A$ if there exist $\varepsilon>0$ and a sequence $x_i \to x$ satisfying 
        $$\dist(x_i,A) \geq \varepsilon \dist(x_i,x)$$ 
        for every $i \in \setN$. We call $A$ porous if it is porous at every $x\in A$.
        \end{definition}

        The condition that porous sets have measure zero provides good control on the local degree of Lipschitz maps. This is formulated in the next lemma; cf. \cite[Lemma 4.7]{Marti2026}.

        \begin{lemma}\label{lemma:degree-behaves-well}
            Let $(X,\dist)$ be a metric $n$-manifold and assume that $\haus^n(P) = 0$ for every porous set $P\subseteq X$. Let $K\subseteq X$ be compact, $\rho\colon K \to \setR^n$ be bi-Lipschitz and $\pi \colon X \to \setR^n$ be Lipschitz. Then, for every Lipschitz extension $\psi\colon \setR^n \to \setR^n$ of $\pi \circ \rho^{-1}$ and every Lipschitz extension $\tilde{\rho}\colon X\to \setR^n$ of $\rho$, we have
            $$\deg(\pi,x) = \deg(\psi\circ \tilde{\rho}, x ) = \deg(\psi,\rho(x)) \cdot\deg(\tilde{\rho},x)$$
            for almost 
            every $y\in \setR^n$ and every $x \in \pi^{-1}(y) \cap K$. Implicitly contained in the statement is the assertion that the local degrees in the equality are well-defined.
        \end{lemma}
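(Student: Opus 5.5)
The plan is to factor $\pi = \psi\circ\rho$ on $K$ and to use porosity to show that, near a good point of $K$, the map $\pi$ and the map $\psi\circ\tilde\rho$ are close relative to the scale. Throughout, "almost every $y$" refers to $\haus^n$-a.e. $y\in\setR^n$.

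\textbf{Step 1: choosing good points.} We restrict to points $x\in K$ with the following properties:
\begin{itemize}
\item[(a)] $x$ is not a porosity point of $K$. The set of points of $K$ at which $K$ is porous is a porous set, hence $\haus^n$-null by hypothesis.
\item[(b)] $\psi$ is differentiable at $\rho(x)$ with invertible derivative $D\psi$.
\item[(c)] the fibres $\pi^{-1}(y)\cap U$ and $\tilde\rho^{-1}(\rho(x))\cap U$ are finite, where $U$ is a neighbourhood of $K$ with $\haus^n(U)<\infty$.
\end{itemize}
For (c), the coarea inequality (Lemma~\ref{lemma:coarea_inequality}) gives finiteness of $\pi^{-1}(y)\cap U$ for a.e. $y$, and the argument of Lemma~\ref{lemma: bi-lip-local-degree-ae} gives finiteness of $\tilde\rho^{-1}(\rho(x))\cap U$ for a.e. $x\in K$.

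\textbf{Step 2: transferring from $x$ to $y$.} Let $N\subseteq K$ be the exceptional set of $x$ failing (a)--(c). We need $\pi(N)$ to be null. This does not follow from $\haus^n(N)=0$ alone, since $\pi|_N$ and $\psi\circ\rho|_N$ are merely Lipschitz; but they map null sets to null sets, so $\pi(N)$ is null. The set $\rho(K)$ where $\det D\psi=0$ needs separate treatment. Its image under $\psi$ is null by the area formula (Lemma~\ref{lemma: area-formula}), so we discard those $y$. Thus for a.e. $y$, every $x\in\pi^{-1}(y)\cap K$ is good.

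\textbf{Step 3: the main obstacle, comparing $\pi$ and $\psi\circ\tilde\rho$ near $x$.} This is the central estimate. Non-porosity at $x$ gives the following: for every $\varepsilon>0$ there is $r_0$ such that every $z$ with $\dist(z,x)<r_0$ satisfies $\dist(z,K)\le\varepsilon\,\dist(z,x)$. Pick $k\in K$ with $\dist(z,k)\le\varepsilon\dist(z,x)$. Then
\[
|\pi(z)-\psi(\tilde\rho(z))|\le|\pi(z)-\pi(k)|+|\psi(\tilde\rho(k))-\psi(\tilde\rho(z))|\le C\varepsilon\,\dist(z,x),
\]
using $\pi(k)=\psi(\rho(k))$ on $K$.

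On the other hand, we need a lower bound
\[
|\psi(\tilde\rho(z))-y|\ge c\,\dist(z,x)
\]
on a small sphere-like region around $x$. By differentiability and invertibility of $D\psi$, $|\psi(w)-y|\ge c'|w-\rho(x)|$ for $w$ near $\rho(x)$. It remains to show $|\tilde\rho(z)-\rho(x)|\gtrsim\dist(z,x)$. This again follows from non-porosity: $z$ is $\varepsilon\dist(z,x)$-close to some $k\in K$, and $\rho$ is bi-Lipschitz on $K$.

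Hence, on $\overline{B}(x,r)\setminus\{x\}$ for small $r$, the straight-line homotopy $H_t=(1-t)\pi+t\,\psi\circ\tilde\rho$ avoids $y$ once $C\varepsilon<c$. Take a small neighbourhood whose boundary lies in that annulus, using the finiteness of fibres. Homotopy invariance of the local degree then gives
\[
\deg(\pi,x)=\deg(\psi\circ\tilde\rho,x),
\]
and the same lower bound shows this local degree is well-defined.

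\textbf{Step 4: conclusion.} Finally,
\[
\deg(\psi\circ\tilde\rho,x)=\deg(\psi,\rho(x))\cdot\deg(\tilde\rho,x)
\]
by the multiplicity property \eqref{eq: local-degree-multiplicity}. Here $\deg(\psi,\rho(x))=\sgn\det D\psi$ is well-defined by \eqref{eq: local-deg-Rn-diffb}, and $\deg(\tilde\rho,x)$ is well-defined by Step 1.
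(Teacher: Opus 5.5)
Your proof is correct. Its skeleton is the paper's: non-porosity at $x$, the straight-line homotopy between $\pi$ and $\psi\circ\tilde\rho$, homotopy invariance, and then multiplicity. Where you differ is in how you get the non-degeneracy lower bound near $x$.

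The paper applies Kirchheim's decomposition (Lemma \ref{lemma: lip-bilip-decomp}) to $\pi\circ\rho^{-1}$. This splits $K$, up to a set with $\haus^n$-null image, into compact pieces $K_i$ on which $\pi=\psi\circ\tilde\rho$ is bi-Lipschitz. It then discards the porosity points of each $K_i$. At a remaining point $x\in K_i$, the bi-Lipschitz bound on $K_i$ propagates to a whole ball and gives the same estimate $\ge \frac{7}{9L}\dist(x,z)$ for both $|\pi(z)-\pi(x)|$ and $|(\psi\circ\tilde\rho)(z)-(\psi\circ\tilde\rho)(x)|$.

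You instead linearize $\psi$ at $\rho(x)$ via Rademacher and throw away the points where $\det D\psi=0$ using the Sard-type consequence of the area formula. Your lower bound then comes from $D_{\rho(x)}\psi$, the bi-Lipschitz property of $\rho$, and non-porosity of $K$ itself. This avoids the decomposition, needs only one porosity-exceptional set, and yields $\deg(\psi,\rho(x))=\sgn\det D_{\rho(x)}\psi$ along the way, which is exactly what Corollary \ref{cor: easy-version-property-ii} later uses. The paper's route needs no differentiability of $\psi$ and treats the two maps symmetrically. Since the Euclidean Kirchheim decomposition is itself proved via Rademacher and the area formula, the two routes are of comparable depth.

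Your Step 2 needs tidier bookkeeping. The set $N$ of points failing (a)--(c) need not be $\haus^n$-null in $K$, and not only because of the degenerate set you treat separately. The set of $x$ with $\pi^{-1}(\pi(x))\cap U$ infinite can have positive measure, for instance if $\pi$ is constant on $K$. So the justification ``Lipschitz maps send null sets to null sets'' applies only to the points failing (a), differentiability of $\psi$ at $\rho(x)$, or finiteness of $\tilde\rho^{-1}(\rho(x))\cap U$. The degenerate points have null image by the area formula. The $y$-exceptional set for finiteness of $\pi^{-1}(y)\cap U$ is null directly by Lemma \ref{lemma:coarea_inequality}. That last condition is in fact redundant: your Step 3 estimate at $t=0$ already isolates $x$ in $\pi^{-1}(y)$, so $\deg(\pi,x)$ is well-defined without it.
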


        \begin{proof}
            We first verify that for almost all $y\in \setR^n$ and every $x \in \pi^{-1}(y) \cap K$, the local degrees in the previous equality are well-defined. Applying Lemma \ref{lemma: lip-bilip-decomp} to $\pi \circ \rho^{-1}\colon \rho(K) \to \setR^n$ we find a decomposition $K = \bigcup_i K_i \cup N$ such that for each $i \in \setN$, the set $K_i$ is compact, $\pi|_{K_i}$ is bi-Lipschitz and $\haus^n(\pi(N)) = 0$. Notice that $\pi(x) = \psi \circ \Tilde{\rho}(x)$ for all $x\in K$. Thus, $\psi \circ \Tilde{\rho}$ restricted to each $K_i$ is bi-Lipschitz as well. Since $\tilde{\rho}|_K = \rho$ is bi-Lipschitz, we conclude that $\psi|_{\rho(K_i)}$ is also bi-Lipschitz for each $i \in \setN$. It follows from Lemma \ref{lemma: bi-lip-local-degree-ae} that for every $i \in \setN$, there exists a set of measure zero $N_i \subseteq K_i$ such that for all $x \in K_i \setminus N_i$, the local degrees $\deg(\pi,x), \deg(\psi\circ \tilde{\rho}, x ), \deg(\psi,\rho(x))$ and $\deg(\tilde{\rho},x)$ are well-defined. Here, we used again that $\rho$ is bi-Lipschitz. Thus, by the multiplicity property of the local degree
            \begin{equation}\label{eq: degree-behaves-well-first-eq-loc-deg}
                \deg(\psi\circ \tilde{\rho}, x ) \stackrel{\eqref{eq: local-degree-multiplicity}}{=} \deg(\psi,\rho(x)) \cdot\deg(\tilde{\rho},x)
            \end{equation}
            for all $i \in \setN$ and every $x \in K_i \setminus N_i$. 
            
            Now, for $i \in \setN$, let $A_i \subseteq K_i $ be the set of points where $K_i$ is porous. Put $K_i' = K_i \setminus (N_i\cup A_i)$ and $N' = N \cup \bigcup_i (N_i \cup A_i)$. We have $\haus^n(\pi(N')) =0$ because porous sets in $X$ have measure zero. Let $y\in \setR^n \setminus \pi(N')$, a condition that holds for $\haus^n$-a.e. point of $\setR^n$. If $y\notin \pi(K)$ then the thesis is true simply because $\pi^{-1}(y) \cap K = \emptyset$. If $y\in \pi(K)$, then $y\in \pi(\bigcup_i K_i') \setminus \pi(N')$. Therefore, any $x\in \pi^{-1}(y) \cap K$ actually belongs to $\bigcup_i K_i'$. It remains to show that $\deg(\pi,x)  = \deg(\psi\circ \tilde{\rho}, x )$ for every $x \in \bigcup_i K_i'$. 
            
            Fix $i \in \setN$ and $x \in K_i'$. Let $L\geq1$ be sufficiently large such that $\pi$ and $\psi \circ \Tilde{\rho}$ are $L$-Lipschitz on $X$ as well as $L$-bi-Lipschitz on $K_i$. Since $K_i$ is not porous at $x$, there exists $r>0$ with the following property. For every $z \in B(x,r)$, there exists $z' \in K_i$ satisfying $\dist(z,z')\leq (3L)^{-2} \dist(x,z)$. For every $z\in B(x,r)$, we have
            \begin{equation*}\label{eq: degree-ineq-1}
                \norm{\pi(x)-\pi(z)} \geq \norm{\pi(x)-\pi(z')} - \norm{\pi(z)-\pi(z')} \geq\frac{7}{9L}\dist(x,z).
            \end{equation*}
            An analogous estimate holds for $(\psi\circ \Tilde{\rho})$ in place of $\pi$. Moreover,
            \begin{align*}\label{eq: degree-ineq-2}
                \norm{\pi(z)-(\psi\circ \Tilde{\rho})(z)}& \leq \norm{\pi(z)-\pi(z')} + \norm{(\psi\circ \Tilde{\rho})(z')-(\psi\circ \Tilde{\rho})(z)}
                \\
                &\leq 2L \dist(z,z') \leq \frac{2}{9L}\dist(x,z).
            \end{align*}
            Now, let $H \colon X \times [0,1]\to \setR^n$ be the straight-line homotopy between $\pi$ and $\psi\circ \tilde{\rho}$. The two inequalities above imply that $H(z,t)\neq \pi(x) = \psi \circ \tilde{\rho}(x)$ for every $z \in B(x,r)\setminus \{x\}$ and every $t\in [0,1]$. The homotopy invariance of the local degree, see the last paragraph of Section \ref{subsec:orientation_topological}, implies that $\deg(\pi,x) = \deg(\psi\circ \tilde{\rho}, x )$. This completes the proof.
        \end{proof}

        We record the following corollary, which will be useful later. 

        \begin{corollary}\label{cor: easy-version-property-ii}
            Let $(X,\dist)$ be a metric $n$-manifold and $\rho, \pi\colon X \to \setR^n$ be Lipschitz. Suppose that $\haus^n(P)=0$ for every porous set $P\subseteq X$. Let $K \subseteq X$ be a compact subset such that $\rho|_K$ and $\pi|_K$ are bi-Lipschitz. If $\deg(\rho, x) = \pm 1$ for almost all $x \in K$, then $\deg(\pi, x) = \pm 1$ for almost all $x \in K$.
        \end{corollary}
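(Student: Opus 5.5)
The argument is a direct application of Lemma~\ref{lemma:degree-behaves-well} with the roles assigned as follows. Set $\rho_0:=\rho|_K$, which is bi-Lipschitz onto the compact set $\rho(K)\subseteq\setR^n$. The map $\pi\circ\rho_0^{-1}\colon\rho(K)\to\setR^n$ is Lipschitz, so by McShane applied componentwise it has a Lipschitz extension $\psi\colon\setR^n\to\setR^n$. We take $\tilde\rho:=\rho$ itself as the Lipschitz extension of $\rho_0$ to $X$. Lemma~\ref{lemma:degree-behaves-well} then gives a null set $Z\subseteq\setR^n$ such that, for every $y\notin Z$ and every $x\in\pi^{-1}(y)\cap K$,
\[
\deg(\pi,x)=\deg(\psi,\rho(x))\cdot\deg(\rho,x),
\]
with all local degrees well defined. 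Since $\pi|_K$ is bi-Lipschitz, $(\pi|_K)^{-1}$ maps $\haus^n$-null sets to $\haus^n$-null sets. Hence the exceptional set $K\cap\pi^{-1}(Z)$ is $\haus^n$-null, and the identity above holds for $\haus^n$-a.e.\ $x\in K$.

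By hypothesis $\deg(\rho,x)=\pm1$ for a.e.\ $x\in K$. It therefore remains to show that $\deg(\psi,\rho(x))=\pm1$ for a.e.\ $x\in K$. Because $\rho|_K$ is bi-Lipschitz, this is equivalent to $\deg(\psi,z)=\pm1$ for $\haus^n$-a.e.\ $z\in\rho(K)$. On $\rho(K)$ the map $\psi$ coincides with $\pi\circ\rho_0^{-1}$, a composition of bi-Lipschitz maps, so $\psi|_{\rho(K)}$ is bi-Lipschitz. By Rademacher, $\psi$ is differentiable a.e. By the Lebesgue density theorem, a.e.\ point of $\rho(K)$ is a density point of $\rho(K)$. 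At a density point $z$ where $\psi$ is differentiable, $D_z\psi$ is determined by the values of $\psi$ on $\rho(K)$ near $z$, since density points are non-porous and so difference quotients along $\rho(K)$ approximate all directions. The lower bi-Lipschitz bound on $\rho(K)$ then yields $|D_z\psi(v)|\ge L^{-1}|v|$, so $D_z\psi$ is non-degenerate. By \eqref{eq: local-deg-Rn-diffb} we conclude $\deg(\psi,z)=\sgn\det D_z\psi=\pm1$ at every such $z$.

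Combining the two steps gives $\deg(\pi,x)=(\pm1)(\pm1)=\pm1$ for $\haus^n$-a.e.\ $x\in K$.

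The main point requiring care is the non-degeneracy of $D_z\psi$ at a.e.\ point of $\rho(K)$, because $\psi$ is only bi-Lipschitz on $\rho(K)$ and not globally. Rademacher combined with density points handles this. A secondary point is that the exceptional set in Lemma~\ref{lemma:degree-behaves-well} is described in the target; the bi-Lipschitz property of $\pi|_K$ is exactly what transfers it back to a null subset of $K$.
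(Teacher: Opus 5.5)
Your proof is correct and follows the paper's argument. You take $\psi$ to be a Lipschitz extension of $\pi\circ\rho^{-1}$ and apply Lemma~\ref{lemma:degree-behaves-well} to get $\deg(\pi,x)=\deg(\psi,\rho(x))\cdot\deg(\rho,x)$ for a.e.\ $x\in K$, and you then use that $\psi$ has a non-degenerate differential at a.e.\ point of $\rho(K)$. You also spell out two steps the paper leaves implicit, and both are sound: transferring the exceptional set from the target back to $K$ via the bi-Lipschitz property of $\pi|_K$, and the non-porosity argument at density points showing that $D_z\psi$ is non-degenerate.
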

    
        \begin{proof}
            Let $\psi \colon \setR^n \to \setR^n$ be a Lipschitz extension of $\pi \circ \rho^{-1} \colon \rho(K) \to \setR^n$. By
             Lemma \ref{lemma:degree-behaves-well} for almost all $x \in K$, we have
            $$\deg(\pi,x) = \deg(\psi\circ \rho, x ) = \deg(\psi,\rho(x)) \cdot\deg(\rho,x).$$
            Furthermore, since $\rho$ and $\pi$ restricted to $K$ are bi-Lipschitz, there exists a $\haus^n$-measurable set $K' \subseteq K$ of full measure such that for all $x \in K'$, the map $\psi$ is differentiable at $\rho(x)$ with non-degenerate differential $D_{\rho(x)}\psi$. Therefore,
            $$\deg(\pi,x) = \deg(\psi,\rho(x)) \cdot\deg(\rho,x) = \sgn(\det (D_{\rho(x)} \psi) ) \cdot\deg(\rho,x) = \pm 1 $$
            for almost all $x \in K$. This completes the proof.
        \end{proof}

        It is relevant to have sufficient conditions under which every porous set of a metric $n$-manifold has $\haus^n$-measure zero. 
        A full characterization of this property for metric measure spaces is provided in \cite[Section 3]{MeraMoranPreissZajicek2003}. 
        In practice, their condition is not easy to check. 
        However, 
        some sufficient conditions can be found in the literature. 
        For instance, porous sets have $\haus^n$-measure zero if $\haus^n$ is locally doubling \cite[Proposition 3.4]{doubling-porous} or if $(X,\dist,\haus^n)$ is a Lipschitz differentiability space \cite[Theorem 2.4]{bate-speight-lip-diffp-space}. We prove now that it is also the case when $\haus^n$ is $\haus^n$-a.e. locally lower Ahlfors $n$-regular.

        \begin{lemma}\label{lemma: lower-ahlfors-no-porous}
            Let $(X,\dist)$ be a metric space with locally finite $\haus^n$ measure. Suppose that for $\haus^n$-almost every $x \in X$ there exist $c>0$ and $R>0$ such that for every $y \in B(x,R)$ and all $0<r<R$ we have
            $$\haus^n(B(y,r)) \geq  cr^n.$$
            Then, every porous set in $X$ has $\haus^n$-measure zero.
        \end{lemma}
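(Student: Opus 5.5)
We reduce to a density argument. Let $A\subseteq X$ be porous and let $G\subseteq X$ be the full-measure set of points $x$ admitting constants $c_x,R_x>0$ with $\haus^n(B(y,r))\ge c_x r^n$ for all $y\in B(x,R_x)$ and $0<r<R_x$. It suffices to show $\haus^n(A\cap G)=0$. Since $X$ is separable (locally finite $\haus^n$ on a metric space need not give separability, but we may work inside a countable union of sets of finite measure, or simply cover $A\cap G$ by countably many pieces), write $A\cap G=\bigcup_{k,m} A_{k,m}$. Here $A_{k,m}$ consists of those $x\in A\cap G$ with $c_x\ge 1/k$, $R_x\ge 1/m$, and porosity constant $\varepsilon_x\ge 1/k$. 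Fix $k,m$ and localize further so that $A':=A_{k,m}\cap B(z,1/(2m))$ has diameter less than $1/(2m)$ and lies in a set of finite $\haus^n$-measure. Then every point of $A'$ has uniform lower Ahlfors constant $c=1/k$ at all centres $y$ near $A'$ and all radii below $1/(2m)$, and uniform porosity $\varepsilon=1/k$.

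Suppose for contradiction $\haus^n(A')>0$. By the density theorem for $\haus^n$ restricted to a set of finite measure (\cite[2.10.19(4)]{Federer}, as used in Lemma~\ref{lemma: charact-set}), for $\haus^n$-a.e.\ $x\in A'$ we have
\[
\lim_{r\to0}\frac{\haus^n(B(x,r)\setminus A')}{r^n}=0 .
\]
Pick such an $x$. By porosity there are $x_i\to x$ with $\dist(x_i,A')\ge\dist(x_i,A)\ge\varepsilon\dist(x_i,x)=:\varepsilon r_i$. Then $B(x_i,\varepsilon r_i)$ is disjoint from $A'$, contained in $B(x,2r_i)$, and centred at a point $x_i\in B(x,R)$. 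The uniform lower bound, with $R$ guaranteed by the localization, therefore gives
\[
\haus^n(B(x,2r_i)\setminus A')\ge c\,\varepsilon^n r_i^n .
\]
This contradicts the vanishing density ratio as $r_i\to0$. Hence $\haus^n(A')=0$, and summing over the countable pieces gives $\haus^n(A)=0$.

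The main obstacle is the density step. We need the statement that $\haus^n$-almost every point of a set of finite measure has zero \emph{upper} density of the complement relative to $r^n$. In a general metric space the ratio $\haus^n(B(x,r)\cap A')/r^n$ need not tend to anything. What we actually use is Federer 2.10.19(4): for $\mu=\haus^n\rstr(U\setminus A')$ with $U$ open of finite measure, $\Theta^{*n}(\mu,x)=0$ for $\haus^n$-a.e.\ $x\in A'$. This is exactly the estimate quoted in the proof of Lemma~\ref{lemma: charact-set}, and $B(x,2r_i)\subseteq U$ for small $r_i$. We also have to handle the measurability of $A'$. For this we replace $A'$ by a measurable hull, or simply note that porous sets can be taken closed up to replacing $A$ by its closure restricted locally; Federer's result is stated for arbitrary sets via outer measure, so this is harmless.
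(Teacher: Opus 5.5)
Your proof is correct and is essentially the paper's argument: the porosity hole $B(x_i,\varepsilon r_i)\subseteq B(x,2r_i)$ has $\haus^n$-measure at least $c\,\varepsilon^n r_i^n$ by the lower bound at $x_i$, which contradicts the vanishing density of the complement of $A$ at $\haus^n$-a.e.\ point of $A$ given by \cite[2.10.19(4)]{Federer}. The paper runs this pointwise with the constants attached to each $x$, so your reduction to the uniform pieces $A_{k,m}$ is harmless but unnecessary; note, however, that your parenthetical claim that the countable localization can be arranged without separability is false, and the lemma itself fails in that generality (take uncountably many copies of $\setR^n$, with distance $\min\{|p-q|,1\}$ inside a copy and $1$ across copies: the set of origins is porous and the lower bound holds everywhere, yet its $\haus^n$-measure is infinite), so both your proof and the paper's tacitly use separability, which does hold in every setting where the lemma is applied.
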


        \begin{proof}
            Let $A \subseteq X$ be a porous set and let $x \in A$. We may suppose that there exist $c>0$ and $R>0$ such that for every $y \in B(x,R)$ and all $0<r<R$ we have
            $$\haus^n(B(y,r)) \geq  cr^n.$$
            By definition of porosity, there exist $\varepsilon>0$ and a sequence $x_i  \to x$ satisfying $\dist(x_i,A) \geq \varepsilon \dist(x_i,x)$
            for every $i \in \setN$. For $i \in \setN$, we define $R_i = \dist(x_i,x)$ and $r_i = \varepsilon R_i$. Notice that $B(x_i,r_i) \subseteq B(x,2R_i)\setminus A$ for every $i \in \setN$. Therefore, for $i \in \setN$ sufficiently large
            \begin{align*}
                \frac{\haus^n(B(x,2R_i)\setminus A)}{ 2^nR_i^n} \geq \frac{\haus^n(B(x_i,r_i))}{2^nR_i^n}\geq \frac{c r_i^n}{2^nR_i^n} = \frac{\varepsilon^n}{2^n} c>0.
            \end{align*}
            In particular, 
            $$\limsup_{r\to 0}\frac{\haus^n(B(x,r)\setminus A)}{r^n}>0.$$
            However, by \cite[2.10.19(4)]{Federer} we have
            $$\lim_{r\to 0}\frac{\haus^n(B(x,r)\setminus A)}{r^n}= 0$$
            for $\haus^n$-almost all $x \in A$. We conclude that $\haus^n(A) = 0$.
        \end{proof}

    \subsection{Degree-regular metric manifolds}

        We are interested in metric manifolds having properties that guarantee some control of the geometry. The following definition is justified by the results of the previous section.

        \begin{definition}\label{def: good-metric-mfld}
            A metric $n$-manifold is called \emph{degree-regular} if the following properties are satisfied. 
        \begin{itemize}
            \item[(i)] (Control of the local degree) Whenever $K \subseteq X$ is compact and $\rho\colon K\to \setR^n$ is bi-Lipschitz, then there exists a Lipschitz extension $\tilde{\rho}\colon X \to \setR^n$ such that
            \begin{equation}
                \label{eq:defin_good_rectifiability}
                \deg(\tilde{\rho},x)=\pm 1 \text{ for }\haus^n\text{-a.e. } x\in K.
            \end{equation}
            \item[(ii)] (No porous sets) Every porous set has $\haus^n$-measure zero.
            \end{itemize}
        \end{definition}

        \begin{remark}\label{rmk: metric-mfld}
        Let $(X,\dist)$ be a metric $n$-manifold. The following remarks are in order.
        \begin{enumerate}
            \item A Lipschitz extension of $\rho\colon K \to \setR^n$ as in (i) always exists by the McShane extension theorem. Moreover, given such an extension $\Tilde{\rho}$, Lemma \ref{lemma: bi-lip-local-degree-ae} implies that the local degree $\deg(\rho,x)$ is well-defined for $\haus^n$-almost all $x\in K$. However, it may happen, a priori, that none of the possible extensions $\tilde{\rho}\colon X \to \setR^n$ satisfies \eqref{eq:defin_good_rectifiability}.
            \item Because of condition (ii), property \eqref{eq:defin_good_rectifiability} does not depend on the extension in the following sense: let $\tilde{\rho},\tilde{\rho}'\colon X \to \setR^n$ be two Lipschitz extensions of $\rho\colon K \to \setR^n$. Then $\tilde{\rho}$ satisfy \eqref{eq:defin_good_rectifiability} if and only if $\tilde{\rho}'$ does. This follows from Corollary \ref{cor: easy-version-property-ii}.
            \item If (ii) holds, (i) is equivalent to the same property with $K$ replaced by any finite $\haus^n$-measure Borel set. This follows by the inner regularity of $\haus^n$ on $X$, see \cite[Corollary 3.3.47]{HKST15}.
            \item Being a degree-regular metric $n$-manifold is a local property in the following sense. Let $(X,\dist)$ be a metric $n$-manifold. If it is degree-regular, then $(U,\dist)$ is a degree-regular metric $n$-manifold for every open subset $U\subseteq X$. Vice versa, if for every $x\in X$ there exists an open neighbourhood $U_x$ of $x$ such that $(U_x,\dist)$ is a degree-regular metric $n$-manifold, then $(X,\dist)$ is degree-regular. Indeed, $X$ is separable and hence it can be covered by degree-regular open neighbourhoods $U_i$ associated to a countable subset $\{x_i\} \subseteq X$. If $P\subseteq X$ is porous then $P\cap U_i$ is porous for every $i$. So $\haus^n(P) \le \sum_{i\in \setN} \haus^n(P\cap U_i)) = 0$. Condition (i) can be checked similarly, since the local degree is defined locally.
            \item Condition (i) is known to hold for a vast classes of metrics, e.g. for linearly locally contractible spaces or for spaces satisfying the ETR property (see below for the definitions). This is the content of Lemma \ref{lemma:ETR=>Control_of_the_degree} and Proposition \ref{prop:ETR+LLC_satisfy_good_rectifiability}. We know of no examples of metric $n$-manifolds that do not satisfy (i).
            \item If $n=2$, by \cite[Lemma 5.2]{rado1938absolutely} metric $n$-manifolds
            $X$ automatically satisfy (i). However, even for $n=2$, property (ii) does not hold in general. Indeed, the metric $n$-manifold $X$ in \cite[Theorem A.1]{sormani-wenger-cancellation} contains a (bi-Lipschitz embedding) subset $Z$
            of positive $\haus^2$-measure, see  
             \cite[Lemma A.2]{sormani-wenger-cancellation}, which is porous in $X$, as it can be readily checked.
        \end{enumerate}
    \end{remark}
    
    Moreover, the degree-regularity property can be upgraded from suitably dense submanifolds to the ambient manifold.
    
    \begin{lemma}
    \label{lemma:degree-regularity_from_submanifold_to_manifold}
        Let $(X,\dist)$ be a metric $n$-manifold and let $X'\subseteq X$ be open and such that $\haus^n(X \setminus X') = 0$. Then $(X,\dist)$ is degree-regular if and only if $(X',\dist)$ is.
    \end{lemma}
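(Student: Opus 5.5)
We check the two conditions of Definition~\ref{def: good-metric-mfld} separately. For the ``only if'' direction the claim is just Remark~\ref{rmk: metric-mfld}(4), since $X'$ is open. The content lies in the converse, so assume $(X',\dist)$ is degree-regular.

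\emph{Porosity (ii).} Let $P\subseteq X$ be porous. Then $P\cap X'$ is porous in $X'$. Indeed, take $x\in P\cap X'$ and a sequence $x_i\to x$ with $\dist(x_i,P)\ge\varepsilon\dist(x_i,x)$. Since $X'$ is open, $x_i\in X'$ for $i$ large. Moreover $\dist(x_i,P\cap X')\ge \dist(x_i,P)$, because distances in $X'$ are computed with the restricted metric. Hence $\haus^n(P\cap X')=0$. Together with $\haus^n(X\setminus X')=0$ this gives $\haus^n(P)=0$. Here we use that the Hausdorff measure of $X'$ with its restricted metric agrees with the ambient $\haus^n$ restricted to $X'$.

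\emph{Degree control (i).} Let $K\subseteq X$ be compact and $\rho\colon K\to\setR^n$ bi-Lipschitz, and fix any McShane extension $\tilde\rho\colon X\to\setR^n$. Now $K\cap X'$ is a Borel set of finite measure in $X'$, and it is not compact in general. We exhaust it by compacts $K_m\subseteq K\cap X'$ with $\haus^n\big((K\cap X')\setminus\bigcup_m K_m\big)=0$, using inner regularity as in Remark~\ref{rmk: metric-mfld}(3). Since $X'$ is degree-regular, each $\rho|_{K_m}$ has a Lipschitz extension $\hat\rho_m\colon X'\to\setR^n$ with $\deg(\hat\rho_m,x)=\pm1$ for a.e.\ $x\in K_m$.

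By Remark~\ref{rmk: metric-mfld}(2), which relies on (ii) for $X'$ and on Corollary~\ref{cor: easy-version-property-ii}, the same holds for the extension $\tilde\rho|_{X'}$ of $\rho|_{K_m}$. Thus $\deg(\tilde\rho|_{X'},x)=\pm1$ for a.e.\ $x\in K_m$.

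The local degree at $x\in X'$ is computed in small neighbourhoods of $x$ contained in $X'$, by excision. Therefore $\deg(\tilde\rho,x)=\deg(\tilde\rho|_{X'},x)$ whenever either side is defined. Taking the union over $m$, and recalling $\haus^n(K\setminus X')=0$, we get $\deg(\tilde\rho,x)=\pm1$ for $\haus^n$-a.e.\ $x\in K$.

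The main point needing care is the well-definedness of the local degree when passing between $X$ and $X'$. The finiteness of $\tilde\rho^{-1}(\tilde\rho(x))$ near $x$, as in Lemma~\ref{lemma: bi-lip-local-degree-ae}, is a local statement, so it transfers unchanged to a neighbourhood inside $X'$. The other delicate point is that Remark~\ref{rmk: metric-mfld}(2) applies to compact subsets of $X'$, which is why we pass to the compact $K_m$ rather than working with $K\cap X'$ directly.
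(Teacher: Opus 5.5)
Your proposal is correct and follows the paper's proof: restrict a porous $P$ to $X'$, and restrict a McShane extension $\tilde\rho$ to $X'$, then use Remark~\ref{rmk: metric-mfld}(2)--(3) and the locality of the local degree. The only differences are cosmetic: you get porosity of $P\cap X'$ directly from the openness of $X'$, where the paper phrases this step through density, and you spell out the compact exhaustion that the paper delegates to Remark~\ref{rmk: metric-mfld}(3).
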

    \begin{proof}
        If $X$ is degree-regular then $X'$ is degree-regular too by Remark \ref{rmk: metric-mfld}.(4). Assume that $X'$ is degree-regular.
        Let $P\subseteq X$ be porous. The condition $\haus^n(X\setminus X') = 0$, together with the local positivity of $\haus^n$ on $X$, implies that $X'$ is dense. Therefore, $P\cap X'$ is porous as a subset of $X'$. So $\haus^n(P) = \haus^n(P\cap X') = 0$.

        We now check condition (i) in Definition \ref{def: good-metric-mfld} for $X$. Let $K\subseteq X$ be compact and $\rho \colon K \to \setR$ be a bi-Lipschitz map. Let $\tilde{\rho}\colon X \to \setR$ be a Lipschitz extension of $\rho$, which exists by McShane extension Theorem. Then $\tilde{\rho}' :=\tilde{\rho}|_{X'}$ is a Lipschitz extension of the bi-Lipschitz map $\rho':=\rho|_{K\cap X'}$. By assumption and items (2) and (3) of Remark \ref{rmk: metric-mfld}, $\deg(\tilde{\rho}',x) = \pm 1$ for $\haus^n$-a.e. $x\in K \cap X'$. Since the local degree is defined locally, and using the fact that $X'$ is open in $X$ and that $\haus^n(X\setminus X') = 0$, we deduce that $\deg(\tilde{\rho},x) = \pm 1$ for $\haus^n$-a.e. $x\in K$, which shows that $(X,\dist)$ satisfies condition (i) of Definition \ref{def: good-metric-mfld}.
        This concludes the proof.
    \end{proof}

    We extend the definition of the ETR property from \cite{MP26} to metric $n$-manifolds. The metric $n$-manifolds provided by regular points of $\RCD$ spaces and $\GCBA$ spaces satisfy this property.
    \begin{definition}\label{defin:ETR}
        A metric $n$-manifold $(X,\dist)$ satisfies the property ETR if for every $x \in X$ there exists an open set $U \subseteq X$ containing $x$ and a homeomorphism $\varphi \colon U\to \varphi(U) \subseteq \setR^n$ with the following properties:
        \begin{enumerate}
            \item $\varphi$ is Lipschitz;
            \item there exist countably many disjoint compact sets $K_i \subseteq U$ such that $\haus^n(U\setminus \bigcup_i K_i) = 0$ and $\varphi|_{K_i}$ is bi-Lipschitz for each $i \in \setN$.
        \end{enumerate}
    \end{definition}

    This condition implies the degree-regularity, when porous sets have measure zero.

    \begin{lemma}
    \label{lemma:ETR=>Control_of_the_degree}
        Every \textup{ETR} metric $n$-manifold which satisfies Definition \ref{def: good-metric-mfld}.(ii), also satisfies Definition \ref{def: good-metric-mfld}.(i), i.e., it is degree-regular.
    \end{lemma}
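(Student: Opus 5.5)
We must show: for an ETR metric $n$-manifold in which porous sets are $\haus^n$-null, every bi-Lipschitz $\rho\colon K\to\setR^n$ on a compact $K\subseteq X$ has a Lipschitz extension with local degree $\pm1$ at $\haus^n$-a.e. point of $K$. Degree-regularity is local (Remark~\ref{rmk: metric-mfld}(4)), and $K$ is covered by finitely many ETR chart domains. So we fix a chart $(U,\varphi)$ and only need to check the degree condition for a.e. $x\in K\cap U$.

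By Remark~\ref{rmk: metric-mfld}(2), the conclusion does not depend on which Lipschitz extension is used. It therefore suffices to find, for a.e. $x\in K\cap U$, some compact $K'\ni x$ and some Lipschitz map $\pi$ that is bi-Lipschitz on $K'$ with $\deg(\pi,\cdot)=\pm1$ a.e. on $K'$. Then Corollary~\ref{cor: easy-version-property-ii}, applied to $\pi$ and $\tilde\rho$ on $K'$, transfers the conclusion to $\tilde\rho$.

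The natural candidate for $\pi$ is the chart $\varphi$ itself, and the compact pieces $K'$ are the sets $K_i\cap K$, where $K_i$ are the compact sets from Definition~\ref{defin:ETR}(2) on which $\varphi$ is bi-Lipschitz. These cover $K\cap U$ up to a null set. Two technical points arise.

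\textbf{Making $\varphi$ global.} The chart $\varphi$ is only defined on $U$. We shrink $U$ to a neighbourhood $V$ with compact closure in $U$, restrict to the compact sets $K_i\cap K\cap\overline V$, and use a McShane extension $\hat\varphi$ of $\varphi|_{\overline V}$ to all of $X$. Local degree at $x\in V$ depends only on the germ, and $\hat\varphi=\varphi$ near $x$. Moreover, $V$ is an open subset of the manifold $X$, so it is a metric $n$-manifold in its own right.

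\textbf{Computing the degree of $\varphi$.} Since $\varphi\colon U\to\varphi(U)$ is a homeomorphism onto an open subset of $\setR^n$ (invariance of domain), its local degree at every point of $U$ is $\pm1$. Indeed, $\varphi$ is injective, so $\varphi^{-1}(\varphi(x))\cap U=\{x\}$ and the local degree is defined. A homeomorphism between open sets induces an isomorphism $H_n(U,U\setminus\{x\})\to H_n(\varphi(U),\varphi(U)\setminus\{\varphi(x)\})$, which maps a generator to a generator.

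Hence $\deg(\hat\varphi,x)=\pm1$ for every $x\in V$, and in particular a.e. on each $K_i\cap K\cap V$. Corollary~\ref{cor: easy-version-property-ii}, with $\rho$ there equal to $\hat\varphi$ and $\pi$ there equal to $\tilde\rho$, on the compact set $K_i\cap K\cap\overline{V'}$ for a slightly smaller $V'\Subset V$, gives $\deg(\tilde\rho,x)=\pm1$ a.e. there. Taking the countable union over $i$, over the shrinking neighbourhoods, and over finitely many charts covering $K$ yields the claim a.e. on $K$.

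\textbf{Main obstacle.} The delicate part is the bookkeeping between local charts and the global map in Corollary~\ref{cor: easy-version-property-ii}. That corollary is stated for maps defined on all of $X$ and compact sets where both maps are bi-Lipschitz. I expect the cleanest route is to apply it directly in the open submanifold $V$, using Remark~\ref{rmk: metric-mfld}(4) to know porous sets are null there. Alternatively, one applies it in $X$ with the extension $\hat\varphi$, and notes that the degree of $\hat\varphi$ is only needed at points of $V$.
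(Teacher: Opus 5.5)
Your proposal is correct and follows essentially the paper's argument. You McShane-extend the ETR chart $\varphi$ to $X$, observe that its local degree is $\pm1$ at every point of the chart domain because it is a homeomorphism there, and then transfer this to an arbitrary extension $\tilde\rho$ via Corollary~\ref{cor: easy-version-property-ii} on the pieces $K\cap K_i$. The shrinking to $V\Subset U$ is unnecessary, since $\varphi$ is already Lipschitz on all of $U$ by Definition~\ref{defin:ETR}, but it is harmless.
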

    \begin{proof}
        Fix a countable atlas $(U^j,\varphi^j)$ as in Definition \ref{defin:ETR}, with corresponding sets $K^j_i$. We define bi-Lipschitz maps $\rho_i^j\colon K_i^j \to \setR^n$ as $\rho_i^j := \varphi^j|_{K_i^j}$. Then $\haus^n(X\setminus \bigcup_{i,j}K_i^j)=0$, and for every $i,j$ it is enough to take as $\tilde{\rho}^j_i\colon X \to \setR^n$ any Lipschitz extension of $\varphi^j$ to the whole of $X$, which exists by the McShane extension theorem. It is also a Lipschitz extension of $\rho^j_i$. Then each $\rho_i^j$ satisfies \eqref{eq:defin_good_rectifiability} because $\varphi^j$ is a homeomorphism.
        
        Take $K \subseteq X$ compact, $\rho\colon K \to \setR^n$ bi-Lipschitz and let $\tilde{\rho}\colon X \to \setR$ be any Lipschitz extension. By the first part and Corollary \ref{cor: easy-version-property-ii} applied to $\rho|_{K \cap K_i^j}$ and $\rho_i^j|_{K \cap K_i^j}$ we conclude that \eqref{eq:defin_good_rectifiability} holds.
    \end{proof}

    In \cite{BassoMartiWenger25, Marti2026} the theory we will generalize is applied to linearly locally contractible manifolds. This setting fits into our framework too.

    \begin{definition}[Linear local contractibility]
        Let $(X,\dist)$ be a metric space. Let $R>0$ and $\lambda \geq 1$. A subset $B \subseteq X$ is called $(\lambda,R)$-linearly locally contractible if for every $x \in B$ and every $r \in (0,R)$, the ball $B(x,r)$ is contractible within $B(x,\lambda r)$.
    \end{definition}

    We first prove that a linearly locally contractible metric manifold $X$ is locally lower Ahlfors regular as in Lemma \ref{lemma: lower-ahlfors-no-porous}. Notice that this would follow by applying \cite[Theorem 1.2 and Lemma 5.2]{Marti2026} to regular disks in $X$. However, these results rely on the existence of suitable integral currents in regular disks and one goal of this article is to present a unified approach to obtain such currents. Therefore, we present a proof that does not rely on the theory of metric currents.

    \begin{proposition}\label{prop: llc-lower-bound}
            Let $n\in \setN$ and $\lambda \ge 1$. Then there exists $c>0$ satisfying the following property.
            Let $(X,\dist)$ be a metric $n$-manifold which is $(\lambda,R)$-linearly locally contractible. Then, for every $x\in X$, there exists $r_x>0$ such that 
            $$\haus^n(B(y,r)) \geq c r^n$$
            for every $y \in B(x,r_x)$ and $0<r<r_x$.        
    \end{proposition}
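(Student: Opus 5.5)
Given $x\in X$, we pick $r_x>0$ so that $\lambda r_x<R$ and $B(x,(2\lambda+2)r_x)$ lies in a coordinate neighbourhood of $x$ that is homeomorphic to $\setR^n$. The argument uses two facts: a topological non-contractibility statement, and Eilenberg's coarea inequality (Lemma~\ref{lemma:coarea_inequality}) applied to distance functions.

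Fix $y\in B(x,r_x)$ and $0<r<r_x$, and suppose, to get a contradiction, that $\haus^n(B(y,r))<cr^n$ with $c$ small. First we find a sphere of small measure. Apply the coarea inequality with $s=n$, $t=1$ to the $1$-Lipschitz function $u=\dist(y,\cdot)$ on $A=B(y,r)$. This gives $\int_0^r \haus^{n-1}(u^{-1}(t))\,dt\le C_n\haus^n(B(y,r))$. Hence there is a set of $t\in(r/2,r)$ of positive measure with $\haus^{n-1}(S_t)\le 2C_n c\, r^{n-1}$, where $S_t=\{u=t\}$. We iterate this procedure to lower dimensions. Concretely, we construct inductively a Lipschitz map $\Phi\colon \overline B(y,r)\to \setR^n$, built from distance functions to net points, whose image avoids a point near $\Phi(y)$ while $\Phi$ restricted to $S_t$ has small image. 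The standard way is the Semmes-type argument: we show that $\overline B(y,t)$ can be retracted, inside $B(y,\lambda t)$, onto an $(n-1)$-dimensional ``skeleton'' near $S_t$.

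The cleaner route, which I will actually pursue, uses small measure to force small topological size, and then uses contractibility to reach a contradiction. Small $\haus^{n}$-measure of $B(y,r)$ together with the coarea inequality iterated through distance functions $\dist(z_i,\cdot)$ lets us push $B(y,r/2)$ by a map into the $(n-1)$-skeleton of a nerve of a cover by balls of radius $\ll r$. The cover is chosen with multiplicity at most $n$ on the bulk, because the small measure forces many level sets to be empty. This yields a continuous $g\colon B(y,r/2)\to P$, where $P$ is a polyhedron of dimension $\le n-1$, with $\dist(g(z),z)\le \delta r$ after composing with a map $P\to X$ that uses the linear local contractibility. That composition is built skeleton by skeleton: vertices go to ball centers, and each $k$-simplex is filled using contractibility of balls of radius $\sim\lambda^k\delta r$. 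Then $\mathrm{id}$ on a small sphere $\Sigma\subseteq B(y,r/4)$ around $y$, taken in a coordinate chart, is homotopic within $B(y,r)\setminus\{y\}$ to a map factoring through an $(n-1)$-dimensional polyhedron. Moreover, by the measure smallness we can arrange that this map misses the point $y$ and is homotopic in $X\setminus\{y\}$ to a map that extends over the ball. This would make the inclusion $\Sigma\to X\setminus\{y\}$ null-homotopic, contradicting $H_{n-1}$-nontriviality of a linking sphere in a manifold, i.e.\ $H_n(X,X\setminus\{y\})\cong\mathbb Z$.

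The main obstacle is the quantitative step: deducing from $\haus^n(B(y,r))<cr^n$ that $y$ can be avoided by a controlled deformation. I will handle it by choosing, via the coarea inequality, radii for $n$ independent distance functions $\dist(z_1,\cdot),\dots,\dist(z_n,\cdot)$ and an $n$-fold slicing argument. The map $F=(\dist(z_i,\cdot))_i$ is $\sqrt n$-Lipschitz, so $\haus^n(F(B(y,r)))\le n^{n/2}cr^n$. For $c$ small, $F(B(y,r))$ therefore misses a point $p$ within distance $\sim c^{1/n}r$ of $F(y)$. Radial projection from $p$ then gives the deformation into an $(n-1)$-dimensional target, which we pull back to $X$ using contractibility at scale $\lambda\cdot c^{1/n}r$. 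Here the constants must close, which is why $c$ depends only on $n,\lambda$.
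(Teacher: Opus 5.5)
There is a genuine gap, and it sits exactly at the step you flag as the main obstacle: nothing in the proposal turns $\haus^n(B(y,r))<cr^n$ into a controlled map onto something lower-dimensional. You sketch three mechanisms, and none of them closes.

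The one you commit to fails outright. That $F=(\dist(z_1,\cdot),\dots,\dist(z_n,\cdot))$ misses a point $p$ near $F(y)$ carries no topological information about $X$. Radial projection from $p$ also cannot be ``pulled back to $X$ using contractibility''. Linear local contractibility lets you build a map $G$ from a complex into $X$ with $G\circ\phi$ uniformly close to the identity only when $\phi$ already has fibres of small diameter. The fibres of $F$ are not small: already in $\setR^n$ they are pairs of points symmetric with respect to the hyperplane through $z_1,\dots,z_n$, and in a general metric manifold they can be large. Put differently, a point omitted by $F(B(y,r))$ contradicts something only if $F$ has nonzero degree over it, and nothing of that kind is available for distance functions.

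The nerve route has the same hole. The coarea inequality gives level sets of small $\haus^{n-1}$-measure, not empty ones. Without a doubling bound you also have no control of the multiplicity, or of the Lipschitz constants of a partition of unity. So ``a cover of mesh $\ll r$ and multiplicity at most $n$ on the bulk'' is simply asserted. That assertion says the Urysohn $(n-1)$-width of the ball is small compared with $r$, which is the whole difficulty.

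The topological endgame also needs repair. A map from the $(n-1)$-sphere $\Sigma$ that factors through an $(n-1)$-dimensional polyhedron gives no contradiction, since $\Sigma$ is itself one. The extra claim that the map ``misses $y$ and is homotopic in $X\setminus\{y\}$ to a map that extends over the ball'' is the conclusion. Moreover, $H_n(X,X\setminus\{y\})\cong\mathbb Z$ does not make the linking sphere essential in $X\setminus\{y\}$ (in $\mathbb S^n$ it bounds), so everything must be kept inside a punctured chart.

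The missing ingredient is the Hausdorff-measure versus Urysohn-width theorem. It says that a compact metric space $S$ admits a map into an $(m-1)$-dimensional simplicial complex in $\ell^\infty$ moving points by at most $c_1\haus^m(S)^{1/m}+\eta$ \cite[Theorem~3.1]{LLN-22}. The paper applies it with $m=n-1$, after using coarea in the opposite direction: it proves $\haus^{n-1}(S_s)\geq a s^{n-1}$ for distance spheres $S_s$ about $y$ in the double $\hat D$ of a regular disk, and then integrates. If $S_s$ had small measure, the theorem would give $f\colon S_s\to\Sigma$ with $\dim\Sigma\leq n-2$. Linear local contractibility, via \cite[Proposition~4.1]{Marti2026}, then gives $g$ with $g\circ f$ homotopic to the inclusion inside the $s/4$-neighbourhood of $S_s$. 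Hence the induced map on $\check{H}^{n-1}(\,\cdot\,;\mathbb Z_2)$ vanishes. This contradicts the fact that $S_s$ separates $y$ from a far point of $\hat D$ \cite[Lemma~2.1]{Kinneberg-2018}. Some result of this type is indispensable; slicing by distance functions does not replace it.
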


    The proof of the proposition is essentially the proof of \cite[Theorem 4.1]{BassoMartiWenger25} applied to regular disks in $X$. We thus only provide a sketch of the proof.

    \begin{proof}
        Fix $x \in X$ and let $0<r_x <R$ be such that $B(x,r_x)$ is contained in a regular disk $D \subseteq X$ as in Definition \ref{def: good-ball}. Such a $r_x$ exists by Lemma \ref{lemma: good-subsets-existence}. Furthermore, let $\hat{D}$ be the closed metric manifold obtained by gluing two copies of $D$ along their boundaries as explained in Definition \ref{def: double-good-subset}. We write $\iota_1 \colon D \hookrightarrow \hat{D}$ for the isometric embedding that identifies $D$ with one of its copies in $\hat{D}$. Put $B \coloneqq  \iota_1(\overline{B}(x,r_x/(2\lambda)))$. If $y\in B$ and $0<r<r_x/(2\lambda)$, then $B(y,\lambda r) \subseteq B(x,r_x) \subseteq X$. Moreover, the corresponding balls in the double $\hat{D}$ stay inside the first copy of $D$. It follows that $B$ is $(\lambda,r_x/(2\lambda))$-linearly locally contractible as a subset of $\hat{D}$. We show that there exists $c=c(n,\lambda)>0$ such that
\[
 \haus^n(B(y,r))\geq c r^n
\]
for every
$y\in B'\coloneqq\iota_1(B(x,r_x/(5\lambda)))$
and every $0<r<r_x/(5\lambda)$.
This proves the proposition after shrinking the local
radius.
       
        The case $n=1$ is not difficult and is left to the reader. We therefore assume that $n\geq 2$. Fix $y \in B'$. For $s>0$, we define $S_s\coloneqq \{y'\in \hat{D}: \dist(y, y')=s\}$. By the coarea inequality, it suffices to prove that
there exists $a=a(n,\lambda)>0$ such that
\[
 \haus^{n-1}(S_s)\geq a s^{n-1}
\]
for every $0<s<r_x/(5\lambda)$.
We choose $a>0$ sufficiently small below.
Suppose, towards a contradiction, that
$\haus^{n-1}(S_s)<a s^{n-1}$ for some such $s$,
and put $S:=S_s$. Embed $\hat{D}$ into $\ell^\infty$. Let $\eta>0$ be such that $\eta< a^\frac{1}{n-1}s$. It follows from \cite[Theorem~3.1]{LLN-22} (see also \cite[Theorem~1.5]{avvakumov2023boxing}) that there exist a finite $(n-2)$-dimensional simplicial complex $\Sigma \subseteq \ell^\infty$ and a Lipschitz map $f\colon S \to \Sigma$ such that
        \begin{equation}\label{eq:homotopy-f}
            \|x - f(x)\|_\infty \leq c_1 \cdot \haus^{n-1}(S)^\frac{1}{n-1} +\eta \leq 2c_1 a^{\frac{1}{n-1}}s
        \end{equation}
        for all $x \in S$, where $c_1\geq 1$ is a constant depending only on $n$. Set $Z \coloneqq f(S)$. Let $g_0\colon Z \to S$ be a (possibly discontinuous) map satisfying $\norm{g_0(z)-z}_\infty\leq 2\dist(z,S)$ for all $z \in Z$. Let $z,z' \in Z$. Then by \eqref{eq:homotopy-f}
        $$\norm{g_0(z)-g_0(z')}_\infty \leq 2(\dist(z,S)+\dist(z',S)) + \norm{z-z'}_\infty\leq 4c's + \norm{z-z'}_\infty,$$
        where $c'=2c_1a^{\frac{1}{n-1}}$. In particular, 
        \begin{equation}\label{eq: g_0-epsilon-cont}
            \norm{g_0(z)-g_0(z')}_\infty \leq 5c's
        \end{equation}
        whenever $\norm{z-z'}_\infty\leq c's$. Since $g_0(Z) \subseteq S \subseteq B$ we may apply \cite[Proposition 4.1]{Marti2026} to obtain a continuous map $g\colon Z \to \hat{D}$ satisfying $\norm{g(z)-g_0(z) }_\infty< 5Qc's$ for all $z \in Z$. Here, $Q>0$ depends only on $\lambda$ and $n$. Notice the condition that $g_0$ is $\varepsilon$-continuous in \cite[Proposition 4.1]{Marti2026} is fulfilled by \eqref{eq: g_0-epsilon-cont} with $\varepsilon= 5c's$. Thus, by decreasing $a>0$ if necessary, we can also guarantee that $\varepsilon < r_x/(5\lambda Q)$. 
        
        \medskip
        
        Let $\iota_{S_s}$ be the inclusion of $S_s$ into $\hat{D}$. Then, by \eqref{eq:homotopy-f}
        \begin{align*}
            \|\iota_{S_s}(w)&-g\circ f(w)\|_\infty
            \\
            &\leq \norm{w -f(w)}_\infty + \norm{f(w) -g_0\circ f(w)}_\infty+\norm{g_0\circ f(w)-g\circ f(w)}_\infty
            \\
            &\leq c's+2\dist(f(w),S) + 5Qc's \leq (3+5Q)c's
        \end{align*}
        for all $w\in S$. Hence, if $a>0$ is sufficiently small, we have $(g\circ f)(S) \subseteq B$ and we can use \cite[Proposition 4.1]{Marti2026} again to find a homotopy between $\iota_{S_s}$ and $h =g\circ f$ whose image lies in the $s/4$-neighbourhood of $S$ (see the comment after the statement of the proposition). Consider the following commutative diagram in \v{C}ech cohomology:
 
        \begin{center}
        \begin{tikzcd}
        \check{H}^{n-1}(h(S_s);\mathbb Z_2)  \arrow{r}{h^\ast} \arrow{d}{g^\ast} &\check{H}^{n-1}(S_s;\mathbb Z_2)  \\
        \check{H}^{n-1}(Z;\mathbb Z_2) \arrow[ur, swap, "f^\ast"] 
        \end{tikzcd}
        \end{center}
        Since $Z$ is a compact subset of the $(n-2)$-dimensional simplicial complex $\Sigma$ its covering dimension is at most $(n-2)$. Therefore, $\check{H}^{n-1}(Z; \mathbb Z_2)$ is trivial and thus the induced homomorphism $h^*$ vanishes. However, for $z\in\iota_1(\partial D)$ we have
\[
 \dist_{\hat D}(y,z)
 \geq r_x-\dist_{\hat D}(x,y)
 >r_x-\frac{r_x}{5\lambda}>2s.
\]
Consequently, $S_s$ separates $y$ and $z$, and its
$s/4$-neighbourhood avoids both points. By the above, the inclusion \(\iota_{S_s}\) is homotopic to \(h\) through maps whose images do not meet \(\{y, z\}\). Therefore, \cite[Lemma~2.1]{Kinneberg-2018} implies that $h^*$ is non-trivial, which is a contradiction. 
We have therefore proved that
$\haus^{n-1}(S_s)\geq a s^{n-1}$ for every
$0<s<r_x/(5\lambda)$.
Fix $0<r<r_x/(5\lambda)$.
If $\haus^n(B_{\hat D}(y,r))=\infty$, the desired
lower bound is immediate. Otherwise, since $\hat D$
is compact and $B_{\hat D}(y,r)$ is Borel, the
measurability assertion in
\cite[Theorem 1.1]{Esmayli_Hajlasz_2021}, applied to
the distance function $\dist_{\hat D}(y,\cdot)$, shows
that $s\mapsto\haus^{n-1}(S_s)$ is Lebesgue measurable
on $(0,r)$. The coarea inequality therefore gives
\[
 \haus^n(B_{\hat D}(y,r))
 \geq
 \frac{\omega_n}{\omega_{n-1}\omega_1}
 \int_0^r \haus^{n-1}(S_s)\,ds
 \geq
 \frac{a\omega_n}{n\omega_{n-1}\omega_1}r^n.
\]
The balls under consideration coincide isometrically
with the corresponding balls in $X$. Thus the
proposition follows with
$\rho_x=r_x/(5\lambda)$ as the local radius and
$c=a\omega_n/(n\omega_{n-1}\omega_1)$.
    \end{proof}

    Therefore, linearly locally contractible metric $n$-manifolds are degree-regular.
        
    \begin{proposition}
    \label{prop:ETR+LLC_satisfy_good_rectifiability}
         Let $(X,\dist)$ be a linearly locally contractible metric $n$-manifold. Then $X$ is degree-regular.
    \end{proposition}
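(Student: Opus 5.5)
To prove that a $(\lambda,R)$-linearly locally contractible metric $n$-manifold is degree-regular, I would check the two conditions of Definition~\ref{def: good-metric-mfld} separately. Both are local properties by Remark~\ref{rmk: metric-mfld}.(4), so I may work near a fixed point $x$ inside a regular disk $D$ from Lemma~\ref{lemma: good-subsets-existence}.

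\textbf{Condition (ii).} This is immediate. Proposition~\ref{prop: llc-lower-bound} supplies, around every point $x$, a radius $r_x$ and a constant $c$ such that
\[
\haus^n(B(y,r))\ge c r^n
\quad\text{for all } y\in B(x,r_x) \text{ and } 0<r<r_x .
\]
This is exactly the hypothesis of Lemma~\ref{lemma: lower-ahlfors-no-porous}, which then gives that every porous set is $\haus^n$-negligible.

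\textbf{Condition (i): reduction.} Let $K\subseteq X$ be compact and $\rho\colon K\to\setR^n$ bi-Lipschitz. By Remark~\ref{rmk: metric-mfld}.(2), once (ii) holds the conclusion does not depend on the chosen extension, so I take any McShane extension $\tilde\rho$. I must show that $\deg(\tilde\rho,x)=\pm1$ for a.e.\ $x\in K$. Lemma~\ref{lemma: bi-lip-local-degree-ae} shows these degrees are defined a.e., so the task is to exclude degree $0$ and degree of absolute value at least $2$ on sets of positive measure.

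\textbf{Excluding $|\deg|\ge2$.} I would use density and non-porosity. Take $x\in K$ that is a density point of $K$ at which $K$ is non-porous. The argument of Lemma~\ref{lemma:degree-behaves-well} shows that $\tilde\rho(z)\neq\tilde\rho(x)$ for $z$ in a punctured ball around $x$, with $|\tilde\rho(z)-\tilde\rho(x)|\gtrsim \dist(z,x)$. By local degree additivity~\eqref{eq: local-degree-additivity}, applied to a small neighbourhood and to values $y$ near $\tilde\rho(x)$, the number $|\deg(\tilde\rho,x)|$ is at most the number of preimages of a generic nearby $y$. Because $\rho$ is injective on $K$ and $K$ has density one at $x$, most nearby points have only one preimage in the neighbourhood. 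This bound is not automatic, however, since preimages in $U\setminus K$ can also contribute.

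\textbf{Excluding degree $0$: the main obstacle.} Here linear local contractibility must enter quantitatively. The plan is to argue by contradiction, in the spirit of \cite[Theorem 4.1]{BassoMartiWenger25} and the proof of Proposition~\ref{prop: llc-lower-bound}. Suppose $\deg(\tilde\rho,x)=0$ at a density and non-porosity point $x$. At small scales $r$, the restriction of $\tilde\rho$ to the sphere $S_r(x)$ is then null-homotopic in $\setR^n\setminus\{\tilde\rho(x)\}$, while $\tilde\rho$ maps $S_r(x)$ into the complement of a ball of radius comparable to $r$ around $\tilde\rho(x)$. Using linear local contractibility, I would construct a controlled map $g$ from a neighbourhood of $B(\tilde\rho(x),cr)\cap\rho(K)$ back into $X$. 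The construction is as follows:
\begin{itemize}
\item first define an $\varepsilon$-continuous map $g_0=\rho^{-1}$ on $\rho(K)$, which is dense at scale $\varepsilon r$ by density;
\item then upgrade $g_0$ to a genuinely continuous $g$ via \cite[Proposition 4.1]{Marti2026}, at the cost of an error of order $Q\varepsilon r$.
\end{itemize}
The composition $\tilde\rho\circ g$ is then uniformly close to the identity on a ball, so by homotopy invariance it has degree one over $\tilde\rho(x)$. On the other hand, $g$ maps the ball into a region where $\tilde\rho$ has total degree $\deg(\tilde\rho,x)$, and the multiplicativity~\eqref{eq: local-degree-multiplicity} forces $\deg(\tilde\rho,x)\cdot(\text{integer})=1$.

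This last argument yields both $\deg\neq0$ and $|\deg|=1$ at once, so it may also handle the previous step. Its delicate point is controlling $g$ so that it maps into $\tilde\rho^{-1}$ of the appropriate region with well-defined degrees. I would handle this by working in the doubled disk $\hat D$, which is closed, so that global degree theory is available.
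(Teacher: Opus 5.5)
Your argument is correct. Condition (ii) is handled exactly as in the paper, via Proposition~\ref{prop: llc-lower-bound} and Lemma~\ref{lemma: lower-ahlfors-no-porous}.

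For condition (i), however, the paper gives no argument of its own. It imports \cite[Lemmas 4.6 and 4.7]{Marti2026}, proved there for closed manifolds, and appeals to Remark~\ref{rmk: metric-mfld}.(4) to localize. You instead prove (i) directly. You use the controlled-approximation result \cite[Proposition 4.1]{Marti2026}, already employed in the proof of Proposition~\ref{prop: llc-lower-bound}, to build a continuous near-inverse $g$ of $\rho$ on a small ball $U$ around $p:=\rho(x)$, and then compare degrees.

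The citation buys brevity. Your route is self-contained, isolates exactly where linear local contractibility is used, and yields $|\deg(\tilde\rho,x)|=1$ in one stroke. You can therefore drop the unfinished $|\deg|\ge 2$ paragraph. You can also drop the contradiction setup, which restricts $\tilde\rho$ to the metric sphere $S_r(x)$; that set need not be a topological sphere.

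Three points should be made explicit when you write it up.

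First, the density you need is Lebesgue density of $\rho(K)$ at $p$ in $\setR^n$, not density of $K$ in $X$. This holds for $\haus^n$-a.e.\ $x\in K$ because $\rho$ is bi-Lipschitz. Also, $g_0$ must be defined on all of $U$: compose a near-nearest-point selection into $\rho(K)$ with $\rho^{-1}$. Then $\tilde\rho\circ g_0$ is $O(\varepsilon r)$-close to the identity, and $g_0$ is $O(L\varepsilon r)$-continuous.

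Second, you need $g(U)\subseteq B(x,Cr)$ with $C$ independent of $r$. Then, for small $r$, the image of $g$ meets $\tilde\rho^{-1}(p)$ only at $x$; non-porosity of $K$ at $x$, as in Lemma~\ref{lemma:degree-behaves-well}, provides this isolation.

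Third, the pointwise rule \eqref{eq: local-degree-multiplicity} does not apply to $g$. The map $g$ is merely continuous and may have infinitely many points over $x$, so its local degrees need not be defined. Use instead functoriality of $H_n(U,U\setminus g^{-1}(x))\to H_n(V,V\setminus\{x\})\to H_n(\setR^n,\setR^n\setminus\{p\})$, with $V$ a chart neighbourhood of $x$. Here $g^{-1}(x)=(\tilde\rho\circ g)^{-1}(p)$ is compact in $U$, since it stays near $p$. The composite has degree $1$ over $p$ by a proper straight-line homotopy to the inclusion. This gives $1=\deg(\tilde\rho,x)\cdot\deg_x(g)$.

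The doubled disk is unnecessary for this degree theory. It only helps if you want to quote \cite[Proposition 4.1]{Marti2026} in its closed-manifold form.
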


    \begin{proof}
        Definition \ref{def: good-metric-mfld}.(i) is a consequence of \cite[Lemma 4.6, Lemma 4.7 and the comment after the statement]{Marti2026}. The statement is for closed manifolds, but the argument is local and applies equally well to our case, see Remark \ref{rmk: metric-mfld}.(4). Furthermore, Lemma \ref{lemma: lower-ahlfors-no-porous} together with Proposition \ref{prop: llc-lower-bound} directly implies that porous sets in $X$ have $\haus^n$-measure zero. 
    \end{proof}

\section{Orientability of degree-regular metric manifolds}
\label{sec:orientability_of_manifolds}

    Let $(X,\dist)$ be a degree-regular metric $n$-manifold as defined in Definition \ref{def: good-metric-mfld}. We denote by $E$ the rectifiable part of $X$, which is well-defined up to $\haus^n$-measure zero sets, cf. Lemma \ref{lemma: rect-part}. We recall that we denote by $\lambda_E \in L^\infty(E)$ the tangential function of $E$. The goal of this section is to prove that the topological and analytical notions of orientability coincide for degree-regular metric manifolds.

    \begin{theorem}\label{thme: characterization-orientability-metric-manifold}
        Let $(X,\dist)$ be a degree-regular metric $n$-manifold. The following two properties are equivalent.
        \begin{itemize}
            \item[(i)] $X$ is topologically orientable.
            \item[(ii)] There exists $T\in \mathcal{I}_{n,\textup{loc}}(X)$ satisfying $\partial T = 0$ and $\norm{T} = \lambda_E\haus^n \rstr E$.
        \end{itemize}
    \end{theorem}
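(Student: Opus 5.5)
For (i)$\Rightarrow$(ii) we build the current locally on regular disks and then glue. Fix an orientation of $X$. By Lemma~\ref{lemma: good-subsets-existence}, $X$ is covered by interiors of regular disks $D$, each oriented by restriction. Using Lemma~\ref{lemma: rect-part} and Lemma~\ref{lemma: lip-bilip-decomp}, cover the rectifiable part $E\cap D$, up to a null set, by disjoint compact sets $K_i\subseteq D$ with bi-Lipschitz charts $\rho_i\colon K_i\to\setR^n$. Let $\widetilde\rho_i$ be Lipschitz extensions with $\deg(\widetilde\rho_i,x)=\pm1$ a.e.\ on $K_i$, which exist by degree-regularity. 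Set
\[
T_D:=\sum_i(\rho_i^{-1})_\#\br{\deg(\widetilde\rho_i,\rho_i^{-1}(\cdot))},
\]
with multiplicities extended by zero outside $\rho_i(K_i)$. By the representation in Lemma~\ref{lemma: charact-set} and the area factor, $\norm{T_D}=\lambda_E\haus^n\rstr(E\cap D)$, which is locally finite. Independence of the chart decomposition follows from Lemma~\ref{lemma:degree-behaves-well}: on an overlap $K_i\cap K'_j$, the transition $\psi$ satisfies $\deg(\widetilde\rho'_j,x)=\deg(\psi,\rho_i(x))\deg(\widetilde\rho_i,x)$ a.e. By \eqref{eq: local-deg-Rn-diffb}, $\deg(\psi,\cdot)=\sgn\det D\psi$, so the two pushforwards agree by the change of variables formula (Lemma~\ref{lemma: area-formula}). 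The same identity shows that the restrictions of $T_D$ and $T_{D'}$ to $\textup{int}(D)\cap\textup{int}(D')$ coincide. Hence the local currents patch, via a locally finite partition of unity, to a global $T\in\mathcal I_{n,\loc}(X)$ with the prescribed mass.

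\textbf{Main step: $\partial T_D=0$ in $\textup{int}(D)$.} Pass to the double $\hat D\cong\mathbb S^n$ and form $\hat T:=\iota_{1\#}T_D+\iota_{2\#}T_D$, the second term built with the reversed orientation. Since $\haus^n(\partial D)=0$, the mass of $\hat T$ is finite, so it suffices to show $\partial\hat T(f,\pi)=\hat T(1,f,\pi)=0$ for all Lipschitz $f$ and $\pi$. Write $\hat T(1,f,\pi)=\hat T(1,\Pi)$ with $\Pi=(f,\pi)\colon\hat D\to\setR^n$. Split $\hat D$ into its rectifiable charts and the purely unrectifiable part $P$. By Bate's perturbation theorem (Theorem~\ref{thme: bate-perturb}), after approximating $\Pi$ uniformly with bounded Lipschitz constant by maps $\Pi_k$ satisfying $\haus^n(\Pi_k(P))=0$, and using the continuity axiom, we may assume $\haus^n(\Pi(P))=0$. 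On each chart $K_i$, Lemma~\ref{lemma:degree-behaves-well} rewrites the integrand $\deg(\widetilde\rho_i,x)\det D(\Pi\circ\rho_i^{-1})$ as $\deg(\Pi,x)\,|\det D(\Pi\circ\rho_i^{-1})|$. The area formula then gives
\[
\hat T(1,\Pi)=\int_{\setR^n}\sum_{x\in\Pi^{-1}(y)}\deg(\Pi,x)\,dy .
\]
For a.e.\ $y$ the fiber is finite, because of the coarea inequality (Lemma~\ref{lemma:coarea_inequality}). The fiber also misses $P$ and the non-chart null set, so every point of the fiber lies in a chart, and by \eqref{eq: local-degree-additivity} the inner sum equals $\deg_y(\Pi)$. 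Since $\hat D$ is compact and $\setR^n$ is not, $\deg(\Pi)=0$. Hence $\hat T(1,\Pi)=0$ and $\partial\hat T=0$. Restricting to $\iota_1(\textup{int}D)$, which is open, Lemma~\ref{cor:cycles_open_sets} gives $\partial T_D=0$ there, and therefore $\partial T=0$.

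I expect the bookkeeping in this step to be the main obstacle: making Bate's approximation compatible with the continuity axiom, ensuring a.e.\ fibers avoid every bad set, and handling the orientation reversal on the second copy.

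For (ii)$\Rightarrow$(i), suppose $T$ is given. For $x\in X$, take a regular disk chart $\varphi\colon U\to\setR^n$, oriented arbitrarily, and let $T_U$ be the current built above on $U$. On $E\cap U$, both $T$ and $T_U$ have multiplicity $\pm1$ relative to the charts; write $T=\theta\,T_U$ there, with $\theta=\pm1$. Since $\partial(T-T_U)=0$ and $\partial(T+T_U)=0$ in $U$, and $U\cong\setR^n$ is connected, the constancy step (the argument above, applied to $\theta$ via the degree of test maps) shows that $\theta$ is constant on $U$. Choosing the local orientation so that $\theta=1$ defines a local orientation. Compatibility on overlaps is forced by $T$, so these local orientations form a global orientation of $X$. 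If constancy proves delicate, I would instead argue by contradiction: a nonconstant $\theta$ yields a cycle whose mass lives on $\{\theta=1\}$, and pushing it forward by a chart to $\setR^n$ gives a top-dimensional cycle $\br{\chi}$ with $\chi$ not a.e.\ constant. The constancy theorem in $\setR^n$, together with Theorem~\ref{thme:De-Philippis-Rindler}, rules this out.
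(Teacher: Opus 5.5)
Your direction (i)$\Rightarrow$(ii) is essentially the paper's argument. You build degree-weighted chart currents on regular disks, pass to the double $\hat D$ and use Bate's perturbation theorem with additivity of the local degree to get $\partial\hat T=0$ (the first half of Proposition~\ref{prop: existence-metric-fund-class-double}), and glue with a partition of unity. Your chart-independence argument, by direct change of variables on $K_i\cap K'_j$ via Lemma~\ref{lemma:degree-behaves-well}, is a valid alternative to the paper's route through the characterization \eqref{eq: chracterization-T_U}.

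The gap is in (ii)$\Rightarrow$(i), at the ``constancy step''. After doubling and perturbing, the hypothesis $\partial(\theta T_U)=0$ gives you that $\int_{\setR^n}\sum_{x\in\Pi^{-1}(y)}\theta(x)\deg(\Pi,x)\,dy=0$ for admissible Lipschitz test maps $\Pi$. Your main step only proves that this integral vanishes when $\theta$ is constant; it gives no way back. To detect a sign change of $\theta$ you would need test maps that isolate a neighbourhood of a single point. A degree-regular manifold has no Lipschitz charts on open sets, only bi-Lipschitz pieces $K_i$, and their Lipschitz extensions are in general far from injective.

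The fallback fails for the same reason. The topological chart is not Lipschitz, so it cannot be used for a push-forward. If $\Pi$ is instead a Lipschitz extension of some $\rho_i$, the push-forward of the cycle carried by $\{\theta=1\}$ has density $\chi(y)=\sum_{x\in\Pi^{-1}(y)}\mathbb{1}_{\{\theta=1\}}(x)\deg(\Pi,x)$. This is a fiber sum over the whole disk, in which contributions can cancel. Its boundary also lies on the uncontrolled set $\Pi(\partial D)$, and nothing in your argument shows that $\chi$ is non-constant on a component of $\setR^n\setminus\Pi(\partial D)$.

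What is missing is a genuine constancy (uniqueness) theorem for top-dimensional cycles. The paper obtains it as Corollary~\ref{cor: uniqueness-T_U} in two steps:
\begin{itemize}
\item Besides $\partial\hat T_D^{\mathfrak{o}}=0$, it proves $\varphi_\#\hat T_D^{\mathfrak{o}}=\deg(\varphi)\br{\mathbb{S}^n}$ for every Lipschitz $\varphi\colon\hat D\to\mathbb{S}^n$. This is the second half of Proposition~\ref{prop: existence-metric-fund-class-double}, which your proposal does not establish, and it makes $\hat T_D^{\mathfrak{o}}$ a metric fundamental class.
\item It then invokes \cite[Proposition 4.4]{marti2024characterization}: every $n$-cycle on $\hat D$ is an integer multiple of $\hat T_D^{\mathfrak{o}}$. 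Applied to the double of $T|_D$, this gives $T|_D=k\,T_D^{\mathfrak{o}}$, and the mass condition forces $k=\pm1$.
\end{itemize}
If you want to avoid citing that result, you need an additional idea of comparable strength. One possibility is homotopy invariance of $\varphi\mapsto\varphi_\#\hat S$ for Lipschitz maps into $\mathbb{S}^n$, combined with Lipschitz maps of degree one concentrated near density points of bi-Lipschitz pieces where $\theta=1$ and where $\theta=-1$. Once constancy is available, your gluing of the local orientations on overlaps is correct and matches the paper.
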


    The condition on the mass measure in (ii) guarantees that $T$ is supported on the whole of $X$. Indeed, $E$ is dense in $X$ by Theorem \ref{thm: topological-dim-existence-rect}, and thus $\spt(T) = X$ since $\lambda_E>0$ and $\haus^n\rstr E$ has locally positive measure by Theorem \ref{thm: topological-dim-existence-rect}. We also note that if $X$ is $n$-rectifiable, then $E = X$. Before we prove Theorem \ref{thme: characterization-orientability-metric-manifold}, we discuss the orientability of regular disks.

    \subsection{Orientability of regular disks}\label{sec: orient-regular-disks}
        A regular disk $D$ (cf. Definition \ref{def: good-ball}) is called degree-regular if $\textup{int}(D)$ is degree-regular in the sense of Definition \ref{def: good-metric-mfld}. Notice that every regular disk $D$ in a degree-regular metric manifold $X$ is degree-regular. 

        \medskip

        We begin with the following construction. Let $X$ a metric space (not necessarily a metric manifold) and $D \subseteq X$ be a regular disk that is degree-regular.
        We fix an orientation $\mathfrak{o}$ of $\textup{int}(D)$, which is homeomorphic to the open ball $B(0,1)\subseteq \setR^n$. By Lemma \ref{lemma: rect-part}, we can write $D = E_D \cup P_D$ where $E_D$ is $n$-rectifiable and $P_D$ is purely $n$-unrectifiable. Let $K_i \subseteq \textup{int}(D)$ be countably many, pairwise disjoint, compact sets such that $\haus^n(E_D\setminus \bigcup_i K_i) = 0$ and let $\rho_i\colon K_i \to \setR^n$ be bi-Lipschitz maps. This is possible since $\haus^n(\partial D) = 0$. For each $i \in \setN$, let $\tilde{\rho}_i\colon \textup{int}(D) \to \setR^n$ be a Lipschitz extension of $\rho_i$ and $$\theta_i^\mathfrak{o} \colon \rho_i(K_i) \to \mathbb{Z}, z \mapsto \theta_i^\mathfrak{o}(z) :=\deg(\tilde{\rho}_i,\rho_i^{-1}(z)),$$ 
        where the degree is computed with respect to the orientation $\mathfrak{o}$ on $\textup{int}(D)$. We define
        \begin{equation}\label{eq: definition-T_U}
            T_D^\mathfrak{o} := \sum_{i\in \setN} \rho^{-1}_{i\#} \br{\theta_i^\mathfrak{o}}.
        \end{equation}
        Since $D$ is assumed to be degree-regular, Remark \ref{rmk: metric-mfld}.(2) implies that the definition of $T_D^\mathfrak{o}$ does not depend on the extensions $\tilde{\rho}_i$ of the functions $\rho_i$, and that $\theta_i^\mathfrak{o} = \pm 1$ $\haus^n$-a.e. on $\rho_i(K_i)$. The next result establishes that $T_D^\mathfrak{o}$ does not depend on the choice of the compact sets $K_i$ and of the bi-Lipschitz maps $\rho_i$. Moreover,  it is a well-defined, canonical integer rectifiable current on $D$ that satisfies natural properties.

        \begin{proposition}\label{prop: characterization-T_U}
            The current $T_D^\mathfrak{o}$ defined in \eqref{eq: definition-T_U} is integer rectifiable and satisfies
            \begin{equation}\label{eq: mass-measure-T_U}
                \norm{T_D^\mathfrak{o}} = \lambda_{E_D} \haus^n\rstr E_D.
            \end{equation}
            Moreover, for all $(f,\pi)\in \mathcal{D}^n(D)$ such that $\haus^n(\pi(P_D)) = 0$, we have
            \begin{equation}\label{eq: chracterization-T_U}
                T_D^\mathfrak{o}(f,\pi) =  \int_{\setR^n} \sum_{x \in \pi^{-1}(y) \cap \textup{int}(D)}f(x) \deg(\pi,x) \, d\haus^n(y).
            \end{equation}
            The equality \eqref{eq: chracterization-T_U} completely characterizes $T_D^\mathfrak{o}$, and $T_D^\mathfrak{o}$ does not depend on the choice of the rectifiable charts $(K_i,\rho_i)$ used in the definition.
            Furthermore, $T_D^{-\mathfrak{o}} = -T_D^\mathfrak{o}$, where $-\mathfrak{o}$ is the orientation of $\textup{int}(D)$ opposite to $\mathfrak{o}$.
            Finally, if $D' \subseteq X$ is another regular disk with orientation $\mathfrak{o}'$ such that $D'$ is degree-regular and $\mathfrak{o} = \mathfrak{o}'$ on $D \cap D'$, then $T_D^\mathfrak{o}|_ B = T_{D'}^{\mathfrak{o}'}|_ B$ for every Borel $B\subseteq D\cap D'$.
        \end{proposition}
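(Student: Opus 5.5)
The plan is to treat each chart piece separately and then glue. Fix the charts $(K_i,\rho_i)$ and extensions $\tilde\rho_i$. Since $D$ is degree-regular, Remark~\ref{rmk: metric-mfld}.(2) gives $\theta_i^\mathfrak{o}=\pm1$ a.e. on $\rho_i(K_i)$, so $\theta_i^{\mathfrak o}\in L^1(\setR^n,\mathbb Z)$ after extension by zero. We have $\norm{\rho^{-1}_{i\#}\br{\theta_i^\mathfrak{o}}}=\lambda_{E_D}\haus^n\rstr K_i$ by the area formula for bi-Lipschitz images in the Ambrosio--Kirchheim theory together with Lemma~\ref{lemma:tangential_function}. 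The $K_i$ are pairwise disjoint and $\sum_i\haus^n(K_i)\le\haus^n(D)<\infty$, so the series converges in mass. Hence $T_D^\mathfrak{o}$ is integer rectifiable in the sense of Definition~\ref{def: loc-integer-rect}, with mass measure $\lambda_{E_D}\haus^n\rstr\bigcup_iK_i=\lambda_{E_D}\haus^n\rstr E_D$; this is \eqref{eq: mass-measure-T_U}.

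For \eqref{eq: chracterization-T_U}, fix $(f,\pi)$ with $\haus^n(\pi(P_D))=0$ and write $T_D^\mathfrak{o}(f,\pi)=\sum_i\int_{\rho_i(K_i)}f\circ\rho_i^{-1}\,\theta_i^\mathfrak{o}\det D(\pi\circ\rho_i^{-1})\,d\haus^n$. On each $K_i$, take a Lipschitz extension $\psi_i$ of $\pi\circ\rho_i^{-1}$. By Lemma~\ref{lemma:degree-behaves-well}, $\deg(\pi,x)=\deg(\psi_i,\rho_i(x))\deg(\tilde\rho_i,x)$ for a.e. $y$ and all $x\in\pi^{-1}(y)\cap K_i$. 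At points where $\psi_i$ is differentiable with nondegenerate derivative, \eqref{eq: local-deg-Rn-diffb} gives $\deg(\psi_i,\rho_i(x))=\sgn\det D\psi_i$. Consequently the integrand equals $f\circ\rho_i^{-1}\,\deg(\pi,\rho_i^{-1}(\cdot))\,|\det D\psi_i|$, where we may drop the degenerate set since there $\det=0$. Applying the area formula (Lemma~\ref{lemma: area-formula}) turns the $i$-th term into $\int_{\setR^n}\sum_{x\in\pi^{-1}(y)\cap K_i}f(x)\deg(\pi,x)\,d\haus^n(y)$. Summing over $i$ (dominated convergence, using that $\#(\pi^{-1}(y)\cap D)$ is integrable by Lemma~\ref{lemma:coarea_inequality}) gives the sum over $\pi^{-1}(y)\cap\bigcup_iK_i$. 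The remaining points lie in $(E_D\setminus\bigcup_iK_i)\cup P_D\cup\partial D$, whose image under $\pi$ is $\haus^n$-null by hypothesis together with $\haus^n(\partial D)=0$ and the Lipschitz property. This yields \eqref{eq: chracterization-T_U}, with the sum over $\textup{int}(D)$ a.e. equal to the one over $\bigcup_iK_i$.

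Uniqueness is the step I expect to be the main obstacle. Two currents satisfying \eqref{eq: chracterization-T_U} agree on all $(f,\pi)$ with $\haus^n(\pi(P_D))=0$. Bate's perturbation theorem (Theorem~\ref{thme: bate-perturb}, applied on the compact set $D$, with $\haus^n(P_D)<\infty$) shows that such $\pi$ are dense in the uniform topology among $1$-Lipschitz bounded maps, after rescaling. Continuity of currents in $\Lip$-bounded pointwise convergence then gives equality on all of $\mathcal D^n(D)$. The formula itself does not involve the charts, so this also shows independence of $(K_i,\rho_i)$. Independence of the extensions $\tilde\rho_i$ was already noted after \eqref{eq: definition-T_U}.

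The remaining claims follow from \eqref{eq: chracterization-T_U}. Reversing the orientation flips every local degree, so $T_D^{-\mathfrak o}=-T_D^\mathfrak{o}$. For compatibility, note that local degrees are local: for $x\in\textup{int}(D)\cap\textup{int}(D')$ with compatible orientations, $\deg(\pi,x)$ is the same computed in $D$ or $D'$. Choose charts for both disks that refine a common family of charts covering $E_D\cap E_{D'}$ inside $\textup{int}(D)\cap\textup{int}(D')$, which is possible by chart independence and Lemma~\ref{lemma:tangential_function}. The terms supported there coincide, and $\haus^n(\partial D\cup\partial D')=0$. Hence $T_D^\mathfrak{o}\rstr B=T_{D'}^{\mathfrak o'}\rstr B$ for Borel $B\subseteq D\cap D'$, and Proposition~\ref{prop:restriction_of_current_to_open_subsets} gives the statement for the restrictions $|_B$.
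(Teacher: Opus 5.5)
Your proposal is correct and follows the paper's proof essentially step by step. It uses the chartwise area formula combined with Lemma~\ref{lemma:degree-behaves-well} and \eqref{eq: local-deg-Rn-diffb}, Bate's perturbation theorem plus continuity of currents for uniqueness and chart independence, the sign flip of the degree, and locality of the degree for compatibility. The deviations are minor. You obtain \eqref{eq: mass-measure-T_U} by computing the mass of each chart piece directly from the Ambrosio--Kirchheim representation and summing over the disjoint $K_i$. The paper instead applies Lemma~\ref{lemma: charact-set} and checks by a density argument that $\textup{set}(T_D^{\mathfrak o})$ has full measure in $E_D$. You also make the compatibility assertion explicit by choosing common charts inside $\textup{int}(D)\cap\textup{int}(D')$, whereas the paper only infers it from the topological nature of \eqref{eq: chracterization-T_U}.
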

    
        \begin{proof}
            By Definition \ref{def: good-metric-mfld}.(i), we have $|\theta_i^\mathfrak{o}(z)| = 1$ for every $i \in \setN$ and almost every $z \in \rho_i(K_i)$. We recall that the sets $K_i$ are pairwise disjoint. Thus, Lemma \ref{lemma: charact-set} implies $\norm{T_D^\mathfrak{o}} = \lambda_{\textup{set}(T_D^\mathfrak{o})} \haus^n\rstr \textup{set}(T_D^\mathfrak{o})$. By Lemma \ref{lemma:tangential_function}, in order to conclude \eqref{eq: mass-measure-T_U} we need to show that $\haus^n(E_D \setminus\textup{set}(T_D^\mathfrak{o})) = 0$. Let $K_i$, $\rho_i$ and $\tilde{\rho}_i$ as in the definition of $T_D^\mathfrak{o}$. Let $i\in \setN$ and let $x$ be a $\haus^n$-density point of $K_i$, that is, $\Theta^n(x,\haus^n\rstr K_i)=\Theta_n(x,\haus^n\rstr K_i)=1$. Notice that $\haus^n$-a.e. point of $E_D$ is of this form. Call $L_i$ the bi-Lipschitz constant of $\rho_i$. We estimate
            \begin{equation*}
                \begin{aligned}
                    \norm{T_D^\mathfrak{o}}(B(x,r)) \ge \norm{\rho_{i\#}^{-1}\br{\theta_i^\mathfrak{o}}}(B(x,r)) &\ge \frac{1}{L_i^n}\norm{\br{ \theta_i^\mathfrak{o}}}(\rho_i(B(x,r))) \\
                    &\ge \frac{1}{L_i^n}\norm{\br{ \theta_i^\mathfrak{o}}}(B(\rho_i(x),r/L_i)) \\
                    &\ge \frac{1}{L_i^n}\haus^n(B(\rho_i,r/L_i)\cap \rho_i(K_i)).
                \end{aligned}
            \end{equation*}
            Since $x$ is a density point of $K_i$, $\rho_i(x)$ is a density point of $\rho_i(K_i)$. Therefore, taking the limit as $r \to 0$ we get
            $$\liminf_{r\to 0}\frac{\norm{T_D^\mathfrak{o}}(B(x,r))}{r^n} \ge \frac{1}{L_i^{2n}} > 0.$$
            This means that $x\in \textup{set}(T_D^\mathfrak{o})$ and concludes the proof of \eqref{eq: mass-measure-T_U}.
            
            Now, let $(f,\pi)\in \mathcal{D}^n(D)$ be such that $\haus^n(\pi(P_D)) = 0$ and let $i \in \setN$. For every Lipschitz extension $\psi\colon \setR^n \to \setR^n$ of $\pi\circ \rho_i^{-1}$ we have
             \begin{align*}
                   \rho^{-1}_{i\#} \br{\theta_i}(f,\pi) &= \int_{\rho_i(K_i)} f(\rho_i^{-1}(z)) \theta_i(z) \det(D_z \psi) \, d\haus^n(z)
                    \\
                    &= \int_{\rho_i(K_i)} f(\rho_i^{-1}(z)) \deg(\tilde{\rho}_i,\rho_i^{-1}(z))\sgn(\det(D_z \psi)) |\det(D_z \psi)| \, d\haus^n(z)
                    \\
                    &=\int_{\rho_i(K_i)} f(\rho_i^{-1}(z)) \deg(\tilde{\rho}_i,\rho_i^{-1}(z))\deg(\psi,z) |\det(D_z \psi)|  d\haus^n(z).
                \end{align*}
                In the last equality we used \eqref{eq: local-deg-Rn-diffb}.
                The area formula, Lemma \ref{lemma: area-formula}, yields
                \begin{align*}
                    \int_{\rho_i(K_i)}& f(\rho_i^{-1}(z)) \deg(\tilde{\rho}_i,\rho_i^{-1}(z))\deg(\psi,z) |\det(D_z \psi)|  d\haus^n(z)
                    \\
                    &= \int_{\setR^n} \sum_{z \in \psi^{-1}(y) \cap \rho_i(K_i)}f(\rho_i^{-1}(z)) \deg(\tilde{\rho}_i,\rho_i^{-1}(z))\deg(\psi,z) \, d\haus^n(y).
                \end{align*}
                Since $\psi=\pi \circ \rho_i^{-1}$ on $\rho_i(K_i)$ and $\rho_i \colon K_i \to \rho_i(K_i)$ is bi-Lipschitz, we have
                
                \begin{align*}
                   \int_{\setR^n} &\sum_{z \in \psi^{-1}(y) \cap \rho_i(K_i)}f(\rho_i^{-1}(z)) \deg(\tilde{\rho}_i,\rho_i^{-1}(z))\deg(\psi,z) \, d\haus^n(y)
                    \\
                    &= \int_{\setR^n} \sum_{x \in \pi^{-1}(y) \cap K_i} f(x)\deg(\tilde{\rho}_i,x) \deg(\psi,\rho(x)) \, d\haus^n(y).
                \end{align*}
    
                Thus, Lemma \ref{lemma:degree-behaves-well} implies
        
                \begin{align*}
                    \int_{\setR^n} &\sum_{x \in \pi^{-1}(y) \cap K_i} f(x)\deg(\tilde{\rho}_i,x) \deg(\psi,\rho(x)) \, d\haus^n(y)
                    \\
                    &=  \int_{\setR^n} \sum_{x \in \pi^{-1}(y) \cap K_i}f(x) \deg(\pi,x) \, d\haus^n(y).
                \end{align*}
                Finally, using that the $K_i \subseteq \textup{int}(D)$ are pairwise disjoint, $\haus^n(\pi(\textup{int}(D)\setminus \bigcup_i K_i)) = \haus^n(\pi(E_{D} \setminus \bigcup_i K_i)) + \haus^n(\pi(P_{D})) = 0$ and \eqref{eq: definition-T_U}, we get
                $$T_D^\mathfrak{o}(f,\pi) = \sum_{i \in \setN} \rho^{-1}_{i\#} \br{\theta_i}(f,\pi) = \int_{\setR^n} \sum_{x \in \pi^{-1}(y) \cap \textup{int}(D)}f(x) \deg(\pi,x) \, d\haus^n(y),$$
                which is \eqref{eq: chracterization-T_U}. Theorem \ref{thme: bate-perturb} says that the set 
                $$ \{\pi \in \Lip(D,\setR^n) \colon \Lip(\pi) \leq L, \; \haus^n(\pi(P_D)) = 0\}$$ 
                is dense in $\Lip_L(D,\setR^n)$ with respect to the supremum norm, for every $L>0$. Hence, Definition \ref{def:current}.(1) implies that \eqref{eq: chracterization-T_U} completely characterizes $T_D^\mathfrak{o}$. This immediately gives that the definition of $T_D^\mathfrak{o}$ does not depend on the choice of the rectifiable charts $(K_i,\rho_i)$ used in the definition. Moreover, if we flip the orientation of $D$ to $-\mathfrak{o}$, the degree changes sign and \eqref{eq: chracterization-T_U} gives that $T_D^{-\mathfrak{o}} = -T_D^\mathfrak{o}$. The characterization provided by \eqref{eq: chracterization-T_U} is purely topological. This proves the last statement of the thesis.
        \end{proof}
    
        The next step is to show that $\spt(\partial T_D^\mathfrak{o}) \subseteq \partial D$. In order to do that, we consider the closed metric $n$-manifold $\hat{D}$ obtained by gluing two copies of $D$ along their boundaries, as explained in Definition \ref{def: double-good-subset}. Let $\iota_1,\iota_2\colon D \hookrightarrow\hat{D}$ be the isometric embeddings that identify $D$ with its copies $D_1$ and $D_2$ in $\hat{D}$, respectively. Recall that $\hat{D}$ is homeomorphic to $\mathbb{S}^n$. In particular, $\hat{D}$ is a closed metric $n$-manifold in the sense of \cite{marti2024characterization}. We fix an orientation and a Riemannian metric on $\mathbb{S}^n$. We denote by $\br{\mathbb{S}^n} \in \bI_n(\mathbb{S}^n)$ the fundamental class of $\mathbb{S}^n$; cf. Definition \ref{def: fund-class-riemann}.
    
        \begin{proposition}\label{prop: existence-metric-fund-class-double}
            The current defined as $\hat{T}_D^\mathfrak{o} = \iota_{1\#} T_D^\mathfrak{o} - \iota_{2\#}T_D^\mathfrak{o}$ satisfies $\partial \hat{T}_D^\mathfrak{o} = 0$ and 
            $$\varphi_\# \hat{T}_D^\mathfrak{o} = \deg(\varphi)\cdot \br{\mathbb{S}^n}$$
            for every Lipschitz map $\varphi\colon \hat{D}\to \mathbb{S}^n$. In particular, $\spt(\partial T_D^\mathfrak{o}) \subseteq \partial D$.
        \end{proposition}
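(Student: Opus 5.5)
The plan is to prove the pushforward formula first and then deduce $\partial\hat T_D^\mathfrak{o}=0$ from it. Fix a Lipschitz map $\varphi\colon\hat D\to\mathbb{S}^n$ and test $\varphi_\#\hat T_D^\mathfrak{o}$ on $(f,\pi)\in\mathcal{D}^n(\mathbb{S}^n)$. Working in coordinate charts of $\mathbb{S}^n$ and using a partition of unity, we may take $\pi$ to be Lipschitz into $\setR^n$.

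By Theorem~\ref{thme: bate-perturb}, applied to the purely unrectifiable part $P_D$ (and its two copies $\iota_1(P_D)$, $\iota_2(P_D)$), we can approximate $\pi\circ\varphi$ uniformly by maps with uniformly bounded Lipschitz constants that send $P_{\hat D}$ to an $\haus^n$-null set. To keep the perturbation compatible with $\varphi$, I would rather perturb $\varphi$ itself inside $\mathbb{S}^n$, for instance by approximating in $\ell^\infty$ or in a chart and then composing with a Lipschitz retraction. Uniform approximation of $\varphi$ preserves $\deg(\varphi)$ by homotopy invariance, and the continuity axiom passes the identity to the limit. It therefore suffices to treat $\varphi$ with $\haus^n(\varphi(P_{\hat D}))=0$.

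For such $\varphi$, apply \eqref{eq: chracterization-T_U} on each copy $\iota_j(D)$. Here $\iota_{1\#}T_D^\mathfrak{o}$ is characterized by the degree with respect to the orientation of $\hat D$, because $\iota_1$ is sense-preserving. The sign in $-\iota_{2\#}T_D^\mathfrak{o}$ exactly compensates for $\iota_2$ being sense-reversing. Since $\haus^n(\partial D)=0$, the interface contributes nothing. This gives
\[
\varphi_\#\hat T_D^\mathfrak{o}(f,\pi)=\int_{\setR^n}\sum_{x\in(\pi\circ\varphi)^{-1}(y)} f(\varphi(x))\deg(\pi\circ\varphi,x)\,d\haus^n(y).
\]
Now use the multiplicity property \eqref{eq: local-degree-multiplicity}, where Lemma~\ref{lemma:degree-behaves-well} guarantees well-definedness for a.e. $y$, to split $\deg(\pi\circ\varphi,x)=\deg(\pi,\varphi(x))\deg(\varphi,x)$. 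Grouping the points over each $z\in\pi^{-1}(y)$ and applying the additivity \eqref{eq: local-degree-additivity}, valid because the fibres $\varphi^{-1}(z)$ are finite for a.e. $z$ by the coarea inequality, yields $\sum_{x\in\varphi^{-1}(z)}\deg(\varphi,x)=\deg(\varphi)$. The integral therefore becomes $\deg(\varphi)\int\sum_{z\in\pi^{-1}(y)}f(z)\deg(\pi,z)\,d\haus^n(y)$, which by the area formula (Lemma~\ref{lemma: area-formula}, in charts) is $\deg(\varphi)\br{\mathbb{S}^n}(f,\pi)$.

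To show $\partial\hat T_D^\mathfrak{o}=0$, I would test against $(f,\pi_1,\dots,\pi_{n-1})$ with $\sigma\equiv1$ ($\hat D$ is compact), so that
\[
\partial\hat T_D^\mathfrak{o}(f,\pi_1,\dots,\pi_{n-1})=\hat T_D^\mathfrak{o}(1,f,\pi_1,\dots,\pi_{n-1}).
\]
The map $\Phi=(f,\pi_1,\dots,\pi_{n-1})\colon\hat D\to\setR^n$ is not into a compact manifold, so I would use the characterization directly. After perturbation, $\hat T_D^\mathfrak{o}(1,\Phi)=\int\deg_y(\Phi)\,dy$, where $\deg_y(\Phi)$ is the degree over $y$ of a map from the closed manifold $\hat D\cong\mathbb{S}^n$ to the noncompact $\setR^n$. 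That degree vanishes, since such a map is homotopic to a constant (or is not surjective). Hence $\partial\hat T_D^\mathfrak{o}=0$.

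For the final claim, let $x\in\textup{int}(D)$ and take test functions supported in a small ball around $x$ avoiding $\partial D$. Pulling them back via $\iota_1$, we have $\partial T_D^\mathfrak{o}(f,\pi)=\partial(\iota_{1\#}T_D^\mathfrak{o})(\cdot)=\partial\hat T_D^\mathfrak{o}(\cdot)=0$, because $\iota_{2\#}T_D^\mathfrak{o}$ vanishes near $\iota_1(x)$. This holds as $D_1\cap D_2=\partial D$, so a small enough ball in $\hat D$ around $\iota_1(x)$ meets only $D_1$.

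The main obstacle is the perturbation step. One must ensure the Bate perturbation can be performed for maps into $\mathbb{S}^n$ or $\setR^n$ while keeping Lipschitz bounds and degrees, and that \eqref{eq: chracterization-T_U} applies on both copies simultaneously, with a.e. well-definedness of the local degrees across the glued boundary.
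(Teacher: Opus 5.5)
Your proposal is correct and follows essentially the paper's route. Bate's perturbation theorem reduces to maps that send $\iota_1(P_D)\cup\iota_2(P_D)$ to an $\haus^n$-null set; for $\varphi$ this goes through a bi-Lipschitz embedding of $\mathbb{S}^n$ into some $\setR^m$ (not $\ell^\infty$, where Bate's theorem is unavailable) and a Lipschitz neighbourhood retraction. Then \eqref{eq: chracterization-T_U} on the two copies is combined with the multiplicity and additivity of the local degree and the area formula, and $\partial\hat T_D^\mathfrak{o}=0$ follows from the vanishing degree of maps $\hat D\to\setR^n$.

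The differences are cosmetic. You prove the two identities in the opposite order, and independently of each other, not one deduced from the other as your opening sentence suggests. You also get $\spt(\partial T_D^\mathfrak{o})\subseteq\partial D$ by a local test near interior points, rather than from $\spt(\iota_{1\#}\partial T_D^\mathfrak{o})=\spt(\iota_{2\#}\partial T_D^\mathfrak{o})\subseteq\iota_1(D)\cap\iota_2(D)$ as the paper does.
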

    
        The proposition implies that $\hat{T}_D^\mathfrak{o}$ is a metric fundamental class of $X$ as defined in \cite[Definition 1.1]{marti2024characterization}.    
    
        \begin{proof}
            By Definition \ref{def: double-good-subset}, $\iota_1$ and $\iota_2$ are sense-preserving and sense-reversing, respectively. Recall that $D = E_D \cup P_D$ where $E_D$ is $n$-rectifiable and $P_D$ is purely $n$-unrectifiable. Put $\hat{P}_D= \iota_1(P_D) \cup \iota_2(P_D)$. Let $\pi \colon \hat{D}\to \setR^n$ be Lipschitz such that $\haus^n(\pi(\hat{P}_D))=0$. Proposition \ref{prop: characterization-T_U} implies that
            \begin{align*}
                \partial \hat{T}_D^\mathfrak{o}(\pi) &= \hat{T}_D^\mathfrak{o}(1,\pi) = T_D^\mathfrak{o}(1,\pi \circ\iota_1)-T_D^\mathfrak{o}(1,\pi \circ\iota_2)
                \\
                &= \int_{\setR^n} \sum_{x \in (\pi \circ\iota_1)^{-1}(y)\cap \textup{int}(D)} \deg(\pi \circ\iota_1,x) - \sum_{x \in (\pi \circ\iota_2)^{-1}(y)\cap \textup{int}(D) } \deg(\pi \circ\iota_2,x) \, d\haus^n(y)
                \\
                &= \int_{\setR^n} \sum_{x \in \pi^{-1}(y)\cap \textup{int}(D_1) } \deg(\pi,x) - \sum_{x \in \pi^{-1}(y)\cap \textup{int}(D_2) } - \deg(\pi,x) \, d\haus^n(y)
                \\
                &= \int_{\setR^n} \sum_{x \in \pi^{-1}(y) } \deg(\pi,x)  \, d\haus^n(y).
            \end{align*}
            In the last step we used that $\haus^n(\partial D_1) = \haus^n(\partial D_2) = 0$. It follows from the additivity property of the degree, \eqref{eq: local-degree-additivity}, that
            $$ \partial \hat{T}_D^\mathfrak{o}(\pi)  = \int_{\setR^n} \sum_{x \in \pi^{-1}(y) } \deg(\pi,x)  \, d\haus^n(y) = \int_{\setR^n} \deg(\pi)  \, d\haus^n(y).$$
            Since $\hat{D}$ is a closed manifold, we have $\deg(\pi) = 0$ and thus $\partial \hat{T}_D^\mathfrak{o}(\pi) = \hat{T}_D^\mathfrak{o}(1,\pi) = 0$. Notice that $\hat{P}_D$ is purely $n$-unrectifiable. Combining Theorem \ref{thme: bate-perturb} 
            and Definition \ref{def:current}.(1) as in the proof of Proposition \ref{prop: characterization-T_U},
            we get that
            $$\partial \hat{T}_D^\mathfrak{o}(\pi) = 0$$
            for every Lipschitz map $\pi\colon \hat{D}\to \setR^n$.
    
            We prove the second part of the statement. Let $\varphi\colon \hat{D} \to \mathbb{S}^n$ be Lipschitz. We first assume that $\haus^n(\varphi(\hat{P}_D))=0$. Arguing as above and using Lemma \ref{prop: characterization-T_U}, yields
            $$\varphi_\#\hat{T}_D^\mathfrak{o}(g,\tau) = \int_{\setR^n} \sum_{x \in \varphi^{-1}(\tau^{-1}(y))} g(\varphi(x)) \deg(\tau \circ \varphi, x) \, d\haus^n(y)$$
            for all $(g,\tau) \in \mathcal{D}^n(\mathbb{S}^n)$. By the coarea inequality, Lemma \ref{lemma:coarea_inequality}, for almost every $y \in \setR^n$ the preimage $\tau^{-1}(y)$ is finite, does not intersect $\varphi(\hat{P}_D)$ and for every $z \in \tau^{-1}(y)$ the preimage $\varphi^{-1}(z)$ is finite as well. Furthermore, it follows from the area formula, Lemma \ref{lemma: area-formula}, that for almost every such $y$ and every $z \in \tau^{-1}(y)$, the derivative of $\tau$ at $z$ exists and is nondegenerate. Therefore,
            $$\deg(\tau \circ \varphi,x) = \deg(\tau,z) \cdot \deg(\varphi,x) = \sgn(\det(D_z \tau)) \cdot \deg(\varphi,x)$$
            for almost every $y \in \setR^n$, every $z \in \tau^{-1}(y)$ and each $x \in \varphi^{-1}(z)$. The additivity property of the degree and the fact that $\hat{P}_D\cap \varphi^{-1}(\tau^{-1}(y)) = \emptyset$ holds for almost every $y \in \setR^n$ imply
            \begin{align*}
                \varphi_\#\hat{T}_D^\mathfrak{o}(g,\tau) &= \int_{\setR^n} \sum_{z \in \tau^{-1}(y)} \left( \sum_{x \in \varphi^{-1}(z)} \deg(\varphi,x) \right)  g(z) \sgn(\det D_z\tau) \, d\haus^n(y)
                \\
                &= \deg(\varphi) \cdot \int_{\setR^n} \sum_{z \in \tau^{-1}(y)} g(z) \sgn(\det D_z\tau) \,d \haus^n(y).
            \end{align*}
            Finally, using the area formula again, Lemma \ref{lemma: area-formula}, we conclude that
            $$\varphi_\#\hat{T}_D^\mathfrak{o}(g,\tau) = \deg(\varphi) \cdot\int_{\mathbb{S}^n} g(z) \det(D_z \tau) \, d\haus^n(z) = \deg(\varphi) \cdot\br{\mathbb{S}^n}(g,\tau).$$
            This proves the second part of the statement under the additional assumption that $\haus^n(\varphi(\hat{P}_D))=0$. Now, let $\varphi\colon \hat{D}\to \mathbb{S}^n$ be an arbitrary Lipschitz map. We embed $\mathbb{S}^n$ into some $\setR^m$ via a bi-Lipschitz embedding; see e.g \cite[Theorem 3.10]{Heinonen-geom-embeddings}. Its image in $\setR^m$ is a Lipschitz neighbourhood retract by \cite[Theorem 3.1]{hohti-1993}. Thus, it follows from Theorem \ref{thme: bate-perturb} that there exists a sequence of maps $\varphi_k \colon \hat{D} \to \mathbb{S}^n$ with uniformly bounded Lipschitz constant, that converges uniformly to $\varphi$ and such that $\haus^n(\varphi_k(\hat{P}_D)) = 0$ for every $k \in \setN$. For $k \in \setN$ sufficiently large, $\varphi$ and $\varphi_k$ are homotopic and in this case we have $\deg(\varphi) = \deg(\varphi_k)$. Hence, by the above and Definition \ref{def:current}.(1) we get
            $$\varphi_\#\hat{T}_D^\mathfrak{o}(g,\tau) = \lim_{k \to \infty} \varphi_{k\#}\hat{T}_D^\mathfrak{o}(g,\tau) = \lim_{k \to \infty}\deg(\varphi_k) \cdot \br{\mathbb{S}^n}(g,\tau) = \deg(\varphi) \cdot \br{\mathbb{S}^n}(g,\tau)$$
            for all $(g,\tau) \in \mathcal{D}^k(\mathbb{S}^n)$. This completes the proof of the second part of the statement. 
    
            Finally, observe that 
             $$\spt (\iota_{i\#}\partial T_D^\mathfrak{o}) \subseteq \overline{ \iota_i(\spt(T_D^\mathfrak{o})) } = \iota_i(\spt(T_D^\mathfrak{o}))$$
             for $i=1,2$. Since $$0 = \partial\hat{T}_D^\mathfrak{o} = \iota_{1\#}\partial T_D^\mathfrak{o} - \iota_{2\#}\partial T_D^\mathfrak{o},$$ then 
             $\spt (\iota_{1\#}\partial T_D^\mathfrak{o}) = \spt (\iota_{2\#}\partial T_D^\mathfrak{o}) \subseteq \iota_1(\spt(T_D^\mathfrak{o})) \cap \iota_2(\spt(T_D^\mathfrak{o})) = \iota_1(\partial D) = \iota_2(\partial D)$ and so $\spt (\partial T_D^\mathfrak{o}) \subseteq \partial D$.
        \end{proof}
    
        As a consequence, we obtain the uniqueness of the local orientations in the following sense; cf. \cite[Proposition 4.4]{marti2024characterization}.
    
        \begin{corollary}\label{cor: uniqueness-T_U}
            Let $S \in \mathcal{I}_n(D)$ be such that $\spt(\partial S) \subseteq \partial D$. Then there exists $k \in \mathbb{Z}$ such that $ S = k \cdot T_D^\mathfrak{o}$.
        \end{corollary}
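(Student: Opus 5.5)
The plan is to reduce to the closed manifold $\hat D$ and use the uniqueness of metric fundamental classes there. Set $\hat S := \iota_{1\#}S - \iota_{2\#}S \in \mathcal{I}_n(\hat D)$, in analogy with $\hat T_D^\mathfrak{o}$ in Proposition \ref{prop: existence-metric-fund-class-double}. I will show that $\partial\hat S = 0$ and that every integer rectifiable $n$-cycle on $\hat D$ is an integer multiple of $\hat T_D^\mathfrak{o}$. Restricting back to the first copy $\iota_1(D)$ then gives the claim.

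\emph{Step 1 (the double is a cycle).} For $(f,\pi)\in\mathcal{D}^{n-1}(\hat D)$ and $\sigma\equiv 1$ we have
\[
\partial\hat S(f,\pi) = S(1,f\circ\iota_1,\pi\circ\iota_1) - S(1,f\circ\iota_2,\pi\circ\iota_2),
\]
and $f\circ\iota_1$, $f\circ\iota_2$ (resp. $\pi\circ\iota_1$, $\pi\circ\iota_2$) agree on $\partial D$. I will write the difference of the two terms as a telescoping sum using multilinearity. Each summand has the form $S(1,h,\dots)$ or $S(1,\dots,h,\dots)$ with a Lipschitz function $h$ vanishing on $\partial D$.

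\emph{Main obstacle.} I need each such summand to vanish, even though $\partial S$ is not assumed to have finite mass. For a term $S(1,h,\pi')=\partial S(h,\pi')$, I will approximate $h$ by $h_\varepsilon := \operatorname{sgn}(h)\max\{|h|-\varepsilon,0\}$. Then $h_\varepsilon$ has support in the open set $\{|h|>\varepsilon\}$, which is at positive distance from $\partial D$ and hence disjoint from $\spt(\partial S)$, so $\partial S(h_\varepsilon,\pi')=0$. Moreover $h_\varepsilon\to h$ with $\Lip(h_\varepsilon)\le\Lip(h)$, so the continuity axiom gives $\partial S(h,\pi')=0$. Terms in which the vanishing function sits in a later slot can be moved into the second slot by the alternating property of currents, or handled by the same truncation combined with locality. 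This yields $\partial\hat S=0$.

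\emph{Step 2 (constancy on $\hat D$).} By Proposition \ref{prop: existence-metric-fund-class-double}, $\hat T_D^\mathfrak{o}$ is a metric fundamental class of the closed metric manifold $\hat D\cong\mathbb{S}^n$. I then invoke the uniqueness statement \cite[Proposition 4.4]{marti2024characterization}, which says that integral $n$-cycles on such a space are integer multiples of the fundamental class. This gives $k\in\mathbb Z$ with $\hat S = k\,\hat T_D^\mathfrak{o}$. If that reference requires hypotheses I cannot verify, my fallback is a direct argument: for $\varphi\colon\hat D\to\mathbb{S}^n$ Lipschitz, $\varphi_\#\hat S$ is an integral top-dimensional cycle on $\mathbb{S}^n$, hence a multiple of $\br{\mathbb{S}^n}$ by Euclidean constancy together with Theorem \ref{thme:De-Philippis-Rindler}. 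One then compares multiplicities chart by chart on the rectifiable part, using bi-Lipschitz charts $\rho_i$ and Lemma \ref{lemma:degree-behaves-well}, and gets constancy of $\theta_S/\theta^\mathfrak{o}$ from connectedness of $\textup{int}(D)$.

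\emph{Step 3 (back to $D$).} Restrict both sides to the Borel set $\iota_1(\textup{int}(D))$. This restriction of $\iota_{2\#}S$ and of $\iota_{2\#}T_D^\mathfrak{o}$ vanishes, since the images meet only in $\iota_1(\partial D)$. Hence $S\rstr\textup{int}(D) = k\,T_D^\mathfrak{o}\rstr\textup{int}(D)$. Since $\haus^n(\partial D)=0$, Lemma \ref{lemma:mass-measure-abso-cont} gives $\norm{S}(\partial D)=\norm{T_D^\mathfrak{o}}(\partial D)=0$. Therefore $S = k\,T_D^\mathfrak{o}$.
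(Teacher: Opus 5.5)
Your proposal is correct and follows the paper's proof: double $S$ to $\hat S=\iota_{1\#}S-\iota_{2\#}S$, note that $\partial\hat S=0$, apply \cite[Proposition 4.4]{marti2024characterization} to get $\hat S=k\,\hat T_D^\mathfrak{o}$, and restrict back to the first copy of $D$. The paper asserts $\partial\hat S=0$ in one line, using only that $\iota_1=\iota_2$ on $\partial D\supseteq\spt(\partial S)$; your truncation-plus-alternation argument supplies that detail, and the fallback in Step 2 is not needed.
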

    
        \begin{proof}
            Define $\hat{S} = \iota_{1\#}S - \iota_{2\#}S \in \mathcal{I}_n(\hat{D})$. Since $\iota_1(x) = \iota_2(x)$ for all $x \in \partial D$ and $\spt(\partial S) \subseteq \partial D$, we have $\partial \hat{S} = 0$. In particular, $S\in \bI_n(\hat{D})$. By \cite[Proposition 4.4]{marti2024characterization}, there exists $k \in \mathbb{Z}$ such that $\hat{S} = k \cdot \hat{T}_D^\mathfrak{o}$. Therefore, 
            $$ S =  \iota^{-1}_{1\#}\hat{S}\rstr \iota_1(D)) = \iota^{-1}_{1\#}(\hat{T}_D^\mathfrak{o}\rstr \iota_1(D)) = k\cdot T_D^\mathfrak{o}.$$
            This completes the proof.
        \end{proof}

    \subsection{Orientability of metric manifolds}

    We are ready to prove the following more technical version of Theorem \ref{thme: characterization-orientability-metric-manifold}.
    \begin{theorem}\label{theo: characterization-orientability-metric-manifold-with-details}
        Let $(X,\dist)$ be a degree-regular metric $n$-manifold and let $E\subseteq X$ be the rectifiable part of $X$.
        \begin{enumerate}
            \item[(a)] If $X$ is oriented then there exists $T\in \bI_{n,\textup{loc}}(X)$ satisfying $\partial T = 0$,
            $\norm{T} = \lambda_E\haus^n \rstr E$ and $T|_ D =  T_D^\mathfrak{o}$ whenever $D \subseteq X$ is a regular disk and $\mathfrak{o}$ is the orientation on $\textup{int}(D)$ induced by the orientation on $X$.
            \item[(b)] If there exists $T\in \bI_{n,\textup{loc}}(X)$ satisfying $\partial T = 0$ and
            $\norm{T} = \lambda_E\haus^n \rstr E$, then there exists an orientation on $X$ such that $T|_ D = T_D^\mathfrak{o}$ whenever $D \subseteq X$ is a regular disk and $\mathfrak{o}$ is the orientation induced on $\textup{int}(D)$.
        \end{enumerate}
    \end{theorem}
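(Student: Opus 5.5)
For (a), I will glue the local currents $T_D^{\mathfrak o}$. Using Lemma \ref{lemma: good-subsets-existence} and separability, I choose a countable cover of $X$ by interiors of regular disks $D_j$. Each $D_j$ carries the orientation $\mathfrak o_j$ induced from $X$. By the last assertion of Proposition \ref{prop: characterization-T_U}, the currents $T_{D_j}^{\mathfrak o_j}$ agree on Borel subsets of the overlaps. I then form a disjoint Borel partition $B_1=D_1$, $B_j=D_j\setminus\bigcup_{l<j}D_l$ and define
\[
T:=\sum_j \bigl(T_{D_j}^{\mathfrak o_j}|_{B_j}\bigr)^X .
\]
I need to check that this sum is locally finite in mass. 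Every compact set meets only finitely many $\operatorname{int}(D_j)$ after passing to a locally finite subcover, and in any case $\norm{T}\le\lambda_E\haus^n\rstr E$ summed over the disjoint $B_j$. Equivalently, I can extract a parametrization by taking the rectifiable charts $(K_i,\rho_i)$ of each $D_j$ restricted to $B_j$. With that parametrization, Lemma \ref{lemma: charact-set} together with \eqref{eq: mass-measure-T_U} and the disjointness of the $B_j$ gives $\norm{T}=\lambda_E\haus^n\rstr E$ and $T\in\mathcal I_{n,\loc}(X)$.

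The identity $T|_D=T_D^{\mathfrak o}$ for an arbitrary regular disk $D$ also follows from the overlap compatibility in Proposition \ref{prop: characterization-T_U}. Indeed, for every Borel $B\subseteq D\cap D_j$,
\[
T|_{D\cap B_j}=T_{D_j}|_{D\cap B_j}=T_D|_{D\cap B_j},
\]
and summing over $j$ gives $T|_D=T_D^{\mathfrak o}$. For $\partial T=0$ I will argue locally. Given $x\in X$, I pick a regular disk $D$ with $x\in\operatorname{int}(D)$. By Lemma \ref{cor:cycles_open_sets},
\[
(\partial T)|_{\operatorname{int} D}=\partial(T|_{\operatorname{int}D})=\partial\bigl((T_D^{\mathfrak o})|_{\operatorname{int}D}\bigr)=\bigl(\partial T_D^{\mathfrak o}\bigr)|_{\operatorname{int}D},
\]
and this vanishes by Proposition \ref{prop: existence-metric-fund-class-double}. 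Hence $x\notin\spt(\partial T)$, so $\partial T=0$, and in particular $T\in\bI_{n,\loc}(X)$.

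For (b), I will construct the orientation from $T$ locally. Let $D$ be a regular disk, and fix an arbitrary orientation $\mathfrak o$ on $\operatorname{int}(D)$. The restriction $T|_D$ lies in $\mathcal I_n(D)$ by Proposition \ref{prop:extension_restriction_integral_currents}, and it has finite mass since $\haus^n(D)<\infty$. Its boundary is supported in $\partial D$ by \eqref{eq:support_boundary_restriction}, because $\partial T=0$. Corollary \ref{cor: uniqueness-T_U} therefore yields $T|_D=k_D T_D^{\mathfrak o}$ for some integer $k_D$. Comparing masses, $\lambda_E\haus^n\rstr E_D=|k_D|\lambda_{E_D}\haus^n\rstr E_D$, and $\haus^n(E_D)>0$ by Theorem \ref{thm: topological-dim-existence-rect}; hence $k_D=\pm1$. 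I then define the orientation of $\operatorname{int}(D)$ to be $k_D\mathfrak o$, so that $T|_D=T_D^{k_D\mathfrak o}$.

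The main obstacle is showing that these local orientations are consistent on overlaps, so that they define a global orientation. For two disks $D,D'$ with a point $x$ in both interiors, I take a small regular disk $D''\subseteq \operatorname{int}D\cap\operatorname{int}D'$ around $x$. From Proposition \ref{prop: characterization-T_U} I get
\[
T_{D}^{\mathfrak o_D}|_{D''}=T_{D''}^{\mathfrak o_D|_{D''}},
\qquad
T_{D'}^{\mathfrak o_{D'}}|_{D''}=T_{D''}^{\mathfrak o_{D'}|_{D''}} .
\]
Both currents equal $T|_{D''}$, which is nonzero. Since $T_{D''}^{-\mathfrak o}=-T_{D''}^{\mathfrak o}$, the two orientations must agree on $D''$. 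Local orientations that agree near every point glue to a continuous orientation of $X$, and this orientation satisfies $T|_D=T_D^{\mathfrak o}$ by construction.
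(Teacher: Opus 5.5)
Your proof is correct. Part (b) is essentially the paper's argument (Corollary \ref{cor: uniqueness-T_U} on each disk, $|k_D|=1$ by comparing masses, then flip the orientation and check compatibility), with two improvements. First, you run it over \emph{all} regular disks, so the final clause $T|_D=T_D^{\mathfrak o}$ holds by construction rather than being re-proved ``as in the first part''. Second, you check compatibility pointwise through a small disk $D''$, which handles possibly disconnected overlaps $\operatorname{int}D\cap\operatorname{int}D'$ more explicitly than the paper's one-line appeal to Proposition \ref{prop: characterization-T_U}. You should justify $T|_{D''}\neq 0$ as you did when proving $k_D=\pm1$, via positivity of $\haus^n(E\cap D'')$ from Theorem \ref{thm: topological-dim-existence-rect}.

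Part (a) takes a genuinely different route. The paper glues with a Lipschitz partition of unity, setting $T(f,\pi)=\sum_iT_i(f\varphi_i,\pi)$ over a locally finite disk cover. It proves $\partial T=0$ globally via \eqref{eq:restr-boundary}. The terms $(\partial T_i)\rstr\varphi_i$ vanish because $\spt(\partial T_i)\subseteq\partial D_i$, and the terms $T_i\rstr(1,\varphi_i)$ sum to $T(f,\sum_i\varphi_i,\pi)=T(f,1,\pi)=0$ by locality. Only afterwards does it get $T|_D=T_D^{\mathfrak o}$ for an arbitrary disk, through Corollary \ref{cor: uniqueness-T_U} (hence through uniqueness of metric fundamental classes on the double $\hat D$) plus the normalization $k=1$.

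You instead glue along a disjoint Borel partition. This makes $T|_D=T_D^{\mathfrak o}$ a direct consequence of the overlap compatibility in Proposition \ref{prop: characterization-T_U}, by summing over the pieces $D\cap B_j$. Then $\partial T=0$ becomes a purely local statement, following from $\spt(\partial T_D^{\mathfrak o})\subseteq\partial D$ and Lemma \ref{cor:cycles_open_sets}. So (a) needs neither the Leibniz/locality cancellation nor the uniqueness corollary.

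The price is some bookkeeping the paper's finite local sums avoid. You must verify that a countable sum of currents with disjointly concentrated mass measures is again a current; continuity follows from the uniform tail bound $\sum_{j>N}\|T'_j\|(K)\to0$ together with \eqref{eq:mass-measure-ineq}. You must also note that each $B_j$ (compact minus closed) is locally compact, so the restriction and extension operations apply. Finally, ``passing to a locally finite subcover'' is not possible in general, since paracompactness yields refinements rather than subcovers. As you observe, though, the mass bound makes local finiteness unnecessary.
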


    \begin{proof}
        We first assume that $X$ is oriented and we construct the current $T \in \bI_{n,\textup{loc}}(X)$ as in the statement. Let $\{D_i\}_i$ be a family of regular disks of $X$ such that their interiors $\{\textup{int}(D_i)\}_i$ form an open, locally finite cover of $X$. Such a family exists by Lemma \ref{lemma: good-subsets-existence} and the paracompactness of $X$. For $i \in \setN$, let $\mathfrak{o}_i$ be the orientation of $\textup{int}(D_i)$ induced by the orientation of $X$. Recall that each $D_i$ is degree-regular as $X$ is degree-regular. Let $T_{D_i}^{\mathfrak{o}_i}\in \mathcal{I}_n(D_i)$ be the current defined in \eqref{eq: definition-T_U}. We set $T_i := \iota_{D_i\#}T_{D_i}^{\mathfrak{o}_i}$, where $\iota_{D_i}$ denotes the inclusion of $D_i$ into $X$. The $T_i$'s are well-defined as each $D_i$ is compact. Let $\{\varphi_i\}_i$ be a Lipschitz partition of unity associated to the cover $\{\textup{int}(D_i)\}_i$. Define
        \begin{equation}\label{eq: construction-global-current-partition-of-unity}
            T(f,\pi) := \sum_{i \in \setN} T_i \rstr \varphi_i(f,\pi) = \sum_{i \in \setN} T_i (f  \varphi_i,\pi)
        \end{equation}
        for all $(f,\pi ) \in \mathcal{D}^n(X)$. For every $(f,\pi ) \in \mathcal{D}^n(X)$, only finitely many $\textup{int}(D_i)$ intersect $\spt(f)$ because $f$ has compact support and $\{\textup{int}(D_i)\}_i$ is a locally finite cover. Hence, the sum in \eqref{eq: construction-global-current-partition-of-unity} is finite and defines a metric $n$-current in $X$. Moreover, Proposition \ref{prop: characterization-T_U}, \eqref{eq:borel-restriction_vs_pushforward} and Proposition \ref{prop:restriction_of_current_to_open_subsets}.(iii) imply that for every $i,j\in \setN$ and every Borel set $B \subseteq D_i \cap D_j$ we have $T_i \rstr B = T_j \rstr B$. Therefore, if $(f,\pi ) \in \mathcal{D}^n(X)$ is such that $\spt(f) \subseteq D_j$ for some $j$, then
        $$T(f,\pi) = \sum_{i\in \setN} T_i(f\varphi_i,\pi) = \sum_{i\in \setN} T_j(f\varphi_i,\pi) =   T_j\left(f\sum_{i\in \setN}\varphi_i,\pi\right) = T_j(f,\pi).$$
        Thus, by the definition of the mass we have that $\bM_V(T) = \bM_V(T_j)$ whenever $V$ is an open subset of some $U_j$. In particular, $\norm{T}(B) = \norm{T_j}(B)$ for every Borel set $B$ contained in some $U_j$. It follows that $T \in \mathcal{I}_{n,\loc}(X)$. Furthermore, $\norm{T_i} = \lambda_{E_{D_i}} \haus^n\rstr E_{D_i}$ for every $i \in \setN$ by Proposition \ref{prop: characterization-T_U}. Hence, the discussion above implies that $\norm{T}= \lambda_E \haus^n\rstr E$, where $E = \bigcup_{i\in \setN}E_{D_i}$. Since, $X \setminus E$ is purely $n$-unrectifiable, Lemma \ref{lemma: rect-part} implies that $E$ is the rectifiable part of $X$, up to a set of $\haus^n$-measure zero. The next step is to show that $\partial T=0$. Let $(f,\pi)\in \mathcal{D}^{n-1}(X)$, then \eqref{eq:restr-boundary} yields
        \begin{equation}\label{eq: glued-T-is-T_U-on-U}
            \partial T(f,\pi) = \sum_{i\in \setN} \partial (T_i \rstr \varphi_i)(f,\pi) = \sum_{i \in \setN} (\partial T_i)\rstr \varphi_i (f,\pi) - T_i\rstr(1,\varphi_i) (f,\pi).
        \end{equation}
        By Proposition \ref{prop: existence-metric-fund-class-double}, we have $\spt(\partial T_i)\subseteq \partial D_i$, hence $\spt(f\varphi_i) \subseteq \textup{int}(D_i)$ and
        $$(\partial T_i)\rstr \varphi_i (f,\pi) = \partial T_i(f\varphi_i,\pi) = 0$$
        for every $i\in \setN$. Moreover, by  Definition \ref{def:current}.(2),
        $$\sum_{i \in \setN} T_i\rstr(1,\varphi_i) (f,\pi) =\sum_{i \in \setN}  T(f, \varphi_i ,\pi) = T\left(f,\sum_{i \in \setN} \varphi_i,\pi\right) = T(f,1,\pi) =0.$$
        Hence, $\partial T = 0$. Finally, let $D \subseteq X$ be a regular disk equipped with the orientation $\mathfrak{o}$ induced by the orientation of $X$. The current $T|_D \in \mathcal{I}_n(D)$ satisfies $\spt(\partial(T|_D)) \subseteq \partial D$ because of \eqref{eq:support_boundary_restriction}, since $\partial T = 0$. Corollary \ref{cor: uniqueness-T_U} gives $k\in \mathbb{Z}$ such that $T|_D = k\cdot T_D^\mathfrak{o}$. Let $i\in \setN$ be such that $\haus^n(D\cap D_i) > 0$. Since $T|_{D_i} = T_{D_i}^{\mathfrak{o}_i}$ and since the orientations of $D_i$ and $D$ agree, we conclude that $k=1$ by Proposition \ref{prop: characterization-T_U}.
        
        \medskip
        
        We now prove the other implication. Assume that there exists $T\in \bI_{n,\loc}(X)$ satisfying $\partial T = 0$ and $\norm{T}= \lambda_E \haus^n\rstr E$, where $E$ is the rectifiable part of $X$. As before, let $\{D_i\}_i$ be a family of regular disks such that $\{\textup{int}(D_i)\}_i$ form an open, locally finite cover of $X$. By Lemma \ref{lemma: good-subsets-existence}, we can further require that $\norm{T}(D_i) < \infty$ for every $i \in \setN$. 
        Equip each $\textup{int}(D_i)$ with an orientation $\mathfrak{o}_i$ and let $T_{D_i}^{\mathfrak{o}_i}\in \mathcal{I}_{n}(D_i)$ be the corresponding current as defined in \eqref{eq: definition-T_U}. By \eqref{eq:support_boundary_restriction}, since $\partial T = 0 $ we have $\spt(\partial (T|_{D_i})) \subseteq \partial D_i$. Corollary \ref{cor: uniqueness-T_U} implies that there exists $k_i \in \mathbb{Z}$ such that $T|_{D_i} = k_i \cdot T_{D_i}^{\mathfrak{o}_i}$. Let $E_i := E \cap D_i$, which is the rectifiable part of $D_i$ up to a set of $\haus^n$-measure zero. By Proposition \ref{prop: characterization-T_U},
        $$\norm{T_{D_i}^{\mathfrak{o}_i}} = \lambda_{E_i} \haus^n \rstr E_i = \lambda_{E \cap D_i} \haus^n \rstr E \cap D_i = \norm{T\rstr D_i} \rstr D_i = \norm{T|_{D_i}},$$
        where the last equality is Proposition \ref{prop:restriction_of_current_to_open_subsets}.(i). In particular, $k_i=\pm1$. By Proposition \ref{prop: characterization-T_U}, up to changing $\mathfrak{o}_i$ with $-\mathfrak{o}_i$, we can suppose that $k_i = 1$, so that $T_{D_i}^{\mathfrak{o}_i} = T|_{D_i}$. We claim that this induces an orientation on $X$. Indeed, let $D_i$ and $D_j$ be such that the intersection $U = \textup{int}(D_i) \cap \textup{int}(D_j)$ is non-empty. By the choice of the orientations on $\textup{int}(D_i)$ and $\textup{int}(D_j)$ we have that 
        $$T_{D_i}^{\mathfrak{o}_i}|_U =  T|_U = T_{D_j}^{\mathfrak{o}_j}|_ U.$$
        Proposition \ref{prop: characterization-T_U} implies that the  orientations on $U$ induced from $\mathfrak{o}_i$ and $\mathfrak{o}_j$ are equal. This shows that $X$ is orientable and that this orientation we found is such that $T|_{D_i} = T_{D_i}^{\mathfrak{o}_i}$ for every $i\in \setN$.
        The last property, namely that for every regular disk $D\subseteq X$ it holds that $T|_D = T_D^\mathfrak{o}$, where $\mathfrak{o}$ is the induced orientation on $\textup{int}(D)$, can be proved exactly as in the first part of the proof. This completes the proof.
    \end{proof}

    This leads to the following natural definition.
    \begin{definition}
        A degree-regular metric $n$-manifold
        $(X,\dist)$ is called \emph{orientable} if any of the equivalent conditions of Theorem \ref{thme: characterization-orientability-metric-manifold} holds. If it is the case, every $T\in \mathcal{I}_{n,\loc}(X)$ such that $\norm{T} = \lambda_E\haus^n\rstr E$ and $\partial T = 0$ is called an \emph{orientation} of $X$.
    \end{definition}

    We can upgrade Corollary \ref{cor: uniqueness-T_U} to a global version.

    \begin{corollary}\label{cor: any-bdry-less-current-implies-orient}
        Let $(X,\dist)$ be a connected, degree-regular metric $n$-manifold. Suppose there exists $S \in \bI_{n,\loc}(X)$, $S\neq 0$, satisfying $\partial S = 0$. Then there exists an orientation $T\in \mathcal{I}_{n,\loc}(X)$ and $k \in \mathbb{Z}$ such that $k \cdot T = S$.
    \end{corollary}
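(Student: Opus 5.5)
The plan is to work locally on a cover by regular disks, use Corollary~\ref{cor: uniqueness-T_U} on each disk, and then use connectedness to show that the local multiplicities are a single global integer.

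\textbf{Step 1: local multiplicities.} By Lemma~\ref{lemma: good-subsets-existence} and paracompactness, choose regular disks $\{D_i\}_i$ whose interiors form a locally finite open cover of $X$. We may assume $\norm{S}(D_i)<\infty$ for every $i$. Fix an orientation $\mathfrak{o}_i$ on each $\textup{int}(D_i)$.

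Since $\partial S=0$, \eqref{eq:support_boundary_restriction} gives $\spt(\partial(S|_{D_i}))\subseteq\partial D_i$. Proposition~\ref{prop:extension_restriction_integral_currents} gives $S|_{D_i}\in\mathcal I_n(D_i)$. Corollary~\ref{cor: uniqueness-T_U} therefore yields $k_i\in\mathbb Z$ with
\[
S|_{D_i}=k_i\,T_{D_i}^{\mathfrak{o}_i}.
\]

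\textbf{Step 2: no local vanishing.} If $k_i=0$, then $S$ vanishes on $\textup{int}(D_i)$. In that case replace $\mathfrak{o}_i$ freely. Otherwise flip $\mathfrak{o}_i$ if needed so that $k_i>0$, using $T_D^{-\mathfrak{o}}=-T_D^{\mathfrak{o}}$ from Proposition~\ref{prop: characterization-T_U}.

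\textbf{Step 3: compatibility on overlaps.} Take $i,j$ with $U=\textup{int}(D_i)\cap\textup{int}(D_j)\neq\varnothing$. The set $U$ is open and nonempty, and $\haus^n\rstr E$ charges every nonempty open set by Theorem~\ref{thm: topological-dim-existence-rect}. Hence $\norm{T_{D_i}^{\mathfrak{o}_i}}(U)>0$, by \eqref{eq: mass-measure-T_U} and $\lambda_E>0$.

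On $U$ we have $k_iT_{D_i}^{\mathfrak{o}_i}|_U=S|_U=k_jT_{D_j}^{\mathfrak{o}_j}|_U$. Comparing masses gives $|k_i|=|k_j|$. So the set of indices with $k_i=0$ and its complement are both unions of overlap-classes.

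Since $X$ is connected, the graph of overlaps on the cover is connected. Because $S\neq0$, some $k_i\neq0$. Therefore all $k_i$ equal a common value $k>0$.

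It remains to align the orientations. Let $\mathfrak{o}_i'$ denote the orientation on $U$ induced from $\mathfrak{o}_i$, and similarly $\mathfrak{o}_j'$. On each connected component $U'$ of $U$, either $\mathfrak{o}_i'=\mathfrak{o}_j'$ or $\mathfrak{o}_i'=-\mathfrak{o}_j'$. Proposition~\ref{prop: characterization-T_U} (its last statement and the sign flip) then gives
\[
T_{D_i}^{\mathfrak{o}_i}|_{U'}=\pm T_{D_j}^{\mathfrak{o}_j}|_{U'}.
\]
The equality $kT_{D_i}^{\mathfrak{o}_i}|_{U'}=kT_{D_j}^{\mathfrak{o}_j}|_{U'}$ with $k\neq0$, together with the positive mass on $U'$, forces the plus sign.

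The point needing most care is this component-wise use of the uniqueness statement. The characterization \eqref{eq: chracterization-T_U} is purely topological, and restricting it to small regular disks inside $U'$ shows that equal currents force equal local orientations.

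\textbf{Step 4: conclusion.} The orientations $\mathfrak{o}_i$ now glue to an orientation of $X$. Theorem~\ref{theo: characterization-orientability-metric-manifold-with-details}(a) supplies an orientation current $T$ with $T|_{D_i}=T_{D_i}^{\mathfrak{o}_i}$ for every $i$.

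Then $S|_{D_i}=kT|_{D_i}$ for all $i$. Since both currents have mass absolutely continuous with respect to $\haus^n$ (Lemma~\ref{lemma:mass-measure-abso-cont}) and $\haus^n(\partial D_i)=0$, a partition of unity subordinate to the cover gives $S=kT$. This gluing is done exactly as in \eqref{eq: construction-global-current-partition-of-unity}.
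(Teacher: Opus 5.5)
Your proof is correct. Its first half matches the paper's: you obtain local multiplicities $k_i$ from Corollary~\ref{cor: uniqueness-T_U}, then show that $|k_i|$ is constant by comparing masses on overlaps and using connectedness of the overlap graph.

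The two routes diverge at the final step. The paper never assembles a topological orientation. Once $k=|k_i|\neq 0$ is known, it simply sets $T:=\frac1k S$. Then $T|_{D_i}=\pm T_{D_i}^{\mathfrak{o}_i}$ has multiplicity $\pm1$, $\partial T=0$, and $\norm{T}=\lambda_E\haus^n\rstr E$ by \eqref{eq: mass-measure-T_U} together with the gluing argument of Theorem~\ref{theo: characterization-orientability-metric-manifold-with-details}. That is exactly the definition of an orientation; the topological orientation is recovered only afterwards, via part (b) of that theorem. Because only $|k_i|$ is used, the paper never has to track signs on overlaps, including the possibility that $\textup{int}(D_i)\cap\textup{int}(D_j)$ is disconnected.

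You instead do the following:
\begin{itemize}
\item normalize signs so that $k_i=k>0$;
\item show the local orientations agree on every component of every overlap, and glue them into a global topological orientation;
\item take $T$ from part (a) of the theorem and identify $S=kT$ by a partition of unity.
\end{itemize}
This is longer and needs the componentwise form of the last statement of Proposition~\ref{prop: characterization-T_U}, which you correctly flag and justify. In exchange, it explicitly exhibits the topological orientation determined by $S$ and shows that $S/k$ is its current. In effect, you reprove the relevant part of Theorem~\ref{theo: characterization-orientability-metric-manifold-with-details}(b) inside the corollary.

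A minor point: in your last step, the absolute continuity of the masses and $\haus^n(\partial D_i)=0$ are not needed. Each $f\varphi_i$ already has compact support in $\textup{int}(D_i)$.
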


    \begin{proof}
        Let $\{D_i\}_i$ be a family of regular disks such that $\{\textup{int}(D_i)\}_i$ form an open, locally finite cover of $X$ and $\norm{S}(D_i) < \infty$ for every $i \in \setN$. Such a family exists by Lemma \ref{lemma: good-subsets-existence}. Since $\partial S = 0$, we have $\partial (S|_{D_i}) \subseteq \partial D_i$ by \eqref{eq:support_boundary_restriction}. 
        For every $i \in \setN$, fix an orientation $\mathfrak{o}_i$ on $\textup{int}(D_i)$. By Corollary \ref{cor: uniqueness-T_U}, there exists $k_i \in \mathbb{Z}$ such that $S|_{D_i} = k_i \cdot T_{D_i}^{\mathfrak{o}_i}$. Proposition \ref{prop: characterization-T_U} implies that $T_{D_i}^{\mathfrak{o}_i}|_B = \pm T_{D_j}^{\mathfrak{o}_j}|_B$ for every $i,j\in \setN$ and every Borel set $B \subseteq D_i \cap D_j$. By connectedness, $k\coloneqq |k_i| = |k_j|$ for every $i,j \in \setN$. Since $S\neq 0$, we deduce that $k\neq 0$.        
        Hence, $T = \frac{1}{k} S$ is a local integral current. Clearly, $\partial T =0$. Moreover, by \eqref{eq: mass-measure-T_U} and Proposition \ref{prop:restriction_of_current_to_open_subsets}.(i),
        $$\norm{T\rstr D_i} = \norm{T|_{D_i}} = \lambda_{E_{D_i}} \haus^n \rstr E_{D_i},$$
        where $E_{D_i}$ denotes the rectifiable part of $D_i$. Arguing as in the proof of Theorem \ref{theo: characterization-orientability-metric-manifold-with-details} we conclude that 
        $$\norm{T} = \lambda_E \haus^n \rstr E$$
        where $E$ is the rectifiable part of $X$. Hence, $T$ is an orientation of $X$.
    \end{proof}

    We have the following consequence.
    \begin{corollary}\label{cor: uniq-orientation-by-current-good-mfld}
        Let $(X,\dist)$ be a degree-regular metric $n$-manifold. If $X$ is connected then it admits at most two orientations.
        In particular, $H_n^\textup{IC}(X) = \mathbb{Z}^{|\mathcal{O}|}$, where $\mathcal{O}$ is the set of orientable connected components of $X$. 
    \end{corollary}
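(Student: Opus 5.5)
The first assertion follows from Corollary~\ref{cor: any-bdry-less-current-implies-orient} together with the sign rigidity established in the proof of Theorem~\ref{theo: characterization-orientability-metric-manifold-with-details}. Let $X$ be connected and degree-regular, and let $T,T'$ be two orientations. Applying Corollary~\ref{cor: any-bdry-less-current-implies-orient} to $S=T'$ gives an orientation $T''$ and $k\in\mathbb{Z}$ with $T'=kT''$. Comparing masses, $\lambda_E\haus^n\rstr E=|k|\lambda_E\haus^n\rstr E$. Since $\haus^n\rstr E$ is nonzero (Theorem~\ref{thm: topological-dim-existence-rect}) and $\lambda_E>0$, we get $|k|=1$. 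So it suffices to show that any two orientations $T,T'$ satisfy $T'=\pm T$.

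For this, cover $X$ by regular disks $D_i$ whose interiors form a locally finite cover. By part (b) of Theorem~\ref{theo: characterization-orientability-metric-manifold-with-details}, $T$ and $T'$ induce topological orientations $\mathfrak o,\mathfrak o'$ of $X$ with $T|_{D_i}=T_{D_i}^{\mathfrak o}$ and $T'|_{D_i}=T_{D_i}^{\mathfrak o'}$. A connected topological manifold has exactly two orientations, so $\mathfrak o'=\pm\mathfrak o$ globally. Proposition~\ref{prop: characterization-T_U} then gives $T'|_{D_i}=\pm T|_{D_i}$ with the same sign for every $i$. Gluing via the partition of unity, as in \eqref{eq: construction-global-current-partition-of-unity}, yields $T'=\pm T$. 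Hence there are at most two orientations, and they occur in the pair $\pm T$.

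For the homology statement, $\haus^{n+1}(X)=0$ because $\haus^n$ is locally finite. Lemma~\ref{lemma:relative_homology_dimension_n} therefore identifies $H_n^{\mathrm{IC}}(X)$ with the group of cycles $\{S\in\bI_{n,\loc}(X):\partial S=0\}$. Write $X=\bigsqcup_\alpha X_\alpha$ as the disjoint union of its connected components. Each $X_\alpha$ is open, and there are countably many of them by second countability. Each $X_\alpha$ is a degree-regular metric $n$-manifold by Remark~\ref{rmk: metric-mfld}.(4).

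Given a cycle $S$, define $S_\alpha:=S|_{X_\alpha}$. Since $X_\alpha$ is open, Lemma~\ref{cor:cycles_open_sets} gives $\partial S_\alpha=(\partial S)|_{X_\alpha}=0$, and Proposition~\ref{prop:extension_restriction_integral_currents} shows that $S_\alpha$ is locally integer rectifiable. Conversely, $S=\sum_\alpha (S_\alpha)^X$. This sum is locally finite because the components are open and disjoint, so any compact set meets only finitely many of them. Each summand $(S_\alpha)^X$ is a cycle, because $X_\alpha$ is also closed, so Lemma~\ref{lemma:extension_vs_boundary} applies.

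By Corollary~\ref{cor: any-bdry-less-current-implies-orient}, $S_\alpha$ is either $0$ or an integer multiple $k_\alpha T_\alpha$ of an orientation $T_\alpha$; the latter case forces $X_\alpha$ to be orientable. Fixing one orientation $T_\alpha$ on each orientable component, $S\mapsto(k_\alpha)_\alpha$ is an isomorphism onto $\mathbb{Z}^{\mathcal O}$, where $\mathbb{Z}^{\mathcal O}$ means the full product. An arbitrary sequence $(k_\alpha)$ is attained because local finiteness lets $\sum_\alpha k_\alpha T_\alpha^X$ define a locally integral cycle. The main subtle point is this product-versus-sum distinction for infinitely many components, together with the observation that the mass condition in the definition of orientation pins down $|k|=1$.
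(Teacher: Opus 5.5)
Your proof is correct. The homology part matches the paper's argument: restrict to the open components, apply Corollary~\ref{cor: any-bdry-less-current-implies-orient} on each, and use the map $S\mapsto(k_\alpha)$. You also read $\mathbb{Z}^{|\mathcal O|}$ correctly as the full product, which local finiteness makes attainable.

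For the first assertion your route differs from the paper's. The paper applies Corollary~\ref{cor: any-bdry-less-current-implies-orient} to a second orientation $S$ to write $S=kT$, then compares masses. You instead use part (b) of Theorem~\ref{theo: characterization-orientability-metric-manifold-with-details} to convert $T$ and $T'$ into topological orientations $\mathfrak o,\mathfrak o'$. You then invoke the classical fact that a connected manifold carries only $\pm\mathfrak o$, and transfer this back via $T_D^{-\mathfrak o}=-T_D^{\mathfrak o}$ from Proposition~\ref{prop: characterization-T_U}.

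Your route is the more explicit one. Corollary~\ref{cor: any-bdry-less-current-implies-orient}, as stated, only produces \emph{some} orientation; in its proof this is just $S/k$. So using it to compare $S$ with a prescribed $T$ tacitly requires redoing its local argument with disk orientations chosen to match $T$, so that the $k_i$ themselves, not only $|k_i|$, are constant. Your argument makes this comparison transparent, at the cost of importing the topological uniqueness of orientations.

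Your opening paragraph is vacuous for the same reason. Applying the corollary to $T'$ itself only returns $T''=\pm T'$, and ``it suffices to show $T'=\pm T$'' is just the original claim. The real content is your second paragraph.
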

    \begin{proof}
        If $X$ is non-orientable then there are no orientations by Theorem \ref{thme: characterization-orientability-metric-manifold}. If $X$ is orientable then there exists an orientation $T\in \mathcal{I}_{n,\loc}(X)$. Clearly, also $-T$ is an orientation. Let now $S$ be any other orientation. If $X$ is connected, then there exists $k\in \mathbb{Z}$ such that $S = k\cdot T$, by Corollary \ref{cor: any-bdry-less-current-implies-orient}. Since $\norm{S} = \norm{T}$, we deduce that $S = \pm T$. This concludes the proof of the first statement.

        In the general case, let us write $X = \bigcup_{i\in \mathcal{C}} X_i$, where each $X_i$ is a connected component of $X$. Each $X_i$ is an open submanifold of $X$, hence it is a degree-regular metric $n$-manifold by Remark \ref{rmk: metric-mfld}.(4). Let us denote by $\mathcal{O}\subseteq \mathcal{C}$ the subset of connected components that are orientable. For every orientable component $X_i$ we fix an orientation $T_i$. Let $S\in H_n^\textup{IC}(X)$, i.e. $S \in \mathcal{I}_{n,\loc}(X)$ with $\partial S = 0$. Then, $S_i:=S|_{X_i} \in \mathcal{I}_{n,\loc}(X_i)$ satisfies $\partial S_i = 0$ for every $i \in \mathcal{C}$, by Lemma \ref{cor:cycles_open_sets} since $X_i$ is open in $X$. For every $i \in \mathcal{C}$ either $S_i = 0$ or $X_i$ is orientable and $S_i = k_i\cdot T_i$ for some $k_i\in \mathbb{Z}$, by Corollary \ref{cor: any-bdry-less-current-implies-orient}. We define the homomorphism of groups $\Phi \colon H_n^\textup{IC}(X) \to \mathbb{Z}^{|\mathcal{O}|}$, $S\mapsto (k_i)_{i\in \mathbb{Z}}$. It is injective: if $\Phi(S) = \Phi(S')$ then $S_i = S_i'$ for every $i\in \mathcal{O}$ and $S_i = S_i' = 0$ for every $i\in \mathcal{C} \setminus \mathcal{O}$, i.e. $S = S'$. It is also surjective since the current $\sum_{i\in \mathcal{O}} k_i\cdot T_i$ is sent to $(k_i) \in \mathbb{Z}^{|\mathcal{O}|}$.
    \end{proof}    

We conclude this section with the following consequence.
\begin{corollary}
\label{cor:orientability_stable_for_submanifolds}
    Let $(X,\dist)$ be a degree-regular metric $n$-manifold and let $Y\subseteq X$ be a submanifold with $\haus^{n-1}(X\setminus Y) = 0$. Then $X$ is orientable if and only if $Y$ is.
\end{corollary}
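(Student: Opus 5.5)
The plan is to transfer orientation currents between $X$ and $Y$ by restriction and extension, and to use Theorem~\ref{theo:boundary_zero_mass} to rule out boundary along $X\setminus Y$.

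\emph{Preliminary reductions.} Since $Y$ is a topological $n$-manifold sitting inside the $n$-manifold $X$, invariance of domain shows that $Y$ is open in $X$. In particular $Y$ is locally compact and separable. Moreover $\haus^{n-1}(X\setminus Y)=0$ implies $\haus^n(X\setminus Y)=0$. Hence, by Lemma~\ref{lemma:degree-regularity_from_submanifold_to_manifold} (or Remark~\ref{rmk: metric-mfld}.(4)), $Y$ is itself a degree-regular metric $n$-manifold, so Theorem~\ref{thme: characterization-orientability-metric-manifold} applies to both spaces. If $E$ is the rectifiable part of $X$, then $E\cap Y$ is the rectifiable part of $Y$ up to $\haus^n$-null sets. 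By Lemma~\ref{lemma:tangential_function}, $\lambda_{E\cap Y}=\lambda_E$ a.e. on $E\cap Y$.

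\emph{$X$ orientable $\Rightarrow$ $Y$ orientable.} Let $T$ be an orientation of $X$ and consider $T|_Y$.
\begin{itemize}
\item By Proposition~\ref{prop:extension_restriction_integral_currents}.(i), $T|_Y\in\mathcal I_{n,\loc}(Y)$.
\item Since $Y$ is open, Lemma~\ref{cor:cycles_open_sets} gives $\partial(T|_Y)=(\partial T)|_Y=0$.
\item By Proposition~\ref{prop:restriction_of_current_to_open_subsets}.(i),
\[
\norm{T|_Y}=\norm{T}\rstr Y=\lambda_E\haus^n\rstr(E\cap Y)=\lambda_{E\cap Y}\haus^n\rstr(E\cap Y).
\]
\end{itemize}
So $T|_Y$ satisfies condition (ii) of Theorem~\ref{thme: characterization-orientability-metric-manifold} for $Y$, and $Y$ is orientable.

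\emph{$Y$ orientable $\Rightarrow$ $X$ orientable.} Let $T_Y$ be an orientation of $Y$. Its mass is
\[
\norm{T_Y}=\lambda_{E\cap Y}\haus^n\rstr(E\cap Y)\le \tfrac{2^n}{\omega_n}\haus^n\rstr Y .
\]
Since $\haus^n$ is locally finite on $X$, we get $\norm{T_Y}(Y\cap K)<\infty$ for every compact $K\subseteq X$. Hence Lemma~\ref{lemma:extension-of-currents} gives the extension $T:=T_Y^X\in\bM_{n,\loc}(X)$, and Proposition~\ref{prop:extension_restriction_integral_currents}.(ii) gives $T\in\mathcal I_{n,\loc}(X)$.
\begin{itemize}
\item By Corollary~\ref{cor:extension_vs_restriction}.(i), $\norm{T}\rstr Y=\norm{T_Y}$.
\item By Lemma~\ref{lemma:mass-measure-abso-cont}, $\norm{T}(X\setminus Y)=0$, since $\haus^n(X\setminus Y)=0$.
\item Combining the two, $\norm{T}=\lambda_E\haus^n\rstr E$.
\end{itemize}
For the boundary, Lemma~\ref{lemma:extension-of-currents} gives
\[
\spt(\partial T)\subseteq\spt(\partial T_Y)\cup(X\setminus Y)=X\setminus Y,
\]
which is $\haus^{n-1}$-negligible. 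Therefore Theorem~\ref{theo:boundary_zero_mass} yields $\partial T=0$. Thus $T$ satisfies condition (ii) of Theorem~\ref{thme: characterization-orientability-metric-manifold}, and $X$ is orientable.

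The main obstacle is this last step, showing that the extension acquires no boundary across $X\setminus Y$. That is exactly where the $\haus^{n-1}$ hypothesis, rather than merely $\haus^n$, is needed, and Theorem~\ref{theo:boundary_zero_mass} is designed to handle it. The only other point to check is that $Y$ is open, so that the restriction and extension lemmas apply.
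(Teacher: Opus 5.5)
Your proof is correct and follows essentially the paper's argument. In one direction you restrict to the open set $Y$ and use Lemma~\ref{cor:cycles_open_sets}; in the other you extend via Lemma~\ref{lemma:extension-of-currents} and kill the boundary with Theorem~\ref{theo:boundary_zero_mass}. The only minor difference is at the end: you verify $\norm{T}=\lambda_E\haus^n\rstr E$ directly and invoke Theorem~\ref{thme: characterization-orientability-metric-manifold}, whereas the paper concludes by applying Corollary~\ref{cor: any-bdry-less-current-implies-orient} on each connected component of $X$.
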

\begin{proof}
    If $X$ is orientable, then there exists an orientation $T\in \mathcal{I}_{n,\loc}(X)$ by Theorem \ref{theo: characterization-orientability-metric-manifold-with-details}. Then $T|_Y \in \mathcal{I}_{n,\loc}(Y)$ is an orientation of $Y$ since $\partial(T|_Y) = 0$ by Corollary \ref{cor:cycles_open_sets}. Therefore $Y$ is orientable by Theorem \ref{theo: characterization-orientability-metric-manifold-with-details}. Vice versa, assume that $Y$ is orientable and $T \in \mathcal{I}_{n,\loc}(Y)$ is an orientation. Notice that $Y$ is open in $X$. Moreover, $\norm{T}(K\cap Y) = \lambda_E\haus^n(K\cap E) \le \lambda_E\haus^n(K) < \infty$ for every compact $K\subseteq X$, since $\haus^n$ is locally finite on $X$. Here, $E$ is the rectifiable part of $Y$. By Lemma \ref{lemma:extension-of-currents}, the current $T^X \in \mathcal{I}_{n,\loc}(X)$ satisfies $\spt(\partial (T^X)) \subseteq X\setminus Y$. Since $\haus^{n-1}(X\setminus Y) = 0$, we have that $\partial (T^X) = 0$, by Theorem \ref{theo:boundary_zero_mass}. Applying Corollary \ref{cor:cycles_open_sets} and Corollary \ref{cor: any-bdry-less-current-implies-orient} to each connected component of $X$, which is open in $X$, we conclude that $X$ is orientable. 
\end{proof}

The previous result does not hold if $\haus^{n-1}(X\setminus Y) >0$. For instance, consider $X$ to be the M\"obius strip and $Y$ to be $X$ minus a vertical segment. Then $X$ is non-orientable while $Y$ is orientable. One can recover the result by assuming, in addition, that $(Y,\dist)$ is a length space (actually, the following proof shows that it is enough that $(Y,\dist)$ is locally quasiconvex).

\begin{proposition}
\label{prop:orientability_from_submanifold_to_manifold_length}
    Let $X$ be a topological $n$-manifold equipped with a metric $\dist$ and let $Y\subseteq X$ be an open, dense submanifold such that $(Y,\dist)$ is a length space. Then $X$ is orientable if and only if $Y$ is.
\end{proposition}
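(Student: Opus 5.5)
One direction is immediate and topological. If $X$ is orientable, then $Y$ is orientable, because $Y$ is an open subset of $X$ and the local orientations $o_x$, $x\in Y$, restrict: for $x\in Y$ the inclusion induces the excision isomorphism $H_n^{\textup{sing}}(Y,Y\setminus\{x\})\cong H_n^{\textup{sing}}(X,X\setminus\{x\})$, and the continuity condition is preserved. For the converse, I would fix an orientation of $Y$ and extend it to $X$. To do this, I would show that for each $x\in X$ there is a small coordinate ball $U\ni x$, homeomorphic to $\setR^n$, such that the orientation of $Y$ restricted to $Y\cap U$ coincides with the restriction of one of the two orientations of $U$. Once this holds, choose that orientation of $U$. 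Two such choices $U,U'$ agree on $U\cap U'$, since they agree on the dense open set $Y\cap U\cap U'$, and two orientations of the manifold $U\cap U'$ that agree on a dense open subset agree everywhere, because the set where they agree is open and closed in each component. The choices therefore glue to an orientation of $X$.

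The key step is the local compatibility, and the length-space assumption is used there. Fix $x\in X$ and a chart ball $U$ with a fixed orientation $\mathfrak{o}_U$. On $Y\cap U$, compare $\mathfrak{o}_U$ with the given orientation $\mathfrak{o}_Y$. The comparison gives a locally constant sign function $s\colon Y\cap U\to\{\pm1\}$, so it suffices to prove that $s$ is constant on $Y\cap V$ for a smaller neighbourhood $V\subseteq U$ of $x$. I would take $V=B(x,r)$ with $B(x,4r)\subseteq U$. Given $y,y'\in Y\cap V$, we have $\dist(y,y')<2r$. Since $(Y,\dist)$ is a length space, there is a continuous curve $\gamma$ in $Y$ joining $y$ to $y'$ of length less than $2r$, so $\gamma$ stays in $B(x,3r)\subseteq U$. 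Then $\gamma$ lies in $Y\cap U$, and since $s$ is locally constant and $\gamma([0,1])$ is connected, $s$ is constant along $\gamma$. Hence $s(y)=s(y')$. After possibly replacing $\mathfrak{o}_U$ by $-\mathfrak{o}_U$, the orientation of $V$ agrees with $\mathfrak{o}_Y$ on $Y\cap V$, which is the required local compatibility. Note that the distance $\dist_X$ and the restricted distance on $Y$ coincide, so the curve length is measured by the ambient metric, which is what puts $\gamma$ inside $U$.

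I expect the main point of care to be the claim that $s$ is well defined and locally constant on $Y\cap U$, that is, that two orientations of the manifold $Y\cap U$ differ by a locally constant sign. This is standard: in each connected component of a manifold, the set where two orientations agree is open and closed. The rest of the argument, the gluing in the first paragraph, is routine. Nothing from the current-theoretic machinery is needed, and no measure assumptions are needed either. This fits the statement, which assumes only that $\dist$ is a metric on a topological manifold. If one prefers local quasiconvexity with constant $C$, the same argument works after replacing $4r$ by $(C+2)r$.
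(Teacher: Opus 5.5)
Your proof is correct and is essentially the paper's argument: the length structure of $Y$ keeps $X\setminus Y$ from locally disconnecting $Y$, so the orientation of $Y$ extends across it. The paper shows by contradiction, with the same path-length estimate, that $U\setminus (X\setminus Y)$ is connected for every connected open $U$, and then extends a section of the orientation double cover; you prove directly the weaker local statement you actually need (two points of $Y\cap B(x,r)$ are joined by a short path staying in $Y\cap U$) and compare local orientation classes with a sign function, which is the same mechanism written without covering spaces.
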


\begin{proof}
    If $X$ is orientable then $Y$ is, being a submanifold. Let $Z:= X\setminus Y$. Since $Y$ is a length space, it is connected. Hence, also $X$ is connected. We claim that $Z$ does not locally separate $X$ in the following sense: the set $U\setminus Z$ is connected for every open connected set $U \subseteq X$. If this is not true, then we can find an open and connected $U\subseteq X$ such that $U\setminus Z$ is disconnected. Let $V,W \subseteq U\setminus Z$ be disjoint, non-empty open sets such that $V\cup W = U\setminus Z$. By density, $(\overline{V} \cap U) \cup (\overline{W} \cap U) = U$. By connectedness of $U$ we find $x\in \overline{V} \cap \overline{W} \cap U$. Let $\{v_k\}\subseteq V$ and $\{w_k\} \subseteq W$ be sequences converging to $x$. Every path joining $v_k$ to $w_k$ in $Y$ must leave $U$. Since $(Y,\dist)$ is a length space we have that
    $$\dist(v_k,w_k) \ge \dist(v_k,\partial U) + \dist(w_k,\partial U) \to 2\dist(x,\partial U) > 0,$$
    while $\dist(v_k,w_k) \to 0$, as $k\to \infty$. This contradiction shows that $Z$ does not locally separate $X$. Now, let $p\colon \tilde{X} \to X$ be the orientation double cover of $X$. To prove that $X$ is orientable it is enough to show that $p$ is trivial, i.e. that there exists a continuous section $s\colon X \to \tilde{X}$. The restriction $p|_{Y}$ is the orientation double cover of $Y$. Since $Y$ is orientable, there exists a global section $s\colon Y \to p^{-1}(Y)$. Let $U$ be an open, connected, orientable set. Since $U\setminus Z$ is connected and orientable, $p^{-1}(U) \cong U\times \{\pm 1\}$. The function $s|_U$ is of the form $(\textup{id}, \sigma)$ with $\sigma \colon U \to \{\pm 1\}$. Since $\sigma$ is continuous, it is constant on the connected set $U \setminus Z$, therefore it admits a unique continuous extension on $U$. This gives a continuous extension of $s$ on $U$. Patching together the extensions of $s$ on each open set of a covering of $X$ by open, connected, orientable sets, we find a global section $s\colon X \to \tilde{X}$, which gives the conclusion.
\end{proof}

\section{Orientability of $n$-regular metric spaces}
\label{sec:n-regular}

Let $(X,\dist)$ be a locally compact, separable metric space. We denote by $\mathcal{M}(X)$ the set of topological manifold points of $X$, i.e. the points having an open neighbourhood homeomorphic to $\setR^n$, which is an open subset and a topological $n$-manifold since it is separable. We also define the set of singular points $\mathcal{S}(X) := X \setminus \mathcal{M}(X)$.

\begin{definition}
    Let $n\in \setN$. A locally compact metric space $(X,\dist)$ is $n$-regular if $\haus^n$ is locally finite on $X$, $(\mathcal{M}(X),\dist)$ is a degree-regular metric $n$-manifold, $\mathcal{M}(X)$ is dense in $X$ and $\haus^n(\mathcal{S}(X)) = 0$.
\end{definition}

Immediately from the definition we have the following corollary.
    
    \begin{corollary}
    \label{cor:extension_of_T_X}
        Let $(X,\dist)$ be a $n$-regular space and let $T \in \mathcal{I}_{n,\loc}(\mathcal{M}(X))$ be an orientation. Then $T^X \in \mathcal{I}_{n,\loc}(X)$ is the unique current $\tilde{T} \in \mathcal{I}_{n,\loc}(X)$ such that $\tilde{T}|_{\mathcal{M}(X)} = T$. Moreover, $\spt(\partial (T^X)) \subseteq \mathcal{S}(X)$ and $\Vert T^X \Vert = \lambda_E\haus^n\rstr E$, where $E$ is the rectifiable part of $X$.
    \end{corollary}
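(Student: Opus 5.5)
The proof assembles the extension and uniqueness results of Section 3, taking $B=\mathcal M(X)$. Since $\mathcal M(X)$ is open in the locally compact space $X$, it is locally compact. The first step is to verify the hypothesis of Lemma \ref{lemma:extension-of-currents}. Because $T$ is an orientation of the degree-regular manifold $\mathcal M(X)$, we have $\norm{T}=\lambda_{E'}\haus^n\rstr E'$, where $E'$ is the rectifiable part of $\mathcal M(X)$. By Lemma \ref{lemma:tangential_function}, $\lambda_{E'}\le 2^n/\omega_n$. Hence for every compact $K\subseteq X$,
\[
\norm{T}(K\cap\mathcal M(X))\le \tfrac{2^n}{\omega_n}\haus^n(K)<\infty ,
\]
since $\haus^n$ is locally finite on $X$. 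Lemma \ref{lemma:extension-of-currents} then yields $T^X\in\bM_{n,\loc}(X)$ with
\[
\spt(\partial T^X)\subseteq \spt(\partial T)\cup(X\setminus\mathcal M(X))=\mathcal S(X),
\]
because $\partial T=0$. Proposition \ref{prop:extension_restriction_integral_currents}.(ii) gives $T^X\in\mathcal I_{n,\loc}(X)$, and Corollary \ref{cor:extension_vs_restriction}.(i) gives $T^X|_{\mathcal M(X)}=T$.

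Uniqueness follows from Lemma \ref{lemma:uniqueness_extension}. Its hypotheses are that $B=\mathcal M(X)$ is locally compact, that $\haus^n(X\setminus B)=\haus^n(\mathcal S(X))=0$ (part of $n$-regularity), and the local finiteness already checked. So any $\tilde T\in\mathcal I_{n,\loc}(X)\subseteq\bM_{n,\loc}(X)$ with $\tilde T|_{\mathcal M(X)}=T$ equals $T^X$.

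For the mass, Corollary \ref{cor:extension_vs_restriction}.(i) gives $\norm{T^X}\rstr\mathcal M(X)=\norm{T}=\lambda_{E'}\haus^n\rstr E'$. By Lemma \ref{lemma:mass-measure-abso-cont}, $\norm{T^X}\ll\haus^n$, so $\norm{T^X}(\mathcal S(X))=0$. It remains to identify $E'$ with the rectifiable part $E$ of $X$ up to null sets. Since $\haus^n(\mathcal S(X))=0$, the decomposition $X=E'\cup(P'\cup\mathcal S(X))$ is a rectifiable/purely unrectifiable decomposition of $X$, and by the uniqueness in Lemma \ref{lemma: rect-part} it agrees with $E\cup P$ up to $\haus^n$-null sets. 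Lemma \ref{lemma:tangential_function} then gives $\lambda_{E'}=\lambda_E$ a.e. on $E'$. Therefore $\norm{T^X}=\lambda_E\haus^n\rstr E$.

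The only step requiring care is this last identification of rectifiable parts and tangential functions between $\mathcal M(X)$ and $X$. The identification is not deep; it just has to be done consistently up to null sets.
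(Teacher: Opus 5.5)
Your proposal is correct and follows the same route as the paper. The paper also checks $\norm{T}(K\cap\mathcal M(X))<\infty$ for compact $K\subseteq X$, notes that the rectifiable parts of $\mathcal M(X)$ and $X$ agree up to $\haus^n$-null sets, and then invokes Lemma \ref{lemma:extension-of-currents}, Proposition \ref{prop:extension_restriction_integral_currents} and Lemma \ref{lemma:uniqueness_extension}; your version is slightly cleaner, since bounding $\lambda_{E'}\le 2^n/\omega_n$ avoids treating the tangential function as a constant, and you make explicit the mass identity via Corollary \ref{cor:extension_vs_restriction}.(i) and Lemma \ref{lemma:mass-measure-abso-cont}, which the paper leaves implicit.
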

    \begin{proof}
        By definition, every compact subset of $X$ has finite $\haus^n$-measure. Moreover, the rectifiable part of $X$ coincides with the rectifiable part of $\mathcal{M}(X)$, up to $\haus^n$-negligible sets. Let $K\subseteq X$ be a compact subset. Then
        $$\norm{T}(K\cap \mathcal{M}(X)) = \lambda_E\haus^n(K\cap E) \le \lambda_E \haus^n(K) < \infty.$$
        Moreover, $\mathcal{M}(X)$ is locally compact. Hence, the thesis follows by Lemma \ref{lemma:extension-of-currents}, Proposition \ref{prop:extension_restriction_integral_currents} and Lemma \ref{lemma:uniqueness_extension}.
    \end{proof}

    We extend the definition of orientation to $n$-regular metric spaces.
    \begin{definition}
    \label{defin:n-regular_orientation}
        A $n$-regular metric space $(X,\dist)$ is \emph{orientable} if $\mathcal{M}(X)$ is orientable.
    \end{definition}

    \begin{remark}
        \label{rmk:n-regular_orientation}
        By Theorem \ref{theo: characterization-orientability-metric-manifold-with-details} and Corollary \ref{cor:extension_of_T_X}, a $n$-regular space is orientable if and only if there exists a current $T\in \mathcal{I}_{n,\loc}(X)$ such that $\norm{T} = \lambda_E\haus^n\rstr E$ and $\spt(\partial T) \subseteq \mathcal{S}(X)$, where $E$ is the rectifiable part of $X$. Every such $T$ is called an \emph{orientation} of $X$. If $T$ is an orientation of $X$, then $T|_{\mathcal{M}(X)}$ is an orientation of $\mathcal{M}(X)$. Indeed, $\partial(T|_{\mathcal{M}(X)}) = 0$ by \eqref{eq:support_boundary_restriction}, since $\mathcal{M}(X)$ is open.
    \end{remark}

    \begin{remark}
    \label{rmk:n-regular_open_sets}
        Let $(X,\dist)$ be a $n$-regular space and let $U\subseteq X$ be open. Then $(U,\dist)$ is locally compact, since it is open, and $n$-regular. The last property follows once we observe that $\mathcal{M}(U) = \mathcal{M}(X) \cap U$, using that open submanifolds of degree-regular metric $n$-manifolds are degree-regular, see Remark \ref{rmk: metric-mfld}.(4). Moreover, if $X$ is orientable then $U$ is orientable. This is because every open submanifold of an orientable manifold is orientable.
    \end{remark}

    The following is a criterion for orientability.

    \begin{corollary}
\label{cor:n-regular_full_support_current_implies_orientability}
    Let $(X,\dist)$ be a $n$-regular space. Then $X$ is orientable if and only if there exists $S\in \mathcal{I}_{n,\loc}(X)$ with $\spt(\partial S) \subseteq \mathcal{S}(X)$ and $\spt(S) = X$.
\end{corollary}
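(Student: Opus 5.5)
The forward direction is immediate from Remark~\ref{rmk:n-regular_orientation}. If $X$ is orientable, it has an orientation $T\in\mathcal I_{n,\loc}(X)$ with $\spt(\partial T)\subseteq\mathcal S(X)$ and $\norm{T}=\lambda_E\haus^n\rstr E$. Applying Theorem~\ref{thm: topological-dim-existence-rect} on $\mathcal M(X)$, the set $E$ is dense in $\mathcal M(X)$ and $\haus^n\rstr E$ charges every open subset of $\mathcal M(X)$. Since $\lambda_E>0$, we get $\spt\norm{T}\supseteq\mathcal M(X)$. Density of $\mathcal M(X)$ in $X$ and \eqref{eq:support_current_=_support_mass} then give $\spt(T)=X$, so we may take $S=T$.

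For the converse, let $S$ be as in the statement. I would pass to the manifold part and set $S':=S|_{\mathcal M(X)}\in\mathcal I_{n,\loc}(\mathcal M(X))$, using Proposition~\ref{prop:extension_restriction_integral_currents}(i). Since $\mathcal M(X)$ is open, Lemma~\ref{cor:cycles_open_sets} gives $\partial S'=(\partial S)|_{\mathcal M(X)}=0$ because $\spt(\partial S)\subseteq\mathcal S(X)$. Thus $S'\in\bI_{n,\loc}(\mathcal M(X))$, with zero boundary being trivially integral.

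Next I would decompose $\mathcal M(X)$ into connected components $X_i$; these are open, countably many, and degree-regular by Remark~\ref{rmk: metric-mfld}.(4). Again by Lemma~\ref{cor:cycles_open_sets}, each $S_i:=S'|_{X_i}$ is a cycle in $X_i$. The key claim is that $S_i\neq0$ for every $i$. By Proposition~\ref{prop:restriction_of_current_to_open_subsets}(i), $\norm{S_i}=\norm{S}\rstr X_i$. Since $X_i$ is a nonempty open subset of $X$ and $\spt\norm{S}=\spt(S)=X$, we have $\norm{S}(X_i)>0$. Corollary~\ref{cor: any-bdry-less-current-implies-orient} then yields an orientation $T_i$ of $X_i$ and some $k_i\neq0$ with $S_i=k_iT_i$, so each $X_i$ is orientable. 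Hence $\mathcal M(X)$ is orientable, which by Definition~\ref{defin:n-regular_orientation} means $X$ is orientable.

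The only delicate point is nonvanishing on every component. This is exactly where full support is used, rather than merely $S\neq0$: a current supported on a single component would fail to orient the others. The positivity argument above via the support of the mass measure settles this, so I do not expect a serious obstacle.
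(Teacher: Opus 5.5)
Your proof is correct and follows essentially the same route as the paper. For the forward direction you take $S$ to be an orientation, and you also spell out, via Theorem~\ref{thm: topological-dim-existence-rect} and the density of $\mathcal M(X)$, why its support is all of $X$, which the paper leaves implicit. For the converse you restrict $S$ to each connected component of $\mathcal M(X)$, use full support to see that each restriction is a nonzero cycle, and invoke Corollary~\ref{cor: any-bdry-less-current-implies-orient}; the only cosmetic difference is that you restrict to $\mathcal M(X)$ first and then to the components, whereas the paper restricts directly to each component.
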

\begin{proof}
    If $X$ is orientable it is enough to take $S=T$, where $T$ is any orientation of $X$.

    Vice versa, assume to have $S$ as in the statement. Let $C$ be a connected component of $\mathcal{M}(X)$, which is open in $\mathcal{M}(X)$, hence in $X$. The current $S|_C \in \mathcal{I}_{n,\loc}(C)$ provided by Proposition \ref{prop:restriction_of_current_to_open_subsets} is not zero, since $\spt(S) = X$ and $C$ is open. Moreover, $\partial(S|_C) = 0$ by \eqref{eq:support_boundary_restriction}. Corollary \ref{cor: any-bdry-less-current-implies-orient} implies that $C$ is orientable. Since this is true for every connected component, we conclude that $\mathcal{M}(X)$ is orientable and hence $X$ is orientable.
\end{proof}

    The boundary  of an orientation $T$ may have no local finite mass. 
    \begin{example}[Orientation of a $n$-regular space whose boundary has locally infinite mass]
    \label{ex:extension_with_infinite_mass_boundary} 
    We construct a compact $2$-dimensional space $X$. For each integer $k \ge 3$, let $\ell_k := 1/k^2$ and $w_k := 1/k^2$. Let $\mathbb{S}^1_{\ell_k}$ be the circle of circumference $\ell_k$. 

For a fixed $k$, we take $k$ disjoint flat cylinders 
$C_{k,j} := \mathbb{S}^1_{\ell_k} \times [0, w_k]$, for $j = 1, \dots, k$. 
We form a quotient space $X_k$ by identifying all their bottom boundaries together, and all their top boundaries together. Explicitly, we define the equivalence relation $\sim_k$ on $\bigsqcup_{j=1}^k C_{k,j}$ by:
\begin{enumerate}
    \item $(t, 0)_{k,j} \sim_k (t, 0)_{k,1}$ for all $j \in \{1,\dots,k\}$ and $t \in \mathbb{S}^1_{\ell_k}$, forming a singular circle $A_k$.
    \item $(t, w_k)_{k,j} \sim_k (t, w_k)_{k,1}$ for all $j \in \{1,\dots,k\}$ and $t \in \mathbb{S}^1_{\ell_k}$, forming a singular circle $B_k$.
\end{enumerate}
The space $X_k$ is a "book" of $k$ cylindrical pages. Its metric diameter is bounded by $w_k + \ell_k/2 = \frac{3}{2k^2}$. 

To form a single connected space, we chain the spaces $X_k$ together. Select basepoints $p_k := (0, w_k) \in B_k$ and $q_k := (0,0) \in A_k$. We define the connected space $Y := \left(\bigsqcup_{k=1}^\infty X_k\right) / \approx$, where we identify $p_k \approx q_{k+1}$ for all $k \ge 1$. Let $Y$ be equipped with the induced length metric. Because $\sum_{k=1}^\infty \mathrm{diam}(X_k) \le \sum_{k=1}^\infty \frac{3}{2k^2} < \infty$, the sequence of gluing points is Cauchy. We define $X$ as the metric completion of $Y$, which adds exactly one limit point $x_\infty$. Notice that $X$ is compact.

The manifold part $\mathcal{M}(X) = X \setminus \mathcal{S}(X)$ is the disjoint union of the open cylinders $\bigsqcup_{k \ge 1} \bigsqcup_{j=1}^k \mathbb{S}^1_{\ell_k} \times (0, w_k)$, which is a degree-regular $2$-manifold.
Its total $2$-dimensional measure is finite:
\[
    \haus^2(\mathcal{M}(X)) = \sum_{k=1}^\infty k (\ell_k w_k) = \sum_{k=1}^\infty \frac{k}{k^4} = \sum_{k=1}^\infty \frac{1}{k^3} < \infty.
\]
The singular set is $\mathcal{S}(X) = \{x_\infty\} \cup \bigcup_{k=1}^\infty (A_k \cup B_k)$. Its $1$-dimensional measure is also finite:
\[
    \haus^1(\mathcal{S}(X)) = \sum_{k=1}^\infty (\haus^1(A_k) + \haus^1(B_k)) = \sum_{k=1}^\infty \frac{2}{k^2} < \infty.
\]

Let $T_{k,j}$ be the current given by the standard orientation on the cylinder $C_{k,j}$. Then $T := \sum_{k=1}^\infty \sum_{j=1}^k T_{k,j}$ is an orientation of $\mathcal{M}(X)$. Let $T^X$ be its extension to $X$.

The boundary of each oriented cylinder $C_{k,j}$ is exactly the top circle minus the bottom circle. Consequently, the boundary of the orientation on $X_k$ is exactly $k\br{B_k} - k\br{A_k}$. The boundary circles of two consectuives $X_k$'s meet only at points, so their
one-dimensional masses do not cancel.

Let $S^{(N)}$ be the sum of the page
currents for $3\leq k\leq N$, viewed as currents on $X$.
Then
\[
 \mathbf M(T^X-S^{(N)})
 \le \sum_{k>N}k^{-3}\longrightarrow0,
\]
and, for every Lipschitz test pair $(f,\pi)$,
\[
 \left|\partial(T^X-S^{(N)})(f,\pi)\right|
 \leq \Lip(f)\Lip(\pi)\,
       \mathbf M(T^X-S^{(N)})
 \leq \Lip(f)\Lip(\pi)\sum_{k>N}k^{-3}
 \longrightarrow0.
\]
This proves that
\[
    \partial T^X = \sum_{k=1}^\infty \left( k\br{B_k} - k\br{A_k} \right).
\]
The total mass of $\partial T^X$ is $\sum_{k=1}^\infty (k \ell_k + k \ell_k) = \sum_{k=1}^\infty \frac{2}{k} = \infty$. 
Furthermore, since any neighborhood of the limit point $x_\infty$ contains all $X_k$ for $k$ sufficiently large, the mass of $\partial T^X$ is locally infinite at $x_\infty$.
    \end{example} 

    The following result establishes a sufficient condition to ensure local finiteness of the mass of the boundary of an orientation.
    
    \begin{corollary}
    \label{cor:boundary_finite_mass_rectifiable}
        Let $(X,\dist)$ be a $n$-regular space. Assume that $\Theta^n(\cdot, \haus^n)$ is locally bounded on $\mathcal{S}(X)$, that $\haus^{n-1}\rstr \mathcal{S}(X)$ is locally finite and that $\mathcal{S}(X)$ is $(n-1)$-rectifiable. Then, every orientation $T\in \mathcal{I}_{n,\loc}(X)$ satisfies $\partial T \in \bM_{n-1,\loc}(X)$, hence $T\in \bI_{n,\loc}(X)$.
    \end{corollary}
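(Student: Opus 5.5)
The plan is to apply Theorem~\ref{theo:boundary_finite_mass_hard} with $k=n$ to an orientation $T$, and then conclude via the boundary rectifiability theorem. By Remark~\ref{rmk:n-regular_orientation}, an orientation satisfies $\norm{T}=\lambda_E\haus^n\rstr E$ and $\spt(\partial T)\subseteq\mathcal S(X)$. I will check hypotheses (i) and (ii) of Theorem~\ref{theo:boundary_finite_mass_hard} separately.

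For (i), Lemma~\ref{lemma:tangential_function} gives $\lambda_E\le 2^n/\omega_n$, so $\norm{T}\le (2^n/\omega_n)\haus^n$ and hence $\Theta^n(y,\norm{T})\le (2^n/\omega_n)\Theta^n(y,\haus^n)$. The assumption that $\Theta^n(\cdot,\haus^n)$ is locally bounded on $\mathcal S(X)$ then bounds $\Theta^n(\cdot,\norm{T})$ locally on $\spt(\partial T)\subseteq\mathcal S(X)$.

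For (ii), write $Z:=\spt(\partial T)$. This is a closed subset of $\mathcal S(X)$, so $\haus^{n-1}\rstr Z\le\haus^{n-1}\rstr\mathcal S(X)$ is locally finite. Moreover $Z$ is $(n-1)$-rectifiable, being a closed, hence $\haus^{n-1}$-measurable, subset of the rectifiable set $\mathcal S(X)$. Lemma~\ref{lemma:density_points_rectifiable_space} then gives $\Theta_{n-1}(y,\haus^{n-1}\rstr Z)=1$ for $\haus^{n-1}$-a.e.\ $y\in Z$, so the lower density condition holds with $c_K=1$ for every compact $K$. If $\haus^{n-1}(Z)=0$, Theorem~\ref{theo:boundary_zero_mass} already gives $\partial T=0$; in any case the general argument covers this.

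Theorem~\ref{theo:boundary_finite_mass_hard} therefore yields $\partial T\in\bM_{n-1,\loc}(X)$, so $T\in\bN_{n,\loc}(X)\cap\mathcal I_{n,\loc}(X)$. The boundary rectifiability theorem \cite[Theorem 8.7]{Lang} identifies this intersection with $\bI_{n,\loc}(X)$. The only delicate point is measurability and rectifiability of $Z$ as a subset of $\mathcal S(X)$ when applying the density lemma, and closedness of $Z$ settles it. Everything else is a direct verification.
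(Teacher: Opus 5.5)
Your proposal is correct and matches the paper's proof. Like the paper, you check (i) of Theorem~\ref{theo:boundary_finite_mass_hard} using $\lambda_E\le 2^n/\omega_n$ from Lemma~\ref{lemma:tangential_function}, check (ii) using local finiteness together with Lemma~\ref{lemma:density_points_rectifiable_space}, and conclude with Lang's boundary rectifiability theorem; one small improvement is that you apply the density lemma to the closed rectifiable set $\spt(\partial T)$ itself rather than to $\mathcal S(X)$, which is the precise form hypothesis (ii) requires, since it is stated for $\haus^{n-1}\rstr\spt(\partial T)$.
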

    \begin{proof}
        Given an orientation $T\in \mathcal{I}_{n,\loc}(X)$, by Lemma \ref{lemma:tangential_function} we have that $\Theta^n(\cdot, \norm{T}) \le 2^n/\omega_n\cdot \Theta^n(\cdot, \haus^n)$, hence it is locally bounded on $\mathcal{S}(X)$. Moreover, $\haus^{n-1}\rstr \mathcal{S}(X)$ is locally finite by assumption and $\Theta_{n-1}(y,\haus^{n-1}) = 1$ for $\haus^{n-1}$-a.e. $y\in \mathcal{S}(X)$ since $\mathcal{S}(X)$ is $(n-1)$-rectifiable, see Lemma \ref{lemma:density_points_rectifiable_space}. Then the result follows by Theorem \ref{theo:boundary_finite_mass_hard}.
    \end{proof}

    Example \ref{ex:extension_with_infinite_mass_boundary} shows that the local boundedness of $\Theta^n(\cdot, \haus^n)$ cannot be removed from the assumptions of Corollary \ref{cor:boundary_finite_mass_rectifiable} and of Theorem \ref{theo:boundary_finite_mass_hard}. Even when the boundary of an orientation has locally finite mass, it may be non trivial, as the fundamental class of a smooth manifold with boundary shows. The next example shows an orientable $n$-regular space with no boundary for which every orientation has non-trivial boundary.

\begin{example}    \label{ex:n-regular_three_hemispheres}
    Let $(X,\dist)$ be the compact, $\CAT(1)$, geodesically complete space obtained by gluing three hemispheres by their common boundary. Then $\mathcal{M}(X)$ is the union of three open hemispheres, which is orientable. Let $T\in \mathcal{I}_{n,\loc}(X)$ be an orientation. By Corollary \ref{cor: uniq-orientation-by-current-good-mfld}, the restriction of $T$ on each hemisphere coincides, up to sign, with the unique orientation on the hemisphere, and its boundary is, up to a sign, the standard orientation on the circle. Since $\partial T$ is the sum of three of such currents, $T$ has non-zero boundary. The same construction with four hemispheres produces a space where some orientations have a boundary and some others do not.
\end{example}

    \subsection{Homological properties of $n$-regular spaces}

    In this section, we derive homological properties of $n$-regular spaces. They will be used in Sections \ref{sec:RCD} and \ref{sec:GCBA}. First of all, we generalize Corollaries  \ref{cor: any-bdry-less-current-implies-orient} and \ref{cor: uniq-orientation-by-current-good-mfld} to $n$-regular spaces.

    \begin{proposition}
    \label{prop:unique_orientation_extension}
        Let $(X,\dist)$ be a $n$-regular space with $\mathcal{M}(X)$ connected. Then the following statements hold.
        \begin{itemize}
            \item[(i)] If there exists $S\in \mathcal{I}_{n,\loc}(X)$, $S\neq 0$, with $\spt(\partial S) \subseteq \mathcal{S}(X)$ then there exist an orientation $T\in \mathcal{I}_{n,\loc}(X)$ and $k\in \mathbb{Z}$ such that $S = k\cdot T$.
            \item[(ii)] If $\mathcal{M}(X)$ is orientable, then there are exactly two orientations $\pm T \in \mathcal{I}_{n,\loc}(X)$.
            If $\partial T = 0$ then $H_n^{\textup{IC}}(X) \cong \mathbb{Z}$. If $\partial T \neq 0$ then $H_n^{\textup{IC}}(X) \cong \{0\}$. 
            \item[(iii)] If $X$ is not orientable then $H_n^{\textup{IC}}(X) \cong \{0\}$.
        \end{itemize}
    \end{proposition}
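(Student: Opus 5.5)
The plan is to reduce everything to the manifold part through restriction to the open set $\mathcal{M}(X)$, and then to return to $X$ through the uniqueness of extensions, Corollary \ref{cor:extension_of_T_X}.

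For (i), take $S$ as in the statement and set $S' := S|_{\mathcal{M}(X)} \in \mathcal{I}_{n,\loc}(\mathcal{M}(X))$ (Proposition \ref{prop:extension_restriction_integral_currents}). Since $\mathcal{M}(X)$ is open and $\spt(\partial S)\subseteq \mathcal{S}(X)$, Lemma \ref{cor:cycles_open_sets} gives $\partial S' = (\partial S)|_{\mathcal{M}(X)} = 0$. We need $S'\neq 0$. If $S'=0$, then $\norm{S}\rstr\mathcal{M}(X)=0$, and $\norm{S}\ll\haus^n$ by Lemma \ref{lemma:mass-measure-abso-cont}, while $\haus^n(\mathcal{S}(X))=0$; hence $S=0$, a contradiction. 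To apply Corollary \ref{cor: any-bdry-less-current-implies-orient} we also need $S'$ locally integral. It is a cycle and locally integer rectifiable, so $\partial S'=0\in\mathcal{I}_{n-1,\loc}$, which means $S'\in\bI_{n,\loc}(\mathcal{M}(X))$. Since $\mathcal{M}(X)$ is a connected degree-regular metric $n$-manifold, that corollary yields an orientation $T'$ of $\mathcal{M}(X)$ and $k\in\mathbb{Z}$ with $S'=kT'$. Let $T:=T'^X$; by Corollary \ref{cor:extension_of_T_X} it is an orientation of $X$. Both $S$ and $kT$ lie in $\bM_{n,\loc}(X)$ and restrict to $S'$ on $\mathcal{M}(X)$. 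Lemma \ref{lemma:uniqueness_extension}, which uses $\haus^n(\mathcal{S}(X))=0$, then gives $S=kT$.

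For (ii), Corollary \ref{cor: uniq-orientation-by-current-good-mfld} shows that the connected manifold $\mathcal{M}(X)$ has exactly two orientations $\pm T'$. By Remark \ref{rmk:n-regular_orientation}, every orientation of $X$ restricts to an orientation of $\mathcal{M}(X)$, and by uniqueness of extensions it is determined by that restriction. So the orientations of $X$ are exactly $\pm T$ with $T=T'^X$.

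For the homology, Lemma \ref{lemma:relative_homology_dimension_n} applies because $\haus^{n+1}(X)=0$, which holds since $\haus^n$ is locally finite. It gives $H_n^{\textup{IC}}(X)=\{S\in\bI_{n,\loc}(X):\partial S=0\}$. Every such nonzero $S$ satisfies $\spt(\partial S)=\emptyset\subseteq\mathcal{S}(X)$, so by (i) $S=kT$ with $k\neq0$. Then $\partial S=k\,\partial T$, and $\partial S=0$ forces $\partial T=0$. Hence if $\partial T\neq0$ there are no nonzero cycles, and $H_n^{\textup{IC}}(X)\cong\{0\}$.

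If $\partial T=0$, then $T$ is a cycle, and it lies in $\bI_{n,\loc}$ since its boundary is zero. The map $k\mapsto kT$ is then an isomorphism $\mathbb{Z}\to H_n^{\textup{IC}}(X)$. It is injective because $T\neq 0$: indeed $\norm{T}=\lambda_E\haus^n\rstr E$, and $E$ has positive measure by Theorem \ref{thm: topological-dim-existence-rect}.

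For (iii), assume $X$ is not orientable. If there were a nonzero cycle $S$, then (i) would produce an orientation of $X$, which is impossible. So $H_n^{\textup{IC}}(X)=\{0\}$.

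The main obstacle is the nonvanishing of $S'$ in (i), which is settled by the absolute-continuity argument above. The only other point needing care is checking that $S'$ is locally integral, so that Corollary \ref{cor: any-bdry-less-current-implies-orient} applies; this is immediate once $\partial S'=0$.
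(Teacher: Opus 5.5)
Your proof is correct and follows the paper's argument. You restrict to $\mathcal{M}(X)$, use $\haus^n(\mathcal{S}(X))=0$ to get a nonzero cycle there, apply Corollary \ref{cor: any-bdry-less-current-implies-orient}, and return to $X$ via Lemma \ref{lemma:uniqueness_extension}; the only differences are that in (ii) the paper applies (i) to an arbitrary orientation and compares masses to force $k=\pm1$, whereas you transport the two orientations of $\mathcal{M}(X)$ through the restriction/extension correspondence, and that you spell out details the paper leaves implicit (nonvanishing of $S|_{\mathcal{M}(X)}$, $T\neq 0$).
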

    \begin{proof}
        Let $S \in \mathcal{I}_{n,\loc}(X)$, $S\neq 0$, with $\spt(\partial S) \subseteq \mathcal{S}(X)$. Since $\haus^n(\mathcal{S}(X)) = 0$, then $S|_{\mathcal{M}(X)} \neq 0$. Moreover, $\partial(S|_{\mathcal{M}(X)}) = 0$ because of \eqref{eq:support_boundary_restriction}. By Corollary \ref{cor: any-bdry-less-current-implies-orient}, there exist an orientation $T\in \mathcal{I}_{n,\loc}(\mathcal{M}(X))$ and $k\in \mathbb{Z}$ such that $S|_{\mathcal{M}(X)} = k\cdot T$. Since $\haus^n(\mathcal{S}(X)) = 0$, then $S = (S|_{\mathcal{M}(X)})^X$, by Lemma \ref{lemma:uniqueness_extension}.
        Therefore, $S = k\cdot T^X$, where $T^X$ is the orientation of $X$ provided by Corollary \ref{cor:extension_of_T_X}. 

        Item (i) implies directly, as in the proof of Corollary \ref{cor: uniq-orientation-by-current-good-mfld}, that there are exactly two orientations $\pm T \in \mathcal{I}_{n,\loc}(X)$. The same proof shows that $H_n^\textup{IC}(X) \cong \mathbb{Z}$ if $\partial T = 0$. If $\partial T \neq 0$, there are no $S\in \bI_{n,\loc}(X)$ with $\partial S = 0$, by (i). Hence, $H_n^\textup{IC}(X) \cong \{0\}$. Again by (i), if $X$ is not orientable then there are no non-trivial $S\in\bI_{n,\loc}(X)$ with $\partial S= 0$, so $H_n^\textup{IC}(X) \cong \{0\}$.
    \end{proof}

    As a consequence, we characterize those $n$-regular spaces with exactly two orientations. Such spaces are called \emph{uniquely orientable}.
    
    \begin{corollary}
    \label{cor:n-regular_unique_orientability}
        Let $(X,\dist)$ be a $n$-regular space. Then $X$ is uniquely orientable if and only if $\mathcal{M}(X)$ is orientable and connected. Moreover, if it is the case, then $H_n^{\textup{IC}}(X) \cong \{0\}$ if and only if $\partial T \neq 0$ for every orientation $T\in \mathcal{I}_{n,\loc}(X)$, and $H_n^{\textup{IC}}(X) = \langle T \rangle \cong \mathbb{Z}$ if and only if $\partial T = 0$ for every orientation. If, additionally, $\haus^{n-1}(\mathcal{S}(X)) = 0$, then $H_n^\textup{IC}(X) \cong \mathbb{Z}$.
    \end{corollary}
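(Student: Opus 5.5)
The plan is to reduce everything to Proposition~\ref{prop:unique_orientation_extension} and Corollary~\ref{cor: uniq-orientation-by-current-good-mfld}. For the characterization, first suppose $\mathcal{M}(X)$ is orientable and connected. Then Proposition~\ref{prop:unique_orientation_extension}.(ii) gives exactly two orientations $\pm T$, so $X$ is uniquely orientable.

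Conversely, assume $X$ has exactly two orientations. Then $X$ is orientable, so $\mathcal{M}(X)$ is orientable by Definition~\ref{defin:n-regular_orientation}, and it remains to prove connectedness. Write $\mathcal{M}(X)=\bigsqcup_{i\in\mathcal C}C_i$ as the union of its connected components. Each $C_i$ is open and, by Remark~\ref{rmk: metric-mfld}.(4), a degree-regular orientable manifold; fix an orientation $T_i$ of $C_i$. For every choice of signs $\epsilon\in\{\pm1\}^{\mathcal C}$ the current $\sum_i\epsilon_iT_i$ is a well-defined element of $\mathcal I_{n,\loc}(\mathcal M(X))$, because the sum is locally finite in mass: its mass measure is $\lambda_E\haus^n\rstr E$, which is locally finite. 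It has zero boundary, since by Lemma~\ref{cor:cycles_open_sets} the boundary can be tested componentwise. Hence it is an orientation of $\mathcal M(X)$, and by Corollary~\ref{cor:extension_of_T_X} it extends uniquely to an orientation of $X$. Distinct sign choices give distinct currents, as one sees by restricting to the components. So if $|\mathcal C|\ge2$ there are at least four orientations, a contradiction. The only point requiring care is this assembly of the sign-twisted sums as currents with the prescribed mass; it is the same argument as the surjectivity part of Corollary~\ref{cor: uniq-orientation-by-current-good-mfld}.

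Now assume $X$ is uniquely orientable, with orientations $\pm T$. Since $\partial(-T)=-\partial T$, either every orientation has zero boundary or none does. Proposition~\ref{prop:unique_orientation_extension}.(ii) then gives $H_n^{\textup{IC}}(X)\cong\mathbb Z$ in the first case and $\{0\}$ in the second. For the finer identification $H_n^{\textup{IC}}(X)=\langle T\rangle$, use Lemma~\ref{lemma:relative_homology_dimension_n}, which identifies $H_n^{\textup{IC}}(X)$ with the cycles, since $\haus^{n+1}(X)=0$ by local finiteness of $\haus^n$. Then apply Proposition~\ref{prop:unique_orientation_extension}.(i): every nonzero cycle $S$ has $\spt(\partial S)=\emptyset\subseteq\mathcal S(X)$, so $S=kT$. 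Since the two cases are exclusive and give non-isomorphic groups, both ``if and only if'' statements follow.

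Finally, if $\haus^{n-1}(\mathcal S(X))=0$, then $\spt(\partial T)\subseteq\mathcal S(X)$ by Corollary~\ref{cor:extension_of_T_X}, so $\haus^{n-1}(\spt(\partial T))=0$. Theorem~\ref{theo:boundary_zero_mass} then gives $\partial T=0$, and hence $H_n^{\textup{IC}}(X)\cong\mathbb Z$. Note that $T$ is only known to lie in $\mathcal I_{n,\loc}\subseteq\bM_{n,\loc}$, which is exactly the hypothesis that theorem requires.
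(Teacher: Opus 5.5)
Your proposal is correct and follows essentially the same route as the paper. The paper also gets the converse direction from Proposition~\ref{prop:unique_orientation_extension}.(ii), and the disconnected case from the extra orientations of $\mathcal M(X)$. The paper obtains those by citing Corollary~\ref{cor: uniq-orientation-by-current-good-mfld} together with Corollary~\ref{cor:extension_of_T_X}, whereas you rebuild the sign-twisted sums by hand. The homological dichotomy and the case $\haus^{n-1}(\mathcal S(X))=0$ come from Proposition~\ref{prop:unique_orientation_extension} and Theorem~\ref{theo:boundary_zero_mass}, as in the paper.
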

    \begin{proof}
        If $\mathcal{M}(X)$ is non-orientable, then there are no orientations at all, by definition. If $\mathcal{M}(X)$ is orientable and disconnected, then it admits more than two orientations by Corollary \ref{cor: uniq-orientation-by-current-good-mfld}. Each of them extends to different orientations of $X$ by Corollary \ref{cor:extension_of_T_X}. Hence $X$ is not uniquely orientable. This proves that if $X$ is uniquely orientable, then $\mathcal{M}(X)$ is orientable and connected. The vice versa is  Proposition \ref{prop:unique_orientation_extension}.(ii). 

        Finally, assume that $X$ is uniquely orientable and let us fix an orientation $T$. Proposition \ref{prop:unique_orientation_extension} shows that $H_n^{\textup{IC}}(X) = \langle T \rangle \cong \mathbb{Z}$ if and only if $\partial T = 0$. Otherwise, $H_n^{\textup{IC}}(X) \cong \{0\}$. If $\haus^{n-1}(\mathcal{S}(X)) = 0$, then $\partial T = 0$ by Theorem \ref{theo:boundary_zero_mass}, hence $H_n^\textup{IC}(X) \cong \mathbb{Z}$.
\end{proof}

Both $H^\textup{IC}_n(X) \cong \mathbb{Z}$ and $H^\textup{IC}_n(X) \cong \{0\}$ may happen for orientable $n$-regular spaces. For the former, it is enough to take any closed orientable Riemannian manifold. For the latter we have the following example.
\begin{example}[Orientable locally $\CAT(1)$ space with trivial top-dimensional homology]
\label{ex:n-regular_orientable_but_trivial_homology}
    Let $Y$ be the compact, geodesically complete, $\CAT(1)$ space obtained by gluing three hemispheres by their boundary, i.e. the space of Example \ref{ex:n-regular_three_hemispheres}. For every choice of pairwise distinct $i,j,k \in \{1,2,3\}$ we select in the hemisphere $H_i$ two disjoint disks $D_i^j$ and $D_i^k$  of radius $1/10$. We remove them from $H^i$ and we glue $\partial D_i^j$ to $\partial D_j^i$ and  $\partial D_i^k$ to $\partial D_k^i$. The resulting space is called $X$. It admits a metric $\dist$ with respect to which it is a $\GCBA$ space. 
    By construction, $\mathcal{M}(X)$ is connected and orientable. However, $H_2^{\textup{IC}}(X)$ $\cong \{0\}$. To prove this, by Corollary \ref{cor:n-regular_unique_orientability} it is enough to notice that an orientation of $X$ has non-trivial boundary. This can be shown by the same argument of Example \ref{ex:n-regular_three_hemispheres}.
\end{example}

There are examples of non-orientable $n$-regular spaces with non trivial top-dimensional homology group, even when $\haus^{n-1}(\mathcal{S}(X)) = 0$.

\begin{example}[A non-orientable $\CAT(1)$ space with non-trivial top-dimensional homology]
\label{ex:n-regular_non_orientable_but_Z_homology}
    Equip the projective plane $\setR\mathbb{P}^2$ with twice the standard metric coming from the sphere: it is a geodesically complete $\CAT(1)$ space $(Y,\dist_Y)$. Let $X$ be the space obtained by gluing a sphere $\mathbb{S}^2$ to $Y$ by a point $p$: it is a compact, geodesically complete, $\CAT(1)$ space. Observe that $\mathcal{M}(X) = X \setminus \{p\} = (\mathbb{RP}^2 \setminus \{p\}) \sqcup (\mathbb{S}^2 \setminus \{p\})$ is disconnected. The component $\mathbb{RP}^2 \setminus \{p\}$ is not orientable, hence $X$ is not orientable. On the other hand, $H_2^\textup{sing}(X) \cong H_2^\textup{IC}(X) \cong \mathbb{Z}$, generated by the fundamental class of $\mathbb{S}^2$. In this example, $\mathcal{S}(X)$ is finite.
\end{example}

In the previous example, the generator $T \in H_2^\textup{IC}(X) \cong \mathbb{Z}$ is the fundamental class of $\mathbb{S}^2$. In particular its support is a strict subset of $X$. This is the only obstruction to the orientability of the space, as it follows by Corollary \ref{cor:n-regular_full_support_current_implies_orientability}.
Similarly, unique orientability can be characterized in homological terms, assuming that $\haus^{n-1}(\mathcal{S}(X)) = 0$.

\begin{corollary}
    Let $(X,\dist)$ be a $n$-regular space with $\haus^{n-1}(\mathcal{S}(X)) = 0$. Then $X$ is uniquely orientable if and only if $H_n^\textup{IC}(X) = \langle T \rangle$ and $\spt(T) = X$. In the latter case, $T$ is the unique, up to sign, orientation of $X$.
\end{corollary}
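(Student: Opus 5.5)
The plan is to reduce both implications to Corollary~\ref{cor:n-regular_unique_orientability} and Proposition~\ref{prop:unique_orientation_extension}, using the hypothesis $\haus^{n-1}(\mathcal{S}(X))=0$ through Theorem~\ref{theo:boundary_zero_mass}. The key observation is this: whenever $S\in\mathcal{I}_{n,\loc}(X)$ satisfies $\spt(\partial S)\subseteq\mathcal{S}(X)$, we get $\haus^{n-1}(\spt(\partial S))=0$, hence $\partial S=0$. In particular every orientation of $X$ is a cycle, hence lies in $\bI_{n,\loc}(X)$.

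For the forward direction, I assume $X$ is uniquely orientable. By Corollary~\ref{cor:n-regular_unique_orientability}, $\mathcal{M}(X)$ is orientable and connected, and $H_n^{\textup{IC}}(X)=\langle T\rangle\cong\mathbb{Z}$ with $T$ an orientation. Here $\partial T=0$ by the observation above. It remains to show $\spt(T)=X$. Since $\|T\|=\lambda_E\haus^n\rstr E$, with $\lambda_E>0$ and $E$ dense in $\mathcal{M}(X)$ with locally positive measure, the remark after Theorem~\ref{thme: characterization-orientability-metric-manifold} gives $\spt(T)\supseteq\mathcal{M}(X)$. Since $\mathcal{M}(X)$ is dense and supports are closed, $\spt(T)=X$. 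The generator of $\mathbb{Z}$ is determined only up to sign, so any generator is $\pm T$.

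For the converse, I assume $H_n^{\textup{IC}}(X)=\langle T\rangle$ with $\spt(T)=X$. Then $T\neq0$, $T\in\mathcal{I}_{n,\loc}(X)$ and $\partial T=0$, so Corollary~\ref{cor:n-regular_full_support_current_implies_orientability} shows that $X$ is orientable. To get connectedness of $\mathcal{M}(X)$, I argue by contradiction. Suppose $\mathcal{M}(X)=C_1\sqcup C_2$ with $C_1,C_2$ nonempty and open; each is a union of components and is orientable. Fix an orientation $R$ of $\mathcal{M}(X)$ and set $R_1:=(R|_{C_1})^X$. This is the extension by zero on $C_2$, which is a legitimate local current since $\mathcal{M}(X)$ is open. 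By Lemma~\ref{lemma:extension-of-currents}, $\spt(\partial R_1)\subseteq X\setminus C_1$. I then need $\spt(\partial R_1)\subseteq\mathcal{S}(X)$. On $C_2$, $R_1$ vanishes near each point because $C_2$ is open. On $C_1$, $\partial R_1$ restricts to $\partial(R|_{C_1})=0$ by Lemma~\ref{cor:cycles_open_sets}. So $\partial R_1=0$ by the key observation, and $R_1\in H_n^{\textup{IC}}(X)$.

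Hence $R_1=kT$ for some $k\in\mathbb{Z}$. Now $R_1\neq0$, since its mass on $C_1$ is positive, so $k\neq0$. But $\spt(R_1)\cap C_2=\emptyset$, while $\spt(kT)=X$ meets $C_2\neq\emptyset$. This is a contradiction, so $\mathcal{M}(X)$ is connected and orientable, and Corollary~\ref{cor:n-regular_unique_orientability} gives unique orientability.

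Finally, by Proposition~\ref{prop:unique_orientation_extension}, every orientation $T'$ is a cycle generating $H_n^{\textup{IC}}(X)$, hence $T'=\pm T$. So $T$ is, up to sign, the unique orientation. I expect the main obstacle to be the support bookkeeping for $\partial R_1$ near points of $\overline{C_1}\cap\overline{C_2}\subseteq\mathcal{S}(X)$. I would handle it via Lemma~\ref{lemma:extension-of-currents} applied with $B=\mathcal{M}(X)$ to $R\cdot\chi_{C_1}$, which is a current in $\mathcal{I}_{n,\loc}(\mathcal{M}(X))$ with zero boundary.
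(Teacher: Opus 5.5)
Your proof is correct and follows essentially the same route as the paper. The forward direction uses Corollary~\ref{cor:n-regular_unique_orientability} together with the full support of an orientation, and the converse uses Corollary~\ref{cor:n-regular_full_support_current_implies_orientability} plus a contradiction obtained by extending the orientation of part of a disconnected $\mathcal{M}(X)$ to a cycle via Theorem~\ref{theo:boundary_zero_mass}. The only minor difference is in how the contradiction is reached: you use $\spt(T)=X$ (your cycle $R_1$ misses $C_2$), whereas the paper uses that several independent orientations would make $H_n^{\textup{IC}}(X)$ non-cyclic.
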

\begin{proof}
    By Theorem \ref{theo:boundary_zero_mass}, every current $S \in \bI_{n,\loc}(X)$ with $\spt(\partial S) \subseteq \mathcal{S}(X)$ satisfies $\partial S = 0$.
    Therefore, if $X$ is uniquely orientable and $T$ is an orientation, then $\spt(T) = X$ by definition and $H_n^\textup{IC}(X) = \langle T \rangle \cong \mathbb{Z}$ by Corollary \ref{cor:n-regular_unique_orientability}. On the other hand, assume that $H_n^\textup{IC}(X) = \langle T \rangle$ and $\spt(T) = X$. Since $\partial T = 0$, $X$ is orientable by Corollary \ref{cor:n-regular_full_support_current_implies_orientability}. If $\mathcal{M}(X)$ has more than one connected component, then it has more than two orientations by Corollary \ref{cor: uniq-orientation-by-current-good-mfld}. Each of them extends to orientations of $X$ by Corollary \ref{cor:extension_of_T_X}. Moreover, these extensions have no boundary because of Theorem \ref{theo:boundary_zero_mass}, since $\haus^{n-1}(\mathcal{S}(X)) = 0$. Therefore, $H_n^\textup{IC}(X)$ would not be cyclic. Hence, $\mathcal{M}(X)$ is connected, so $X$ is uniquely orientable by Corollary \ref{cor:n-regular_unique_orientability}.
\end{proof}

\subsection{Locally uniquely orientable $n$-regular spaces}
\label{subsec:n-regular_loc_uniq_orientable}

Example \ref{ex:n-regular_orientable_but_trivial_homology} shows a connected, orientable, compact, $2$-regular space with trivial top-dimensional homology group. On the other hand, Example \ref{ex:n-regular_non_orientable_but_Z_homology} gives a connected, non-orientable, compact, $2$-regular space with top-dimensional homology group isomorphic to $\mathbb{Z}$.

For a connected topological $n$-manifold, the top-dimensional locally finite singular homology group with integer coefficients is isomorphic to $\mathbb Z$ if the manifold is orientable, and is zero otherwise. We introduce a natural class of $n$-regular spaces for which we can recover this dichotomy for the homology of local integral currents, assuming that $\haus^{n-1}(S(X)) = 0$.

\begin{definition}
\label{defin:LUO_spaces}
    A $n$-regular space $(X,\dist)$ is said to be \emph{locally uniquely orientable} (LUO) if for every $x\in X$ there exists $r_x>0$ such that $B(x,r)$ is uniquely orientable for every $0<r<r_x$.
\end{definition}

The definition is well-posed because of Remark \ref{rmk:n-regular_open_sets}.
We can prove the desired dichotomy for locally uniquely orientable spaces.

\begin{theorem}
\label{theo:n-regular_loc_uniq_orient_dichotomy}
    Let $(X,\dist)$ be a connected, \textup{LUO} $n$-regular space. Then $\mathcal{M}(X)$ is connected. Moreover, one of the following two situations occur.
    \begin{itemize}
        \item[(a)] $X$ is not orientable and $H_n^\textup{IC}(X) = \{0\}$.
        \item[(b)] $X$ is uniquely orientable.  In this case, if in addition $\haus^{n-1}(\mathcal{S}(X)) = 0$, then $H_n^\textup{IC}(X) \cong \mathbb{Z}$. 
    \end{itemize}
\end{theorem}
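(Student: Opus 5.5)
The plan is to prove connectedness of $\mathcal{M}(X)$ first and then deduce the dichotomy from Proposition \ref{prop:unique_orientation_extension} and Corollary \ref{cor:n-regular_unique_orientability}.

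\emph{Connectedness of $\mathcal{M}(X)$.} Since $X$ is LUO, each $x\in X$ has a radius $0<r<r_x$ such that $B(x,r)$ is uniquely orientable. By Corollary \ref{cor:n-regular_unique_orientability} applied to the $n$-regular space $B(x,r)$ (Remark \ref{rmk:n-regular_open_sets}), the set $\mathcal{M}(B(x,r))=\mathcal{M}(X)\cap B(x,r)$ is connected and orientable. Now partition $\mathcal{M}(X)$ into its connected components. For each $x$, the connected set $\mathcal{M}(X)\cap B(x,r)$ meets exactly one component $C_x$; it is nonempty because $\mathcal{M}(X)$ is dense. Hence the assignment $x\mapsto C_x$ is locally constant on $X$. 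Indeed, if $y\in B(x,r)$, choose a small ball $B(y,s)\subseteq B(x,r)$ with $\mathcal{M}(X)\cap B(y,s)\neq\emptyset$; this set lies in $C_x$, so $C_y=C_x$. Since $X$ is connected, $C_x$ is the same component $C$ for every $x$. Every point of $\mathcal{M}(X)$ lies in its own component, and that component equals $C_x$ for $x$ the point itself, so $\mathcal{M}(X)=C$ is connected. The only delicate point here is the density of $\mathcal{M}(X)$, which is guaranteed by $n$-regularity.

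\emph{The dichotomy.} If $\mathcal{M}(X)$ is not orientable, then $X$ is not orientable by Definition \ref{defin:n-regular_orientation}. Proposition \ref{prop:unique_orientation_extension}(iii), which applies since $\mathcal{M}(X)$ is connected, gives $H_n^\textup{IC}(X)=\{0\}$; this is case (a). If $\mathcal{M}(X)$ is orientable, then being also connected it makes $X$ uniquely orientable by Corollary \ref{cor:n-regular_unique_orientability}. Under the additional hypothesis $\haus^{n-1}(\mathcal{S}(X))=0$, the last assertion of that corollary gives $H_n^\textup{IC}(X)\cong\mathbb{Z}$; this is case (b).

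I expect no serious obstacle. The only real content is the local-to-global connectedness argument, and in particular checking that the component selected near $x$ is well defined, which uses the connectedness of $\mathcal{M}(X)\cap B(x,r)$ supplied by local unique orientability.
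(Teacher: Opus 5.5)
Your proposal is correct and follows essentially the same route as the paper. Connectedness of $\mathcal{M}(X)$ comes from three ingredients: density of $\mathcal{M}(X)$, connectedness of $X$, and the connectedness of $\mathcal{M}(X)\cap B(x,r)$ forced by Corollary \ref{cor:n-regular_unique_orientability}. You phrase this as local constancy of the component map $x\mapsto C_x$, while the paper argues by contradiction from a separation $\mathcal{M}(X)=U\cup V$ and a point of $\overline U\cap\overline V$, but these are the same point-set argument. The dichotomy is then read off from Proposition \ref{prop:unique_orientation_extension}(iii) and Corollary \ref{cor:n-regular_unique_orientability} exactly as in the paper.
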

\begin{proof}
    Suppose that $\mathcal{M}(X)$ is not connected and let $U,V$ be two disjoint open subsets such that $\mathcal{M}(X) = U \cup V$. Since $\mathcal{M}(X)$ is dense in $X$, $X = \overline{U} \cup \overline{V}$. Since $X$ is connected, then $\overline{U} \cap \overline{V} \neq \emptyset$. Let $x \in \overline{U} \cap \overline{V}$ and let $r_x>0$ as in Definition \ref{defin:LUO_spaces}. Let $0<r<r_x$ and let $U_r := U\cap B(x,r)$ and $V_r := V \cap B(x,r)$. The sets $U_r$ and $V_r$ are open, non-empty and disjoint in $\mathcal{M}(B(x,r))$. Moreover, $\mathcal{M}(B(x,r)) = U_r \cup V_r$.  Corollary \ref{cor:n-regular_unique_orientability} implies that $B(x,r)$ is not uniquely orientable, contradicting Definition \ref{defin:LUO_spaces}. Hence, $\mathcal{M}(X)$ is connected.
    Now, if $X$ is oriented then it is uniquely oriented by Corollary \ref{cor:n-regular_unique_orientability}. The same result implies that $H_n^\textup{IC}(X) \cong \mathbb{Z}$ if $\haus^{n-1}(S(X)) = 0$. If $X$ is non-orientable then the thesis follows by Proposition \ref{prop:unique_orientation_extension} since $\mathcal{M}(X)$ is connected.
\end{proof}

A converse of Theorem \ref{theo:n-regular_loc_uniq_orient_dichotomy} does not hold. In particular, the connectedness of $\mathcal{M}(X)$ does not imply that $X$ is LUO.
\begin{example}[A uniquely orientable $\CAT(1)$ space which is not LUO]
    Let $X$ be the compact, geodesically complete, $\CAT(1)$ space obtained by identifying the north and south pole of a sphere $\mathbb{S}^2$ with radius $2$. Call $p$ the corresponding point in $X$. By construction, $\mathcal{M}(X) = X \setminus \{p\}$ is connected and orientable. Hence, $X$ is uniquely orientable by Corollary \ref{cor:n-regular_unique_orientability}. However, $X$ fails the local unique orientability property at $p$. 
\end{example}

From the degree-$n$ homological point of view, LUO $n$-regular spaces $X$ with $\haus^{n-1}(S(X)) = 0$ resemble topological manifolds.

\begin{proposition}
\label{prop:n-regular_loc_uniq_conn_homology_of_conn_open_subsets}
    Let $(X,\dist)$ be a connected, orientable, \textup{LUO}, $n$-regular space with $\haus^{n-1}(\mathcal{S}(X)) = 0$ and let $T\in \mathcal{I}_{n,\loc}(X)$ be an orientation. Let $U\subseteq X$ be an open, connected subset. Then $T|_U$ generates $H_n^\textup{IC}(U)$. Assume, in addition, that $\haus^n(\overline{U}\setminus U) = 0$. If $T|_{\overline{U}} \in \bI_{n,\loc}(\overline{U})$, then $T|_ {\overline{U}}$ generates $H_n^\textup{IC}(\overline{U}, \overline{U}\setminus U)$, otherwise $H_n^\textup{IC}(\overline{U}, \overline{U}\setminus U) = \{0\}$.
\end{proposition}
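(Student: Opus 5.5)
The plan is to apply Theorem \ref{theo:n-regular_loc_uniq_orient_dichotomy} to $U$ itself, and then to compute the relative group using Lemma \ref{lemma:relative_homology_dimension_n} together with the uniqueness of extensions.

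\textbf{Step 1: $U$ inherits the hypotheses.} By Remark \ref{rmk:n-regular_open_sets}, $U$ is $n$-regular with $\mathcal M(U)=\mathcal M(X)\cap U$, so $\mathcal S(U)=\mathcal S(X)\cap U$ and $\haus^{n-1}(\mathcal S(U))=0$. We check that $U$ is LUO. Given $x\in U$, choose $r<\min\{r_x,\dist(x,X\setminus U)/2\}$. Then the ball of radius $r$ about $x$ in $U$ coincides with the ball $B(x,r)$ of $X$, which is uniquely orientable. Thus $U$ is connected, LUO and $n$-regular, and Theorem \ref{theo:n-regular_loc_uniq_orient_dichotomy} shows that $\mathcal M(U)$ is connected.

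\textbf{Step 2: $T|_U$ is an orientation of $U$ and generates $H_n^{\rm IC}(U)$.} By Theorem \ref{theo:boundary_zero_mass}, $\partial T=0$, since $\spt(\partial T)\subseteq\mathcal S(X)$ is $\haus^{n-1}$-null. Hence $\partial(T|_U)=(\partial T)|_U=0$ by Lemma \ref{cor:cycles_open_sets}. By Proposition \ref{prop:restriction_of_current_to_open_subsets}.(i), $\norm{T|_U}=\lambda_E\haus^n\rstr(E\cap U)$, and Lemma \ref{lemma:tangential_function} identifies this with the corresponding measure for the rectifiable part of $U$. So $T|_U$ is an orientation of $U$.

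Now let $S\in\bI_{n,\loc}(U)$ with $\partial S=0$ and $S\neq0$. Proposition \ref{prop:unique_orientation_extension}.(i)--(ii) gives $S=k\,T'$ for some orientation $T'=\pm T|_U$. Hence $H_n^{\rm IC}(U)=\langle T|_U\rangle\cong\mathbb Z$.

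\textbf{Step 3: the relative group.} Assume $\haus^n(\overline U\setminus U)=0$. The set $\overline U$ is closed, hence locally compact, and $\haus^{n+1}(\overline U)=0$. So Lemma \ref{lemma:relative_homology_dimension_n} identifies $H_n^{\rm IC}(\overline U,\overline U\setminus U)$ with the quotient of $\{S\in\bI_{n,\loc}(\overline U):\spt(\partial S)\subseteq\overline U\setminus U\}$ by $\bI_{n,\loc}(\overline U\setminus U)$. The denominator vanishes by Lemma \ref{lemma:mass-measure-abso-cont}, since $\haus^n(\overline U\setminus U)=0$.

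Take such an $S$. Because $U$ is open in $\overline U$ and $\spt(\partial S)\cap U=\emptyset$, Lemma \ref{cor:cycles_open_sets} gives $\partial(S|_U)=0$, so by Step 2, $S|_U=k\,T|_U$ for some $k\in\mathbb Z$. By Proposition \ref{prop:restriction_of_current_to_open_subsets}.(iv), $(T|_{\overline U})|_U=T|_U$. Lemma \ref{lemma:uniqueness_extension}, applied in $\overline U$ with $B=U$, then yields
\[
S=(S|_U)^{\overline U}=k\,(T|_U)^{\overline U}=k\,T|_{\overline U}.
\]
Moreover, $\spt(\partial(T|_{\overline U}))\subseteq\partial U\subseteq\overline U\setminus U$ by \eqref{eq:support_boundary_restriction}, since $\partial T=0$. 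This gives the two cases.

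\begin{itemize}
\item If $T|_{\overline U}\in\bI_{n,\loc}(\overline U)$, it is an admissible relative cycle, and every admissible $S$ is an integer multiple of it. Hence it generates the relative group.
\item Otherwise, $k\,T|_{\overline U}$ has locally finite boundary mass only for $k=0$, since $\partial(kT|_{\overline U})=k\,\partial(T|_{\overline U})$. Hence the only admissible class is $0$ and the relative group is trivial.
\end{itemize}

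The main point needing care is the bookkeeping in Step 3, namely that restriction to $U$ followed by extension to $\overline U$ recovers $T|_{\overline U}$. This is handled by Lemma \ref{lemma:uniqueness_extension} together with $\haus^n(\overline U\setminus U)=0$. The LUO inheritance in Step 1 is the other delicate point, and it is settled by choosing radii smaller than the distance to $X\setminus U$.
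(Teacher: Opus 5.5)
Your proof is correct and follows essentially the same route as the paper. You show that $T|_U$ is an orientation of the connected LUO $n$-regular space $U$, so that $H_n^{\textup{IC}}(U)=\langle T|_U\rangle$. You then apply Lemma~\ref{lemma:uniqueness_extension} across the $\haus^n$-null set $\overline{U}\setminus U$ to get $S=k\,T|_{\overline{U}}$ for every relative cycle, and you spell out two points the paper leaves implicit: that $U$ inherits the LUO property (via radii below $\dist(x,X\setminus U)$), and that $\bI_{n,\loc}(\overline{U}\setminus U)=0$.
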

\begin{proof}
    By Remark \ref{rmk:n-regular_open_sets}, $(U,\dist)$ is a LUO $n$-regular space. Moreover, $\haus^{n-1}(\mathcal{S}(U)) \le \haus^{n-1}(\mathcal{S}(X)) = 0$. By Proposition \ref{prop:extension_restriction_integral_currents}, Proposition \ref{prop:restriction_of_current_to_open_subsets}.(i) and \eqref{eq:support_boundary_restriction}, the current $T|_U \in \mathcal{I}_{n,\loc}(U)$ is an orientation of $U$. Theorem \ref{theo:n-regular_loc_uniq_orient_dichotomy} implies that $H_n^\textup{IC}(U) = \langle T|_U \rangle$. This proves the first claim. 
    
    Let $S\in \bI_{n,\loc}(\overline{U})$ be a representative of a class in $H_n^\textup{IC}(\overline{U}, \overline{U}\setminus U)$. Then $\spt(\partial S) \subseteq \overline{U}\setminus U$, hence $S|_U \in H_n^\textup{IC}(U)$, by Proposition \ref{prop:extension_restriction_integral_currents} and \eqref{eq:support_boundary_restriction}. By the first part of the statement, there exists $k\in \mathbb{Z}$ such that $S|_U = k\cdot (T|_U)$. Since we are assuming that $\haus^n(\overline{U} \setminus U)=0$, we can apply Lemma \ref{lemma:uniqueness_extension} to deduce that $S = (S|_U)^{\overline{U}}$ and $k\cdot T|_{\overline{U}} = (k\cdot T_U)^{\overline{U}}$. Therefore, $S = k\cdot T|_{\overline{U}}$. This shows that if $H_n^\textup{IC}(\overline{U},\overline{U} \setminus U) \ne \{0\}$ then $T|_{\overline{U}} \in \bI_{n,\loc}(\overline{U})$ and that it generates $H_n^\textup{IC}(\overline{U},\overline{U}\setminus U)$.
    This concludes the proof.
\end{proof}

\begin{remark}
    The results of this section remain true under more general assumptions, that can be expressed only in homological terms. More precisely, we say that a locally compact metric space $(X,\dist)$ is \emph{homologically locally uniquely orientable} (HLUO) if $H_n^\textup{IC}(X,X\setminus \{x\}) = \langle T_x\rangle \cong \mathbb{Z}$ for every $x\in X$ and $\spt(T_x) \supseteq B(x,r)$ for some $r>0$. Under this assumption, the analogous of Theorem \ref{theo:n-regular_loc_uniq_orient_dichotomy} holds. Example \ref{ex:n-regular_non_orientable_but_Z_homology} shows that requiring $H_n^\textup{IC}(X,X\setminus \{x\}) \cong \mathbb{Z}$ for every $x\in X$, without a condition on the support of the generator, is not enough. The relation between LUO $n$-regular spaces and HLUO spaces is analogous to the relation between topological manifolds and homology manifolds.
\end{remark}

\subsection{Branched double covers of $n$-regular spaces}

If a connected, degree-regular metric $n$-manifold is orientable, then it admits an orientation in the current sense by Theorem \ref{thme: characterization-orientability-metric-manifold}. If it is not orientable, we can consider the double cover, which is an orientable, connected manifold. If the manifold is a length space, we can lift the metric to the double cover and the resulting space is an orientable, connected, degree-regular metric $n$-manifold, which therefore supports a current as in Theorem \ref{thme: characterization-orientability-metric-manifold}. The next theorem extends this picture to $n$-regular spaces. We recall that a map $f\colon (X,\dist_X) \to (Y,\dist_Y)$ is a submetry if $f(B(x,r)) = B(f(x),r)$ for every $x\in X$ and every $r\ge 0$. A submetry is $1$-Lipschitz, $1$-open and surjective. The quotient map associated to the action of a group of isometries is always a submetry. Moreover, we recall that, as defined in \cite{CavallucciIncertiMedici2026}, a continuous, surjective map $\pi\colon X\to Y$ between topological spaces is a branched covering if: 
\begin{itemize}
    \item[(i)] $\pi|_{\pi^{-1}(\Sigma)}\colon \pi^{-1}(\Sigma) \to \Sigma$ is a homeomorphism, where the set $\Sigma$ is defined as  $\Sigma := \{y\in Y\,:\, \#\pi^{-1}(y) = 1\}$;
    \item[(ii)] $\pi|_{X\setminus\pi^{-1}(\Sigma)}\colon X \setminus\pi^{-1}(\Sigma) \to Y\setminus\Sigma$ is a covering.
\end{itemize}
The set $\Sigma$ is called the singular set. The branched covering is said to have pure-degree $2$ if $\pi|_{X\setminus\pi^{-1}(\Sigma)}$ is a $2$-covering map. 

\begin{theorem}
\label{theo:double_cover}
    Let $(X,\dist)$ be a $n$-regular space such that $(\mathcal{M}(X),\dist)$ is a length space and $\mathcal{M}(X)$ is not orientable. Then there exist a $n$-regular space $(DX, \dist_D)$ and an involutive isometry $\Gamma \colon DX \to DX$ satisfying the following properties.
    \begin{itemize}
        \item[(i)] $(X,\dist)$ is isometric to $\langle \Gamma \rangle \backslash DX$ equipped with the quotient distance. Given $\textup{Fix}(\Gamma) := \{\hat{x}\in DX\,:\, \Gamma \hat{x}=\hat{x}\}$, then the projection map $\pi\colon DX \to X$
        is a branched covering of pure degree $2$ with singular set $\pi(\textup{Fix}(\Gamma))$. More precisely, $\pi|_{DX \setminus \textup{Fix}(\Gamma)}$ is a $2$-fold covering map and a local isometry, while $\pi|_{\textup{Fix}(\Gamma)}$ is an isometry.
        \item[(ii)] The map $\pi$ is a submetry, satisfies $\dist(\pi(\hat{x}), \pi(\hat{x}')) = \min \left\{ \dist_D(\hat{x}, \hat{x}'), \, \dist_D(\hat{x}, \Gamma(\hat{x}')) \right\}$ for every $\hat{x},\hat{x}' \in DX$ and $\pi_\# (\haus^n \rstr DX) = 2 \haus^n \rstr X$.
        \item[(iii)] The manifold part $\mathcal{M}(DX)$ of $DX$ is orientable and connected. Moreover, $\pi_\#(\haus^{n-1}\rstr \mathcal{S}(DX)) \le 2\haus^{n-1}\rstr \mathcal{S}(X)$.
        \item[(iv)] There exist exactly two orientations $\pm T\in \mathcal{I}_{n,\loc}(DX)$, and $\Gamma_\#T = -T$. Moreover, every $S\in \mathcal{I}_{n,\loc}(DX)$ with $\spt(\partial S) \subseteq \mathcal{S}(DX)$ satisfies $\Gamma_\#S = - S$.
    \end{itemize}
\end{theorem}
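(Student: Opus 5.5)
The plan is to build $DX$ as a metric completion of the orientation double cover of the manifold part, glued back over the singular set. Let $M:=\mathcal M(X)$ and let $p\colon\tilde M\to M$ be its orientation double cover. Since $M$ is non-orientable, $\tilde M$ is connected; it carries the deck involution $\gamma$, which reverses the local orientation. Because $(M,\dist)$ is a length space, we lift the length structure: $\dist_{\tilde M}(a,b)$ is the infimum of the $\dist$-lengths of $p\circ c$ over curves $c$ in $\tilde M$ from $a$ to $b$. Then $p$ is a local isometry, $\gamma$ is an isometry, and
\[
\dist(p(a),p(b))=\min\{\dist_{\tilde M}(a,b),\dist_{\tilde M}(a,\gamma b)\},
\]
since every path in $M$ lifts.

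Let $\widehat M$ be the completion of $\tilde M$. Define $DX$ as the quotient of $\widehat M$ obtained by identifying the completion points lying over the same point of $\mathcal S(X)$ whenever their distance is zero; more simply, take $DX:=\widehat M$ restricted to the points projecting into $X$, using that the completion of $M$ maps onto $X$ via the density of $M$ in $X$. The maps $\gamma$ and $p$ extend to $\Gamma$ and $\pi$ by uniform continuity. The min-formula passes to the limit, which gives (ii) for the metric. Submetry then follows from $p$ being a local isometry on the dense set $\tilde M$ together with the min-formula: a lift of a near-geodesic starting at $\pi(\hat x)$ starts at $\hat x$ or at $\Gamma\hat x$, and we pick the one starting at $\hat x$. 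Points with $\pi^{-1}(x)$ a singleton are exactly the points of $\mathrm{Fix}(\Gamma)$, and $\pi|_{\mathrm{Fix}(\Gamma)}$ is an isometry by the min-formula. Off $\mathrm{Fix}(\Gamma)$ we have $\dist_D(\hat x,\Gamma\hat x)>0$, so small balls map isometrically; this yields the $2$-fold covering in (i) and the isometry $X\cong\langle\Gamma\rangle\backslash DX$. Local compactness of $DX$ follows from local compactness of $X$ and the finiteness of the fibres.

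For the measure statements we use that $\pi$ is a local isometry on $DX\setminus\mathrm{Fix}(\Gamma)$, so $\haus^s$ pushes forward with multiplicity $2$ there. On $\mathrm{Fix}(\Gamma)$ the map $\pi$ is isometric, which gives multiplicity $1\le 2$. Since $\mathrm{Fix}(\Gamma)\subseteq\pi^{-1}(\mathcal S(X))$ has $\haus^n$-measure zero, we get $\pi_\#\haus^n=2\haus^n$. The $(n-1)$-inequality in (iii) follows from $\mathcal S(DX)\subseteq\pi^{-1}(\mathcal S(X))$, because points over $M$ are manifold points.

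For $n$-regularity of $DX$ we argue as follows. We have $\tilde M\subseteq\mathcal M(DX)$, and $\tilde M$ is open, dense and of full measure. The set $\mathcal M(DX)$ is a metric manifold whose complement in it has $\haus^n$-measure zero. Degree-regularity is local, so it transfers from $M$ to $\tilde M$ through the local isometry $p$, by Remark \ref{rmk: metric-mfld}.(4), and then from $\tilde M$ to $\mathcal M(DX)$ by Lemma \ref{lemma:degree-regularity_from_submanifold_to_manifold}.

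For (iii), note that $\tilde M$ is connected and dense in $\mathcal M(DX)$, so $\mathcal M(DX)$ is connected. For orientability, $\tilde M$ is orientable and dense, and it is a length space. We then apply Proposition \ref{prop:orientability_from_submanifold_to_manifold_length}, whose proof only needs local quasiconvexity.

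For (iv), Proposition \ref{prop:unique_orientation_extension}.(ii) gives exactly two orientations $\pm T$. The current $\Gamma_\#T$ is again an orientation, since $\Gamma$ is an isometry preserving $\mathcal S(DX)$, so $\Gamma_\#T=\pm T$. Restricted to $\tilde M$, the map $\gamma$ has local degree $-1$ everywhere, because it is the deck transformation of the orientation cover. The characterization \eqref{eq: chracterization-T_U} on a regular disk $D$ and on $\gamma(D)$ then gives $\Gamma_\#T=-T$. Finally, any $S$ with $\spt(\partial S)\subseteq\mathcal S(DX)$ equals $kT$ by Proposition \ref{prop:unique_orientation_extension}.(i), hence $\Gamma_\#S=-S$.

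The main obstacle I expect is the construction and topology of $DX$ near $\pi^{-1}(\mathcal S(X))$. The issues are that the completion may contain extra points, and that one needs $\mathcal M(DX)\cap\pi^{-1}(\mathcal S(X))$ to have measure zero and the fibre description to hold. I would handle this by defining $DX$ as the closure of $\tilde M$ inside the completion, intersected with $\pi^{-1}(X)$. I would then check, via the min-formula and Cauchy sequences, that the fibres of $\pi$ have cardinality at most $2$.
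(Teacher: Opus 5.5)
Your proposal is correct and follows the paper's proof essentially step for step. Both lift the length metric to the orientation double cover so that it becomes a local isometry, complete it, and take $DX=\pi^{-1}(X)$. Both then read off (i)--(ii) from the min-formula, obtain $n$-regularity by transferring degree-regularity through the local isometry and then applying Lemma~\ref{lemma:degree-regularity_from_submanifold_to_manifold}, and deduce orientability and connectedness of $\mathcal M(DX)$ from Proposition~\ref{prop:orientability_from_submanifold_to_manifold_length} and (iv) from Proposition~\ref{prop:unique_orientation_extension}. The differences are minor: you derive the submetry property directly from the min-formula and check $\Gamma_\#T=-T$ through the degree characterization on a regular disk, whereas the paper cites \cite{CavallucciIncertiMedici2026} and simply restricts to the cover. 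For local compactness of $DX$, what is actually needed is properness of $\pi$, which the min-formula gives, not just finiteness of the fibres.
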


\begin{proof}
    We consider the orientation double cover $\pi\colon Y \to \mathcal{M}(X)$ of $\mathcal{M}(X)$. Since $(\mathcal{M}(X),\dist)$ is a length metric space, we can define a metric $\dist_D$ on $Y$ that makes $\pi$ a local isometry, see \cite[Proposition I.3.25]{BridsonHaefliger}. Let $\Gamma \colon Y \to Y$ be the unique non-trivial deck transformation. Observe that $\Gamma$ is an involutive isometry of $Y$. Let $\overline{Y}$ be the completion of $Y$. We extend $\Gamma$ to an involutive isometry of $\overline{Y}.$ This gives a quotient map $\pi \colon \overline{Y} \to \overline{X}$ that extends $\pi$. Finally, we define $DX = \pi^{-1}(X)$.     
    
    By definition, $\Gamma$ acts as an involutive isometry on $DX$, $\pi |_{DX}$ is the associated quotient map, that we denote again by $\pi$, and $X = \langle \Gamma \rangle \backslash DX$ as sets. Since $\pi|_{Y}\colon Y \to \mathcal{M}(X)$ is a local isometry and $(\mathcal{M}(X),\dist)$ is a length space, then the quotient metric on $\mathcal{M}(X)$ coincides with $\dist$, hence the quotient metric on $X$ coincides with $\dist$ too, since $\mathcal{M}(X)$ is dense in $X$. Therefore, $\dist(\pi(\hat{x}), \pi(\hat{x}')) = \min \left\{ \dist_D(\hat{x}, \hat{x}'), \, \dist_D(\hat{x}, \Gamma(\hat{x}')) \right\}$ for every $\hat{x},\hat{x}' \in DX$.
    This implies that $\pi|_{\textup{Fix}(\Gamma)}$ is an isometry, hence a homeomorphism, and that $\pi|_{DX \setminus \textup{Fix}(\Gamma)}$ is a local isometry and a $2$-fold covering map, since $\textup{Fix}(\Gamma)$ is closed. Hence, $\pi\colon DX \to X$ is a branched covering of pure degree $2$ with singular set $\pi(\textup{Fix}(\Gamma))$. This shows (i).

    The fact that $\pi$ is a submetry can be proved exactly as in \cite[Corollary 4.7]{CavallucciIncertiMedici2026}. Since $\pi|_{Y}\colon Y \to \mathcal{M}(X)$ is a local isometry and a $2$-fold covering, $\pi_\#(\haus^n \rstr Y) = 2\haus^n \rstr X$. Similarly, $\haus^n(DX\setminus Y) \le 2\haus^n(X\setminus \mathcal{M}(X)) = 0$, hence $\pi_\#(\haus^n \rstr DX) = 2\haus^n \rstr X$. This proves (ii).

    We have not shown yet that $DX$ is a locally compact, $n$-regular space. 
    The quotient map $\pi$ associated to the finite group $\langle \Gamma \rangle$ is closed and has compact fibers, hence it is proper. Since it is also continuous and $X$ is locally compact, then $DX$ is locally compact. The metric $n$-manifold $(Y,\dist_D)$ is degree-regular, being locally isometric to the degree-regular metric $n$-manifold $\mathcal{M}(X)$, see Remark \ref{rmk: metric-mfld}.(4). By (ii), we have that $\haus^n(\mathcal{M}(DX)\setminus Y) \le 2\haus^n(\mathcal{S}(X)) = 0$. Hence, $(\mathcal{M}(DX), \dist_D)$ is a degree-regular metric $n$-manifold by Lemma \ref{lemma:degree-regularity_from_submanifold_to_manifold}. This shows that $(DX,\dist_D)$ is $n$-regular.

   The orientable manifold $(Y,\dist_D)$ is a length space. It is also open and dense in $\mathcal{M}(DX)$. Therefore, $\mathcal{M}(DX)$ is orientable by Proposition \ref{prop:orientability_from_submanifold_to_manifold_length} and connected. Moreover, $\mathcal{S}(DX) = DX \setminus \mathcal{M}(DX) \subseteq DX \setminus Y = \pi^{-1}(\mathcal{S}(X))$. Arguing as before, we have that $\haus^{n-1}\rstr \pi^{-1}(\mathcal{S}(X)) \le 2\haus^{n-1}\rstr \mathcal{S}(X)$, which proves (iii). 
    
    Since $\mathcal{M}(DX)$ is connected and orientable, $DX$ has exactly two orientations $\pm T \in \mathcal{I}_{n,\loc}(DX)$ by Corollary \ref{cor:n-regular_unique_orientability}. Since $\Gamma$ is an isometry, $\Gamma_\# T$ is an orientation, hence $\Gamma_\# T = \pm T$. Restricting to $Y$, we deduce that $\Gamma_\#T = - T$.
    Let $S\in \mathcal{I}_{n,\loc}(DX)$ with $\spt(\partial S) \subseteq \mathcal{S}(DX)$. If $S=0$ then $\Gamma_\#S = -S$. If $S\neq 0$, Proposition \ref{prop:unique_orientation_extension} implies that there exists $k\in \mathbb{Z}$ such that $S = k \cdot T$. Therefore $\Gamma_\#S = -S$.
\end{proof}

In general, the involution $\Gamma$ may have fixed points and $\pi$ is not a $2$-fold covering, see for instance \cite[Example 1.20]{BrenaBruePigati2024}. Similarly, the inclusion $\mathcal{M}(DX) \supseteq Y$, where $Y$ is the double cover of $\mathcal{M}(X)$ may be strict.

\begin{definition}
    Let $(X,\dist)$ be a non-orientable $n$-regular space such that $(\mathcal{M}(X),\dist)$ is a length space. The space $DX$ of Theorem \ref{theo:double_cover} is called the \emph{branched double covering} of $X$.  
\end{definition}

The name is justified by the following proposition, which shows that the couple $(DX,\Gamma)$ is unique up to equivariant isometries in the following sense.

\begin{proposition}
\label{prop:double_cover_unique}
   Let $(X,\dist)$ be a non-orientable $n$-regular space such that $(\mathcal{M}(X),\dist)$ is a length space. Let $(Y,\dist_Y)$ be a metric space with an involutive isometry $\Delta \colon Y \to Y$ such that $(X,\dist)$ is isometric to $ \langle\Delta\rangle \backslash Y$ equipped with the quotient metric, the quotient map $p\colon Y \to X$ satisfies that $p|_{p^{-1}(\mathcal{M}(X))}\colon p^{-1}(\mathcal{M}(X)) \to \mathcal{M}(X)$ is the orientation double cover and $(p^{-1}(\mathcal{M}(X)),\dist_Y)$ is a length space. Then there exists an isometry $F\colon DX \to Y$ such that $F\circ \Gamma = \Delta \circ F$. 
\end{proposition}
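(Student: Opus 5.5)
The plan is to build $F$ on the dense manifold parts, where covering-space uniqueness applies, and then extend it by density and completeness. Set $Y^\circ:=p^{-1}(\mathcal M(X))$ and let $Y_D:=\pi^{-1}(\mathcal M(X))\subseteq DX$ be the orientation double cover from the proof of Theorem \ref{theo:double_cover}. Both $p|_{Y^\circ}$ and $\pi|_{Y_D}$ are orientation double covers of $\mathcal M(X)$, so the uniqueness of the orientation double cover gives a homeomorphism $F_0\colon Y_D\to Y^\circ$ with $p\circ F_0=\pi$. Since $\mathcal M(X)$ is non-orientable, both covers are connected and nontrivial. Hence $F_0\circ\Gamma$ and $\Delta\circ F_0$ are both nontrivial lifts of $\pi$, which forces
\[
F_0\circ\Gamma=\Delta\circ F_0 .
\]

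Next I would show that $F_0$ is an isometry. Because $(X,\dist)$ is the quotient $\langle\Delta\rangle\backslash Y$, the map $p$ is $1$-Lipschitz, and it is a local isometry on $Y^\circ$ away from the fixed points of $\Delta$. Since $p|_{Y^\circ}$ is an honest $2$-fold covering, $\Delta$ has no fixed points on $Y^\circ$. I would then argue that $p|_{Y^\circ}$ is a local isometry as follows. For $y\in Y^\circ$, let $r$ be small enough that $B(p(y),r)$ is evenly covered. For $y'$ near $y$ we have $\dist(p(y),p(y'))=\min\{\dist_Y(y,y'),\dist_Y(y,\Delta y')\}$. The quantity $\dist_Y(y,\Delta y')$ is bounded below by roughly $\dist_Y(y,\Delta y)>0$, so the minimum is attained by the first term once $y'$ is close enough to $y$.

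Both $\dist_D|_{Y_D}$ and $\dist_Y|_{Y^\circ}$ are length metrics that are locally isometric to $\dist$ through the respective projections. Hence $F_0$ preserves the lengths of curves, and the two metrics coincide with the induced length metrics. This makes $F_0$ an isometry $(Y_D,\dist_D)\to(Y^\circ,\dist_Y)$. One subtle point is whether $\dist_D$ restricted to $Y_D$ is the intrinsic length metric on $Y_D$. This holds by construction, via \cite[Proposition I.3.25]{BridsonHaefliger} and the completion. For $Y$ it is exactly the length-space hypothesis.

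Finally, I would extend $F_0$ to the completions $\overline{Y_D}\to\overline{Y^\circ}$, where it remains an isometry and is still equivariant by continuity, and then restrict it to $DX=\pi^{-1}(X)$. The main obstacle is showing that $Y^\circ$ is dense in $Y$, and that the image of $DX$ is exactly $Y$, which amounts to identifying $Y$ with $p^{-1}(X)$ inside the completion of $Y^\circ$. For density, take $y\in Y$ and points $x_k\in\mathcal M(X)$ with $x_k\to p(y)$; by the quotient formula, each $x_k$ has a preimage within distance $\dist(x_k,p(y))$ of $y$. The same argument shows that every point of $Y$ is a limit of a Cauchy sequence in $Y^\circ$. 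For the reverse inclusion, a Cauchy sequence in $Y^\circ$ whose projections converge in $X$ must converge in $Y$. This requires completeness of fibers, or local compactness of $Y$, which follows from $p$ being proper with finite fibers over the locally compact space $X$, as argued in Theorem \ref{theo:double_cover}. Matching limits on both sides, together with the formula $\dist(\pi\hat x,\pi\hat x')=\min\{\dots\}$, then gives a bijective isometry $F\colon DX\to Y$ with $F\circ\Gamma=\Delta\circ F$.
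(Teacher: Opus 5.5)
Your proposal is correct and follows the same route as the paper. You build an equivariant isometry between the two orientation double covers of $\mathcal M(X)$, extend it by density to the completions, and use $X=\langle\Delta\rangle\backslash Y$ to identify the image of $DX$ with $Y$. You also supply, soundly, the steps the paper only asserts: that $p$ is a local isometry on $p^{-1}(\mathcal M(X))$ (via the $\min$-formula for the quotient metric), that a local isometry between length spaces is a global isometry, and that Cauchy sequences in $p^{-1}(\mathcal M(X))$ with convergent projections converge in $Y$.
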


\begin{proof}
    The map $p|_{p^{-1}(\mathcal{M}(X))}$ is the orientation double cover, therefore a local isometry. This implies that there exists an isometry $F\colon \pi^{-1}(\mathcal{M}(X)) \to p^{-1}(\mathcal{M}(X))$ satisfying $F\circ \Gamma = \Delta \circ F$, where $\pi\colon DX \to X$ is the quotient map. By density, this map extends to an isometric embedding $F\colon DX \to \overline{Y}$ such that $F\circ \Gamma = \Delta \circ F$, where $\overline{Y}$ is the completion of $Y$. Since $X = \langle\Delta\rangle \backslash Y$, the image of $F$ is $Y$, which gives the thesis. 
\end{proof}

\section{Orientability of $\RCD$ spaces}
\label{sec:RCD}

Throughout this section, $(X,\dist,\haus^n)$ denotes a
non-collapsed $\RCD(\kappa,n)$ space, where $\kappa\in\setR$
and $n\in\setN\setminus\{0\}$.
Non-collapsed $\RCD(\kappa,n)$ spaces were introduced in
\cite{DPG} as a subclass of $\RCD$ spaces
\cite{AGS:14b,G:15}, motivated by the Cheeger--Colding
theory of Ricci limit spaces \cite{CC:97,CC:II,CC:III}.
A central feature of the $\RCD$ theory is the equivalence
between the Lagrangian formulation via optimal transport
\cite{LV:AnnMath,Sturm:I,Sturm:II} and the Eulerian
formulation via the Bakry--\'Emery condition
\cite{BakryEmery}, established in
\cite{AGS:15,EKS:15,AMS:JGA,AMS:19}.
Another fundamental feature is the local-to-global
property \cite{CaMi:21}.

For an introduction to the theory, we refer to the surveys
\cite{Amb:ICM,Vil:BS,Sturm:survey,G:Survey}.
For first- and second-order calculus, see
\cite{AGS:14a,Gigli:nonsmoothDG} and the lecture notes
\cite{GigliPasqualetto-Lectures}.
We assume familiarity with Gromov--Hausdorff distance
and convergence; see \cite{Gromov:Book,BBI-Book}
for the foundations and \cite{GMS15} for their
relation to the $\RCD$ condition.

Following \cite{MondinoNaber19,DPG}, we denote by
\[
    \mathcal R_n(X)
    :=\{x\in X:\operatorname{Tan}_xX=\{(\setR^n,\dist_E,0)\}\}
\]
the set of $n$-regular points, where $\operatorname{Tan}_xX$ denotes the set of Gromov-Hausdorff tangents at $x$.
For $0\leq k\leq n-1$, we denote by
\[
    \mathcal{S}^k(X)
    :=\{x\in X:\text{no tangent cone at $x$ splits off an $\setR^{k+1}$ factor}\}
\]
the $k$-th singular stratum. For every $k\in \setN$, \cite[Theorem 1.8]{DPG} gives
\begin{equation}
    \label{eq:RCD_dimension_singular_strata}
    \dim_H(\mathcal{S}^k(X)) \le k.
\end{equation}
We adopt the notion of $\RCD$ boundary introduced
in \cite{DPG}, namely
\begin{equation}
    \label{eq:RCD-boundary-definition}
    \partial X
    :=\overline{\mathcal{S}^{n-1}(X)\setminus \mathcal{S}^{n-2}(X)}.
\end{equation}
An alternative definition, formulated in terms of tangent cones, was introduced
in \cite{KapMonGT}; the two notions agree $\haus^{n-1}$-almost everywhere by
\cite[Theorem 6.6]{BrueNaberSemola20} and its proof.

When $n=1$, the analogous definitions are understood with the evident
modifications, and all the statements below follow directly from the
one-dimensional classification \cite{KL16}. We shall therefore restrict the proofs to the
case $n\geq 2$.

We also recall two quantitative notions from \cite{BrueNaberSemola20}. Given $\delta>0$, a ball
$B(x,r)$ is called a \emph{$\delta$-boundary ball} if
\[
 \dist_{\rm GH}(B(x,r),B_{\setR^n_+}(0,r))<\delta r,
 \qquad
 \setR^n_+:=\setR^{n-1}\times[0,+\infty).
\]
A map $u=(u^1,\ldots,u^k):B(x,r)\to\setR^k$ is a
\emph{$\delta$-splitting map} if its components are harmonic, have a dimensional
Lipschitz bound $\Lip(u^a)\leq C(n)$, and satisfy
\[
 r^2\dashint_{B(x,r)}|\Hess u^a|^2\,d\haus^n<\delta,
 \qquad
 \dashint_{B(x,r)}
 \big|\langle\nabla u^a,\nabla u^b\rangle-\delta_{ab}\big|\,d\haus^n<\delta,
\]
where $\delta_{ab}$ denotes the Kronecker delta.
The precise normalization of the error will not be important below. We will use
repeatedly the fact that, on a sufficiently good boundary ball, the
$\setR^{n-1}$-valued almost-splitting map supplied by
\cite[Theorems 8.4 and 9.1]{BrueNaberSemola20} remains, after a suitable linear
normalization, a good almost-splitting map at every smaller boundary-centered scale.
This is the boundary version of the transformation argument of
\cite[Proposition 3.13]{BrueNaberSemola20}.

\subsection{Regularity of the ambient space and of its boundary}

We revisit some basic properties of $\RCD$ spaces in our terminology.

\begin{theorem}
\label{theo:RCD-n-regular-connected-manifold-part}
Let $(X,\dist,\haus^n)$ be a non-collapsed $\operatorname{RCD}(\kappa,n)$ space.
Then $X$ is $n$-regular, $\mathcal{M}(X) \supseteq \mathcal{R}_n(X)$ and $(\mathcal M(X),\dist)$ is a length space.  In particular, $\mathcal{M}(B(x,r))$ is connected for every $x\in X$ and every $r>0$. Moreover, $X$ is $n$-rectifiable and for
$\haus^{n}$-almost every $x\in X$, the approximate tangent metric
space to $X$ at $x$ is isometric to Euclidean $\setR^{n}$; consequently the
tangential function of the rectifiable part of $X$ is equal to one
almost everywhere. 
\end{theorem}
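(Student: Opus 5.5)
We verify the items of $n$-regularity one at a time and import the needed structure theory. Local finiteness of $\haus^n$ follows from Bishop--Gromov volume comparison, which holds for the reference measure $\haus^n$ of a non-collapsed space. Bishop--Gromov also gives $\haus^n(B(y,r))\ge c(\kappa,n,R)\,r^n$ uniformly for $y$ in a bounded set and $r<R$. Hence Lemma \ref{lemma: lower-ahlfors-no-porous} shows that porous sets are $\haus^n$-negligible, which is Definition \ref{def: good-metric-mfld}.(ii).

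For the manifold part we use the Reifenberg-type results of Cheeger--Colding as extended to $\RCD$ in \cite{DPG,KapMonGT}. Every $x\in\mathcal R_n(X)$ has an $\varepsilon$-regular neighbourhood, that is, $\dist_{\rm GH}(B(y,s),B_{\setR^n}(0,s))<\varepsilon s$ for all nearby $y$ and small $s$. Such a neighbourhood is bi-H\"older homeomorphic to a Euclidean ball, so $\mathcal R_n(X)\subseteq\mathcal M(X)$.

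Next we show $\haus^n(\mathcal S(X))=0$ and density of $\mathcal M(X)$. Since $\mathcal S(X)\subseteq X\setminus\mathcal R_n(X)=\mathcal S^{n-1}(X)$, \eqref{eq:RCD_dimension_singular_strata} gives $\haus^n(\mathcal S(X))=0$. Density follows because $\haus^n$ charges every ball.

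Rectifiability and Euclidean approximate tangents come from the structure theory of \cite{MondinoNaber19} (together with \cite{DPG}). This provides $\delta$-splitting maps $u\colon B(x,r)\to\setR^n$ around almost every regular point, which are $(1+\varepsilon)$-bi-Lipschitz on sets of almost full measure. Therefore $X$ is $n$-rectifiable. By Lemma \ref{lemma:tangential_function} and the fact that mGH tangents are $\setR^n$ almost everywhere, the approximate tangent norm is Euclidean and $\lambda_E=1$ almost everywhere.

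Condition (i) of degree-regularity is the main obstacle. We plan to use Lemma \ref{lemma:ETR=>Control_of_the_degree} and establish ETR on $\mathcal M(X)$, or at least on the open set $\mathcal R_\varepsilon$ of $\varepsilon$-regular points, which has full measure in $\mathcal M(X)$. By Lemma \ref{lemma:degree-regularity_from_submanifold_to_manifold} it then suffices to work there. On $\mathcal R_\varepsilon$ we take as chart a harmonic $\delta$-splitting map $u$, which is Lipschitz. Its transformation-theorem refinement (as in \cite[Proposition 3.13]{BrueNaberSemola20} and \cite{MP26}) gives that $u$ is a homeomorphism onto its image on an $\varepsilon$-regular ball, via the canonical Reifenberg map. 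The same refinement shows that $u$ is bi-Lipschitz on each set of points where the splitting stays good at all scales, and these sets exhaust the ball up to a null set. This yields the countable compact decomposition required by Definition \ref{defin:ETR}. If identifying the Reifenberg homeomorphism with $u$ turns out to be delicate, we would instead argue by degree. On the ball, $u$ is uniformly close to a homeomorphism, so the straight-line homotopy avoids $u(x)$ near points where $u$ is locally bi-Lipschitz, and the local degree is $\pm1$ there.

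Finally we show that $(\mathcal M(X),\dist)$ is a length space and that $\mathcal M(B(x,r))$ is connected. Since $\haus^{n-1}$-almost no geodesic passes through $\mathcal S^{n-2}$, we use the convexity result of Deng type: the regular set is a.e. convex. More concretely, for $\haus^n\otimes\haus^n$-a.e. pair of points, the geodesic joining them lies in $\mathcal R_n(X)$ at interior times. Approximating arbitrary points of $\mathcal M(X)$ by such pairs, lengths within $\mathcal M(X)$ approximate $\dist$, so $\mathcal M(X)$ is a length space. The same argument applied to points in a ball, with geodesics staying in a slightly larger ball, gives connectedness of $\mathcal M(B(x,r))$. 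We treat $B(x,r)$ as $X$ restricted, using that geodesics between points of $B(x,r)$ near the center remain inside, and extend to the whole ball by chaining through a.e.-good geodesics in small balls.
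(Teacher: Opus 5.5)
Your proposal is correct and follows essentially the same route as the paper. You get rectifiability and Euclidean tangents from \cite{MondinoNaber19}, and ETR charts around $n$-regular points, which the paper imports directly from \cite[Proposition 3.8]{MP26} and whose construction you sketch via splitting maps and the transformation theorem. These are upgraded to degree-regularity of $\mathcal M(X)$ through Lemmas~\ref{lemma:ETR=>Control_of_the_degree} and~\ref{lemma:degree-regularity_from_submanifold_to_manifold}, and Deng's a.e.\ convexity of $\mathcal R_n(X)$ \cite{Deng} gives the length property.

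The differences are minor. You make porous sets negligible with the Bishop--Gromov lower Ahlfors bound and Lemma~\ref{lemma: lower-ahlfors-no-porous} rather than local doubling. Your chaining argument for connectedness of $\mathcal M(B(x,r))$ works but is unnecessary: once $\mathcal M(X)$ is a length space, near-minimizing curves in $\mathcal M(X)$ from a point near $x$ to $p\in B(x,r)$ stay inside $B(x,r)$ by the triangle inequality, which is what the paper does.
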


\begin{proof}
Rectifiability of general $\RCD(K,N)$ spaces $(X,\dist,\meas)$,
without any non-collapsing assumption, was established in
\cite{MondinoNaber19}, together with the uniqueness and
Euclidean structure of the pointed Gromov--Hausdorff tangent
at $\meas$-almost every point. Subsequently, absolute
continuity of $\meas$ with respect to the Hausdorff measure
of the corresponding chart dimension was proved independently
in \cite{KellMondino,GigliPasqualetto:CAG}.
Together, these results imply that a non-collapsed
$\operatorname{RCD}(\kappa,n)$ space $(X,\dist,\haus^n)$
is $n$-rectifiable and has a unique pointed
Gromov--Hausdorff tangent isometric to $\setR^n$ at
$\haus^n$-almost every point. Consequently,
Lemma \ref{lemma:tangential_function} yields
$\lambda_X(x)=1$ for $\haus^n$-almost every $x\in X$.

We first prove $n$-regularity. By \eqref{eq:RCD_dimension_singular_strata}, $\haus^n(X\setminus\mathcal R_n(X))=0$.
Moreover, \cite[Proposition 3.8]{MP26}, together with the regular-coordinate
results recalled there, shows that every point of $\mathcal R_n(X)$ has an open
neighbourhood which is a topological $n$-manifold and admits an ETR chart in the
sense of Definition~\ref{defin:ETR}. Let $U\subseteq\mathcal M(X)$ be the union
of these neighbourhoods. Then $U$ is open and
\begin{equation}
    \label{eq:full-measure-ETR-RCD}
    \haus^n(X\setminus U)=0.
\end{equation}
The measure $\haus^n$ is locally doubling on $X$ by Bishop--Gromov. Hence every
porous subset of $U$ has zero $\haus^n$-measure. Lemma
\ref{lemma:ETR=>Control_of_the_degree} therefore implies that $U$ is a
degree-regular metric $n$-manifold. Since
$\haus^n(\mathcal M(X)\setminus U)=0$, Lemma
\ref{lemma:degree-regularity_from_submanifold_to_manifold} upgrades this to the
whole manifold part $\mathcal M(X)$. Equation
\eqref{eq:full-measure-ETR-RCD} also gives
$\haus^n(\mathcal S(X))=0$. Thus $(X,\dist)$ is $n$-regular.

Let us prove the length-space property. Fix $x,y\in\mathcal M(X)$
and $\varepsilon>0$. Since $\mathcal M(X)$ is open, we can choose
$0<r<\varepsilon$ such that $B(x,r)\cup B(y,r)\subseteq\mathcal M(X)$. The set $\mathcal R_n(X)$ has full $\haus^n$-measure in $X$. Moreover, by
\cite[Theorem~6.5]{Deng}, $\mathcal R_n(X)$ is $\haus^n$-a.e. convex: for
$\haus^n\times\haus^n$-almost every pair of points in
$\mathcal R_n(X)\times\mathcal R_n(X)$ there exists a minimizing geodesic
joining them and entirely contained in $\mathcal R_n(X)$. Since every nonempty
open ball has positive $\haus^n$-measure, we can therefore choose
\[
    x'\in B(x,r)\cap\mathcal R_n(X),
    \qquad
    y'\in B(y,r)\cap\mathcal R_n(X),
\]
such that a minimizing geodesic $\gamma$ from $x'$ to $y'$ is entirely
contained in $\mathcal R_n(X)$.

Let $\gamma_x$ and $\gamma_y$ be minimizing geodesics from $x$ to $x'$ and
from $y'$ to $y$, respectively. Since $\dist(x,x')<r$ and
$\dist(y,y')<r$, we have
\[
    \gamma_x\subseteq B(x,r)\subseteq\mathcal M(X),
    \qquad
    \gamma_y\subseteq B(y,r)\subseteq\mathcal M(X).
\]
Furthermore, $\gamma\subseteq\mathcal R_n(X)\subseteq\mathcal M(X)$. Hence the
concatenation $\gamma_x*\gamma*\gamma_y$ is a rectifiable curve in
$\mathcal M(X)$ joining $x$ to $y$, and
\begin{align*}
    \operatorname{Length}(\gamma_x*\gamma*\gamma_y)
    &=\dist(x,x')+\dist(x',y')+\dist(y',y)\\
    &\leq \dist(x,y)+2\dist(x,x')+2\dist(y,y')\\
    &<\dist(x,y)+4\varepsilon.
\end{align*}
Since $\varepsilon>0$ is arbitrary, the intrinsic length distance on
$\mathcal M(X)$ is bounded above by $\dist$. The reverse inequality is
immediate from the definition of length, and therefore the two distances
coincide. 

We now prove that $\mathcal M(B(x,r))$ is path connected. By Remark~\ref{rmk:n-regular_open_sets},
\[
    \mathcal M(B(x,r))=B(x,r)\cap\mathcal M(X).
\]
Moreover, $\mathcal M(X)$ is dense in $X$: indeed,
$\haus^n(X\setminus\mathcal M(X))=0$, while every nonempty open ball in an
$\RCD$ space has positive $\haus^n$-measure. Fix $p,q\in B(x,r)\cap\mathcal M(X)$. Set
\[
    a:=\min\{r-\dist(x,p),\,r-\dist(x,q)\}>0
\]
and choose $0<\delta<a/4$. By density, there exists
$z\in B_\delta(x)\cap\mathcal M(X)$. Since $\mathcal M(X)$ is a length
space, there are rectifiable curves $\alpha_p,\alpha_q\subseteq\mathcal M(X)$
joining $z$ to $p$ and $z$ to $q$, respectively, such that
\[
    \operatorname{Length}(\alpha_p)<\dist(z,p)+\delta,
    \qquad
    \operatorname{Length}(\alpha_q)<\dist(z,q)+\delta.
\]
For every $w\in\alpha_p$ we have
\[
    \dist(x,w)
    \leq \dist(x,z)+\operatorname{Length}(\alpha_p)
    < \dist(x,p)+3\delta<r,
\]
and the same argument applies to $\alpha_q$. Hence both curves are contained
in $B(x,r)\cap\mathcal M(X)$, and their concatenation joins $p$ to $q$ there.
Thus $\mathcal M(B(x,r))$ is path connected.
\end{proof}

We now turn to the boundary as a metric space in its own right. We recall some properties needed for the regularity of the boundary and for the study of the boundary of an orientation.

\begin{proposition}
\label{prop:RCD_basic_properties}
Let $(X,\dist,\haus^n)$ be a non-collapsed $\operatorname{RCD}(\kappa,n)$ space.
Then the following properties hold.
\begin{enumerate}
    \item[(i)] For every $x\in X$,
    \[
        \Theta^n(x,\haus^n)\leq1.
    \]
    Equivalently, if $v_{\kappa,n}$ denotes the volume function of the
    $n$-dimensional simply connected model space with constant sectional curvature $\kappa/(n-1)$, then
    \[
        \haus^n(B(x,r))\leq v_{\kappa,n}(r),\qquad \forall r>0.
    \]
    \item[(ii)] The set $\partial X$ is $(n-1)$-rectifiable and
    $\haus^{n-1}\rstr\partial X$ is locally finite. More quantitatively, for
    every compact set $K\subseteq X$ there exist $r_K>0$ and $C_K<\infty$ such
    that
    \begin{equation}
        \label{eq:local-boundary-upper-RCD}
        \haus^{n-1}(B(x,r)\cap\partial X)
        \leq C_K\cdot r^{n-1}
    \end{equation}
    for every $x\in K\cap\partial X$ and $0<r<r_K$.
    \item[(iii)] There is a closed set $C\subseteq X$ with
    $\dim_HC\leq n-2$ such that $X\setminus C$ is a topological
    $n$-manifold with boundary. Moreover, at every point of
    \[
       \mathcal R_{\partial X}
       := \mathcal{S}^{n-1}(X)
          \setminus \mathcal{S}^{n-2}(X)
    \]
    the tangent cone is unique and isometric to Euclidean $\setR^n_+$, and
    \begin{equation}
        \label{eq:boundary-minus-top-stratum}
        \partial X\setminus\mathcal R_{\partial X}
        \subseteq  \mathcal{S}^{n-2}(X).
    \end{equation}
\end{enumerate}
\end{proposition}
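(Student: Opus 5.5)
This proposition collects known structure results for non-collapsed $\RCD(\kappa,n)$ spaces, so I would derive each item from the literature rather than prove anything from scratch.

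For (i), I would use the Bishop--Gromov volume comparison for $\RCD(\kappa,n)$ spaces with reference measure $\haus^n$. The ratio $\haus^n(B(x,r))/v_{\kappa,n}(r)$ is non-increasing in $r$. Its limit as $r\downarrow0$ is the density $\Theta^n(x,\haus^n)$, since $v_{\kappa,n}(r)\sim\omega_nr^n$. This density is at most $1$ by \cite[Corollary 1.7]{DPG}, which identifies it with the volume density and bounds it by the Euclidean one. Monotonicity then gives $\haus^n(B(x,r))\le v_{\kappa,n}(r)$ for every $r>0$. Conversely, the upper density bound follows immediately from the volume bound by dividing by $\omega_nr^n$.

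For (ii), I would invoke the boundary structure theory of Bru\`e--Naber--Semola \cite{BrueNaberSemola20}. Their Theorem~1.4/1.6 gives rectifiability of $\partial X$ and a volume bound of the form $\haus^{n-1}(\partial X\cap B(x,r))\le C(n,\kappa,v)r^{n-1}$ for $r\le1$. Here $v$ is a lower bound on $\haus^n(B(x,1))$, which in turn comes from Bishop--Gromov and non-collapsing. On a compact $K$, the quantity $\inf_{x\in K}\haus^n(B(x,1))$ is positive by lower semicontinuity and compactness, so a uniform $C_K$ and $r_K$ exist. Local finiteness of $\haus^{n-1}\rstr\partial X$ then follows by covering a compact set with finitely many balls.

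For (iii), I would again quote \cite{BrueNaberSemola20}, namely the topological regularity theorem stating that away from a closed set of codimension two $X$ is a topological manifold with boundary (Theorem~1.2 there, with $C$ the closure of the set of points where no boundary or interior Reifenberg chart exists, contained in $\mathcal S^{n-2}$ up to its closure, hence $\dim_H C\le n-2$ by \eqref{eq:RCD_dimension_singular_strata}). Uniqueness of the tangent cone $\setR^n_+$ on $\mathcal R_{\partial X}$ follows from the classification of $(n-1)$-symmetric non-collapsed cones. Such a cone splits $\setR^{n-1}$, so it is $\setR^{n-1}\times C(Z)$ with $Z$ a point or an interval of length at most $\pi$. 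It is not $\setR^n$, because points of $\mathcal S^{n-1}\setminus\mathcal S^{n-2}$ are not regular, so it is a half space. Uniqueness then comes from the boundary $\epsilon$-regularity \cite[Theorem 1.4]{BrueNaberSemola20}. The inclusion \eqref{eq:boundary-minus-top-stratum} follows because $\partial X$ is the closure of $\mathcal R_{\partial X}$, and any limit point not in $\mathcal R_{\partial X}$ lies in $\mathcal S^{n-1}$. That last fact holds because no point near $\mathcal R_{\partial X}$ can be $n$-regular while being a limit of boundary points, by volume convergence and $\epsilon$-regularity. Such a point is therefore not in $\mathcal S^{n-1}\setminus\mathcal S^{n-2}$, so it lies in $\mathcal S^{n-2}$.

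The main obstacle is this last inclusion, namely showing that points of $\overline{\mathcal R_{\partial X}}\setminus\mathcal R_{\partial X}$ cannot be $n$-regular. I would first try the density gap: $\Theta^n=1/2$ on $\mathcal R_{\partial X}$, $\Theta^n$ is upper semicontinuous, and $\Theta^n=1$ exactly at regular points. Upper semicontinuity then forces $\Theta^n\le 1/2$ at such limit points, which rules out regular points.
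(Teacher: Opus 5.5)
Your route is the paper's. You use Bishop--Gromov plus the non-collapsed density bound for (i), and the boundary volume estimate of \cite{BrueNaberSemola20}, made uniform on compact sets by a positive lower bound on $\haus^n(B(x,1))$, for (ii). For (iii) you use the topological regularity and $\varepsilon$-regularity results of \cite{BrueNaberSemola20} together with a density gap. Two of your justifications are, however, wrong as written.

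First, the density is \emph{lower}, not upper, semicontinuous. By Bishop--Gromov, $\Theta^n(x,\haus^n)=\sup_{r>0}\haus^n(B(x,r))/v_{\kappa,n}(r)$, which is a supremum of lower semicontinuous functions of $x$. Upper semicontinuity in fact fails: in the non-collapsed $\RCD(0,2)$ cone over a circle of length $<2\pi$, the density is $<1$ at the vertex and $1$ at all nearby points. Upper semicontinuity would only give $\Theta^n(x,\haus^n)\geq\limsup_i\Theta^n(x_i,\haus^n)$, which cannot exclude regular points. The inequality you actually need is $\Theta^n(x,\haus^n)\leq\liminf_i\Theta^n(x_i,\haus^n)=1/2$ for $\mathcal R_{\partial X}\ni x_i\to x$, and that is lower semicontinuity. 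With this correction, your argument for \eqref{eq:boundary-minus-top-stratum} is exactly the paper's.

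Second, in (iii) the inclusion $C\subseteq\overline{\mathcal S^{n-2}(X)}$ gives no dimension bound, because Hausdorff dimension is not stable under closure. What is true, and what you should use, is the following. The set of points admitting no neighbourhood homeomorphic to an open subset of $\setR^n_+$ is automatically closed, and it lies in $\mathcal S^{n-2}(X)$ itself:
\begin{itemize}
\item points of $X\setminus\mathcal S^{n-1}(X)$ are regular and have Euclidean charts by Cheeger--Colding's Reifenberg theorem;
\item points of $\mathcal S^{n-1}(X)\setminus\mathcal S^{n-2}(X)$ have half-space charts by the boundary chart theorem of \cite{BrueNaberSemola20}.
\end{itemize}
Then \eqref{eq:RCD_dimension_singular_strata} gives $\dim_H C\le n-2$. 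Alternatively, quote the codimension-two statement of \cite{BrueNaberSemola20} directly, as the paper does.

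There is also a minor slip in the cone classification. A non-collapsed cone splitting off $\setR^{n-1}$ is $\setR^{n-1}\times C(Z)$ with $Z$ a single point or two points at distance $\pi$. An interval $Z$ would make the cone $(n+1)$-dimensional. The conclusion that the tangent is $\setR^n_+$ is unaffected.
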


\begin{proof}
The existence of the density and the inequality
$\Theta^n(x,\haus^n)\leq1$ follow from Bishop--Gromov monotonicity and the
non-collapsed volume rigidity theory; see, for instance,
\cite[Section 2.4]{BrueNaberSemola20}. The model-volume estimate follows by
letting the smaller radius tend to zero in Bishop--Gromov.

The rectifiability of $\partial X$, the local upper estimate for its
$(n-1)$-dimensional measure, and the uniqueness of the half-space tangent on
$\mathcal R_{\partial X}$ are contained in
\cite[Theorems 1.4 and 6.6]{BrueNaberSemola20}. Those theorems are stated after
normalizing the curvature and assuming a lower volume bound on a unit ball.
To obtain \eqref{eq:local-boundary-upper-RCD}, fix a compact set $K$, rescale on
a fixed sufficiently small radius so that the normalized curvature lower bound
is at least $-(n-1)$, and use Bishop--Gromov together with compactness of $K$
to obtain a uniform positive lower volume bound on the finitely many balls
needed to cover $K$. The estimate in \cite[Theorem 1.4(i)]{BrueNaberSemola20}
then gives \eqref{eq:local-boundary-upper-RCD}.

The codimension-two topological regularity statement is
\cite[Theorem 9.2]{BrueNaberSemola20}. The inclusion
\eqref{eq:boundary-minus-top-stratum} follows from the lower semicontinuity of
the density and the definition of the boundary; see
\cite[(1.10) and (1.11)]{BrueNaberSemola20}.
\end{proof}

The next lemma
is the boundary analogue of the fixed-splitting-chart decomposition in
\cite[Proposition 3.2]{MP26}. 

\begin{lemma}[ETR for the boundary regular manifold]
\label{lem:ETR-boundary-RCD}
Let $(X,\dist,\haus^n)$ be a non-collapsed $\operatorname{RCD}(\kappa,n)$ space.
There exists a relatively open subset
$M_{\partial X}\subseteq\partial X$ such that
\begin{equation}
    \label{eq:boundary-regular-complement-codim2}
    \dim_{H}(\partial X\setminus M_{\partial X})\leq n-2,
\end{equation}
and $M_{\partial X}$ is an \textup{ETR} metric $(n-1)$-manifold. 
\end{lemma}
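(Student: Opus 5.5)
We would define $M_{\partial X}$ as the set of points $x\in\partial X$ admitting a neighbourhood in which $X$ is a topological manifold with boundary, whose manifold boundary is exactly $\partial X$ there, and carrying a Lipschitz boundary chart built from an almost-splitting map. Concretely, by Proposition~\ref{prop:RCD_basic_properties}.(iii) every $x\in\mathcal R_{\partial X}$ has half-space tangent cone, so for every $\delta>0$ there is $r>0$ with $B(x,r)$ a $\delta$-boundary ball. For $\delta$ small, \cite[Theorems 8.4 and 9.1]{BrueNaberSemola20} provide a $\delta$-splitting map $u\colon B(x,r)\to\setR^{n-1}$ together with a boundary-distance coordinate, and these combine into a bi-H\"older homeomorphism onto a half-ball, mapping $\partial X\cap B(x,r/2)$ onto the flat boundary. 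Let $M_{\partial X}$ be the union of the sets $\partial X\cap B(x,r_x/2)$ obtained this way. Then $M_{\partial X}$ is relatively open in $\partial X$, contains $\mathcal R_{\partial X}$ by construction, and is a topological $(n-1)$-manifold, since $u|_{\partial X\cap B(x,r_x/2)}$ is a homeomorphism onto an open set of $\setR^{n-1}$. Moreover, \eqref{eq:boundary-minus-top-stratum} and \eqref{eq:RCD_dimension_singular_strata} give $\partial X\setminus M_{\partial X}\subseteq\mathcal S^{n-2}(X)$, which proves \eqref{eq:boundary-regular-complement-codim2}. Local finiteness of $\haus^{n-1}$ on $M_{\partial X}$ follows from \eqref{eq:local-boundary-upper-RCD}.

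It remains to verify the ETR property with the \emph{same} chart $\varphi:=u|_{\partial X\cap B(x,r_x/2)}$. Condition (1) of Definition~\ref{defin:ETR} holds because harmonic $\delta$-splitting maps satisfy $\Lip(u^a)\le C(n)$. For condition (2), we would adapt the argument of \cite[Proposition 3.2]{MP26}. Fix $\eta>0$ and consider the set $G_\eta$ of points $y\in\partial X\cap B(x,r_x/2)$ at which, for every scale $s\le r_x$, the normalized map $A_{y,s}\circ u$ is an $\eta$-splitting map on $B(y,s)$, for suitable lower-triangular matrices $A_{y,s}$ with $|A_{y,s}|$ uniformly bounded. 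This is the boundary version of the transformation argument mentioned after the definition of splitting maps. The standard telescoping estimate then shows that on each such set $u$ is bi-Lipschitz onto its image: distance is comparable to $|u(y)-u(y')|$ at scale $\dist(y,y')$, since $u$ restricted to $\partial X$ is $\varepsilon$-GH close to an isometry of the half-ball boundary at that scale. Splitting each $G_\eta$ by the size of $|A_{y,s}^{-1}|$ produces countably many pieces. Taking compact subsets via inner regularity and disjointifying gives the disjoint compact sets $K_i$.

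The main obstacle is to show that $\haus^{n-1}\bigl(\partial X\cap B(x,r_x/2)\setminus\bigcup_i K_i\bigr)=0$. The bad points are those where the normalizing matrices degenerate, i.e. where $\sum_{s=2^{-j}}\dashint_{B(y,s)}s^2|\Hess u|^2$ diverges, or where the boundary ball condition fails at some scale. We would first try to control the non-degeneracy through a boundary analogue of the Hessian-decay/maximal-function argument. Integrating $s^2|\Hess u|^2$ over the scales and against $\haus^{n-1}\rstr\partial X$, using the upper bound \eqref{eq:local-boundary-upper-RCD} and a Fubini-type estimate from \cite{BrueNaberSemola20}, should show that this sum is finite at $\haus^{n-1}$-a.e. boundary point. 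Since $\dim_H\mathcal S^{n-2}\le n-2$, the points outside $\mathcal R_{\partial X}$ are negligible, and at every point of $\mathcal R_{\partial X}$ the boundary ball condition eventually holds at small scales, by uniqueness of the tangent cone.

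If the Hessian control fails, we would use a fallback. By rectifiability of $\partial X$ (Proposition~\ref{prop:RCD_basic_properties}.(ii)), Lemma~\ref{lemma: lip-bilip-decomp} yields bi-Lipschitz pieces for charts of $\partial X$. We would then show that $\varphi$ has nondegenerate metric differential $\haus^{n-1}$-a.e., using that $|\nabla u|\simeq\mathrm{Id}$ along the boundary, and apply Kirchheim's decomposition to $\varphi$ composed with those charts.
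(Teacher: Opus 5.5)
Your set-up is sound: the definition of $M_{\partial X}$, the codimension bound via \eqref{eq:boundary-minus-top-stratum}, the use of a single fixed chart $u$, and the telescoping argument. That argument shows $u|_{\partial X}$ is bi-Lipschitz on any set of points where suitable normalizing matrices $A_{y,s}$ stay bounded at all small scales.

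The gap is the step you flag as the main obstacle, the full-measure decomposition for this fixed $u$. Your proposed criterion does not close it, for two reasons.

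First, the change of normalization from scale $2s$ to scale $s$ is controlled by the oscillation of $\langle\nabla u^a,\nabla u^b\rangle$ on a ball of radius comparable to $s$. The Poincar\'e inequality bounds this oscillation by $\bigl(s^2\dashint_{B(y,Cs)}|\Hess u|^2\,d\haus^n\bigr)^{1/2}$. So it is the sum of these \emph{square roots} that must converge. Finiteness of $\sum_j s_j^2\dashint_{B(y,s_j)}|\Hess u|^2$ does not stop $|A_{y,s}|$ from blowing up.

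Second, and more seriously, even summable drift only says that the averaged Gram matrices $\dashint_{B(y,s)}\langle\nabla u^a,\nabla u^b\rangle$ converge as $s\to0$. Nothing in your sketch prevents the limit from being degenerate on a set of positive $\haus^{n-1}$-measure, and that is exactly what would destroy the lower Lipschitz bound. The transformation theorem alone only gives $|A_{y,s}|\lesssim s^{-\epsilon}$, which is why it yields bi-H\"older rather than bi-Lipschitz control.

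Your fallback has the same hole. The claim ``$|\nabla u|\simeq\mathrm{Id}$ along the boundary'' is not available, because all splitting estimates are $L^1(\haus^n)$-averages and $\haus^n(\partial X)=0$. Showing that $u|_{\partial X}$ has nondegenerate metric differential $\haus^{n-1}$-a.e.\ is the original problem restated.

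Your route can be repaired, but it needs a decay rate played against the transformation theorem:
\begin{itemize}
\item A Vitali covering using \eqref{eq:local-boundary-upper-RCD} gives $\sup_{s}s^{1-n}\int_{B(y,s)}|\Hess u|^2\,d\haus^n<\infty$ for $\haus^{n-1}$-a.e.\ $y\in\partial X$. By non-collapsing, this yields $s^2\dashint_{B(y,s)}|\Hess u|^2\le C_y s$.
\item The Gram averages then converge at rate $s^{1/2}$.
\item The transformation theorem bounds their smallest eigenvalue below by $c\,s^{2\epsilon}$, so the limit is nondegenerate and the normalizations stay bounded.
\item The pieces must then also be indexed by $C_y$ and by a threshold scale, since $\eta$-splitting at every scale $s\le r_x$ with a fixed $\eta$ is not available.
\end{itemize}

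The paper avoids any all-scale control at a single point. For each centre $y$ and each small scale $s$, it renormalizes $u$ by $A_{y,s}$ and runs the argument of \cite[Theorem 8.4(ii)]{BrueNaberSemola20} at that scale. This gives a set $G_{y,s}\subseteq B(y,s)\cap\partial X$ with $\haus^{n-1}$-defect at most $\varepsilon_0 s^{n-1}$, on which $u$ is bi-Lipschitz with constants depending on $(y,s)$; ETR allows such varying constants. A density-point argument then shows that every Borel set of positive measure contains a compact bi-Lipschitz piece for $u$. Finally, a maximal disjoint family of such pieces exhausts $U$ up to a null set.
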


\begin{proof}
By \eqref{eq:boundary-minus-top-stratum} and \eqref{eq:RCD_dimension_singular_strata}, the complement of
$\mathcal R_{\partial X}$ in $\partial X$ has Hausdorff dimension at most
$n-2$. At every $p\in\mathcal R_{\partial X}$ the unique tangent is Euclidean
$\setR^n_+$, by Proposition \ref{prop:RCD_basic_properties}.
Fix the small parameters required below before choosing
the radii $r_p$. We choose each $r_p$ sufficiently small
that, after rescaling by $r_p^{-1}$, both the boundary-ball
error and the negative part of the curvature lower bound
satisfy all the smallness requirements of
\cite[Theorem 8.4 and Proposition 3.13]{BrueNaberSemola20}.
In particular, for the parameter $\delta>0$ used in
Proposition 3.13, we require
\[
 \max\{0,-\kappa\}\,r_p^2\leq\delta^2(n-1).
\]
Hence, after choosing $r_p>0$ sufficiently small and rescaling by
$r_p^{-1}$, $B(p,16r_p)$ is an arbitrarily good boundary ball. By
\cite[Theorem 8.4]{BrueNaberSemola20}, $B(p,8r_p)\cap\partial X$ is a
topological $(n-1)$-manifold. We may therefore take
\[
 M_{\partial X}:=
 \bigcup_{p\in\mathcal R_{\partial X}}
       (B(p,r_p/2)\cap\partial X).
\]
This is relatively open in $\partial X$, contains
$\mathcal R_{\partial X}$, is a topological $(n-1)$-manifold, and satisfies
\eqref{eq:boundary-regular-complement-codim2}.
    We prove that $M_{\partial X}$ satisfies the ETR property. Fix
    $x\in M_{\partial X}$. There exist
    $p\in\mathcal{R}_{\partial X}$ and $r>0$ such that
    \[
        x\in B(p,r/2)\cap\partial X
    \]
    and, after rescaling by $r^{-1}$, $B(p,16r)$ is an arbitrarily good
    boundary ball. After rescaling, we write $r=1$ and retain the small
curvature lower bound ensured by the original choice
of the radii. 
    We choose the constants below in the following order. Fix
    \[
        0<\varepsilon_0<\frac{\omega_{n-1}}{8}.
    \]
    We then choose an auxiliary splitting error $\eta>0$, depending only on
    $n$ and $\varepsilon_0$, sufficiently small for the argument in the proof
    of \cite[Theorem 8.4.(ii)]{BrueNaberSemola20} to produce a good set whose
    complement has $\haus^{n-1}$-measure at most
    $\varepsilon_0$ in the unit boundary ball. We finally choose
    $\delta_0$, and the splitting error used below, sufficiently small in
    terms of $n$ and $\eta$.

    By \cite[Theorem 8.4]{BrueNaberSemola20}, there exists a harmonic
    $\eta$-splitting map
    \[
        u:B(p,8) \to\setR^{n-1}
    \]
    such that
    \begin{equation}
        \label{eq:holder-boundary-chart-ETR}
        c\,\dist(z,z')^{1+\alpha}
        \leq
        |u(z)-u(z')|
        \leq
        C\,\dist(z,z')
    \end{equation}
    for every $z,z'\in B(p,1)\cap\partial X$, where
    $c>0$, $C<\infty$ and $\alpha>0$. In particular, the restriction of
    $u$ to $B(p,1)\cap\partial X$ is Lipschitz and injective.

    Choose an open neighbourhood $U$ of $x$ in $M_{\partial X}$ such that
    \[
        \overline{U}\subseteq B(p,3/4)\cap M_{\partial X}.
    \]
    Since $U$ is a topological $(n-1)$-manifold, the invariance of domain,
    together with \eqref{eq:holder-boundary-chart-ETR}, implies that
    \[
        u:U\to u(U)\subseteq\setR^{n-1}
    \]
    is a Lipschitz homeomorphism.

We claim that, after choosing $\delta_0>0$ sufficiently small, the following
holds. For every $y\in U$ and every sufficiently small $s>0$ there exists a Borel set
\[
    G_{y,s}\subseteq B(y,s)\cap\partial X
\]
such that
\begin{equation}
    \label{eq:good-set}
    \haus^{n-1}
    \big((B(y,s)\cap\partial X)\setminus G_{y,s}\big)
    \leq \varepsilon_0 s^{n-1},
\end{equation}
and $u|_{G_{y,s}}$ is bi-Lipschitz.

Let us verify the claim. We first observe that the hypotheses of the
transformation theorem \cite[Proposition 3.13]{BrueNaberSemola20} (after \cite{CJN:21}) are
satisfied after recentering at $y$. Indeed, by
\cite[Theorem 8.1]{BrueNaberSemola20}, after choosing the original
boundary error sufficiently small, every ball
\[
    B(y,t),\qquad 0<t<1,
\]
is a boundary ball with the error required in
\cite[Proposition 3.13]{BrueNaberSemola20}.

It remains to check the almost-splitting assumption on the fixed ball $B(y,2)$.
Since $y\in B(p,1)$, we have
\[
    B(y,2)\subseteq B(p,3)\subseteq B(p,8)
    \qquad\text{and}\qquad
    B(p,1)\subseteq B(y,2).
\]
Hence, by Bishop--Gromov,
\[
    \frac{\haus^n(B(p,8))}{\haus^n(B(y,2))}
    \leq C(n).
\]
Since $u:B(p,8)\to\mathbb{R}^{n-1}$ is an $\eta$-splitting map, it follows from
the definition that
\[
    \fint_{B(y,2)}|\Hess u_a|^2\,d\haus^n
    \leq C(n)\eta
\]
and
\[
    \fint_{B(y,2)}
    \left|
    \langle\nabla u_a,\nabla u_b\rangle-\delta_{ab}
    \right|\,d\haus^n
    \leq C(n)\eta
\]
for every $a,b=1,\ldots,n-1$. Thus, after reducing $\eta$ if necessary,
$u:B(y,2)\to\mathbb{R}^{n-1}$ satisfies the almost-splitting assumption of
\cite[Proposition 3.13]{BrueNaberSemola20}.

We can therefore apply the transformation theorem. For every prescribed
sufficiently small scale $s>0$ it gives a matrix
\[
    A_{y,s}\in {\rm GL}(n-1)
\]
such that $v:=A_{y,s}\circ u$ is an arbitrarily good almost-splitting map on $B(y, 16s)$. Notice that the matrix is allowed to depend on the scale $s$.

    We rescale now the metric by $s^{-1}$ and replace $v$ by
    \[
        \widetilde v:=s^{-1}\big(v-v(y)\big).
    \]
    In the rescaled space, $\widetilde v$ is a sufficiently good splitting
    map on $B(y,16)$ and the corresponding ball is a sufficiently good
    boundary ball. We can therefore apply verbatim the argument proving
    \cite[Theorem 8.4(ii)]{BrueNaberSemola20} (cf.\ \cite[Theorem~6.5]{MondinoNaber19}) to the prescribed splitting
    map $\widetilde v$. We briefly recall the point of that argument. One
    removes from $B(y,1)$ those points at which the
    maximal function of $|\Hess\widetilde v|^2$ is too large.
    Lemma 4.16 in \cite{BrueNaberSemola20} implies that at every point of
    the remaining good set, call it $\tilde{G}_{y,s}$, the map $\widetilde v$ is a sufficiently good
    almost-splitting map at all the relevant scales. Corollary 7.2 in
    \cite{BrueNaberSemola20} then gives, for any two points $z,z'\in \tilde{G}_{y,s}\cap \partial X$,
    \[
        (1-\tau)\dist(z,z')
        \leq
        |\widetilde v(z)-\widetilde v(z')|
        \leq
        (1+\tau)\dist(z,z')
    \]
    for a fixed $\tau<1$. The Vitali argument in
    \cite[Theorem 8.4(ii)]{BrueNaberSemola20}, using the
    $\haus^{n-1}$-upper bound for the boundary and the $L^2$-bound for the
    Hessian, gives
    \[
        \haus^{n-1}
        \big(
        (B(y,1)\cap\partial X)\setminus \widetilde G_{y,s}
        \big)
        \leq C(n)\eta^{1/2}.
    \]
    By our choice of $\eta$,
    \[
        C(n)\eta^{1/2}\leq\varepsilon_0.
    \]
    Scaling back, we obtain a Borel set
    $G_{y,s}\subseteq B(y,s)\cap\partial X$ satisfying
    \eqref{eq:good-set} and such that
    $v|_{G_{y,s}}$ is bi-Lipschitz.

    Since $A_{y,s}$ is invertible, also $u|_{G_{y,s}}$ is bi-Lipschitz.
    Indeed, if
    \[
        c_{y,s}\dist(z,z')
        \leq
        |A_{y,s}u(z)-A_{y,s}u(z')|
        \leq
        C_{y,s}\dist(z,z')
    \]
    for $z,z'\in G_{y,s}$, then
    \[
        \frac{c_{y,s}}{\|A_{y,s}\|}
        \dist(z,z')
        \leq
        |u(z)-u(z')|
        \leq
        \|A_{y,s}^{-1}\|C_{y,s}\dist(z,z').
    \]
    This proves \eqref{eq:good-set} and the claimed local
    property.

    Finally, we establish the full-measure decomposition for the fixed
    topological chart $u$. Let $E\subseteq U$ be a Borel set with
    \[
        \haus^{n-1}(E)>0.
    \]
    By the $(n-1)$-rectifiability and the local finiteness of
    $\haus^{n-1}\rstr\partial X$, proved in
    \cite[Theorem 1.4(i)]{BrueNaberSemola20}, and by
    Lemma \ref{lemma:density_points_rectifiable_space}, we can choose
    $y\in E$ such that
    \begin{equation}
        \label{eq:density-boundary-E}
        \lim_{s\to 0}
        \frac{\haus^{n-1}(B(y,s)\cap\partial X)}
        {\omega_{n-1}s^{n-1}}
        =1
    \end{equation}
    and
    \begin{equation}
        \label{eq:density-E-relative-boundary}
        \lim_{s\to 0}
        \frac{
        \haus^{n-1}
        \big((B(y,s)\cap\partial X)\setminus E\big)}
        {s^{n-1}}
        =0.
    \end{equation}
    Since $U$ is open in $M_{\partial X}$, for $s>0$ sufficiently small
    $B(y,s)\cap\partial X\subseteq U$. By
    \eqref{eq:density-boundary-E} and
    \eqref{eq:density-E-relative-boundary}, we can also assume that
    \[
        \haus^{n-1}(B(y,s)\cap\partial X)
        \geq \frac34\omega_{n-1}s^{n-1}
    \]
    and
    \[
        \haus^{n-1}
        \big((B(y,s)\cap\partial X)\setminus E\big)
        \leq\varepsilon_0s^{n-1}.
    \]
    Set
    \[
        A_s:=B(y,s)\cap\partial X.
    \]
    Since
    \[
        A_s\setminus(E\cap G_{y,s})
        =
        (A_s\setminus E)\cup(A_s\setminus G_{y,s}),
    \]
    we obtain
    \begin{align*}
        \haus^{n-1}(E\cap G_{y,s})
        &=
        \haus^{n-1}(A_s)
        -
        \haus^{n-1}
        \big(A_s\setminus(E\cap G_{y,s})\big)\\
        &\geq
        \haus^{n-1}(A_s)
        -
        \haus^{n-1}(A_s\setminus E)
        -
        \haus^{n-1}(A_s\setminus G_{y,s})\\
        &\geq
        \left(
        \frac34\omega_{n-1}-2\varepsilon_0
        \right)s^{n-1}
        >0.
    \end{align*}
    By the inner regularity of $\haus^{n-1}$, there exists a compact set
    \[
        K\subseteq E\cap G_{y,s}
    \]
    with $\haus^{n-1}(K)>0$. The restriction $u|_K$ is bi-Lipschitz.

    We have therefore proved that every Borel subset of $U$ with positive
    $\haus^{n-1}$-measure contains a compact subset of positive measure on
    which $u$ is bi-Lipschitz. Since $\overline U$ is compact and
    $\haus^{n-1}\rstr\partial X$ is locally finite, we have
    $\haus^{n-1}(U)<\infty$. Consider a maximal family of pairwise disjoint
    compact sets
    \[
        \{K_i\}_{i\in I},\qquad K_i\subseteq U,
    \]
    such that $\haus^{n-1}(K_i)>0$ and $u|_{K_i}$ is bi-Lipschitz.
    Since
    $\haus^{n-1}(U)<\infty$, the index set $I$ is countable, and we write
    $I\subseteq\setN$. If
    \[
        \haus^{n-1}
        \left(
        U\setminus\bigcup_{i\in I}K_i
        \right)>0,
    \]
    the property proved in the previous paragraph, applied to the Borel set
    $U\setminus\bigcup_{i\in I}K_i$, would give a compact set of positive
    measure, disjoint from every $K_i$, on which $u$ is bi-Lipschitz. This
    contradicts maximality. Therefore
    \[
        \haus^{n-1}
        \left(
        U\setminus\bigcup_{i\in I}K_i
        \right)=0.
    \]
    Thus the Lipschitz homeomorphism $u:U\to u(U)$ and the pairwise
    disjoint compact sets $K_i$ satisfy Definition \ref{defin:ETR}.
    Since $x\in M_{\partial X}$ was arbitrary, $M_{\partial X}$ is an ETR
    metric $(n-1)$-manifold.
\end{proof}

We can summarize the results as follows.

\begin{proposition}
\label{prop:RCD-boundary-nminus1-regular}
Let $(X,\dist,\haus^n)$ be a non-collapsed $\RCD(\kappa,n)$ space. Then the metric space $(\partial X,\dist)$ is $(n-1)$-regular. Moreover, for
$\haus^{n-1}$-almost every $x\in\partial X$, the approximate tangent metric
space to $\partial X$ at $x$ is Euclidean $\setR^{n-1}$; consequently the
tangential function of the rectifiable part of $\partial X$ is equal to one
almost everywhere.
\end{proposition}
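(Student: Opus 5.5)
We verify the items in the definition of $(n-1)$-regularity for $Y:=(\partial X,\dist)$ one at a time, with $M_{\partial X}$ from Lemma~\ref{lem:ETR-boundary-RCD} serving as a large manifold part.

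\emph{Local compactness and local finiteness.} Since $\partial X$ is closed in the locally compact space $X$, it is locally compact and separable. Proposition~\ref{prop:RCD_basic_properties}.(ii) gives that $\haus^{n-1}\rstr\partial X$ is locally finite.

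\emph{The manifold part.} Because $M_{\partial X}$ is relatively open in $\partial X$ and is a topological $(n-1)$-manifold, we have $M_{\partial X}\subseteq\mathcal M(\partial X)$. By \eqref{eq:boundary-regular-complement-codim2},
\[
\dim_H\bigl(\partial X\setminus M_{\partial X}\bigr)\leq n-2,
\]
so $\haus^{n-1}(\mathcal S(\partial X))=0$. For density of $\mathcal M(\partial X)$, suppose a nonempty relatively open set $V\subseteq\partial X$ missed $M_{\partial X}$. Then $V$ would have dimension at most $n-2$. However, $\partial X$ is the closure of $\mathcal R_{\partial X}\subseteq M_{\partial X}$, so $V$ meets $\mathcal R_{\partial X}$, which is a contradiction. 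Hence $M_{\partial X}$, and therefore $\mathcal M(\partial X)$, is dense.

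\emph{Degree-regularity.} By Lemma~\ref{lemma:ETR=>Control_of_the_degree}, it suffices to show that porous subsets of $M_{\partial X}$ are $\haus^{n-1}$-null. Then Lemma~\ref{lemma:degree-regularity_from_submanifold_to_manifold} upgrades degree-regularity from $M_{\partial X}$ to $\mathcal M(\partial X)$, since $\haus^{n-1}(\mathcal M(\partial X)\setminus M_{\partial X})=0$. To rule out positive-measure porous sets, the plan is to use Lemma~\ref{lemma: lower-ahlfors-no-porous}. This requires a local lower Ahlfors bound $\haus^{n-1}(B(y,r)\cap\partial X)\geq cr^{n-1}$ for $y$ near $\haus^{n-1}$-a.e.\ point and small $r$. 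I expect this to be the main obstacle, since the lower bound must be uniform in the centre $y$, not just pointwise.

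I would first try to obtain it from the boundary-ball structure. Near $p\in\mathcal R_{\partial X}$, every $B(y,t)$ with $y\in B(p,r_p)\cap\partial X$ is a good boundary ball by \cite[Theorem~8.1]{BrueNaberSemola20}. The almost-splitting map $u$ restricted to $\partial X$ is then a GH-approximation onto almost all of an $(n-1)$-ball. Combined with the Lipschitz bound on $u$, this gives
\[
\haus^{n-1}(B(y,t)\cap\partial X)\geq \haus^{n-1}\bigl(u(B(y,t)\cap\partial X)\bigr)/C\geq ct^{n-1}.
\]
Topologically, the image contains a ball by degree/invariance of domain, using \eqref{eq:holder-boundary-chart-ETR}. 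As an alternative route, one could use the good sets $G_{y,s}$ from the proof of Lemma~\ref{lem:ETR-boundary-RCD}, where \eqref{eq:good-set} gives a uniform fraction of bi-Lipschitz image.

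\emph{Tangents.} By \cite[Theorem 1.4]{BrueNaberSemola20}, $\partial X$ is $(n-1)$-rectifiable, so the rectifiable part is all of $\partial X$. At each $x\in\mathcal R_{\partial X}$ the unique tangent of $X$ is $\setR^n_+$, so blow-ups of $\partial X$ at $x$ are contained in $\partial\setR^n_+=\setR^{n-1}$. At $\haus^{n-1}$-a.e.\ point, the approximate tangent is unique and is a normed $(n-1)$-plane isometric to the mGH tangent by Lemma~\ref{lemma:tangential_function}. This plane embeds isometrically into $\setR^{n-1}$ with the Euclidean norm, so it is Euclidean. Lemma~\ref{lemma:tangential_function} then gives $\lambda=1$ almost everywhere.
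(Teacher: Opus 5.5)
Your proposal has the same overall structure as the paper's proof. You use the ETR manifold $M_{\partial X}$ of Lemma~\ref{lem:ETR-boundary-RCD}, prove a uniform local lower Ahlfors $(n-1)$-bound, and feed it into Lemma~\ref{lemma: lower-ahlfors-no-porous} and Lemma~\ref{lemma:ETR=>Control_of_the_degree}. You then upgrade to $\mathcal M(\partial X)$ by Lemma~\ref{lemma:degree-regularity_from_submanifold_to_manifold} and read the tangents off the half-space tangents on $\mathcal R_{\partial X}$. Your density check ($M_{\partial X}\supseteq\mathcal R_{\partial X}$ and $\partial X=\overline{\mathcal R_{\partial X}}$) is correct and makes explicit a point the paper leaves implicit.

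The real issue is the lower Ahlfors bound. The paper does not construct anything here. It applies the boundary volume $\varepsilon$-regularity theorem \cite[Corollary 8.7]{BrueNaberSemola20}, after scaling, on the good boundary balls supplied by \cite[Theorem 8.1]{BrueNaberSemola20}. This gives $\haus^{n-1}(B(y,t)\cap\partial X)\ge c\,t^{n-1}$ uniformly in $y$ near $p$.

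Your first route can be completed, but not with the fixed chart $u$ of Lemma~\ref{lem:ETR-boundary-RCD}. That map is a good splitting map only at the top scale. At scale $t$ only $A_{y,t}\circ u$ is one, and the matrices may degenerate as $t\to0$. With $u$ itself, the Lipschitz bound and the lower bound in \eqref{eq:holder-boundary-chart-ETR} only force $u(B(y,t)\cap\partial X)$ to contain a ball of radius comparable to $t^{1+\alpha}$. That yields $t^{(1+\alpha)(n-1)}$, not $t^{n-1}$.

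You must instead work at each scale with $v=A_{y,t}\circ u$, or with a fresh splitting map on $B(y,t)$. Such a $v$ is $C(n)$-Lipschitz there, and it is injective on the manifold $B(y,t)\cap\partial X$, so its image is open by invariance of domain. A boundary point $w$ of that image with $|w-v(y)|<(1-\delta)t$ would be a limit of $v(q_k)$ with $q_k$ accumulating at distance $t$ from $y$. That contradicts the GH-approximation estimate $|v(q)-v(y)|\ge\dist(q,y)-\delta t$. Hence the image contains $B(v(y),(1-\delta)t)$, and the measure bound follows with $C(n)^{n-1}$ in place of your $C$.

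Your alternative route through $G_{y,s}$ fails. The estimate \eqref{eq:good-set} only bounds the bad part from above, so it gives no lower bound on $\haus^{n-1}(B(y,s)\cap\partial X)$. In Lemma~\ref{lem:ETR-boundary-RCD} that lower bound came from pointwise density, which is exactly the non-uniform information you are trying to avoid.

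For the tangents, the claim that blow-ups of $\partial X$ lie in $\partial\setR^n_+$ does not follow just from the tangent of $X$ being $\setR^n_+$. You need that boundary points cannot converge to interior points of the half-space. One way is \cite[Theorem 8.1(ii)]{BrueNaberSemola20}, as in the paper. Another is the closedness of $\{\Theta^n(\cdot,\haus^n)\le 1/2\}$ combined with Bishop--Gromov and volume convergence.

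The paper's citation actually gives two-sided Hausdorff closeness to the hyperplane, which identifies the tangent of $\partial X$ as $\setR^{n-1}$ outright. Your one-sided version also closes, but you should say why. An isometric embedding of an $(n-1)$-dimensional normed plane into Euclidean $\setR^{n-1}$ is onto by invariance of domain and completeness, then affine by Mazur--Ulam, so the plane is Euclidean.
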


\begin{proof}
By Lemma~\ref{lem:ETR-boundary-RCD}, $M_{\partial X}$ is an ETR metric
$(n-1)$-manifold and its complement in $\partial X$ is
$\haus^{n-1}$-negligible. On every relatively compact subset of
$M_{\partial X}$, the boundary volume $\varepsilon$-regularity theorem
\cite[Corollary 8.7]{BrueNaberSemola20}, after scaling, gives a local lower
Ahlfors $(n-1)$-bound. Lemma~\ref{lemma: lower-ahlfors-no-porous} therefore
implies that porous subsets of $M_{\partial X}$ have zero
$\haus^{n-1}$-measure. Lemma~\ref{lemma:ETR=>Control_of_the_degree} shows that
$M_{\partial X}$ is degree-regular.

Since $M_{\partial X}$ is open in the manifold part
$\mathcal M(\partial X)$ and has full $\haus^{n-1}$-measure there, Lemma
\ref{lemma:degree-regularity_from_submanifold_to_manifold} gives that
$\mathcal M(\partial X)$ is degree-regular. Furthermore,
\[
 \haus^{n-1}\bigl(\partial X\setminus\mathcal M(\partial X)\bigr)=0.
\]
Thus $(\partial X,\dist)$ is $(n-1)$-regular.

Finally, $\mathcal R_{\partial X}$ has full $\haus^{n-1}$-measure in
$\partial X$, and at every point of this set the ambient tangent is the
Euclidean half-space by Proposition \ref{prop:RCD_basic_properties}. The Hausdorff closeness of the boundary to the flat
hyperplane at all sufficiently small scales, given by
\cite[Theorem 8.1.(ii)]{BrueNaberSemola20}, implies that the metric tangent of
the boundary is $\setR^{n-1}$. The last assertion follows from Lemma
\ref{lemma:tangential_function}.
\end{proof}

\subsection{Orientability and homology of non-collapsed $\RCD$ spaces}

Orientable non-collapsed $\RCD$ spaces are LUO, hence uniquely orientable.

\begin{proposition}
\label{prop:RCD_orientable_implies_uniquely_orientable}
    Let $(X,\dist,\mathcal \haus^n)$ be a non-collapsed $\operatorname{RCD}(\kappa,n)$ space. If $X$ is orientable then it is \textup{LUO} and uniquely orientable. Every orientation $T\in \mathcal{I}_{n,\loc}(X)$ satisfies $\norm{T} = \haus^n$. 
\end{proposition}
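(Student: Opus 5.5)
The plan is to reduce everything to the connectedness of manifold parts of small balls, which Theorem \ref{theo:RCD-n-regular-connected-manifold-part} already provides. By that theorem, $X$ is $n$-regular and $\mathcal{M}(B(x,r))$ is connected for every $x\in X$ and every $r>0$. Assume $X$ is orientable, i.e.\ $\mathcal{M}(X)$ is orientable (Definition \ref{defin:n-regular_orientation}).

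First, fix $x\in X$ and $r>0$. By Remark \ref{rmk:n-regular_open_sets}, the ball $B(x,r)$ is an $n$-regular space with $\mathcal{M}(B(x,r))=B(x,r)\cap\mathcal{M}(X)$. This set is orientable, being an open subset of the orientable manifold $\mathcal{M}(X)$, and it is connected by Theorem \ref{theo:RCD-n-regular-connected-manifold-part}. Corollary \ref{cor:n-regular_unique_orientability} then shows that $B(x,r)$ is uniquely orientable. Since this holds for every $r>0$, we may take $r_x=\infty$ in Definition \ref{defin:LUO_spaces}, so $X$ is LUO. The same corollary applied to $X$ itself, whose manifold part is connected by taking $r$ larger than the diameter or using the length-space property, gives unique orientability of $X$. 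Alternatively, since $X$ is connected (it is a length space), Theorem \ref{theo:n-regular_loc_uniq_orient_dichotomy} yields the dichotomy and hence unique orientability in the orientable case.

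Second, the mass identity. Every orientation satisfies $\norm{T}=\lambda_E\haus^n\rstr E$ by Remark \ref{rmk:n-regular_orientation}, where $E$ is the rectifiable part. Theorem \ref{theo:RCD-n-regular-connected-manifold-part} gives that $X$ is $n$-rectifiable, so $\haus^n(X\setminus E)=0$, and that $\lambda_E=1$ $\haus^n$-a.e. Together these give $\norm{T}=\haus^n$.

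There is no real obstacle, since the work was done in Theorem \ref{theo:RCD-n-regular-connected-manifold-part}. The only points needing care are bookkeeping. One is checking that the open ball is itself locally compact and $n$-regular, which Remark \ref{rmk:n-regular_open_sets} covers. The other is that the connectedness of $\mathcal{M}(X)$ used for global unique orientability follows from the length-space property of $\mathcal{M}(X)$.
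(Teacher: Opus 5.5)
Your proposal is correct and follows essentially the same route as the paper. Both arguments get unique orientability of every ball, hence LUO, from Corollary \ref{cor:n-regular_unique_orientability} together with the connectedness of $\mathcal M(B(x,r))$ supplied by Theorem \ref{theo:RCD-n-regular-connected-manifold-part}. Both then apply the same argument to $X$ itself, and derive $\norm{T}=\haus^n$ from $n$-rectifiability and $\lambda_X=1$ almost everywhere.

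The only slip is the option ``$r$ larger than the diameter'', which requires $X$ to be bounded. Your length-space alternative, or writing $\mathcal M(X)$ as the increasing union of the connected sets $\mathcal M(B(x_0,r))$, handles the general case.
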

\begin{proof}
    Every open ball $B(x,r)$ is orientable, because its manifold part is an open submanifold of the orientable manifold $\mathcal M(X)$. By Remark~\ref{rmk:n-regular_open_sets}, each $B(x,r)$ is $n$-regular, and by Theorem \ref{theo:RCD-n-regular-connected-manifold-part} its manifold part is connected. Corollary \ref{cor:n-regular_unique_orientability} therefore implies that every ball $B(x,r)$ is uniquely orientable. In particular, $X$ is LUO. The same proof applied to the whole $X$ shows that $X$ is uniquely orientable. Every orientation $T\in \mathcal{I}_{n,\loc}(X)$ satisfies $\norm{T} = \haus^n$ because $X$ is $n$-rectifiable and $\lambda_X = 1$ $\haus^n$-a.e., by Theorem \ref{theo:RCD-n-regular-connected-manifold-part}. 
\end{proof}

The next result shows that the orientation of a non-collapsed $\RCD$ space with boundary behaves as in the case of topological manifolds with boundaries. More precisely, the boundary of an orientation is the orientation of the boundary. It can be seen as a Stokes' Theorem for non-collapsed $\RCD$ spaces.

\begin{theorem}\label{theo:RCD-boundary-mass}
Let $(X,\dist,\haus^n)$ be a non-collapsed $\mathrm{RCD}(\kappa,n)$
space, and let $T\in\mathcal I_{n,\loc}(X)$ be an orientation. Then $\spt(\partial T) = \partial X$ and
\begin{equation}
    \label{eq:RCD_mass_of_boundary}
    \norm{\partial T}
    =
    \haus^{n-1}\rstr\partial X.
\end{equation}
In particular, if $X$ is orientable then $\partial X$ is orientable. In this case, for every orientation $T\in \mathcal{I}_{n,\loc}(X)$, $\partial T|_{\partial X} \in \mathcal{I}_{n-1,\loc}(\partial X)$ is an orientation of $\partial X$.
\end{theorem}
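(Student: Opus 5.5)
The argument has three steps: local finiteness of $\norm{\partial T}$, localization of the support, and identification of the multiplicity in half-space charts.

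\emph{Local finiteness.} We apply Theorem~\ref{theo:boundary_finite_mass_hard} with $k=n$. By Proposition~\ref{prop:RCD_orientable_implies_uniquely_orientable}, $\norm{T}=\haus^n$, so Proposition~\ref{prop:RCD_basic_properties}(i) gives $\Theta^n(\cdot,\norm{T})\le 1$, which is hypothesis (i). For hypothesis (ii) we first need $\spt(\partial T)\subseteq\partial X$. By Corollary~\ref{cor:extension_of_T_X}, $\spt(\partial T)\subseteq\mathcal S(X)$. Proposition~\ref{prop:RCD_basic_properties}(iii) shows that $X\setminus C$ is a topological manifold with boundary, with $\dim_H C\le n-2$. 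Points of $\mathcal S(X)\setminus(C\cup\partial X)$ are interior manifold points, so they lie in $\mathcal M(X)$; hence $\mathcal S(X)\subseteq \partial X\cup C$. Now restrict $T$ to the open set $X\setminus\partial X$. Its boundary is supported in $C$, which is $\haus^{n-1}$-null, so Theorem~\ref{theo:boundary_zero_mass} and Lemma~\ref{cor:cycles_open_sets} give $\spt(\partial T)\subseteq\partial X$. Since $\partial X$ is $(n-1)$-rectifiable with $\haus^{n-1}\rstr\partial X$ locally finite, Lemma~\ref{lemma:density_points_rectifiable_space} gives lower density $1$ a.e. on $\partial X$. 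Any closed subset $F\subseteq\partial X$ then has $\Theta_{n-1}(\cdot,\haus^{n-1}\rstr F)=1$ at $\haus^{n-1}$-a.e. point of $F$, which is hypothesis (ii). So $\partial T\in\bM_{n-1,\loc}$, $T\in\bI_{n,\loc}(X)$, and by the boundary rectifiability theorem $\partial T\in\mathcal I_{n-1,\loc}(X)$. By Lemma~\ref{lemma: charact-set} and Proposition~\ref{prop:RCD-boundary-nminus1-regular} (tangential factor $1$), $\norm{\partial T}=|\theta|\haus^{n-1}\rstr\set(\partial T)$ with $\theta$ integer-valued on $\partial X$.

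\emph{Multiplicity one.} It remains to show $|\theta|=1$ $\haus^{n-1}$-a.e. on $\partial X$. This forces $\set(\partial T)=\partial X$ up to null sets, and then $\spt(\partial T)=\partial X$ follows from the density of $\mathcal R_{\partial X}$ in $\partial X$ and the local positivity of $\haus^{n-1}$ on $M_{\partial X}$. I expect this to be the main obstacle. The plan is to work near a point $p$ of the open set $M_{\partial X}$, where $X$ is a topological manifold with boundary by Proposition~\ref{prop:RCD_basic_properties}(iii) and \cite[Theorem 8.4]{BrueNaberSemola20}. There one uses a half-space chart $\Phi=(u,h)\colon B(p,r)\to\setR^n_+$, where $u$ is the splitting map from Lemma~\ref{lem:ETR-boundary-RCD} and $h$ is a boundary-distance-type coordinate. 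Choose $\Phi$ to be Lipschitz and a homeomorphism onto a relatively open set, with $\Phi(\partial X)\subseteq\setR^{n-1}\times\{0\}$. If the topological chart of \cite{BrueNaberSemola20} is not Lipschitz, first try $h=\dist(\cdot,\partial X)$ together with $u$ and argue by local degree rather than by injectivity. Then $\Phi_\#(T\rstr B)$ is an $n$-current in $\setR^n$. Arguing as in Proposition~\ref{prop: characterization-T_U}, its density is $\deg(\Phi,\cdot)=\pm1$, because $\Phi$ is a homeomorphism in the interior. Hence $\Phi_\#(T\rstr B)=\pm\br{\chi_{\Phi(B)}}$ up to the cutoff. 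Its boundary on the flat part of $\setR^{n-1}\times\{0\}$ is $\pm\br{\setR^{n-1}}$ with multiplicity one. Pulling back through the bi-Lipschitz pieces $K_i$ of $u|_{\partial X}$ from Lemma~\ref{lem:ETR-boundary-RCD}, and using $\partial\Phi_\#=\Phi_\#\partial$ together with \eqref{eq:borel-restriction_vs_pushforward}, gives $\theta\circ u^{-1}=\pm1$ a.e. on $u(K_i)$. Covering $M_{\partial X}$, which has full measure, yields \eqref{eq:RCD_mass_of_boundary}.

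\emph{Orientation of $\partial X$.} By Proposition~\ref{prop:restriction_of_current_to_open_subsets}, $S:=(\partial T)|_{\partial X}\in\mathcal I_{n-1,\loc}(\partial X)$ satisfies $\norm{S}=\haus^{n-1}\rstr\partial X=\lambda_E\haus^{n-1}\rstr E$. Moreover $\partial S=(\partial\partial T)|_{\partial X}=0$ by Lemma~\ref{lemma:extension_vs_boundary}, since $\partial X$ is closed and $(S)^X=\partial T$. Thus $S|_{\mathcal M(\partial X)}$ is an orientation of the degree-regular manifold $\mathcal M(\partial X)$ given by Proposition~\ref{prop:RCD-boundary-nminus1-regular}. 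By Theorem~\ref{thme: characterization-orientability-metric-manifold} and Remark~\ref{rmk:n-regular_orientation}, $\partial X$ is orientable and $S$ is an orientation of it.
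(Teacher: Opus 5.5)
Your overall architecture works, but your route to multiplicity one differs from the paper's.

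\textbf{The paper's route.} It takes the Lipschitz half-space homeomorphism $F$ of \cite[Theorem 9.1]{BrueNaberSemola20} and cuts out a regular disk $D=F^{-1}(\overline B_{\setR^n_+}(0,r))$ by slicing. Using the doubled disk and Proposition~\ref{prop: existence-metric-fund-class-double}, it proves $F_\#(T|_D)=\br{B}$. This gives $F_\#((\partial(T|_V))\rstr D)=\br{\overline B_{\setR^{n-1}}(0,r)}^{F(V)}$, hence $\partial X\subseteq\spt(\partial T)$. Via Corollary~\ref{cor:n-regular_full_support_current_implies_orientability} it then gets orientability of $\partial X$, all before any mass identity. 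Multiplicity one then comes from constancy. On $A=F^{-1}(B_{\setR^{n-1}}(0,r)\times\{0\})$ one has $(\partial T)|_{A}=k\,W$, where $W$ is an orientation of $A$ with $\norm{W}=\haus^{n-1}\rstr A$. The identity $k\,(F|_A)_\#W=\br{U}$ then forces $k=\pm1$.

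\textbf{Your route.} You read off the density of $\Phi_\#(T\rstr B)$ directly from \eqref{eq: chracterization-T_U}. This is legitimate, since $T|_D=T_D^{\mathfrak o}$ on regular disks in the interior and $X$ is $n$-rectifiable, and it is shorter than doubling. You then recover the multiplicity by pulling back through bi-Lipschitz pieces rather than by constancy. Finally, you deduce $\spt(\partial T)=\partial X$ and orientability of $\partial X$ as consequences of the mass identity.

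Your first step is also fine: support in $\partial X$ via Theorem~\ref{theo:boundary_zero_mass} on $X\setminus\partial X$, then Theorem~\ref{theo:boundary_finite_mass_hard}. The detour through $C$ is unnecessary, however. Directly, $\mathcal S(X)\setminus\partial X\subseteq\mathcal S^{n-2}(X)$. Your claim about $C$ also tacitly assumes that the manifold boundary of $X\setminus C$ lies in $\partial X$, which Proposition~\ref{prop:RCD_basic_properties}(iii) does not state.

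\textbf{The unjustified step.} One step you have not justified is load-bearing. You need a Lipschitz homeomorphism $\Phi=(u,h)$ onto a relatively open subset of $\setR^n_+$ whose tangential component $u$ is the very map for which Lemma~\ref{lem:ETR-boundary-RCD} gives a full-measure bi-Lipschitz decomposition of $\partial X$. An arbitrary Lipschitz half-space chart will not do. A Lipschitz homeomorphism can send a set of positive $\haus^{n-1}$-measure onto a null set; already in one dimension, take $t\mapsto\int_0^t(1-\chi_K)$ with $K$ a fat Cantor set. On such a set, $\Phi_\#$ carries no information about $\theta$.

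You have two ways to close this.
\begin{enumerate}
\item Verify that the chart of \cite[Theorem 9.1]{BrueNaberSemola20} has this form, with $u$ a sufficiently good splitting map to which the proof of Lemma~\ref{lem:ETR-boundary-RCD} applies verbatim.
\item More simply, replace the pull-back by the paper's constancy argument. You already prove $\partial\big((\partial T)|_{\partial X}\big)=0$. So on a chart piece $A\subseteq\partial X$ homeomorphic to an $(n-1)$-ball, Corollary~\ref{cor: any-bdry-less-current-implies-orient} gives $(\partial T)|_A=kW$ with $\norm{W}=\haus^{n-1}\rstr A$. Your own pushforward computation, near the flat part, then yields $k=\pm1$ for any Lipschitz half-space homeomorphism.
\end{enumerate}
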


{\begin{proof}
    We assume $n\geq 2$, as the case $n=1$ follows directly from the one-dimensional classification of non-collapsed RCD spaces \cite{KL16}. By Remark \ref{rmk:n-regular_orientation}, we have that $\spt(\partial T) \subseteq \mathcal{S}(X)$. By Theorem \ref{theo:RCD-n-regular-connected-manifold-part}, the definition of $\partial X$ and \eqref{eq:RCD_dimension_singular_strata}, we have 
    $$\haus^{n-1}(\mathcal{S}(X) \setminus \partial X) \le \haus^{n-1}(\mathcal{S}^{n-2}(X)) = 0.$$
    Therefore, Proposition \ref{prop:RCD_basic_properties} implies that the assumptions of Corollary \ref{cor:boundary_finite_mass_rectifiable} are satisfied and hence $\partial T \in \bM_{n-1,\loc}(X)$. By Lemma \ref{lemma:mass-measure-abso-cont} we have that $\norm{\partial T}(X\setminus \partial X) = 0$. Since $\partial X$ is closed, we have $\spt(\partial T) \subseteq \partial X$, by \cite[Theorem 4.3.(2)]{Lang}.

    Let us fix $x\in \mathcal{S}^{n-1}(X) \setminus \mathcal{S}^{n-2}(X)$. By Proposition \ref{prop:RCD_basic_properties}, the tangent cone at $x$ is unique and isometric to Euclidean $\setR^n_+$. Thus, after rescaling at a sufficiently small radius, we may apply \cite[Theorem 9.1]{BrueNaberSemola20}. We obtain an open neighbourhood $V$ of $x$ and a homeomorphism
    \[
    F\colon V\longrightarrow F(V)\subseteq \setR^n_+
    \]
    that is Lipschitz and such that
    \[
    F(x)=0,
    \qquad
    F(V\cap\partial X)=F(V)\cap\partial\setR^n_+,
    \]
    and such that $F(V)$ contains a closed half-ball $\overline{B}_{\setR^n_+}(0,s)$ for some $s>0$. 
    
    Arguing as in the proof of Lemma \ref{lemma: good-subsets-existence}, we conclude that for almost all $r\in(0,s)$ we have $\haus^n(F^{-1}(S_r)) = 0$, where $S_r:=S_{\mathbb{R}^n_+}(0,r) = \{z\in \setR^n_+\,:\, \dist_E(z,0) = r\}$. By the slicing theorem \cite[Theorem 6.2]{Lang} applied to $T|_V$ and $u(y) = \dist_E(0,F(y))$ we find $r\in(0,s)$ such that $\haus^n(F^{-1}(S_r)) = 0$, the slice $\langle T|_V,u,r\rangle \in \bN_{n-1,\loc}(X)$ exists and satisfies $\spt(\langle T|_V,u,r\rangle) \subseteq F^{-1}(S_r)$.
    
    Set $B\coloneqq \overline{B}_{\setR^n_+}(0,r)$, $S\coloneqq S_r$ and $D\coloneqq F^{-1}(B)$. By \cite[Definition 6.1]{Lang}, we have
    \begin{equation}\label{eq: RCD-bdry-current-slice}
        \partial (T|_V\rstr D) =  (\partial (T|_V))\rstr D  + \langle T|_V,u,r\rangle.
    \end{equation}
    Notice that that $F\colon D\to B$ is a homeomorphism and 
    $$\haus^n(\partial D) = \haus^n(\partial X\cap D) + \haus^n(F^{-1}(S)) = 0$$
    because $\haus^n\rstr \partial X = 0$. This shows that $D$ is a regular disk, according to Definition \ref{def: good-ball}. Moreover, $F\colon D \to B$ is a homeomorphism that is Lipschitz and satisfies 
    \begin{equation}\label{eq: RCD-bdry-current-eq-homeom}
        F(D\cap \partial X) = B \cap \partial \setR^n_+ \quad \textup{and} \quad F(\partial D) = (B \cap \partial \setR^n_+) \cup S.
    \end{equation}
    Thus, $\textup{int}(D)$ is fully contained in $\mathcal{M}(V)$. 
    
    By Remark \ref{rmk:n-regular_orientation}, $T|_{\mathcal{M}(V)}$ is an orientation of $\mathcal{M}(V)$. 
    We equip $\textup{int}(D)$ with the orientation $\mathfrak{o}$ induced by the orientation of $\mathcal{M}(V)$ associated to $T|_{\mathcal{M}(V)}$, according to Theorem \ref{theo: characterization-orientability-metric-manifold-with-details}. Let $T_D^\mathfrak{o}\in \mathcal{I}_n(D)$ be the current associated to $D$ as constructed in Section \ref{sec: orient-regular-disks}. Since $\spt(\partial T) \subseteq \partial X$, \eqref{eq:support_boundary_restriction} gives that $\spt(\partial (T|_D)) \subseteq \partial D$.
    Corollary \ref{cor: uniqueness-T_U} implies that there exists $k \in \mathbb{Z}$ such that $k\cdot T_D^\mathfrak{o} = T|_D$. By Theorem \ref{theo: characterization-orientability-metric-manifold-with-details}, $T|_{D'} = T_{D}^\mathfrak{o}|_{D'}$ for every regular disk $D' \subseteq \textup{int}(D)$. Hence, $k=1$ and $T|_D = T_D^\mathfrak{o}$.
    
    Let us consider the doubles $\hat{D}$ and $\hat{B}$ of $D$ and $B$, respectively, as defined in Definition \ref{def: double-good-subset}. Moreover, let $\hat{\br{B}} \in \bI_n(\hat{B})$ and $\hat{T}_D^\mathfrak{o} \in \bI_n(\hat{D})$ be the associated currents as in Proposition \ref{prop: existence-metric-fund-class-double}, where $\br{B}$ is the fundamental class of $B$. Fix a bi-Lipschitz homeomorphism $\varphi \colon \hat{B} \to \mathbb{S}^n$, where $\mathbb{S}^n$ is a equipped with the standard round metric. Furthermore, equip $\textup{int}(B)$ and $\mathbb{S}^n$ with an orientation such that $F$ and $\varphi$ are sense-preserving. The map $F$ extends to a homeomorphism $\hat{F} \colon \hat{D} \to \hat{B}$ that is Lipschitz and has degree $1$. Thus, Proposition \ref{prop: existence-metric-fund-class-double} implies that $(\varphi \circ \hat{F})_\#\hat{T}_D^\mathfrak{o} = \br{\mathbb{S}^n}$. Since $\varphi$ is a bi-Lipschitz homeomorphism, it sends $\hat{\br{B}}$ to $\br{\mathbb{S}^n}$ and hence, $\hat{F}_\#\hat{T}_D^\mathfrak{o}=\hat{\br{B}}$. In particular,
    $$\iota_{1\#}{\br{B}} - \iota_{2\#}{\br{B}} =\hat{\br{B}} = \hat{F}_\#\hat{T}_D^\mathfrak{o} = \hat{F}_\#(\iota_{1\#}T_D^\mathfrak{o} - \iota_{2\#}T_D^\mathfrak{o}) = F_{1\#}(T|_D) - F_{2\#}(T|_D),$$
    where $F_i := \hat{F}\circ \iota_i$ and the $\iota_i$ are the isometric embeddings that identify $D$ and $B$ with their copies $D_i$ and $B_i$ in $\hat{D}$ and $\hat{B}$, respectively. Since $\iota_i \circ F|_D = \hat{F} \circ \iota_i$ for $i =1,2$ we have $F_{1\#}(T|_D) = \iota_{1\#}\br{B}$. By composing with the relative $\iota_1^{-1}$ maps, and using that $(T|_V)|_D=T|_D$ by Proposition \ref{prop:restriction_of_current_to_open_subsets}.(iv), we get that 
    $$(F_\#(T|_V))|_B \stackrel{\eqref{eq:restriction_vs_pushforward}}{=} (F|_D)_{\#}(T|_D) = \br{B}.$$
    Hence, by Corollary \ref{cor:extension_vs_restriction},
    $$F_\#(T|_V\rstr D) = (F_\#(T|_V)) \rstr B = ((F_\#(T|_V))|_B)^{F(V)} = \br{B}^{F(V)}.$$
    Therefore, 
    \begin{equation}
        \begin{aligned}
            F_\#(\partial(T|_V\rstr D)) \stackrel{\eqref{eq:boundary_pushforward_commute}}{=} \partial(F_\#(T|_V \rstr D)) = \partial (\br{B}^{F(V)}) &=  \partial( \br{\overline{B}_{\setR^n_+}(0,s)}^{F(V)}) \\
            &= \br{\overline{B}_{\setR^{n-1}}(0,r))}^{F(V)} - \br{S}^{F(V)}.
        \end{aligned}
    \end{equation}
    Moreover, by \eqref{eq: RCD-bdry-current-slice} we have
    $$F_\#(\partial (T|_V \rstr D)) = F_\#((\partial (T|_V))\rstr D ) + F_\#(\langle T|_V,u,r\rangle).$$
    Since $\spt(\langle T|_V,u,r\rangle) \subseteq F^{-1}(S)$, then $\spt(F_\#(\langle T|_V,u,r\rangle)) \subseteq S$. Moreover,
    \begin{equation}
    \begin{aligned}
        \spt(F_\#(\partial (T|_V) \rstr D)) \subseteq F(\spt(\partial (T|_V)) \cap D) &\stackrel{\eqref{eq:support_boundary_restriction}}{\subseteq} F((\spt(\partial T)\cup \partial V) \cap D) \\
        &\subseteq F(D\cap \partial X) =      
        \overline{B}_{\setR^{n-1}(0,r)}.
    \end{aligned}
    \end{equation}    
    We conclude that 
    \begin{equation}
        \label{eq:F_push_boundary_disk}
        F_\#((\partial (T|_V))\rstr D) = \br{\overline{B}_{\setR^{n-1}}(0,r))}^{F(V)},
    \end{equation}
    so $(\partial (T|_V))\rstr D \neq 0$  and $x\in \spt((\partial (T|_V))\rstr D)$ by \eqref{eq:support_vs_pushforward} and the injectivity of $F$. Recall that $V$ is open, hence $\partial V \cap V = \emptyset$.  Therefore, 
    $$x\in \spt((\partial (T|_V))\rstr D) \subseteq \spt(\partial (T|_V)) \cap D \stackrel{\eqref{eq:support_boundary_restriction}}{\subseteq} \spt(\partial T) \cap D.$$
    As $x\in \mathcal{S}^{n-1}(X) \setminus \mathcal{S}^{n-2}(X)$ was arbitrary and $\partial X = \overline{\mathcal{S}^{n-1}(X) \setminus \mathcal{S}^{n-2}(X)}$ it follows that $\partial X \subseteq \spt(\partial T)$. We conclude that $\spt(\partial T) = \partial X$. 

    By Proposition \ref{prop:RCD-boundary-nminus1-regular}, $(\partial X, \dist)$ is $(n-1)$-regular. The current $S:=\partial T|_{\partial X} \in \mathcal{I}_{n-1,\loc}(\partial X)$ by Proposition \ref{prop:extension_restriction_integral_currents}. Moreover, $\spt(S) = \partial X$ by Lemma \ref{lemma:restriction_to_support}. Furthermore, using Corollary \ref{cor:extension_vs_restriction} multiple times, Lemma \ref{lemma:extension_vs_boundary} and that $S^X = \partial T\rstr \partial X = \partial T$, which follows again by Corollary \ref{cor:extension_vs_restriction}, we have that 
    \begin{equation}
        \begin{aligned}
            \partial S = (\partial S)^X|_{\partial X} = (\partial (S^X))|_{\partial X} = (\partial \partial T)|_{\partial X} = 0|_{\partial X} = 0.
        \end{aligned}
    \end{equation}
    Therefore, by Corollary \ref{cor:n-regular_full_support_current_implies_orientability}, we conclude that $\partial X$ is orientable. 
    
    Let again $x\in \mathcal{S}^{n-1}(X) \setminus \mathcal{S}^{n-2}(X)$. From the first part of the proof, we can find an open neighbourhood $V$ of $x$, a homeomorphism $F\colon V \to F(V) \subseteq \setR^n_+$ which is Lipschitz, and a regular disk $x\in D\subseteq V$ such that $F_\#((\partial(T|_V)) \rstr D) = \br{\overline{B}_{\setR^{n-1}}(0,r)}^{F(V)}$. 
    Let $A:= F^{-1}(B_{\setR^{n-1}}(0,r) \times \{0\}) \subseteq \partial X$, which is open in $\partial X$, hence orientable because of Remark \ref{rmk:n-regular_open_sets}. Let $W \in \mathcal{I}_{n-1,\loc}(A)$ be an orientation of $A$. By Proposition \ref{prop:RCD-boundary-nminus1-regular},
    $$\norm{W} = \haus^{n-1}\rstr A.$$
    By Lemma \ref{cor:cycles_open_sets}, $\partial(S|_A) = 0$. Since $\mathcal{M}(A) = A$ is connected, Proposition \ref{prop:unique_orientation_extension} gives $k\in \mathbb{Z}$ such that
    $$S|_A = k\cdot W.$$
    Let $U := B_{\setR^{n-1}}(0,r) \times \{0\}$. Notice that $\partial(T|_V) = (\partial T)|_V$ by Lemma \ref{cor:cycles_open_sets}, since $V$ is open. Therefore,
    \begin{equation}
        \begin{aligned}
            (F|_A)_\#(S|_A) = 
            (F|_{F^{-1}(U)})_\#(((\partial T)|_V)|_{F^{-1}(U)}) &= (F|_{F^{-1}(U)})_\#((\partial (T|_V) \rstr D)|_{F^{-1}(U)}) \\
            &\stackrel{\eqref{eq:restriction_vs_pushforward}}{=} \br{\overline{B}_{\setR^{n-1}}(0,r)}|_{U} = \br{U}.
        \end{aligned}
    \end{equation}
    Hence,
    $$k\cdot (F|_A)_\# W = \br{U}.$$
    Since $(F|_A)_\# W$ is integer rectifiable, we deduce that $k = \pm 1$. Consequently, by Proposition \ref{prop:restriction_of_current_to_open_subsets}, 
    $$\norm{S} \rstr A = \norm{S|_A} = \norm{W} = \haus^{n-1}\rstr A.$$
    Since this holds on a set of full $\haus^{n-1}$-measure, hence on a set of full $\norm{\partial T}$-measure of
    $\partial X$, then 
    $$\norm{\partial T} = \norm{\partial T \rstr \partial X} = \norm{\partial T|_{\partial X}} = \haus^{n-1}\rstr \partial X.$$
    Since $\partial(\partial T|_{\partial X}) = 0$, $\partial T|_{\partial X}$ is an orientation of $\partial X$.
\end{proof} 

As a consequence, the top-degree homology of non-collapsed $\RCD$ spaces behaves exactly as in the case of manifolds with boundary.

\begin{theorem}
\label{theo:RCD-homological-properties}
    Let $(X,\dist,\mathcal \haus^n)$ be a non-collapsed $\operatorname{RCD}(\kappa,n)$ space. 
    \begin{itemize}
        \item[(i)] If $X$ is orientable then $H_n^\textup{IC}(X) \cong \mathbb{Z}$ if and only if $\partial X = \emptyset$, otherwise $H_n^\textup{IC}(X) = \{0\}$ and $H_n^\textup{IC}(X,\partial X) \cong \mathbb{Z}$. 
        \item[(ii)] If $X$ is not orientable, then $H_n^\textup{IC}(X) = H_n^\textup{IC}(X,\partial X) = \{0\}$.
    \end{itemize}
\end{theorem}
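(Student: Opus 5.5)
The plan is to reduce everything to the dichotomies already established for $n$-regular spaces, combined with the Stokes-type identity of Theorem~\ref{theo:RCD-boundary-mass}. By Theorem~\ref{theo:RCD-n-regular-connected-manifold-part}, $X$ is $n$-regular, and $\mathcal M(X)=\mathcal M(B(x,r))$ is connected once $r$ is large (take an increasing union of balls; alternatively use directly that $(\mathcal M(X),\dist)$ is a length space, hence connected). Since $\haus^{n+1}(X)=0$, Lemma~\ref{lemma:relative_homology_dimension_n} identifies $H_n^{\mathrm{IC}}(X)$ with the $n$-cycles in $\bI_{n,\loc}(X)$. It identifies $H_n^{\mathrm{IC}}(X,\partial X)$ with $\{S\in\bI_{n,\loc}(X):\spt(\partial S)\subseteq\partial X\}/\bI_{n,\loc}(\partial X)$. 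Moreover $\bI_{n,\loc}(\partial X)=0$, because $\haus^n(\partial X)=0$ by \eqref{eq:RCD_dimension_singular_strata} and Lemma~\ref{lemma:mass-measure-abso-cont}. Hence the relative group is just the group of such $S$.

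For (i), assume $X$ is orientable and fix an orientation $T$. By Proposition~\ref{prop:RCD_orientable_implies_uniquely_orientable}, $X$ is uniquely orientable, and Theorem~\ref{theo:RCD-boundary-mass} gives $\spt(\partial T)=\partial X$ with $\|\partial T\|=\haus^{n-1}\rstr\partial X$. In particular $T\in\bI_{n,\loc}(X)$, since $\partial T$ is integer rectifiable by the final part of that theorem. Corollary~\ref{cor:n-regular_unique_orientability} then shows that $H_n^{\mathrm{IC}}(X)\cong\mathbb Z$ exactly when $\partial T=0$, which happens exactly when $\partial X=\emptyset$; otherwise $H_n^{\mathrm{IC}}(X)=\{0\}$. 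For the relative group, take $S\in\bI_{n,\loc}(X)$ with $\spt(\partial S)\subseteq\partial X$. We have $\partial X\subseteq\mathcal S(X)$: by Proposition~\ref{prop:RCD_basic_properties}(iii), points of $\mathcal R_{\partial X}$ have half-space neighbourhoods and so are not manifold interior points, and $\mathcal S(X)$ is closed. Therefore Proposition~\ref{prop:unique_orientation_extension}(i) gives $S=kT$ for some $k\in\mathbb Z$. Conversely, $T$ itself is admissible. This yields $H_n^{\mathrm{IC}}(X,\partial X)=\langle T\rangle\cong\mathbb Z$, and the generator is nonzero because $\|T\|=\haus^n$.

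For (ii), assume $X$ is not orientable. Proposition~\ref{prop:unique_orientation_extension}(iii), using connectedness of $\mathcal M(X)$, gives $H_n^{\mathrm{IC}}(X)=\{0\}$. For the relative group, suppose $S\neq0$ with $\spt(\partial S)\subseteq\partial X\subseteq\mathcal S(X)$. Proposition~\ref{prop:unique_orientation_extension}(i) would then produce an orientation of $X$, a contradiction. Hence $S=0$ and $H_n^{\mathrm{IC}}(X,\partial X)=\{0\}$.

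The main obstacle is checking the inclusion $\partial X\subseteq\mathcal S(X)$, which is needed to apply Proposition~\ref{prop:unique_orientation_extension}(i). I would argue as follows. A point $x\in\mathcal R_{\partial X}$ has a neighbourhood homeomorphic to a half-ball with $x$ on the boundary face, so by invariance of domain $x$ has no neighbourhood homeomorphic to $\setR^n$. Since $\mathcal S(X)$ is closed, passing to the closure gives the full inclusion. The remaining steps are bookkeeping with the earlier results.
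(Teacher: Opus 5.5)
Your proposal is correct and follows essentially the same route as the paper: unique orientability together with Theorem~\ref{theo:RCD-boundary-mass} and Proposition~\ref{prop:unique_orientation_extension} for (i), and connectedness of $\mathcal M(X)$ together with Proposition~\ref{prop:unique_orientation_extension} for (ii). What you add is an explicit justification of $\partial X\subseteq\mathcal S(X)$, which the paper uses tacitly; two small corrections apply here: the half-ball chart with $x$ on the boundary face comes from \cite[Theorem 9.1]{BrueNaberSemola20}, as invoked in the proof of Theorem~\ref{theo:RCD-boundary-mass}, not from Proposition~\ref{prop:RCD_basic_properties}(iii) itself, and in the orientable case the inclusion is already immediate from $\partial X=\spt(\partial T)\subseteq\mathcal S(X)$.
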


\begin{proof}

If $X$ is orientable then it is uniquely orientable by Proposition \ref{prop:RCD_orientable_implies_uniquely_orientable}, so it has exactly two orientations $\pm T$. Proposition \ref{prop:unique_orientation_extension} implies that $H_n^{\mathrm{IC}}(X)\cong\mathbb Z$ if and only if $\partial T=0$. By
Theorem~\ref{theo:RCD-boundary-mass}, $    \|\partial T\|=\haus^{n-1}\rstr\partial X$. Since $\haus^{n-1}\rstr \partial X$ is locally positive, by \cite[Theorem 1.4.(iii) and Theorem 1.2]{BrueNaberSemola20}, we deduce that $\partial X=\emptyset$ if and only if $\partial T=0$. In case $\partial X \ne \emptyset$ then $T\in H_n^\textup{IC}(X,\partial X)$. Moreover, for every $S\in \bI_{n,\loc}(X)$ with $\spt(\partial S) \subseteq \partial X$, there exists $k\in \mathbb{Z}$ such that $S = k\cdot T$, by Proposition \ref{prop:unique_orientation_extension} and Theorem \ref{theo:RCD-n-regular-connected-manifold-part}. This proves
(i).

Finally, suppose that $X$ is not orientable. By Theorem \ref{theo:RCD-n-regular-connected-manifold-part}, $\mathcal M(X)$ is
connected. Hence, Proposition~\ref{prop:unique_orientation_extension}.(iii) applies and gives $H_n^{\mathrm{IC}}(X)=\{0\}$. If $S \in \bI_{n,\loc}(X)$ with $\spt(\partial S) \subseteq \partial X$, consider $S|_{\mathcal{M}(X)}$. Since $\mathcal{M}(X)$ is open, $\partial (S|_{\mathcal{M}(X)}) = 0$, by Lemma \ref{cor:cycles_open_sets}. Since $\mathcal{M}(X)$ is non-orientable, Corollary \ref{cor: any-bdry-less-current-implies-orient} implies that $S|_{\mathcal{M}(X)} = 0$. Since $\haus^n(X \setminus \mathcal{M}(X)) = 0$, $S = 0$ by Corollary \ref{cor:extension_vs_restriction} and Lemma \ref{lemma:mass-measure-abso-cont}. This proves (ii).
\end{proof}

We end this section with the proof of the Brouwer-like no-retraction theorem for non-collapsed $\RCD$ spaces.

\begin{corollary}\label{coro: Brouwer}
    Let $(X,\dist,\mathcal \haus^n)$ be an orientable, non-collapsed $\operatorname{RCD}(\kappa,n)$ space. Then, there exists no proper Lipschitz retraction $r\colon X \to \partial X$.
\end{corollary}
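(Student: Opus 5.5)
The plan is to argue by contradiction using the Stokes identity of Theorem~\ref{theo:RCD-boundary-mass}. Suppose $r\colon X\to\partial X$ is a proper Lipschitz retraction. In particular $\partial X\neq\emptyset$, since $r$ maps into it. Fix an orientation $T\in\bI_{n,\loc}(X)$. By Theorem~\ref{theo:RCD-boundary-mass}, $\partial T\in\bM_{n-1,\loc}(X)$ with $\spt(\partial T)=\partial X$ and $\norm{\partial T}=\haus^{n-1}\rstr\partial X$. Let $S:=\partial T|_{\partial X}$; by the same theorem, $S$ is an orientation of $\partial X$, and $S^X=\partial T$ by Corollary~\ref{cor:extension_vs_restriction}.

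Since $\partial X$ is closed and $r$ is proper and locally Lipschitz, the pushforward $r_\#T$ is a well-defined current on $\partial X$. View $r$ as a map $r^{\partial X}\colon X\to\partial X$. Then $r^{\partial X}_\#T$ is an $n$-current of locally finite mass on $\partial X$, which has $\haus^n(\partial X)=0$ (indeed $\haus^{n-1}\rstr\partial X$ is locally finite). By Lemma~\ref{lemma:mass-measure-abso-cont}, this forces $r^{\partial X}_\#T=0$, and hence
\[
0=\partial\bigl(r^{\partial X}_\#T\bigr)=r^{\partial X}_\#\partial T .
\]
Here locally finite mass of the pushforward follows from properness and the bound \eqref{eq:mass-measure-ineq}.

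On the other hand, $r|_{\partial X}=\mathrm{id}$. Since $\partial T=S^X$ is supported in $\partial X$, Proposition~\ref{prop:restriction_of_current_to_open_subsets}(iii) together with the identity $\partial T\rstr\partial X=\partial T$ gives
\[
r^{\partial X}_\#\partial T=(r|_{\partial X})_\#(\partial T|_{\partial X})=S .
\]
Thus $S=0$. But $\norm{S}=\haus^{n-1}\rstr\partial X$, which is nonzero because $\partial X\neq\emptyset$ and $\haus^{n-1}\rstr\partial X$ is locally positive (as used in the proof of Theorem~\ref{theo:RCD-homological-properties}). This is the desired contradiction.

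The main technical point is to justify the commutation of the pushforward with the boundary, \eqref{eq:boundary_pushforward_commute}, together with the identification of $r_\#\partial T$ with $S$ for currents living on the subset $\partial X$. I would handle this by working with currents on $X$ and pushing forward to $X$ via $\iota\circ r$, where $\iota\colon\partial X\hookrightarrow X$ is the inclusion. This gives $(\iota\circ r)_\#T$, an $n$-current with support in $\partial X$, which vanishes by Lemma~\ref{lemma:mass-measure-abso-cont}. Then $(\iota\circ r)_\#\partial T=\partial T$, because $\iota\circ r$ is the identity on $\spt(\partial T)$ and currents of locally finite mass depend only on the test functions restricted to the support (via \cite[Theorem 4.4]{Lang}). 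Hence $\partial T=\partial\bigl((\iota\circ r)_\#T\bigr)=0$, which contradicts $\norm{\partial T}=\haus^{n-1}\rstr\partial X\neq0$.
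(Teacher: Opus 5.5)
Your proposal is correct and follows essentially the same route as the paper. You push the orientation forward by $r$, and the pushforward vanishes because $\haus^n(\partial X)=0$ and Lemma~\ref{lemma:mass-measure-abso-cont} applies. You then commute $\partial$ with $r_\#$ and use $r|_{\partial X}=\mathrm{id}$ to identify $r_\#\partial T$ with $\partial T|_{\partial X}$, which is nonzero by Theorem~\ref{theo:RCD-boundary-mass}. Your $\iota\circ r$ variant is the same argument, with the subset bookkeeping handled inside $X$.
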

\begin{proof}
    Let $T\in \bI_{n,\loc}(X)$ be an orientation.
    Suppose that there exists $r$ as in the statement. Then $r|_{\partial X} = \textup{id}_{\partial X}$, so 
    $$0 = \partial(r_\#T) \stackrel{\eqref{eq:boundary_pushforward_commute}}{=} r_\#(\partial T) = (r_\#(\partial T))|_{\partial X} \stackrel{\eqref{eq:restriction_vs_pushforward}}{=} (r|_{\partial X})_\#(\partial T|_{\partial X}) = \partial T|_{\partial X} \ne 0,$$
    by Theorem \ref{theo:RCD-boundary-mass}. The first equality is true, since $\haus^n(\partial X) = 0$, so there are no non-trivial $n$-currents with locally finite mass on $\partial X$ because of Lemma \ref{lemma:mass-measure-abso-cont}. This contradiction concludes the proof.
\end{proof}

\section{Orientability of $\GCBA$ spaces}
\label{sec:GCBA}

We recall the basic properties of $\GCBA$ spaces. For more details we refer to \cite{BridsonHaefliger, LytchakNagano2019_GCBA, LN:JEMS22}. After that, we will study their orientations via currents.

\subsection{Basics on $\GCBA$ spaces}

Given $\kappa \in \setR$, we define $D_\kappa := \pi/\sqrt{\kappa}$ when $\kappa >0$ and $D_\kappa = \infty$ otherwise. A complete metric space $(X,\dist)$ is $\CAT(\kappa)$ if it is $D_\kappa$-geodesic (i.e. for every $x,y\in X$ with $\dist(x,y)<D_\kappa$ there exists a geodesic joining $x$ to $y$) and every geodesic triangle of perimeter at most $2D_\kappa$ is smaller than its comparison triangle in the simply connected, $2$-dimensional Riemannian manifold with sectional curvature $\kappa$. We refer to \cite{BridsonHaefliger} for basic properties of such spaces. 

A metric space $(X,\dist)$ is geodesically complete if every local geodesic $\gamma \colon [a,b] \to X$ admits an extension $\tilde{\gamma}\colon [a-\varepsilon,b+\varepsilon]\to X$, $\tilde{\gamma}|_{[a,b]} = \gamma$, which is a local geodesic.

A metric space $(X,\dist)$ is a $\GCBA$ space if it is separable, geodesically complete, locally compact and if for every point $x\in X$ there exist $\kappa_x\in \setR$ and $0< r_x \le D_{\kappa_x}$ such that $\overline{B}(x,r_x)$ is $\CAT(\kappa_x)$. The last property is generally called CBA: curvature bounded above. The G stands for "geodesically complete".
For every point $x$ in a $\GCBA$ space $(X,\dist)$, there exists an integer $n_x\in \setN$ such that the Hausdorff dimension of every small ball around $x$ is exactly $n_x$ (\cite[Theorem 1.2]{LytchakNagano2019_GCBA}). The quantity $n_x$ is called the \emph{dimension} of $x$. 

A $\GCBA$ space $(X,\dist)$ is said to be \emph{purely $n$-dimensional}, $n\in \setN \setminus\{0\}$, if the dimension of every point is $n$.
The Hausdorff $n$-measure $\haus^n$ on a purely $n$-dimensional $\GCBA$ space is  positive on open sets and locally finite (\cite[Theorem 1.4]{LytchakNagano2019_GCBA}). The {\em space of directions} at $x \in X$ is 
$$\Sigma_xX := \{\gamma\colon [0,\varepsilon] \to X\,:\, \gamma \text{ geodesic with } \gamma(0)=x\}/\sim,$$
where $\gamma \sim \gamma'$ if and only if $\angle_x(\gamma,\gamma') = 0$; for the definition of the {\em Alexandrov angle} between two geodesics, we refer to \cite[Definition I.1.12]{BridsonHaefliger}. 
The space $\Sigma_x X$ equipped with the distance $\angle_x(\cdot,\cdot)$ is a compact, geodesically complete, $\CAT(1)$ space of diameter $\pi$ (\cite[Corollary 5.9]{LytchakNagano2019_GCBA}), or empty if $x$ is isolated. 
The Euclidean cone over the space of directions, namely $T_x  X := \textup{Cone}(\Sigma_x X) = [0,+\infty)\times \Sigma_x X /_\sim$, $(0,v)\sim(0,w)$ for every $v,w\in \Sigma_x X$, with the corresponding metric $\dist_T$ (see \cite[Definiton I.5.6]{BridsonHaefliger}), is a proper, geodesically complete, $\CAT(0)$ space which is called the {\em tangent space} at $x$. Let $x\in X$, $k\in \setN$ and $\delta > 0$. A $k$-tuple $\{v_1,\ldots,v_k\} \subseteq \Sigma_x X$ is called \emph{$\delta$-spherical} if there exists another $k$-tuple $\{\bar{v}_1,\ldots,\bar{v}_k\} \subseteq \Sigma_x X$ such that:
\begin{itemize}
    \item for every $i=1,\ldots,k$, $\angle_x(v_i,w) + \angle_x(w,\bar{v}_i) < \pi + \delta$ for every $w\in \Sigma_x X$;
    \item for every $1\le i\ne j \le k$, $\angle_x(v_i,v_j), \angle_x(v_i,\bar{v}_j), \angle_x(\bar{v}_i,\bar{v}_j) < \pi/2 + \delta$.
\end{itemize}
A point $x\in X$ is said to be \emph{$(k,\delta)$-strained} if $\Sigma_x X$ has a $\delta$-spherical $k$-tuple. We define the set of $(k,\delta)$-\emph{strained points} as
$$X_{k,\delta} := \{x\in X\,:\, \text{$x$ is $(k,\delta)$-strained}\}.$$
In the next result, we express the structure theory developed in \cite{LytchakNagano2019_GCBA} with the terminology introduced in this paper. We will focus only on purely dimensional spaces.

\begin{theorem}
\label{theo:GCBA_basic_properties}
    Let $(X,\dist)$ be a purely $n$-dimensional $\GCBA$ space. Then $(X,\dist)$ is $n$-regular and $n$-rectifiable. Moreover, $\lambda_X \equiv 1$ and $\haus^n\rstr X$ is locally Ahlfors $n$-regular. Finally, $\mathcal{S}(X)$ is $(n-1)$-rectifiable and $\haus^{n-1}\rstr \mathcal{S}(X)$ is locally finite.
\end{theorem}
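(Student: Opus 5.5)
The proof is essentially a translation of the Lytchak--Nagano structure theory into the paper's terminology. The main input is the decomposition of a purely $n$-dimensional $\GCBA$ space into the open set of $(n,\delta)$-strained points $X_{n,\delta}$ and its complement. I would organize the argument in four steps.

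\textbf{Step 1: local measure bounds.} By \cite[Theorem 1.4]{LytchakNagano2019_GCBA}, $\haus^n$ is locally finite and positive on open sets. The volume estimates there (upper bounds from $\CAT(\kappa)$ comparison and the logarithmic maps, lower bounds from geodesic completeness) give local Ahlfors $n$-regularity. In particular $\haus^n$ is locally doubling and lower Ahlfors regular. Lemma \ref{lemma: lower-ahlfors-no-porous} then shows that porous sets are $\haus^n$-negligible, which is condition (ii) of Definition \ref{def: good-metric-mfld}.

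\textbf{Step 2: rectifiability and the tangential function.} For small $\delta$, $X_{n,\delta}$ is open and dense, and each of its points has a neighbourhood that is bi-Lipschitz to an open subset of $\setR^n$ through a strainer map (distance coordinates). Its complement has Hausdorff dimension at most $n-1$, so it is $\haus^n$-null. This gives $n$-rectifiability of $X$ and the inclusion $X_{n,\delta}\subseteq \mathcal M(X)$. Hence $\mathcal M(X)$ is dense and $\haus^n(\mathcal S(X))=0$.

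For $\lambda_X\equiv1$: at $\haus^n$-a.e.\ point the tangent space $T_xX$ is $\setR^n$ (\cite{LytchakNagano2019_GCBA}, regular points have full measure). Combined with Lemma \ref{lemma:tangential_function}, this shows that the approximate tangent is Euclidean almost everywhere, so $\lambda_X=1$ a.e.

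\textbf{Step 3: degree-regularity of $\mathcal M(X)$.} The strainer maps on $X_{n,\delta}$ are bi-Lipschitz homeomorphisms onto open sets of $\setR^n$, so they are ETR charts with a single compact piece locally. Lemma \ref{lemma:ETR=>Control_of_the_degree} makes $X_{n,\delta}\cap\mathcal M(X)$ degree-regular. Since $\haus^n(\mathcal M(X)\setminus X_{n,\delta})=0$, Lemma \ref{lemma:degree-regularity_from_submanifold_to_manifold} upgrades this to all of $\mathcal M(X)$. Together with Step 2, this gives $n$-regularity.

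\textbf{Step 4: the singular set.} We have $\mathcal S(X)\subseteq X\setminus X_{n,\delta}$. The main obstacle is showing that this set is $(n-1)$-rectifiable with locally finite $\haus^{n-1}$ measure. By \cite{LytchakNagano2019_GCBA}, $X\setminus X_{n,\delta}$ splits into $(n-1,\delta)$-strained points that are not $n$-strained, plus a set of dimension at most $n-2$. Near each $(n-1,\delta)$-strained point, the $(n-1)$-strainer map is Lipschitz and, on the non-$n$-strained part, bi-Lipschitz onto its image in $\setR^{n-1}$; this gives rectifiability.

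For local finiteness, I would first try the coarea inequality (Lemma \ref{lemma:coarea_inequality}) applied to the strainer map: the fibers meet the non-$n$-strained set in uniformly finitely many points, a count bounded by the local branching bound coming from the volume bounds. The $(n-2)$-dimensional remainder is $\haus^{n-1}$-null, so it does not affect the estimate. If that route is delicate, I would instead cite the corresponding statement in \cite{LN:JEMS22} on the codimension-one structure of the singular set.
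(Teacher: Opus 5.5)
Your Steps 1--3 follow the paper's proof. $X_{n,\delta}$ is an open, dense, full-measure, locally bi-Lipschitz (hence ETR) manifold. Porous sets are null by Lemma~\ref{lemma: lower-ahlfors-no-porous}. Lemma~\ref{lemma:ETR=>Control_of_the_degree} followed by Lemma~\ref{lemma:degree-regularity_from_submanifold_to_manifold} gives degree-regularity of $\mathcal M(X)$. Finally, $\lambda_X=1$ comes from Euclidean tangents almost everywhere via Lemma~\ref{lemma:tangential_function}.

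Two citation-level points on these steps:
\begin{itemize}
\item Lemma~\ref{lemma:tangential_function} is about \emph{measured} Gromov--Hausdorff tangents. You therefore need that $T_xX$ is the blow-up limit and that the rescaled measures converge. The paper combines Theorems 1.2, 1.4 and 12.1 of \cite{LytchakNagano2019_GCBA} for this.
\item For the global Ahlfors bounds, the paper takes the upper bound from \cite[Corollary 11.8]{LytchakNagano2019_GCBA} and the uniform lower bound from \cite[Theorem 3.1]{CavallucciSambusetti2022}, not from \cite[Theorem 1.4]{LytchakNagano2019_GCBA}. For the porosity condition alone, the lower bound on $X_{n,\delta}$ suffices, and there it is immediate from the bi-Lipschitz charts.
\end{itemize}

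The genuine gap is the local finiteness of $\haus^{n-1}\rstr\mathcal S(X)$ in Step 4.

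\emph{The coarea route cannot work.} Lemma~\ref{lemma:coarea_inequality} with $s=t=n-1$ reads $\int^*\#(F^{-1}(y)\cap A)\,d\haus^{n-1}(y)\le \Lip(F)^{n-1}\haus^{n-1}(A)$. It bounds the multiplicity integral \emph{by} $\haus^{n-1}(A)$, so it can only yield lower bounds on $\haus^{n-1}(A)$, never the upper bound you need. What does work chartwise is the trivial estimate $\haus^{n-1}(A)\le L^{n-1}\haus^{n-1}(F(A))$. It follows from the bi-Lipschitz property of the strainer map on the non-$n$-strained part that you already invoke for rectifiability. That property also makes the multiplicity at most one, so no branching count is needed.

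\emph{Chart bounds do not give local finiteness.} For compact $K$, the set $K\cap(X_{n-1,\delta'}\setminus X_{n,\delta})$ is not compact, and there is no a priori lower bound on the size of strainer neighbourhoods near $X\setminus X_{n-1,\delta'}$. Covering it may therefore require infinitely many charts, whose contributions need not be summable. Knowing only $\dim_H(X\setminus X_{n-1,\delta'})\le n-2$ gives no control on that sum, so a scale-invariant estimate is required.

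That estimate is exactly what the paper imports: \cite[Proposition 10.6]{LytchakNagano2019_GCBA} gives both the $(n-1)$-rectifiability and the local finiteness of $\haus^{n-1}$ on $X\setminus X_{n,\delta}\supseteq\mathcal S(X)$. This is the statement your fallback citation should point to.
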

\begin{proof}
    Let $\delta \le 1/50n^2$. By \cite[Corollary 11.8]{LytchakNagano2019_GCBA}, see also the proof of \cite[Theorem 1.2]{LytchakNagano2019_GCBA},  the set $X_{n,\delta}$ is open and dense in $X$ and it is locally bi-Lipschitz homeomorphic to open subsets of $\setR^n$. Therefore, $(X_{n,\delta},\dist)$ is a metric $n$-manifold that satisfies the ETR property of Definition \ref{defin:ETR}. By \cite[Section 12.5]{LytchakNagano2019_GCBA}, $\haus^n\rstr X_{n,\delta}$ is locally Ahlfors $n$-regular, hence porous sets in $X_{n,\delta}$ have $\haus^n$-measure zero, by Lemma \ref{lemma: lower-ahlfors-no-porous}. By Lemma \ref{lemma:ETR=>Control_of_the_degree}, $X_{n,\delta}$ is a degree-regular metric $n$-manifold. By \cite[Corollary 11.8]{LytchakNagano2019_GCBA}, $\haus^{n}\rstr (X\setminus X_{n,\delta}) = 0$. Then $(\mathcal{M}(X), \dist)$ is a degree-regular metric $n$-manifold by Lemma \ref{lemma:degree-regularity_from_submanifold_to_manifold}. So $(X,\dist)$ is $n$-regular. The same properties of $X_{n,\delta}$ imply that $(X,\dist)$ is $n$-rectifiable.
    A combination of \cite[Theorem 1.2, Theorem 1.4 and Theorem 12.1]{LytchakNagano2019_GCBA} gives that the measured Gromov-Hausdorff tangent space at $\haus^n$-a.e. point of $X$ is isometric to $\setR^n$. Therefore, $\lambda_X(x) = 1$ for $\haus^n$-a.e. $x\in X$ by Lemma \ref{lemma:tangential_function}. The measure $\haus^n\rstr X$ is locally upper Ahlfors $n$-regular by \cite[Corollary 11.8]{LytchakNagano2019_GCBA}, while it is locally lower Ahlfors $n$-regular by \cite[Theorem 3.1]{CavallucciSambusetti2022}.

    Since $\mathcal{S}(X) = X\setminus \mathcal{M}(X) \subseteq X\setminus X_{n,\delta}$ and since the latter is $(n-1)$-rectifiable by \cite[Proposition 10.6]{LytchakNagano2019_GCBA}, then $\mathcal{S}(X)$ is $(n-1)$-rectifiable. The same reference shows that $\haus^{n-1}\rstr \mathcal{S}(X) \le \haus^{n-1}\rstr (X\setminus X_{n,\delta})$ is locally finite.
\end{proof}

\subsection{Wall points of $\GCBA$ spaces}

Let $(X,\dist)$ be a purely $n$-dimensional $\GCBA$ space. A point $x\in X$ is called a \emph{wall point} if $T_xX$ splits a $\setR^{n-1}$-factor but it is not equal to $\setR^n$. The set of wall points of $X$ will be denoted by $\mathcal{W}(X)$. Notice that $\mathcal{W}(X)$ has the same definition of the set $\mathcal{S}^{n-1}(X) \setminus \mathcal{S}^{n-2}(X)$ defined in the $\RCD$ setting.

\begin{remark}
    \label{rmk:GCBA_open_subsets}
    Let $(X,\dist)$ be a purely $n$-dimensional $\GCBA$ space and let $U\subseteq X$ be an open subset. Then $(U,\dist)$ is a purely $n$-dimensional $\GCBA$ space. Moreover, $\mathcal{M}(U) = U\cap \mathcal{M}(X)$ and $\mathcal{W}(U) = U\cap \mathcal{W}(X)$. Indeed, all these sets are defined via local or infinitesimal properties.
\end{remark}

Wall points should be thought of as the set of codimension $1$ singular points. This point of view can be formalized as follows. Given $\delta,\delta^* > 0$, we define
$$\mathcal{S}^{n-1}_{\delta,\delta^*}(X) := X_{n-1,\delta} \setminus X_{n,\delta^*},$$
that we call the \emph{codimension one $(\delta,\delta^*)$-singular set}. 
A fine geometric analysis of the sets $\mathcal{W}(X)$ and $\mathcal{S}^{n-1}_{\delta,\delta^*}(X)$ for suitable small $\delta,\delta^*$ has been done in \cite{Nagano2026}. We summarize here the properties we need, providing some improvements on the original statements. 

\begin{theorem}
\label{theo:GCBA_characterization_no_wall_points}
    Let $n\in \setN$. Then, there exists $\delta^* = \delta^*(n) > 0$ satisfying the following property.
    Let $(X,\dist)$ be a purely $n$-dimensional $\GCBA$ space. Then $\mathcal{W}(X)= \bigcap_{\delta > 0} \mathcal{S}^{n-1}_{\delta,\delta^*}(X)$ and $\haus^{n-1}(X \setminus (\mathcal{M}(X)\cup \mathcal{W}(X))) = 0$. 
    Moreover, the following properties are equivalent.
    \begin{itemize}
        \item[(i)] $\mathcal{W}(X) = \emptyset$.
        \item[(ii)] $X\setminus\mathcal{M}(X)$ has Hausdorff dimension at most $n-2$.
        \item[(iii)] $X \setminus X_{n,\delta^*}$ has Hausdorff dimension at most $n-2$.
    \end{itemize}
    Finally, $\mathcal{M}(X) \cap \mathcal{W}(X) = \emptyset$.
\end{theorem}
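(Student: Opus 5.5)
This statement mostly repackages Nagano's analysis of wall singularities \cite{Nagano2026}, combined with the Lytchak--Nagano structure theory \cite{LytchakNagano2019_GCBA}. The plan is to fix $\delta^*=\delta^*(n)$ smaller than the threshold $1/50n^2$ of Theorem~\ref{theo:GCBA_basic_properties} and small enough for Nagano's results to apply. The argument then proceeds in four steps.

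\textbf{Step 1: the identity for $\mathcal W(X)$.} First I prove $\mathcal{W}(X)=\bigcap_{\delta>0}\mathcal{S}^{n-1}_{\delta,\delta^*}(X)$.
\begin{itemize}
\item[$\subseteq$:] If $T_xX$ splits $\setR^{n-1}$, then $\Sigma_xX$ is a spherical join $\mathbb S^{n-2}*\Sigma'$. This gives exact spherical $(n-1)$-tuples, so $x\in X_{n-1,\delta}$ for every $\delta>0$. If moreover $T_xX\neq\setR^n$, then $\Sigma'$ is a finite set of at least three points, by geodesic completeness and diameter $\pi$. In that case no $\delta^*$-spherical $n$-tuple exists once $\delta^*$ is small, since such a tuple would force $\Sigma'$ to have exactly two antipodal points. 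Hence $x\notin X_{n,\delta^*}$.
\item[$\supseteq$:] If $x\in X_{n-1,\delta}$ for all $\delta$, compactness of $\Sigma_xX$ lets the $\delta$-spherical tuples subconverge to an exactly spherical $(n-1)$-tuple. By the Lytchak--Nagano rigidity this forces an $\setR^{n-1}$ splitting of $T_xX$. Since $x\notin X_{n,\delta^*}$, the tangent space $T_xX$ is not $\setR^n$, so $x\in\mathcal W(X)$.
\end{itemize}
The same splitting argument shows $\mathcal M(X)\cap\mathcal W(X)=\emptyset$. Indeed, a manifold point has a homology-manifold link, so $\Sigma_xX$ is a homology $(n-1)$-sphere. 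A join $\mathbb S^{n-2}*\Sigma'$ with $\#\Sigma'\ge3$ has $H_{n-1}$ of rank at least $2$, a contradiction.

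\textbf{Step 2: the measure statement.} To show $\haus^{n-1}(X\setminus(\mathcal M\cup\mathcal W))=0$, I split the set into two pieces.
\begin{itemize}
\item The points not in any $X_{n-1,\delta}$ lie in $X\setminus X_{n-1,\delta}$. This set has Hausdorff dimension at most $n-2$ by \cite[Proposition 10.6/Theorem 1.2]{LytchakNagano2019_GCBA}.
\item The remaining points lie in $\mathcal S^{n-1}_{\delta,\delta^*}\setminus\mathcal W$. By Step 1, for each such point there is $\delta$ with $x\notin X_{n-1,\delta}$, so it also sits in the union over $\delta$ of $X\setminus X_{n-1,\delta}$. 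I expect this to need Nagano's result that $\mathcal S^{n-1}_{\delta,\delta^*}\setminus\mathcal W$ is $\haus^{n-1}$-null, or equivalently that the non-wall, non-manifold part sits in the dimension-$\le n-2$ stratum.
\end{itemize}
Points of $X_{n,\delta^*}$ are manifold points, by the bi-Lipschitz charts of Theorem~\ref{theo:GCBA_basic_properties}, so they contribute nothing.

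\textbf{Step 3: the equivalences.} I prove them around the cycle (iii)$\Rightarrow$(ii)$\Rightarrow$(i)$\Rightarrow$(iii).
\begin{itemize}
\item[(iii)$\Rightarrow$(ii):] This holds because $X\setminus\mathcal M(X)\subseteq X\setminus X_{n,\delta^*}$.
\item[(ii)$\Rightarrow$(i):] I need that a nonempty wall set has positive $\haus^{n-1}$-measure. For this I would use Nagano's local structure near a wall point: there the set $\mathcal S^{n-1}_{\delta,\delta^*}$ is a bi-Lipschitz $(n-1)$-manifold branching locally. Together with $\mathcal M\cap\mathcal W=\emptyset$, a wall point then has a neighbourhood containing a positive-$\haus^{n-1}$ subset of $X\setminus\mathcal M(X)$, contradicting (ii).
\item[(i)$\Rightarrow$(iii):] If $\mathcal W=\emptyset$, then by Step 1 every point of $X\setminus X_{n,\delta^*}$ fails to lie in $X_{n-1,\delta}$ for some $\delta$. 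Hence $X\setminus X_{n,\delta^*}\subseteq\bigcup_m (X\setminus X_{n-1,1/m})$, which is a countable union of sets of dimension at most $n-2$.
\end{itemize}

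\textbf{Main obstacle.} The hardest parts are the two uses of Nagano's analysis: the non-emptiness-implies-positive-measure claim for walls in (ii)$\Rightarrow$(i), and the measure statement in Step 2. For both I would quote \cite{Nagano2026} directly, checking that its smallness constants can be chosen to depend only on $n$.
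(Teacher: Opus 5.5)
Your proposal is correct. It follows the paper's structure, but it relies on \cite{Nagano2026} much less.

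\textbf{How the paper argues.} The paper quotes Nagano for most of the work:
\begin{itemize}
\item the inclusion $\mathcal{W}(X)\subseteq\mathcal{S}^{n-1}_{\delta,\delta^*}(X)$ is his Lemma~5.9;
\item (i)$\Leftrightarrow$(iii) is his Theorem~1.5, and (i)$\Leftrightarrow$(ii) is his Theorem~1.2;
\item $\mathcal{M}(X)\cap\mathcal{W}(X)=\emptyset$ follows by applying (i)$\Leftrightarrow$(ii) to the $\GCBA$ space $\mathcal{M}(X)$.
\end{itemize}
Only two pieces are argued directly, exactly as in your Steps~1 and~2: the reverse inclusion, via \cite[Proposition 6.6]{LytchakNagano2019_GCBA}, and the $\haus^{n-1}$-null statement.

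\textbf{How your route differs.}
\begin{itemize}
\item You get the inclusion from an explicit computation in $\mathbb{S}^{n-2}*\{p_1,\dots,p_\ell\}$.
\item You deduce (i)$\Rightarrow$(iii) from the identity for $\mathcal{W}(X)$ together with $\dim_H(X\setminus X_{n-1,1/m})\le n-2$.
\item You get (iii)$\Rightarrow$(ii) from $X_{n,\delta^*}\subseteq\mathcal{M}(X)$. Your choice of $\delta^*$ makes this inclusion explicit; the paper uses it tacitly.
\item You prove disjointness through link homology. Small balls are contractible and punctured small balls are homotopy equivalent to $\Sigma_xX$, so $\tilde H_{n-1}(\Sigma_xX)\cong\mathbb{Z}$ at a manifold point. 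The $(n-1)$-fold suspension of $\ell\geq 3$ points has $\tilde H_{n-1}\cong\mathbb{Z}^{\ell-1}$ instead.
\end{itemize}
Your version needs Nagano's wall analysis only for (ii)$\Rightarrow$(i), that is, for the fact that a wall point forces an $(n-1)$-dimensional set of non-manifold points nearby. The paper's version is shorter, but it hides which parts genuinely depend on that analysis.

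\textbf{Your doubt in Step 2 is unfounded.} You have already shown
\[
X\setminus(\mathcal{M}(X)\cup\mathcal{W}(X))\subseteq\bigcup_m(X\setminus X_{n-1,1/m}).
\]
This is a countable union of $\haus^{n-1}$-null sets, since monotonicity in $\delta$ lets you restrict to $\delta=1/m$. That is the paper's entire argument, and it needs nothing from Nagano.

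The same nullity of $\mathcal{S}^{n-1}_{\delta,\delta^*}(X)\setminus\mathcal{W}(X)$ is what you need in (ii)$\Rightarrow$(i). It lets you pass from positive measure of $\mathcal{S}^{n-1}_{\delta,\delta^*}(X)$ to positive measure of $\mathcal{W}(X)\subseteq X\setminus\mathcal{M}(X)$.

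There is also a cleaner way to do (ii)$\Rightarrow$(i). Nagano's model neighbourhood $\setR^{n-1}\times T_\ell$ (his Theorem~1.1, which the paper uses later) has a spine of non-manifold points of topological dimension $n-1$. Szpilrajn's inequality then gives $\dim_H(X\setminus\mathcal{M}(X))\geq n-1$, contradicting (ii).

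\textbf{Uniformity in $\ell$.} In the join computation, the smallness of $\delta^*$ must be uniform in $\ell$. Your phrase ``would force $\Sigma'$ to have exactly two antipodal points'' does not show this. It does hold, for the following reason.
\begin{itemize}
\item For $\ell\geq 3$, test the condition for $(v_i,\bar v_i)$ against a suspension point $p_j$ different from those toward which $v_i$ and $\bar v_i$ are tilted.
\item This gives $\angle(v_i,p_j)+\angle(p_j,\bar v_i)=\pi+t_i+\bar t_i$, where $t_i,\bar t_i$ are the tilts. So all $2n$ directions lie within $\delta^*$ of the $\mathbb{S}^{n-2}$ factor.
\item Testing against a point at distance $\pi$ from $v_i$ gives $\angle(v_i,\bar v_i)>\pi-\delta^*$. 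Combined with the upper bounds on the angles $\angle(v_i,v_j)$ and $\angle(v_i,\bar v_j)$, this yields $n$ almost orthonormal directions in $\setR^{n-1}$.
\item That is impossible once $\delta^*<c(n)$.
\end{itemize}
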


\begin{proof}
    Let $\delta^* = \delta^*(n) >0$ be such that \cite[Theorem 1.5 and Lemma 5.9]{Nagano2026} hold true. Applying \cite[Theorem 1.5]{Nagano2026} we get the equivalence between (i) and (iii). Moreover, (i) is equivalent to (ii) because of \cite[Theorem 1.2]{Nagano2026}. Applying \cite[Lemma 5.9]{Nagano2026} we conclude that $\mathcal{W}(X) \subseteq \mathcal{S}_{\delta,\delta^*}^{n-1}(X)$ for every $0<\delta <\delta^*$, hence $\mathcal{W}(X) \subseteq \bigcap_{\delta > 0} \mathcal{S}^{n-1}_{\delta,\delta^*}(X)$. If $x\in \bigcap_{\delta > 0} \mathcal{S}^{n-1}_{\delta,\delta^*}(X)$, then $\Sigma_x X$ admits a $\delta$-spherical $(n-1)$-tuple for every $\delta > 0$. By \cite[Proposition 6.6]{LytchakNagano2019_GCBA}, \(T_xX\) splits an \(\mathbb R^{n-1}\)-factor. Since \(x\notin X_{n,\delta_*}\), its tangent cannot be isometric to \(\mathbb R^n\). Hence \(x\in \mathcal{W}(X)\). This shows that $\mathcal{W}(X) = \bigcap_{\delta > 0} \mathcal{S}^{n-1}_{\delta,\delta^*}(X)$. Finally, for every $0<\delta \le \delta^*$, we have 
    $$X \setminus (\mathcal{M}(X) \cup \mathcal{S}^{n-1}_{\delta,\delta^*}(X)) \subseteq X \setminus (X_{n,\delta^*} \cup \mathcal{S}^{n-1}_{\delta,\delta^*}(X)) \subseteq X\setminus X_{n-1,\delta}.$$
    By \cite[Proposition 10.6]{LytchakNagano2019_GCBA} we have that
    $$\haus^{n-1}(X \setminus (\mathcal{M}(X) \cup \mathcal{S}^{n-1}_{\delta,\delta^*}(X))) \le \haus^{n-1}(X\setminus X_{n-1,\delta}) = 0.$$
    Therefore, 
    $$\haus^{n-1}(X \setminus (\mathcal{M}(X) \cup \mathcal{W}(X))) = \lim_{k\to \infty} \haus^{n-1}(X \setminus (\mathcal{M}(X) \cup \mathcal{S}^{n-1}_{1/k,\delta^*}(X))) = 0.$$
    The space $\mathcal{M}(X)$ is open in $X$, hence it is a purely $n$-dimensional $\GCBA$ space by Remark \ref{rmk:GCBA_open_subsets}. Applying the equivalence between (i) and (ii) to the space $(\mathcal{M}(X),\dist)$ we deduce that $\mathcal{M}(X) \cap \mathcal{W}(X) = \mathcal{W}(\mathcal{M}(X)) = \emptyset$.
\end{proof}

\subsection{Orientability}

We specialize the orientability theory via currents to purely $n$-dimensional $\GCBA$ spaces.

\begin{theorem}
\label{theo:characterization_orientability_GCBA}
    Let $(X,\dist)$ be a purely $n$-dimensional $\GCBA$ space. Then every orientation $T\in \mathcal{I}_{n,\loc}(X)$ satisfies $\norm{T}=\haus^n$,  $\partial T \in \bM_{n-1,\loc}(X)$, $T\in \bI_{n,\loc}(X)$ and $\norm{\partial T} \ll \haus^{n-1} \rstr \mathcal{W}(X)$. In particular, if $\mathcal{W}(X) = \emptyset$ then $\partial T = 0$.
\end{theorem}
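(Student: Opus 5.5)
The plan is to combine Theorem \ref{theo:GCBA_basic_properties} with the general boundary criteria of the earlier sections.

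First, the mass identity. By Theorem \ref{theo:GCBA_basic_properties}, $X$ is $n$-rectifiable with $\lambda_X\equiv1$. Its rectifiable part is therefore $X$ up to a null set, and every orientation satisfies $\norm{T}=\lambda_E\haus^n\rstr E=\haus^n$.

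Second, local finiteness of $\partial T$. I would invoke Corollary \ref{cor:boundary_finite_mass_rectifiable}, whose hypotheses are exactly supplied by Theorem \ref{theo:GCBA_basic_properties}:
\begin{itemize}
\item local upper Ahlfors regularity gives local boundedness of $\Theta^n(\cdot,\haus^n)$ on $\mathcal S(X)$;
\item $\mathcal S(X)$ is $(n-1)$-rectifiable;
\item $\haus^{n-1}\rstr\mathcal S(X)$ is locally finite.
\end{itemize}
This yields $\partial T\in\bM_{n-1,\loc}(X)$, hence $T\in\bN_{n,\loc}(X)\cap\mathcal I_{n,\loc}(X)=\bI_{n,\loc}(X)$ by the boundary rectifiability theorem.

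Third, the absolute continuity $\norm{\partial T}\ll\haus^{n-1}\rstr\mathcal W(X)$. By Lemma \ref{lemma:mass-measure-abso-cont} applied in dimension $n-1$, $\norm{\partial T}\ll\haus^{n-1}$. Since $\spt(\partial T)\subseteq\mathcal S(X)$ (Remark \ref{rmk:n-regular_orientation}), we get $\norm{\partial T}(X\setminus\mathcal S(X))=0$. Theorem \ref{theo:GCBA_characterization_no_wall_points} gives
\[
\haus^{n-1}\bigl(X\setminus(\mathcal M(X)\cup\mathcal W(X))\bigr)=0,
\]
that is, $\haus^{n-1}(\mathcal S(X)\setminus\mathcal W(X))=0$. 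Now let $A$ be Borel with $\haus^{n-1}(A\cap\mathcal W(X))=0$. Then $A\cap\mathcal S(X)$ is $\haus^{n-1}$-null, so $\norm{\partial T}(A\cap\mathcal S(X))=0$. Also $\norm{\partial T}(A\setminus\mathcal S(X))=0$. Hence $\norm{\partial T}(A)=0$, which is the claimed absolute continuity.

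For the last assertion, assume $\mathcal W(X)=\emptyset$. Then $\norm{\partial T}=0$, so $\partial T=0$. Alternatively, Theorem \ref{theo:GCBA_characterization_no_wall_points}(ii) gives $\dim_H\mathcal S(X)\le n-2$, and Theorem \ref{theo:boundary_zero_mass} applies directly.

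There is no real obstacle here. The only delicate point is measurability: $\mathcal W(X)$ should be Borel, or one should phrase the null-set argument with $\haus^{n-1}$ as an outer measure. Both are fine, since $\mathcal W(X)$ is a countable intersection of differences of open sets by Theorem \ref{theo:GCBA_characterization_no_wall_points}.
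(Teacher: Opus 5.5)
Your proposal is correct and follows the paper's proof essentially step by step. The mass identity comes from $n$-rectifiability and $\lambda_X\equiv1$, local finiteness of $\partial T$ comes from Corollary \ref{cor:boundary_finite_mass_rectifiable} with its hypotheses supplied by Theorem \ref{theo:GCBA_basic_properties}, and absolute continuity comes from Lemma \ref{lemma:mass-measure-abso-cont}, $\spt(\partial T)\subseteq\mathcal S(X)$ and $\haus^{n-1}(\mathcal S(X)\setminus\mathcal W(X))=0$ from Theorem \ref{theo:GCBA_characterization_no_wall_points}. Your measurability caveat is harmless but unnecessary: the null-set argument only uses subadditivity of $\haus^{n-1}$ and the Borel set $A\cap\mathcal S(X)$, with $\mathcal S(X)$ closed.
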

\begin{proof}
    The space  $(X,\dist)$ is $n$-regular, $n$-rectifiable and $\lambda_X\equiv 1$ by Theorem \ref{theo:GCBA_basic_properties}. Therefore, by definition, $\norm{T}= \haus^n$. Theorem \ref{theo:GCBA_basic_properties} says that the hypotheses of Corollary \ref{cor:boundary_finite_mass_rectifiable} are satisfies, so $\partial T \in \bM_{n,\loc}(X)$ and $T\in \bI_{n,\loc}(X)$ for every $T\in \mathcal{I}_{n,\loc}(X)$ as in (i). Since $\partial T\in \bM_{n-1,\loc}(X)$ and $\spt(\partial T) \subseteq \mathcal{S}(X)$, then $\norm{\partial T} \ll \haus^{n-1}\rstr \mathcal{S}(X)$ by Lemma \ref{lemma:mass-measure-abso-cont}. Theorem \ref{theo:GCBA_characterization_no_wall_points} gives that
    $$0=\haus^{n-1}((X \setminus \mathcal{M}(X)) \cap (X\setminus \mathcal{W}(X))) = \haus^{n-1}(\mathcal{S}(X)\setminus \mathcal{W}(X)),$$
    hence $\norm{\partial T} \ll \haus^{n-1}\rstr \mathcal{W}(X)$. If $\mathcal{W}(X) = \emptyset$, then $\norm{\partial T} = 0$, hence $\partial T = 0$.    
\end{proof}

Example \ref{ex:n-regular_three_hemispheres} shows a purely $2$-dimensional, orientable, $\GCBA$ space such that the boundary of every orientation is not zero, and also a purely $2$-dimensional, orientable, $\GCBA$ space with an orientation with trivial boundary. In particular, the equivalent of Theorem \ref{theo:RCD-boundary-mass} for $\GCBA$ spaces, with $\mathcal{W}(X)$ replacing $\partial X$, cannot be true. Indeed, the space $\mathcal{W}(X)$ should not be thought as a boundary, since the space is locally geodesically complete.

\subsection{Local Poincaré inequality on locally uniquely orientable $\GCBA$ spaces}

All the pathological examples of Section \ref{sec:n-regular} are $\GCBA$ spaces. As explained in Section \ref{subsec:n-regular_loc_uniq_orientable}, the expected homological properties can be recovered for locally uniquely orientable $n$-regular spaces.

\begin{example}[Homology manifolds are locally uniquely orientable]
\label{ex:CBA_hom_mfds_are_loc_uniq_orientable}
Every separable $\CBA$ homology $n$-manifold is locally uniquely orientable. First of all, every homology $n$-manifold is locally compact by definition. If it has a $\CBA$ metric, then it is automatically geodesically complete, hence $\GCBA$ spaces, and purely $n$-dimensional (see \cite[Lemma 2.15]{CavallucciSambusetti2026}). Every homology $n$-manifold is locally orientable in the sense of homology manifolds by \cite[Theorem 16.15]{bredon-book}. Every submanifold of an orientable homology manifold is orientable, so $X$ is locally orientable as $n$-regular space. By \cite[Theorem 1.2]{LN:JEMS22}, $\mathcal{M}(X)$ is locally connected. Therefore, Corollary \ref{cor:n-regular_unique_orientability} implies that $X$ is locally uniquely orientable.
\end{example}

The following class contains examples that are not homology manifolds.

\begin{example}[A locally uniquely orientable $\CAT(1)$ space which is not a homology manifold]
\label{ex:GCBA_loc_uniq_orient_not_mfd}
    Let $\mathbb{T}^2 = \mathbb{S}^1_{2\pi} \times \mathbb{S}^1_{2\pi}$, where $\mathbb{S}^1_{2\pi}$ has length $2\pi$. Let $X = \mathbb{T}^2 \ast \mathbb{S}^0$ be the spherical suspension over it. Since $\mathbb{T}^2$ is $\CAT(1)$ then $X$ is $\CAT(1)$ as well by \cite[Corollary II.3.15]{BridsonHaefliger}. It is also compact and purely $3$-dimensional. It is not a homology manifold by \cite[Lemma 3.1]{LN:JEMS22}. Indeed, $H_1^\textup{sing}(\Sigma_{p_\pm} X) \cong H_1^\textup{sing}(\mathbb{T}^2) \cong \mathbb{Z}^2$, where $p_\pm$ are the two suspension points. Observe that $\mathcal{M}(X) = X \setminus \{p_\pm\}$. Corollary \ref{cor:n-regular_unique_orientability} shows that $X$ is locally uniquely orientable. 

    For every $n\ge 4$, by taking $X' = X \ast \mathbb{S}^{n-4}$ we construct a compact, locally uniquely orientable, purely $n$-dimensional $\CAT(1)$ space such that $\dim_H\mathcal{S}(X') = n-3$. These spaces are genuinely different from homology manifolds, since for them $\dim_H(\mathcal{S}(X)) = 0$ by \cite[Theorem 1.2]{LN:JEMS22}.
\end{example}

The top-degree homology of locally uniquely orientable $\GCBA$ spaces behaves like in the case of manifolds.

\begin{theorem}
\label{theo:GCBA_loc_uniq_orient_dichotomy}
    Let $(X,\dist)$ be a connected, purely $n$-dimensional $\GCBA$ \textup{LUO} space. Then $\mathcal{W}(X) = \emptyset$ and $\mathcal{M}(X)$ is connected. Moreover, one of the following two situations occurs.
    \begin{itemize}
        \item[(a)] $X$ is non-orientable and $H_n^\textup{IC}(X) = \{0\}$.
        \item[(b)] $X$ is uniquely orientable and $H_n^\textup{IC}(X) \cong \mathbb{Z}$. 
    \end{itemize}
\end{theorem}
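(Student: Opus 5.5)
The core of the argument is the claim $\mathcal{W}(X)=\emptyset$. Once we have it, everything else follows from results already in the paper. By Theorem \ref{theo:GCBA_basic_properties}, $X$ is $n$-regular, so Theorem \ref{theo:n-regular_loc_uniq_orient_dichotomy} applies directly. It gives that $\mathcal{M}(X)$ is connected, and it gives the dichotomy between case (a) and the unique orientability in case (b). To get $H_n^\textup{IC}(X)\cong\mathbb{Z}$ in case (b), we do not need $\haus^{n-1}(\mathcal{S}(X))=0$. Instead, Theorem \ref{theo:characterization_orientability_GCBA} gives $\partial T=0$ for every orientation once $\mathcal{W}(X)=\emptyset$. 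Corollary \ref{cor:n-regular_unique_orientability} then yields $H_n^\textup{IC}(X)=\langle T\rangle\cong\mathbb{Z}$. Alternatively, Theorem \ref{theo:GCBA_characterization_no_wall_points}(ii) shows that $\dim_H\mathcal{S}(X)\le n-2$, so the hypothesis of Theorem \ref{theo:n-regular_loc_uniq_orient_dichotomy}(b) holds anyway.

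To prove $\mathcal{W}(X)=\emptyset$, suppose $x\in\mathcal{W}(X)$. Take $r<r_x$ small, with $B(x,r)$ inside a $\CAT(\kappa)$ neighbourhood, so that $B:=B(x,r)$ is uniquely orientable. By Remark \ref{rmk:GCBA_open_subsets}, $B$ is a purely $n$-dimensional $\GCBA$ space with $x\in\mathcal{W}(B)$. By Corollary \ref{cor:n-regular_unique_orientability}, $\mathcal{M}(B)$ is connected and orientable, and there is an orientation $T$ of $B$. Theorem \ref{theo:characterization_orientability_GCBA} gives $T\in\bI_{n,\loc}(B)$ with $\norm{\partial T}\ll\haus^{n-1}\rstr\mathcal{W}(B)$.

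The key input is Nagano's local structure near wall points \cite{Nagano2026}. Near a wall point, $X$ locally looks like $m\ge 3$ half-space sheets glued along an $(n-1)$-dimensional topological manifold $W'\subseteq\mathcal{W}$ of positive $\haus^{n-1}$-measure. The count is $m\ge 3$ because $T_xX$ is $\setR^{n-1}$ times a cone over $m$ points, it is geodesically complete, and it is not $\setR^n$. The plan is to find, arbitrarily close to $x$, a small ball $B'$ in which $B'\setminus W'$ has exactly $m$ components, and in which $\mathcal{S}(B')$ outside $W'$ is $\haus^{n-1}$-negligible. By Corollary \ref{cor:n-regular_unique_orientability}, $\mathcal{M}(B'\setminus W')$ has $m$ components, and each component carries an orientation; as $B'$ itself has connected manifold part, these are the restrictions of $T$.

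Now compute $\partial T$ near a point of $W'$. Use a degree-one half-space chart on each sheet, argued as in the proof of Theorem \ref{theo:RCD-boundary-mass}. This shows $(\partial T)|_{W'}=\sum_{i=1}^m\varepsilon_i\br{W'}$ with $\varepsilon_i=\pm1$. The second step is to show that unique orientability forces the $m\ge3$ sheets to be glued into a single connected manifold part. But removing a codimension-one set with $m\ge 3$ local sheets lets us flip the sign on a single sheet. More carefully: take an orientation of $\mathcal{M}(B)$ and reverse it on the union of the sheets lying on one side. Globally on $B$, compare it with the local cover argument: the set $\mathcal{M}(B)$ near $W'$ decomposes into pieces, and since $\mathcal{M}(B)\cap\mathcal{W}=\emptyset$ by Theorem \ref{theo:GCBA_characterization_no_wall_points}, $\mathcal{M}(B)$ avoids $W'$. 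Its connectivity in $B$ can then only come from paths going around $W'$ inside $B$. Choosing $r$ small relative to the scale at which the sheets separate, as in the proof of Theorem \ref{theo:n-regular_loc_uniq_orient_dichotomy}, we aim to exhibit a disconnection of $\mathcal{M}(B(y,s))$ for a wall point $y$ and small $s$. That contradicts Definition \ref{defin:LUO_spaces}.

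The main obstacle is this last separation step. It requires a quantitative statement from \cite{Nagano2026}: at some point $y\in\mathcal{W}$, the ball $B(y,s)$ minus $\mathcal{W}$, and hence minus $\mathcal{S}$, is disconnected for all small $s$. The first attempt would use Nagano's bi-Lipschitz/topological description of $\mathcal{S}^{n-1}_{\delta,\delta^*}$ near $\haus^{n-1}$-almost every wall point as an $(n-1)$-manifold that locally separates. If only $\haus^{n-1}$-a.e. control is available, I would pick $y$ as such a generic wall point; this is possible because $\mathcal{W}\ne\emptyset$ forces $\haus^{n-1}(\mathcal{W})>0$ by Theorem \ref{theo:GCBA_characterization_no_wall_points}(ii).
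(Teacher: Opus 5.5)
Your reduction once $\mathcal{W}(X)=\emptyset$ is known is correct and matches the paper; both of your routes to $H_n^{\textup{IC}}(X)\cong\mathbb{Z}$ in case (b) work. Your strategy for $\mathcal{W}(X)=\emptyset$ is also the paper's: find a point at which the manifold part of every small ball is disconnected, and contradict LUO via Corollary~\ref{cor:n-regular_unique_orientability}. The gap is that you never secure this separation step. You call it ``the main obstacle'' and only guess at what \cite{Nagano2026} provides.

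The paper closes it with one precise input, \cite[Theorem 1.1]{Nagano2026}: if $\mathcal{W}(X)\neq\emptyset$, some open $U\subseteq X$ is homeomorphic to $\setR^{n-1}\times T_\ell$ with $\ell\geq 3$. Take $x=(p,o)$ and $r$ so small that $B(x,r)\subseteq U$. Points of $\setR^{n-1}\times\{o\}$ are not manifold points, so $\mathcal{M}(X)\cap B(x,r)$ is covered by the $\ell$ pairwise disjoint open sets $B(x,r)\cap(\setR^{n-1}\times e_i)$, where the $e_i$ are the open edges. Each of these is nonempty because $x$ lies in the closure of every sheet. Hence $\mathcal{M}(B(x,r))$ is disconnected for all small $r$, contradicting LUO at this single point; one point suffices since LUO holds everywhere.

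The material in your middle paragraph should be dropped. Once $B'\setminus W'$ splits into two nonempty relatively open pieces and $W'\cap\mathcal{M}(X)=\emptyset$, density of $\mathcal{M}(X)$ already disconnects $\mathcal{M}(B')$. So the computation $(\partial T)|_{W'}=\sum_i\varepsilon_i\br{W'}$, the sign flipping, and the $\haus^{n-1}$-negligibility of $\mathcal{S}(B')\setminus W'$ play no role. Likewise, ``exactly $m$ components'' is more than you can claim for a metric ball, and two suffice.

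Your fallback also contains a non sequitur. Theorem~\ref{theo:GCBA_characterization_no_wall_points} only gives that $\mathcal{W}(X)\neq\emptyset$ implies $\dim_H\mathcal{S}(X)>n-2$. That does not imply $\haus^{n-1}(\mathcal{W}(X))>0$. Positivity of $\haus^{n-1}$ on the wall set would itself come from the $(n-1)$-manifold $\setR^{n-1}\times\{o\}$ in Nagano's theorem, and at that point the generic-point argument is no longer needed.
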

\begin{proof}
    Suppose that $\mathcal{W}(X) \neq \emptyset$. By \cite[Theorem 1.1]{Nagano2026}, there exists an open subset $U\subseteq X$ homeomorphic to $\setR^{n-1}\times T_\ell$, where $T_\ell$ is the tree with one vertex $o$ and $\ell \ge 3$ infinite edges. By choosing $x = (p,o) \in U$, where $p\in \setR^{n-1}$, we have that $\mathcal{M}(X)\cap B(x,r)$ is not connected for every sufficiently small $r>0$. This contradicts the local unique orientability of $X$ at $x$, by Corollary \ref{cor:n-regular_unique_orientability}. This proves that $\mathcal{W}(X) = \emptyset$. Then $\haus^{n-1}(\mathcal{S}(X)) = 0$ by the computation in the proof of Theorem \ref{theo:characterization_orientability_GCBA}. 
    Then, the remaining statements follow by Theorem \ref{theo:n-regular_loc_uniq_orient_dichotomy}. 
\end{proof}

The following is an immediate consequence.
\begin{corollary}
    Let $(X,\dist)$ be a separable, $\CBA$, connected homology $n$-manifold. Then $\mathcal{W}(X) = \emptyset$. Moreover,
    \begin{itemize}
        \item[(a)] if $X$ is orientable as homology manifold then $H_n^{\textup{IC}}(X) \cong \mathbb{Z}$;
        \item[(b)] if $X$ is non-orientable as homology manifold then $H_n^{\textup{IC}}(X) \cong \{0\}$.
    \end{itemize}
\end{corollary}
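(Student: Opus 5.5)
The plan is to reduce everything to Theorem~\ref{theo:GCBA_loc_uniq_orient_dichotomy}, which already gives $\mathcal{W}(X)=\emptyset$ and the dichotomy for connected, purely $n$-dimensional, LUO $\GCBA$ spaces. So the work is to check that a separable, connected $\CBA$ homology $n$-manifold $X$ satisfies these hypotheses, and then to match ``orientable as a homology manifold'' with ``orientable as an $n$-regular space'', i.e.\ with orientability of $\mathcal{M}(X)$ in the sense of Definition~\ref{defin:n-regular_orientation}.

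The first step uses Example~\ref{ex:CBA_hom_mfds_are_loc_uniq_orientable}. It shows that $X$ is a purely $n$-dimensional $\GCBA$ space, via \cite[Lemma 2.15]{CavallucciSambusetti2026}, and that $X$ is LUO. Since $X$ is connected, Theorem~\ref{theo:GCBA_loc_uniq_orient_dichotomy} applies directly. It yields $\mathcal{W}(X)=\emptyset$ and that $\mathcal{M}(X)$ is connected. It also yields the alternative: either $X$ is non-orientable and $H_n^{\textup{IC}}(X)=\{0\}$, or $X$ is uniquely orientable and $H_n^{\textup{IC}}(X)\cong\mathbb{Z}$.

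It remains to identify the two notions of orientability, and this is the main obstacle.

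If $X$ is orientable as a homology manifold, the orientation sheaf is trivial. Its restriction to the open subset $\mathcal{M}(X)$ is the orientation sheaf of that topological manifold, so $\mathcal{M}(X)$ is orientable. Hence we are in case (b) and $H_n^{\textup{IC}}(X)\cong\mathbb{Z}$.

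Conversely, suppose $\mathcal{M}(X)$ is orientable. I would show that the homology orientation sheaf, a locally constant sheaf with stalks $\mathbb{Z}$, has a global section. Take the section given on $\mathcal{M}(X)$. By \cite[Theorem 1.2]{LN:JEMS22}, $\mathcal{S}(X)$ has Hausdorff dimension $0$ in homology manifolds, so in particular it has no interior. Since $\mathcal{M}(X)$ is locally connected near singular points (again \cite{LN:JEMS22}, as used in Example~\ref{ex:CBA_hom_mfds_are_loc_uniq_orientable}), on a small connected neighbourhood $U$ of a singular point the sheaf is constant. The section on the connected set $U\cap\mathcal{M}(X)$ is then a single generator, and it extends uniquely to $U$. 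Patching these extensions, as in the proof of Proposition~\ref{prop:orientability_from_submanifold_to_manifold_length}, gives a global orientation of $X$.

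Contrapositively, if $X$ is non-orientable as a homology manifold, then $\mathcal{M}(X)$ is non-orientable. Hence we are in case (a) and $H_n^{\textup{IC}}(X)=\{0\}$. The delicate point is making the local connectedness of $U\cap\mathcal{M}(X)$ precise near singular points. I would handle it by invoking the LUO property together with Corollary~\ref{cor:n-regular_unique_orientability}: LUO means small balls $B(x,r)$ are uniquely orientable, so $B(x,r)\cap\mathcal{M}(X)$ is connected.
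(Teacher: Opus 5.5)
Your proposal is correct and follows the paper's proof. Example~\ref{ex:CBA_hom_mfds_are_loc_uniq_orientable} places $X$ under Theorem~\ref{theo:GCBA_loc_uniq_orient_dichotomy}, and you identify orientability as a homology manifold with orientability of $\mathcal{M}(X)$ by extending orientation sections across the singular set, as in Proposition~\ref{prop:orientability_from_submanifold_to_manifold_length}. The only difference is cosmetic: you get connectedness of $B(x,r)\cap\mathcal{M}(X)$ from LUO and Corollary~\ref{cor:n-regular_unique_orientability} rather than citing \cite[Theorem 1.2]{LN:JEMS22} directly, which is the same input, since LUO was derived from that citation.
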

\begin{proof}
    By Example \ref{ex:CBA_hom_mfds_are_loc_uniq_orientable}, $(X,\dist)$ is a locally unique orientable, purely $n$-dimensional $\GCBA$ space. Theorem \ref{theo:GCBA_loc_uniq_orient_dichotomy} implies that $\mathcal{W}(X) = \emptyset$. Notice that the fact that $\mathcal{W}(X) = \emptyset$ can also be deduced directly from the definition of wall points. By \cite[Theorem 1.2]{LN:JEMS22}, $\mathcal{M}(X)$ is locally connected and $(\mathcal{M}(X),\dist)$ is a length space. Arguing as in the proof of Proposition \ref{prop:orientability_from_submanifold_to_manifold_length}, we deduce that $X$ is orientable as homology manifold if and only if $\mathcal{M}(X)$ is orientable as topological manifold. The conclusion follows by Theorem \ref{theo:GCBA_loc_uniq_orient_dichotomy}. 
\end{proof}

By using the homological properties of locally uniquely orientable $\GCBA$ spaces, we prove that they are locally $\textup{PI}$-spaces. The proof
is conceptually related to the one of \cite{BassoMartiWenger25}. It applies also to spaces which are far from being homology manifolds, see Example \ref{ex:GCBA_loc_uniq_orient_not_mfd}.

\begin{theorem}\label{thm:local-PI-new}
    Let $(X,\dist)$ be a purely $n$-dimensional $\GCBA$ $\textup{LUO}$ space. Then $(X,\dist,\haus^n)$ is locally doubling and satisfies a local $1$-Poincaré inequality. More precisely, for every $x \in X$ there exist $r_x>0$ and $C_x>0$ such that for every Lipschitz function $f\colon X \to \setR$ we have
    $$
    \dashint_B \left\vert f - \dashint_B f \right\vert \, d\haus^n \le C_x r \dashint_{3 B} \lip_a f\,d\haus^n
    $$ 
    for every ball $B = B(y,r)$ with $y \in B(x,r_x)$ and $0<r<r_x$. 
\end{theorem}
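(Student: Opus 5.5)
Doubling is immediate: by Theorem \ref{theo:GCBA_basic_properties}, $\haus^n$ is locally Ahlfors $n$-regular, so for each $x$ there are $r_x>0$ and $c_x\ge 1$ with $c_x^{-1}r^n\le\haus^n(B(y,r))\le c_x r^n$ for $y\in B(x,10r_x)$, $r<10r_x$. For the Poincaré inequality I would follow the isoperimetric route of \cite{BassoMartiWenger25}. The target is a local relative isoperimetric inequality: for every $y\in B(x,r_x)$, $r<r_x$, and every locally integral current $R=T\rstr A$ of the form produced by slicing, with $\spt(\partial R)\cap B(y,3r)$ controlled,
\[
 \min\{\|T\|(A\cap B(y,r)),\ \|T\|(B(y,r)\setminus A)\}\le C\, r\, \|\partial(T\rstr A)\|(B(y,3r)),
\]
where $T$ is an orientation of a small neighbourhood. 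The Poincaré inequality then follows by the standard coarea/truncation argument: for Lipschitz $f$, apply this to the superlevel sets $A_t=\{f>t\}$, use the slice formula \eqref{eq: char-slice} to identify $\partial(T\rstr A_t)$ with $\langle T,f,t\rangle$ inside the ball, and use $\int\bM(\langle T,f,t\rangle\rstr B)\,dt\le\int_B\lip_a f\,d\|T\|$ (from Proposition \ref{prop:mass-measure-ineq-loc-Lip-constant}). Since $\|T\|=\haus^n$, this gives the weak $(1,1)$ inequality, and the doubling property upgrades it to the stated form with median replaced by mean.

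For the setup: by LUO, every small ball $B(x,\rho)$ is uniquely orientable, so it has an orientation $T$ with $\|T\|=\haus^n$. By Theorem \ref{theo:GCBA_loc_uniq_orient_dichotomy} applied to the connected open ball (balls in small $\CAT(\kappa)$ regions are geodesic, hence connected; Remark \ref{rmk:GCBA_open_subsets}), $\mathcal W=\emptyset$, so Theorem \ref{theo:characterization_orientability_GCBA} gives $\partial T=0$ in $B(x,\rho)$. The same holds in every sub-ball $B(y,s)\subseteq B(x,\rho)$. Crucially, by Proposition \ref{prop:n-regular_loc_uniq_conn_homology_of_conn_open_subsets} (with $\haus^{n-1}(\mathcal S)=0$), $T|_{B(y,s)}$ generates $H_n^{\mathrm{IC}}(B(y,s))$. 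In other words, every $n$-cycle in a small ball is an integer multiple of the orientation. This constancy theorem is what replaces manifold topology.

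For the isoperimetric inequality, let $Z:=\partial(T\rstr A)\rstr B(y,2r)$, suitably sliced at a good radius $s\in(r,2r)$ so that $Z$ is an integral $(n-1)$-cycle supported in $\overline B(y,2r)$; the slice error is absorbed in the mass bound. In a small $\CAT(\kappa)$ ball the geodesic cone construction toward $y$ gives a filling $V\in\bI_n$ with $\partial V=Z$, $\spt V\subseteq \overline B(y,2r)$ and $\bM(V)\le C r\,\bM(Z)$ (cone inequality in $\CAT(\kappa)$, \cite{Wenger2005}). Then $T\rstr(A\cap B(y,s))-V$ is an $n$-current whose boundary lies on the sphere $\partial B(y,s)$, hence a cycle in the open ball after restriction. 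By the constancy result it equals $k\,T|_{B(y,s)}$. Comparing masses with $\|T\|=\haus^n$ and multiplicity one forces $k\in\{0,1\}$, giving $\haus^n(A\cap B(y,r))\le\bM(V)$ or $\haus^n(B(y,r)\setminus A)\le\bM(V)$. This is the desired inequality.

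I expect the main obstacle to be this last step: correctly localizing the cycle. One must handle the sphere boundary, either through a cutoff/slice argument or by working relatively via $H_n^{\mathrm{IC}}(\overline U,\overline U\setminus U)$ as in Proposition \ref{prop:n-regular_loc_uniq_conn_homology_of_conn_open_subsets}, and one must ensure the cone filling stays inside the ball where constancy holds. This is why the right-hand side uses $3B$. I would first try the relative version on $U=B(y,s)$ for a.e. $s$, which also requires $\haus^n(\partial B(y,s))=0$; this holds for a.e. $s$.
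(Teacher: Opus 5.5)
\textbf{There is a genuine gap: the cycle $Z$ cannot be built with mass controlled by the perimeter.} The failure is in your claim that ``the slice error is absorbed in the mass bound''.

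The current $U:=\partial(T\rstr A)\rstr B(y,s)$ is not a cycle. Its boundary is an $(n-2)$-dimensional slice lying on the sphere $S(y,s)$. The natural way to close it up is $Z=\partial\bigl(T\rstr(A\cap B(y,s))\bigr)$. This adds the slice $\langle T\rstr A,\dist(y,\cdot),s\rangle$. For a good $s\in(r,2r)$, the mass of that slice is only bounded by $r^{-1}\haus^n(A\cap B(y,2r))$, and it is not controlled by the perimeter of $A$ in $3B$.

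For example, take $A=B(y,3r)\setminus B(y,\eta)$ with $\eta\ll r$. The perimeter in $3B$ is of order $\eta^{n-1}$. But $Z$ contains essentially all of $S(y,s)$, so the cone inequality only gives $\min\{\dots\}\le\bM(V)\lesssim r^n$. The bound you need is of order $r\eta^{n-1}$, so this is vacuous.

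Two natural repairs also fail:
\begin{itemize}
\item Passing to the complement when convenient only yields $\min_r\lesssim r\,\mathrm{Per}+\min_{2r}$, which is circular.
\item If instead you want $\partial\bigl(T\rstr(A\cap B(y,s))-V\bigr)$ to lie on the sphere, you must fill the $(n-2)$-cycle $\partial U$ by something of mass $\lesssim\bM(U)$ that avoids $B(y,r)$. A geodesic cone from $y$, or from any fixed vertex, passes through $B(y,r)$, and nothing gives fillings inside the sphere itself.
\end{itemize}
So the real obstacle is this $(n-2)$-dimensional boundary term. It is not where the filling of the top-dimensional cycle lies. Constancy is applied after restricting to $B$, so only the boundary of the comparison current has to avoid $B$. (As a side remark, $k\in\{0,1\}$ is not forced, but this is harmless because $|1-k|+|k|\geq 1$.)

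\textbf{The missing idea is the one the paper uses.} Prove the inequality first in isoperimetric form, under a smallness assumption, with minimal fillings instead of cones.
\begin{itemize}
\item If $P>\varepsilon R^{n-1}$, Ahlfors regularity already gives the claim, so assume $P\le\varepsilon R^{n-1}$.
\item Slice $\partial(T\rstr E_t)$ at a radius $R'\in(2R,3R)$ to get $U$ with $\bM(U)\le P+\delta$.
\item Take a minimal filling $V$ of $\partial U$ in the compact $\CAT(\kappa)$ ball $Y$, which carries a $2$-convex bicombing. Then $\bM(V)\le\bM(U)$.
\item The lower density bound $\|V\|(B(z,\rho))\ge D^{-(n-2)}\rho^{n-1}$, valid for $\rho\le\dist(z,\spt\partial V)$, forces $\spt V$ into the $\varepsilon'R$-neighbourhood of $S(y,R')$, where $\varepsilon'=D(2\varepsilon)^{1/(n-1)}<1$. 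Hence $\spt V\cap B=\emptyset$. This is what the room between $B$ and $3B$ is for.
\item Now $W=U-V$ is a genuine cycle of mass at most $2(P+\delta)$. Fill it by a minimal $S$ with $\bM(S)\le D\,\bM(W)^{n/(n-1)}$, using Wenger's Euclidean isoperimetric inequality.
\item Apply constancy to $(T\rstr E_t-S)|_B$, whose boundary misses $B$.
\end{itemize}
This gives $\min\{\dots\}\lesssim P^{n/(n-1)}$. Ahlfors regularity, via $m^{1/n}\lesssim R$, then converts it into the linear form you were aiming for.

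Your reduction from the relative isoperimetric inequality to the Poincar\'e inequality, using superlevel sets and slices, is fine. It is a variant of the paper's Lemma \ref{lemma: rel-isoper-implies-poincare}.
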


Before giving the proof, we need some preparation. We use the following well-known relation between relative isoperimetric inequalities and Poincar\'e inequalities; see e.g. \cite{KorteLahti2014}. Recall that for a ball $B = B(x,r)$ in a metric space $(X,\dist)$ and $\lambda>0$ we write $\lambda B = B(x,\lambda r)$.

\begin{lemma}\label{lemma: rel-isoper-implies-poincare}
    Let $(X,\dist)$ be a locally compact metric space with a locally finite and locally doubling Borel measure $\meas$. Suppose that $\spt( \meas) = X$ and for every $x\in X$ there exist $r_x>0$ and $\lambda,C\geq 1$ such that for all $y\in B(x,r_x)$, each $0<R<r_x$ and every Borel set $E \subseteq B(x,r_x)$ we have
    \begin{equation}\label{eq: rel-isoper-inequality-statement-lemma}
        \frac{\min\bigl\{\meas(E\cap B), \meas(B\setminus E)\bigr\} }{\meas(B)}\leq  \frac{CR}{\meas(\lambda B)}  \liminf_{s \to 0}\frac{\meas((B(E,s)\setminus E)\cap \lambda B)}{s},
    \end{equation}
    where $B= B(y,R)$. Then, for every $x\in X$ there exist $r'_x>0$ and $C'\geq 1$ such that for all $y\in B(x,r'_x)$, each $0<R<r'_x$ and for every Lipschitz function $f\colon X \to \setR$ we have
    \begin{equation}\label{eq: poinc-inequality-statement-lemma}
        \dashint_B \left\vert f - \dashint_B f \right\vert \, d\meas \le C'R \dashint_{\lambda B} \lip_a f\,d\meas,
    \end{equation}
    where $B= B(y,R)$. 
\end{lemma}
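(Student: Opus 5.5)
We plan to follow the standard route from isoperimetric to Poincaré inequalities through the coarea formula for the perimeter given by the lower Minkowski content. Throughout, $x$ is fixed. We work in a ball $B(x,r'_x)$ with $r'_x$ much smaller than $r_x$, chosen so that doubling holds with a constant $D$ on all balls $B(y,R)$ with $y\in B(x,r'_x)$ and $R<\lambda r'_x$, and so that every set we use lies inside $B(x,r_x)$. The ball $B=B(y,R)$ and the Lipschitz function $f$ are then fixed. Subtracting a median $m$ of $f$ on $B$, we may reduce to bounding $\int_B|f-m|\,d\meas$, since $\dashint_B|f-f_B|\le 2\dashint_B|f-m|$.

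The first step is to apply the hypothesis to superlevel sets. For $t>m$ we set $E_t:=\{f>t\}\cap B(x,r_x)$, and for $t<m$ we use $\{f<t\}$. By the choice of the median, $\min\{\meas(E_t\cap B),\meas(B\setminus E_t)\}=\meas(E_t\cap B)$ for $t\ge m$, and symmetrically for $t\le m$. Cavalieri's principle then gives
\[
\int_B |f-m|\,d\meas=\int_m^\infty\meas(E_t\cap B)\,dt+\int_{-\infty}^m\meas(\{f<t\}\cap B)\,dt .
\]
Into each integrand we insert \eqref{eq: rel-isoper-inequality-statement-lemma}, which yields
\[
\int_B|f-m|\,d\meas\le \frac{CR\,\meas(B)}{\meas(\lambda B)}\int_{\setR}P^-(E_t,\lambda B)\,dt ,
\]
where $P^-(E,\lambda B):=\liminf_{s\to0}\meas((B(E,s)\setminus E)\cap\lambda B)/s$. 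Since the lemma only asks for the inequality with $\lambda B$, we do not need to change the dilation constant.

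The second step, which we expect to be the main obstacle, is a coarea inequality for the lower outer Minkowski content:
\[
\int_{\setR}P^-(\{f>t\},\lambda B)\,dt\le \int_{\lambda B}\lip_a f\,d\meas .
\]
Our planned argument is as follows. If $z\in B(\{f>t\},s)\setminus\{f>t\}$, then $t-\operatorname{Lip}(f|_{B(z,s)})\,s<f(z)\le t$. Integrating over $t$ and using Fubini,
\[
\int_{\setR}\frac{\meas((B(\{f>t\},s)\setminus\{f>t\})\cap\lambda B)}{s}\,dt\le\int_{\lambda B} L_s(z)\,d\meas(z),
\]
where $L_s(z):=\operatorname{Lip}(f|_{B(z,s)})$. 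The functions $L_s$ are bounded by $\operatorname{Lip}(f)$ and decrease to $\lip_a f$ as $s\to0$. Fatou's lemma on the left-hand side and dominated convergence on the right-hand side give the claimed inequality. Measurability of $t\mapsto P^-$ only needs care at the level of upper integrals, and Fatou's lemma still applies there.

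The third step combines the two estimates, giving $\int_B|f-m|\le CR\,\meas(B)\,\meas(\lambda B)^{-1}\int_{\lambda B}\lip_a f$. Dividing by $\meas(B)$ yields \eqref{eq: poinc-inequality-statement-lemma} with $C'=2C$. The one remaining technicality is the localization: the superlevel sets must be Borel subsets of $B(x,r_x)$. Intersecting with $B(x,r_x)$ does not change $E_t\cap B$. Near $\lambda B\subseteq B(x,r_x/2)$ it also does not change the Minkowski content for small $s$, because $B(x,r_x)$ contains a neighbourhood of $\lambda B$ once $r'_x\le r_x/(4\lambda)$.
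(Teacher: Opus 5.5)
Your argument is correct, and its core is the same as the paper's. Both use a Fubini-plus-Fatou coarea inequality for the lower outer Minkowski content, inserted into a layer-cake decomposition. The paper writes this through $f_s:=\sup_{B(\cdot,s)}f$, using $0\le (f_s-f)/s\le\Lip(f)$ and $\limsup_{s\downarrow0}(f_s-f)/s\le\lip_a f$, which is exactly your $L_s$ bound.

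The difference is in the bookkeeping around that core. The paper first treats nonnegative $f$ with $\spt(f)\subseteq B(x,r'_x)$. It compares with the mean via
$2\min\{a,b\}/(a+b)\ge 2ab/(a+b)^2=\dashint_B\bigl|\mathbb{1}_{E_t}-\dashint_B\mathbb{1}_{E_t}\,d\meas\bigr|\,d\meas$,
where $a=\meas(E_t\cap B)$ and $b=\meas(B\setminus E_t)$. It then splits $f=f^+-f^-$ and removes the support restriction with a cutoff, ending with $C'=4C$.

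You compare with a median instead, and localize simply by intersecting the level sets with $B(x,r_x)$. Since $\lambda B\subseteq B(x,r_x/2)$, for $s<r_x/2$ this changes neither $E_t\cap B$ nor $(B(E_t,s)\setminus E_t)\cap\lambda B$. So both the sign splitting and the cutoff step disappear, which is a genuine simplification.

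There are two small corrections.

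First, do not switch to $\{f<t\}$ for $t<m$. Your coarea step controls only superlevel sets, so that branch would need the inequality for $-f$ and would cost a factor $2$. Instead keep $E_t=\{f>t\}\cap B(x,r_x)$ for all $t\in\setR$. For $t<m$ the median property gives $\min\{\meas(E_t\cap B),\meas(B\setminus E_t)\}=\meas(B\cap\{f\le t\})$. Its integral over $(-\infty,m)$ equals $\int_B(m-f)^+\,d\meas$, and with this your constant $C'=2C$ is indeed obtained.

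Second, also shrink $r'_x$ so that $\meas(B(x,(1+\lambda)r'_x))<\infty$. Your dominated-convergence step needs $\meas(\lambda B)<\infty$, and the hypothesis alone does not give finiteness on $B(x,r_x)$.
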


\begin{proof}
   Fix $x\in X$ and let $r_x,\lambda,C$ be as in the
hypothesis. Choose $r'_x>0$ sufficiently small that
$4\lambda r'_x<r_x$ and
$\meas(B(x,(1+\lambda)r'_x))<\infty$.
Let $y\in B(x,r'_x)$, $0<R<r'_x$, and
$B=B(y,R)$.
Let $f\colon X\to\setR$ be Lipschitz.
    
    We first prove \eqref{eq: poinc-inequality-statement-lemma} under the additional assumption that $f$ is non-negative and $\spt(f)\subseteq B(x,r'_x)$.
 Set $\nu:=\meas\rstr\lambda B$ and for $s,t>0$ define
    \[
     f_s(z):=\sup_{w\in B(z,s)}f(w)\qquad  \text{and} \qquad E_t:=\{f>t\}.
    \]
    Since $B(E_t,s)=\{f_s>t\}$, the layer-cake formula gives
    \[
     \int_0^\infty
     \frac{\nu(B(E_t,s)\setminus E_t)}{s}\,dt
     =
     \int_X\frac{f_s-f}{s}\,d\nu.
    \]
    Moreover,
    \[
     0\leq\frac{f_s-f}{s}\leq\Lip(f),
     \qquad
     \limsup_{s\downarrow0}\frac{f_s(z)-f(z)}{s}
     \leq\lip_a f(z).
    \]
    Fatou's lemma on the left, and the upper-limit estimate on
    the right using finiteness of $\nu$, imply
    \[
     \int_0^\infty
     \liminf_{s\downarrow0}
     \frac{\nu(B(E_t,s)\setminus E_t)}{s}\,dt
     \leq\int_X\lip_a f\,d\nu.
    \]

Therefore, the relative isoperimetric inequality \eqref{eq: rel-isoper-inequality-statement-lemma} implies
    \begin{align*}
        \frac{R}{\meas(\lambda B)}\int_{\lambda B} \lip_a(f) \, d\meas &\geq \frac{R}{\meas(\lambda B)}\int_0^\infty
     \liminf_{s\downarrow0}
     \frac{\nu(B(E_t,s)\setminus E_t)}{s}\,dt
        \\
        &=\frac{R}{\meas(\lambda B)} \int_0^\infty \liminf_{s\downarrow 0}\frac{\meas((B(E_t,s)\setminus E_t)\cap \lambda B)}{s} \, dt
        \\
        &\geq \frac{1}{C} \int_0^\infty \frac{\min\bigl\{\meas(B \cap E_t), \meas(B\setminus E_t)\bigr\} }{\meas(B)}\, dt.
    \end{align*}
    A direct computation shows 
    $$2 \, \frac{\min\bigl\{\meas(B \cap E_t), \meas(B\setminus E_t)\bigr\} }{\meas(B)} \geq 2 \, \frac{\meas(B \cap E_t) \cdot \meas(B\setminus E_t)}{\meas(B)^2}=   \dashint_B \left| \mathbb{1}_{E_t} -  \dashint_B \mathbb{1}_{E_t}\right| \, d\meas$$
    
    for all $t>0$. Therefore, 
$$ \frac{2CR}{\meas(\lambda B)}
 \int_{\lambda B}\lip_a f\,d\meas
 \geq
 \int_0^\infty
 \dashint_B
 \left|
 \mathbb{1}_{E_t}
 -\dashint_B\mathbb{1}_{E_t}\,d\meas
 \right|
 \,d\meas\,dt.
$$
    Finally, by the layer-cake formula we have
\[
 f(z)-\dashint_B f \, d\meas
 =\int_0^\infty
   \bigl(\mathbb{1}_{E_t}(z)-\dashint_B\mathbb{1}_{E_t}\,d\meas\bigr)\,dt,
\]
and hence
\[
 \int_0^\infty
 \dashint_B\left|\mathbb{1}_{E_t}-\dashint_B\mathbb{1}_{E_t}\right|\,d\meas\,dt \geq \dashint_B\left|f-\dashint_B f\right|\,d\meas.
\]

Combining this with the previous inequality establishes \eqref{eq: poinc-inequality-statement-lemma} under the additional assumption that $f\geq 0$ and $\spt(f)\subseteq B(x,r'_x)$. Now, let $f\colon X \to \setR$ be any Lipschitz function with $\spt(f) \subseteq B(x,r'_x)$. Set $f^+=\max\{f,0\}$ and $f^-=\max\{-f,0\}$. Both functions are Lipschitz, non-negative and have support contained in $\spt(f) \subseteq B(x,r_x')$. Moreover,
\[
 |f-f_B|
 \leq |f^+-(f^+)_B|+|f^--(f^-)_B|
 \qquad \text{and} \qquad
 \lip_a f^\pm\leq \lip_a f.
\]
Applying the first part of the proof to $f^+$ and $f^-$ separately gives
$$\dashint_B \left\vert f - \dashint_B f \right\vert \, d\meas \le C'R \dashint_{\lambda B} \lip_a f\,d\meas,$$
where $C' = 4C$. Hence, we proved \eqref{eq: poinc-inequality-statement-lemma} for Lipschitz functions $f$ with support contained in $B(x,r'_x)$.

\medskip

Finally, let $f\colon X \to \setR$ be an arbitrary Lipschitz function. Choose a Lipschitz cutoff function $\chi$ with support contained in $B(x,r_x')$ and with $\chi\equiv1$ on $B(x,r_x'/2)$. Set $g \coloneqq \chi f$. Then $g$ is Lipschitz and its support is contained in $B(x,r'_x)$. Put
\[
 \rho_x:=\frac{r'_x}{3(1+\lambda)}.
\]
For $y\in B(x,\rho_x)$ and $0<R<\rho_x$,
\[
 B(y,\lambda R)\subseteq B(x,r'_x/3),
\]
so $\chi\equiv 1$ on a neighbourhood of $ B(y,\lambda R)$.
Consequently,
\[
 g=f\ \text{on }B(y,R),
 \qquad
 \lip_a g=\lip_a f\ \text{on }B(y,\lambda R).
\]
Applying the inequality already proved to $g$ therefore
gives \eqref{eq: poinc-inequality-statement-lemma} for $f$
whenever $y\in B(x,\rho_x)$ and $0<R<\rho_x$.
Renaming $\rho_x$ as $r'_x$ completes the proof,
with $C'=4C$.
\end{proof}

The main tools to establish a relative isoperimetric inequality in a purely $n$-dimensional $\GCBA$ $\textup{LUO}$ space are isoperimetric inequalities for integral currents.

\medskip

Let \(T\in \bI_{k}(X)\) be a cycle. We call \(S\in \bI_{k+1}(X)\) a \textit{minimal filling} of \(T\) if \(\partial S=T\) and \(\bM(S)\leq \bM(S')\) for every \(S'\in \bI_{k+1}(X)\) with \(\partial S'=T\). The next result guarantees the existence of minimal fillings in compact spaces with a $2$-convex bicombing. The definition of a convex bicombing, see \cite[Definition 2.1]{Wenger2005}, will not be used directly.

\begin{proposition}\label{thm:filling-cat(k)-spaces}
    Let $k\geq 0$ be an integer. Then there exists $D_k\geq 1$ such that the following holds. Let $(Y,\dist)$ be a compact metric space admitting a $2$-convex bicombing, and let $T\in\bI_k(Y)$ be a cycle. If $k=0$, assume additionally that $T(1)=0$. Then there exists a minimal filling $S\in\bI_{k+1}(Y)$ of $T$. Any such filling satisfies
    \begin{equation}\label{eq:filling-estis}
        \bM(S) \leq D_k \bM(T)^{\frac{k+1}{k}} \quad \text{ and } \quad \norm{S}(B(y, r))\geq D_k^{-k}r^{k+1}
    \end{equation}
    for all \(y\in \spt (S)\) and all $0\le r \le \dist(y, \spt (T))$. In particular, \(\spt (S) \subseteq \overline{B}(\spt(T), R)\), for $R=D_k\bM(S)^{\frac{1}{k+1}}$.
\end{proposition}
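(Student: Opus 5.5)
The plan is to reduce the statement to Wenger's filling theory for spaces with convex bicombings \cite{Wenger2005}, transported from the Ambrosio--Kirchheim setting to Lang's framework via the identifications of Section \ref{subsec:AK_currents}. Since $Y$ is compact, hence complete and locally compact, every $T\in\bI_k(Y)$ is an integral current in the sense of Ambrosio--Kirchheim, with the same mass. Conversely, every AK integral $(k+1)$-current in $Y$ induces an element of $\bI_{k+1}(Y)$. Hence fillings and masses can be compared in either theory, and it suffices to prove the statement for AK currents.

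\textbf{Step 1 (isoperimetric inequality).} We first establish a Euclidean-type isoperimetric inequality in $Y$: every cycle $T$ (with $T(1)=0$ if $k=0$) admits some filling $S_0\in\bI_{k+1}(Y)$ with $\bM(S_0)\le D_k\bM(T)^{(k+1)/k}$. This is Wenger's result for spaces with a convex bicombing \cite[Theorem 1.2]{Wenger2005}. Its proof uses a cone construction together with a decomposition of $T$ into pieces of small diameter, and it needs only the convexity of the bicombing. For $k=0$, we instead connect positive and negative points pairwise by bicombing geodesics.

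\textbf{Step 2 (existence of a minimal filling).} We take a minimizing sequence $S_j$ with $\partial S_j=T$ and $\bM(S_j)\to\inf$. Masses are uniformly bounded, and boundaries equal $T$ and so have fixed mass. Since $Y$ is compact, the Ambrosio--Kirchheim compactness theorem yields a weakly convergent subsequence whose limit $S$ is integral. Lower semicontinuity of mass and continuity of $\partial$ under weak convergence then show that $S$ is a minimal filling. Because any minimal filling satisfies $\bM(S)\le\bM(S_0)$, the first inequality in \eqref{eq:filling-estis} follows from Step 1.

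\textbf{Step 3 (lower density bound), the main obstacle.} Fix $y\in\spt(S)$ and set $f(r)=\norm{S}(B(y,r))$ for $r<\dist(y,\spt T)$. By slicing (Section \ref{sec: slicing}), for almost every $r$ the slice $\langle S,\dist(y,\cdot),r\rangle$ is an integral cycle, since $\spt T$ avoids the ball. We also have $\partial(S\rstr B(y,r))=\langle S,\dist_y,r\rangle$, and the slice mass is at most $f'(r)$. Minimality lets us replace $S\rstr B(y,r)$ by a filling of the slice, so
\[
f(r)\le D_k f'(r)^{(k+1)/k}.
\]
Since $f>0$ for $r>0$, because $y\in\spt S$ and $\spt S=\spt\norm{S}$, we can integrate $(f^{k/(k+1)})'\ge c$ to obtain $f(r)\ge D_k^{-k}r^{k+1}$ after adjusting the constant $D_k$. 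The delicate points are the following:
\begin{itemize}
\item the differentiability of $f$ almost everywhere, together with the distributional inequality $f(r)-f(0)\ge\int_0^r f'$;
\item the case $k=0$, where the slice is a $0$-cycle of zero total weight.
\end{itemize}

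\textbf{Step 4 (support bound).} The final claim follows by contradiction. If $y\in\spt S$ had $\dist(y,\spt T)>R=D_k\bM(S)^{1/(k+1)}$, we would get $\bM(S)\ge D_k^{-k}R^{k+1}$. Adjusting $D_k$ once more so that the exponents match yields $\bM(S)\ge\norm{S}(B(y,R))>\bM(S)$, which is impossible.
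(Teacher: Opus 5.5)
Your Steps 1--2 and the case $k\ge1$ of Step 3 follow the paper. The paper gets existence from Wenger's isoperimetric inequality and Ambrosio--Kirchheim compactness, and for the density bound it simply cites the standard slicing argument you spell out. There is one slip. The function to integrate is $f^{1/(k+1)}$, not $f^{k/(k+1)}$: from $f'\ge D_k^{-k/(k+1)}f^{k/(k+1)}$ a.e. one gets $(f^{1/(k+1)})'\ge (k+1)^{-1}D_k^{-k/(k+1)}$, whence $f(r)\ge (k+1)^{-(k+1)}D_k^{-k}r^{k+1}$.

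\textbf{The gap: the density bound for $k=0$.} You list this case as delicate but never prove it, and your mechanism does not apply there. First, the inequality $f\le D_kf'^{(k+1)/k}$ is meaningless for $k=0$. Second, no isoperimetric inequality for $0$-cycles in terms of mass alone exists: two Dirac masses at distance $d$ have mass $2$ but need filling mass $d$. Third, the target bound is $\norm{S}(B(y,r))\ge D_0^{-0}r=r$, so enlarging $D_0$ cannot help.

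\textbf{The missing idea is a discreteness argument.} Fix a.e. $r<\dist(y,\spt T)$. The slice $\langle S,\dist_y,r\rangle=\partial(S\rstr B(y,r))$ is an integral $0$-current with $\langle S,\dist_y,r\rangle(1)=0$, by locality. It is also nonzero. Otherwise $S\rstr(Y\setminus B(y,r))$ would be a filling of $T$ with mass $\bM(S)-\norm{S}(B(y,r))<\bM(S)$, using $y\in\spt S$; this contradicts minimality. Hence the slice has mass at least $2$, and integrating the slice-mass inequality gives $\norm{S}(B(y,r))\ge 2r$.

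The paper reaches the same bound $2r$ differently. It uses the Bonicatto--Del Nin decomposition of $S$ into injective Lipschitz curves or loops. Minimality rules out loops, so the decomposition consists of finitely many curves, each joining two points of $\spt T$, and the bound is read off along a curve through $y$. Once either argument is supplied, the rest of your proof, including the support bound in Step 4, goes through.
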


When $k=0$, the first inequality in \eqref{eq:filling-estis}
is omitted; the remaining conclusions still apply. 

\begin{proof}
    Since $Y$ is compact, the spaces $\bI_{k}(Y)$ coincide with the spaces of integral $k$-currents in the sense of Ambrosio-Kirchheim; see Section \ref{subsec:AK_currents}. We first show that there exists a filling $V \in \bI_{k+1}(Y)$ of $T$, that is, $\partial V = T$. By \cite[Corollary 1.4]{Wenger2005} the space $Y$ satisfies an isoperimetric inequality of Euclidean type for $\bI_m(Y)$, for all $m \ge 1$, with constant $C_m$ depending only on $m$. In particular, if $k\geq 1$, then there exists a filling $V \in \bI_{k+1}(Y)$ of $T$. On the other hand, if $k = 0$, then the integral current $T$ is a finite sum of integer-weighted atoms. Since $T(1)=0$, its positive and negative multiplicities can be paired. As $Y$ admits a $2$-convex bicombing we can join each pair by a Lipschitz curve. This gives an integral filling of $T$ of finite mass. The existence of a minimal filling $S\in \bI_{k+1}(Y)$ follows now from \cite[Theorem 5.2 and 8.5]{AmbrosioKirchheim00}; c.f. \cite[Section 10]{AmbrosioKirchheim00}. It remains to show that $S$ satisfies \eqref{eq:filling-estis}.
    
    We first treat the case $k=0$. By \cite[Theorem 5.3]{bonicatto-del-nin-2022}, there exists an at most countable family of Lipschitz
curves, each of which is injective or an injective loop, $\gamma^i \colon [0,1] \to Y$ such that
    \[
    S=\sum_{i \in \setN} S_i \quad \text{ and } \quad \bN(S)=\sum_{i\in \setN} \bN(S_i)
    \]
    where \(S_i=\gamma_{\#}^i\big(\br{\mathbb{1}_{[0,1]}}\big)\). The additivity of the normal mass implies
\[
 \|S\|=\sum_{i\in \setN}\|S_i\|,
 \qquad
 \|\partial S\|=\sum_{i\in \setN}\|\partial S_i\|.
\]
    Thus, there is no cancellation either in the currents or in their boundaries. Initially, the decomposition allows for injective curves or loops; however, the minimality of $S$ excludes the possibility of loops. Indeed, removing a loop would strictly decrease the mass without changing the boundary, and the remaining curves have endpoints in $\spt(T)$. Hence, each curve $\gamma^i$ connects two points of $\spt(T)$ with opposite signs. Additivity of the normal mass implies that there are only finitely many curves, since each $S_i$ contributes boundary mass \( \bM(\partial S_i) = 2\). Thus, every point of $\spt(S)$ lies on one of these curves. It follows that for each $y \in \spt (S)$ and every $0\le r\le\dist(y,\spt(T))$, the ball $B(y,r)$ contains portions of a curve $\gamma^i$ of length at least \(r\) on both sides of \(y\). As the curve $\gamma^i$ is injective we get 
    $$\|S\|(B(y,r)) \geq \haus^1(\textup{im}(\gamma^i) \cap B(y,r))\geq2r.$$

Now let $k\geq1$. Minimality and the isoperimetric
inequality give
\[
 \bM(S)\leq C_k\bM(T)^{\frac{k+1}{k}}.
\]
The standard slicing argument for mass-minimizing
fillings (see \cite[Theorem~10.6]{AmbrosioKirchheim00}
or the proof of \cite[Lemma~3.4]{Wenger2005}) gives
a constant $c_k>0$, depending only on $k$, such that
\[
 \norm{S}(B(y,r))\geq c_k r^{k+1}
\]
for every $y\in\spt(S)$ and
$0\leq r\leq\dist(y,\spt(T))$.
Choosing
\[
 D_k\geq\max\{1,C_k,c_k^{-1/k}\}
\]
gives both estimates in \eqref{eq:filling-estis}.
For $k=0$, the preceding argument gives the required
estimate with $D_0=1$. If $T=0$, minimality implies $S=0$.
Otherwise, applying the density estimate with
$r=\dist(y,\spt(T))$ gives
\[
 \dist(y,\spt(T))
 \leq D_k^{\frac{k}{k+1}}\bM(S)^{\frac1{k+1}}
 \leq D_k\bM(S)^{\frac1{k+1}},
\]
which proves the asserted support inclusion.

\end{proof}

We can now prove Theorem \ref{thm:local-PI-new}.

\begin{proof}[Proof of Theorem \ref{thm:local-PI-new}] 
 If $n=0$ then $X$ is discrete and there is nothing to prove. If $n=1$ then $X$ is locally isometric to a graph with locally finite valency. Theorem \ref{theo:GCBA_loc_uniq_orient_dichotomy} implies that $\mathcal{W}(X) = \emptyset$, hence $X$ has no branching points. This implies that $X$ is locally isometric to an interval, where the usual $1$-Poincaré inequality holds true.   

From now on, we assume $n\ge 2$. Fix $x\in X$ and let $r_x>0$ be sufficiently small such that $Y = \overline{B}(x,r_x)$ is compact and $\CAT(\kappa)$ for some $\kappa \in \setR$, $r_x < D_\kappa/2$, the ball $B(x,r_x)$ is orientable with orientation $T\in \bI_n(B(x,r_x))$ and there exists $A>0$ such that
    \begin{equation}\label{eq: isop-gcba-ahlfors-reg}
       A^{-1} r^n \leq \norm{T}(B(y,r)) = \haus^n(B(y,r)) \leq Ar^n
    \end{equation}
    for all $y\in B(x,r_x/4)$ and every $r \in (0,\frac{3}{4}r_x)$. This last property follows by Theorem \ref{theo:GCBA_basic_properties}. Combining Theorem \ref{theo:GCBA_characterization_no_wall_points} with Theorem  \ref{theo:GCBA_loc_uniq_orient_dichotomy} implies that $\haus^{n-1}(\mathcal{S}(X))=0$. Hence, by the slicing theory and replacing $T$ with the restriction of an orientation of a slightly larger ball to $Y$, we may suppose that $T$ has an extension $T^Y \in \bI_{n}(Y)$ with $\spt(\partial (T^Y)) \subseteq \{y \in X \colon \dist(y,x) = r_x\}$. We also choose $r_x$ so that
$$\haus^n(\{z\in X:\dist(x,z)=r_x\})=0.$$
This is possible for almost every radius, by local
finiteness of $\haus^n$. Consequently,
$\|T^Y\|=\haus^n\rstr Y$.
    From now on, we denote $T^Y$ again by $T$; its
restriction to $B(x,r_x)$ is the original orientation. Furthermore, it follows from \cite[Section 2.2]{CavallucciSambusetti2022} that $Y$ admits a $2$-convex bicombing. Hence, Theorem \ref{thm:filling-cat(k)-spaces} applies to $Y$ for all $k \geq 0$. We set \(D:=\max\{ D_{n-1}, D_{n-2}\}\), where  \(D_{n-1}, D_{n-2}\) denote the constants from the Theorem. 
    
    Fix a Borel set \(E\subseteq B(x,r_x/4)\) and let \(B=B(y, R)\) be an open ball of radius \(0< R <  r_x/4\) and center $y \in B(x,r_x/4)$. Let $E_s:= \{z \in Y\,:\, \dist(z,E) < s\}$ denote the open $s$-neighbourhood of $E$ in $Y$ and $\lambda B = B(y,\lambda R)$. We claim that
    \begin{equation}\label{eq:main-goal-section-10}
    \min\bigl\{\norm{T}\big(E\cap B\big), \norm{T}\big(B \setminus E\big) \bigr\} \leq C  \cdot \left(\liminf_{s \to 0}\frac{\norm{T}((E_s\setminus E)\cap 3B)}{s}\right)^{\frac{n}{n-1}}
    \end{equation}
    for some constant \(C\) depending only on the data of $n$ and $A$. Let \(\varepsilon>0\) be such that $D \cdot (2\varepsilon)^\frac{1}{n-1}<1$.
    We may  assume that \(\norm{T}(E\cap B)\), \(\norm{T}(B\setminus E)\neq 0\) as well as 
    \begin{equation}\label{eq:assumption-on-P}
    P\coloneqq \liminf_{s \to 0}\frac{\norm{T}((E_s\setminus E)\cap 3B)}{s}\leq  \varepsilon R^{n-1}.
    \end{equation}
    Otherwise, the Ahlfors regularity property \eqref{eq: isop-gcba-ahlfors-reg} implies \eqref{eq:main-goal-section-10} directly with $C = A \varepsilon^{-(n/(n-1))}$. Since $E_s\setminus E$ contains $\partial E\setminus E$ we necessarily have $\norm{T}\big((\partial E \setminus E)\cap 3B\big) = 0$. Hence, after replacing \(E\) with its closure, we may assume that \(E\) is compact. In particular, 
    \begin{equation}\label{eq: isoper-E_r-limit-measure}
        \norm{T}(B\setminus E_s) \xrightarrow{s\searrow 0} \norm{T}(B\setminus E) \qquad \text{and}\qquad \norm{T}(B\cap E_s) \xrightarrow{s\searrow 0} \norm{T}(B\cap E) .
    \end{equation}
    Let \(0 < \delta <\min\big\{\varepsilon R^{n-1}, 1\big\}\). The parameter \(\delta\) is introduced mainly to account for the possibility that \(P=0\). By definition, there exists \(s>0\) arbitrarily small such that
    \begin{equation*}
        \frac{\norm{T}((E_s\setminus E)\cap 3B)}{s}\leq P + \delta. 
    \end{equation*}
    Moreover, for almost every sufficiently small \(t>0\), we know from the slicing theory, Section \ref{sec: slicing}, that \(\partial(T\rstr E_t)\) is an integral \((n-1)\)-current in $Y$ whose support is contained in \(\{ z\in Y : \dist(z, E)=t\}\), and
    \[
    \int_0^s \norm{\partial(T\rstr E_t)}(3B)\, dt \leq \norm{T}\big((E_s \setminus E)\cap 3B\big).
    \]
    The inequality follows from \cite[Theorem 6.2.(3)]{Lang}.
    Therefore, there exists $t>0$ arbitrarily small such that 
    \(T\rstr E_t\in \bI_n(Y)\) with \(\spt ( \partial(T\rstr E_t))\subseteq \{ z\in Y : \dist(z, E)=t\}\), and
    \begin{equation}\label{eq:mass-partial-V}
    \norm{\partial(T\rstr E_t)}(3B) \leq  P+\delta.
    \end{equation}
    Using slicing again we find \(R'\in (2R, 3R)\) such that \(U \coloneqq \partial(T\rstr E_t)\rstr B(y, R')\in \bI_{n-1}(Y)\) and the support of the current \(\partial U\) is contained in \(S(y, R')=\{ z\in Y : \dist(z, y)=R' \}\).
    When $n=2$, the additional condition
$(\partial U)(1)=0$ holds automatically.
    By Theorem~\ref{thm:filling-cat(k)-spaces}, there exists a minimal filling \(V\in \bI_{n-1}(Y)\) of \(\partial U\) satisfying
    \[
 \spt(V)\subseteq
 \overline B\bigl(\spt(\partial V),
 D\bM(V)^{\frac1{n-1}}\bigr).
\]
    Since \(V\) is a minimal filling, \eqref{eq:mass-partial-V} implies
    \begin{equation}\label{eq: 1st-minimal-filling-supp}
        \bM(V) \leq \bM(U) \leq \norm{\partial(T\rstr E_t)}(3B)\leq P+\delta.
    \end{equation}
    This, together with \eqref{eq:assumption-on-P} and the definition of \(\delta\), yields 
    \begin{equation}\label{eq:support-of-U-is-close}
    \spt(V) \subseteq B(\spt (\partial V ), \varepsilon'R),
    \end{equation}
    where \(\varepsilon' = D\cdot(2 \varepsilon)^{\frac{1}{n-1}}\). As \(\spt (\partial V) = \spt(\partial U ) \subseteq S(y, R')\) and \(\varepsilon'<1\), we have
    \begin{equation}\label{eq:spt-of-U-does-not-meet-R-ball}
    \spt(V) \cap B(y, R)=\emptyset.
    \end{equation}
    Let $W=U-V \in \bI_{n-1}(Y)$. Notice that $\partial W = 0$ and $\bM(W)\leq 2( P+\delta)$. Again, by Theorem~\ref{thm:filling-cat(k)-spaces}, there exists a minimal filling \(S\in \bI_n(Y)\) of \(W\) satisfying \(\bM(S)\leq D \bM(W)^{\frac{n}{n-1}}\). Let $Q \coloneqq T\rstr E_t - S \in \bI_n(Y)$. Then,
    \begin{align*}
        \partial Q
        &= \partial(T\rstr E_t)-W\\
        &= \partial(T\rstr E_t)-U+V\\
        &= \partial(T\rstr E_t)
           \rstr\bigl(Y\setminus B(y,R')\bigr)+V.
    \end{align*}
    By \eqref{eq:spt-of-U-does-not-meet-R-ball} and our choice of $R'$ we have $\spt( \partial Q) \cap B = \emptyset.$ Recall that $\haus^{n-1}(\mathcal{S}(X)) = 0$. Thus, Proposition \ref{prop:n-regular_loc_uniq_conn_homology_of_conn_open_subsets} implies that $T|_{B}$ is an orientation of $B$ and there exists $k \in \mathbb{Z}$ such that $k \cdot T|_B = Q|_B$. By Corollary \ref{cor:extension_vs_restriction} we conclude that $k\cdot(T\rstr B) = Q\rstr B$.    
    Therefore,
    $$Q\rstr B =  k\cdot T\rstr B = k \cdot (T\rstr (E_t \cap B) + T\rstr (B\setminus E_t)).$$
    Thus,
    $$ (1-k) \cdot T\rstr (E_t \cap B) - k \cdot T\rstr (B\setminus E_t)= S\rstr B.$$
    In particular,
    $$ |1-k|\,\|T\|(E_t\cap B)  +|k|\,\|T\|(B\setminus E_t) \leq \mathbf M(S).$$
    Consequently,
    $$\min\bigl\{\norm{T}\big(E_t\cap B\big), \norm{T}\big(B \setminus E_t\big) \bigr\}  \leq \bM(S) \leq D \bM(W)^\frac{n}{n-1} \leq D[2(P+\delta)]^{\frac{n}{n-1}}.$$ 
    Since $t>0$ can be chosen to be arbitrarily small, \eqref{eq: isoper-E_r-limit-measure} implies
    $$\min\bigl\{\norm{T}\big(E\cap B\big), \norm{T}\big(B \setminus E\big) \bigr\}  \leq D [2( P+\delta)]^\frac{n}{n-1}.$$
    As $\delta>0$ was arbitrary, we conclude from the definition of $P$ that \eqref{eq:main-goal-section-10} holds with $C =  \max\{2^{n/(n-1)} D, A \varepsilon^{-(n/(n-1))}\}$.
    As $\haus^n\rstr Y = \norm{T}$, we also have
    \begin{equation}\label{eq: isoper-gcba-hn}
        \min\bigl\{\haus^n\big(E\cap B\big), \haus^n\big(B \setminus E\big) \bigr\} \leq C  \cdot \left(\liminf_{s \to 0}\frac{\haus^n((E_s\setminus E)\cap 3B)}{s}\right)^{\frac{n}{n-1}}
    \end{equation}
    for every Borel set \(E\subseteq B(x,r_x/4)\) and every open ball \(B=B(y, R)\) of radius \(0< R <  r_x/4\) and center $y \in B(x,r_x/4)$. Fix such $E$ and $B=B(y,R)$. Write $m=\min\{\mathscr H^n(E\cap B),\mathscr H^n(B\setminus E)\}$ and
    \[
      P=\liminf_{s\to 0}\frac{\mathscr H^n((E_s\setminus E)\cap 3B)}{s}.
    \]
    We may assume $P<\infty$.  We use \eqref{eq: isop-gcba-ahlfors-reg}, enlarging $A$ to be at least $1$ if necessary.
    Raising the inequality \eqref{eq: isoper-gcba-hn} to the power $\frac{n-1}{n}$ gives
    $m^{\frac{n-1}{n}}\leq C^{\frac{n-1}{n}}P$. Since
    $m\leq \mathscr H^n(B)\leq A R^n$, we have $m^{1/n}\leq A^{1/n}R$, and hence
    \[
      m=m^{\frac1n}\,m^{\frac{n-1}{n}}\leq A^{\frac1n}C^{\frac{n-1}{n}}\,R\,P.
    \]
    Moreover,
    $\mathscr H^n(3B)\leq A3^nR^n\leq A^2 3^n\mathscr H^n(B)$. Dividing by
    $\mathscr H^n(B)$, we obtain
    \[
      \frac{\min\{\mathscr H^n(E\cap B),\mathscr H^n(B\setminus E)\}}{\mathscr H^n(B)}
      \leq \frac{C'R}{\mathscr H^n(3B)}
      \liminf_{s\to 0}\frac{\mathscr H^n((E_s\setminus E)\cap 3B)}{s},
    \]
    where $C'=3^n A^{2+1/n}C^{(n-1)/n}$. It follows now from Lemma \ref{lemma: rel-isoper-implies-poincare} that $X$ satisfies a local $1$-Poincaré inequality as claimed.
\end{proof}

In particular, every $\CBA$ homology manifold satisfies a local $1$-Poincaré inequality. This result is new. The corresponding statement for topological manifold with a metric with curvature bounded above is known to hold true. The classical proof follows by Semmes (\cite{Semmes1996}). A more recent proof, at least for $n\neq 4$, follows by \cite[Corollary 1.6]{BassoMartiWenger25} and the comment below.

\begin{corollary}
     Let $(X,\dist)$ be a $\CBA$ homology $n$-manifold. Then $(X,\dist,\haus^n)$ is locally doubling and satisfies a local $1$-Poincaré inequality.
\end{corollary}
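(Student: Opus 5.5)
The plan is to reduce the corollary to Theorem \ref{thm:local-PI-new}. That theorem applies to purely $n$-dimensional, locally uniquely orientable $\GCBA$ spaces, so the whole task is to verify these hypotheses for a $\CBA$ homology $n$-manifold. This verification is essentially the content of Example \ref{ex:CBA_hom_mfds_are_loc_uniq_orientable}, and the steps below repackage it.

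First, a homology $n$-manifold is locally compact by definition. It is separable because it is locally compact and, being a manifold-like space, second countable; I would either take this as part of the standing hypotheses or note that it follows from the Example's setting. A $\CBA$ metric on a homology manifold is automatically locally geodesically complete, since local geodesics can be extended in spaces with nontrivial local homology in top degree. This gives that $X$ is $\GCBA$. The same reasoning, via \cite[Lemma 2.15]{CavallucciSambusetti2026}, shows that every point has dimension $n$, so $X$ is purely $n$-dimensional. Theorem \ref{theo:GCBA_basic_properties} then says that $X$ is $n$-regular.

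Second, I need local unique orientability. Fix $x$ and a small ball $B(x,r)$. By \cite[Theorem 16.15]{bredon-book}, small neighbourhoods of $x$ are orientable homology manifolds, so the open submanifold $\mathcal M(B(x,r))=B(x,r)\cap\mathcal M(X)$ is orientable. By \cite[Theorem 1.2]{LN:JEMS22}, $\mathcal M(X)$ is locally connected in $X$, meaning its intersections with small balls are connected. Hence $\mathcal M(B(x,r))$ is connected for every $r$ below some $r_x$. Corollary \ref{cor:n-regular_unique_orientability} then shows that $B(x,r)$ is uniquely orientable, so $X$ is LUO.

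Finally, Theorem \ref{thm:local-PI-new} gives local doubling of $\haus^n$ and the local $1$-Poincaré inequality. The main point needing care is the connectedness step. It requires that \cite[Theorem 1.2]{LN:JEMS22} gives connectedness of $B(x,r)\cap\mathcal M(X)$ for all sufficiently small $r$, not merely for some cofinal family of neighbourhoods. If it only yields a basis of neighbourhoods with connected manifold part, I would use that $\dim_H\mathcal S(X)=0$ (from the same reference) together with the length-space structure, arguing as in Proposition \ref{prop:orientability_from_submanifold_to_manifold_length}, to pass to round balls.
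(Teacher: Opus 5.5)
Your proposal is correct and follows the same route as the paper: verify, as in Example \ref{ex:CBA_hom_mfds_are_loc_uniq_orientable}, that $X$ is a purely $n$-dimensional LUO $\GCBA$ space, and then apply Theorem \ref{thm:local-PI-new}. Two small remarks: separability does not follow from being a homology manifold, since there may be uncountably many components, but both conclusions are local, so you can simply pass to a small open ball with compact closure; and your fallback for the connectedness of $B(x,r)\cap\mathcal M(X)$ for all small $r$ works, because the argument in the proof of Proposition \ref{prop:orientability_from_submanifold_to_manifold_length} shows that $U\cap\mathcal M(X)$ is connected for every connected open $U$ once $(\mathcal M(X),\dist)$ is a length space.
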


}

\bibliographystyle{plain}
\bibliography{OrientabilityBiblio}

\end{document}